\newtheorem{theorem}{Theorem}[section]
\newtheorem{lemma}[theorem]{Lemma}
\newtheorem{corollary}[theorem]{Corollary}
\newcommand{\dd}{\,{\rm d}}
\newcommand{\bs}{\boldsymbol}
\DeclareMathOperator*{\img}{img}
\newcommand{\sign}{\operatorname{sign}}
\newcommand{\curl}{\operatorname{curl}}
\renewcommand{\div}{\operatorname{div}}
\newcommand{\grad}{\operatorname{grad}}
\DeclareMathOperator*{\tr}{tr}
\DeclareMathOperator*{\rot}{rot}
\newcommand{\dev}{\operatorname{dev}}
\newcommand{\sym}{\operatorname{sym}}
\newcommand{\skw}{\operatorname{skw}}
\newcommand{\mskw}{\operatorname{mskw}}
\newcommand{\vskw}{\operatorname{vskw}}
\newcommand{\hess}{\operatorname{hess}}
\begin{document}
\title[Finite elements for divdiv-conforming symmetric tensors]{Finite elements for divdiv-conforming symmetric tensors in three dimensions}
\author{Long Chen}%
\address{Department of Mathematics, University of California at Irvine, Irvine, CA 92697, USA}%
\email{chenlong@math.uci.edu}%
\author{Xuehai Huang}%
\address{School of Mathematics, Shanghai University of Finance and Economics, Shanghai 200433, China}%
\email{huang.xuehai@sufe.edu.cn}%

\thanks{The first author was supported by NSF DMS-2012465, and in part by DMS-1913080.}
\thanks{The second author is the corresponding author. The second author was supported by the National Natural Science Foundation of China Project 11771338 and the Fundamental Research Funds for the Central
Universities 2019110066.}

\subjclass[2010]{
65N30;   
65N12;   
65N22;   
}

\begin{abstract}
Two types of finite element spaces on a tetrahedron are constructed for divdiv conforming symmetric tensors in three dimensions. The key tools of the construction are the decomposition of polynomial tensor spaces and the characterization of the trace operators. First, the divdiv Hilbert complex and its corresponding polynomial complexes are presented. Several decompositions of polynomial vector and tensors spaces are derived from the polynomial complexes.  Then, traces for div-div operator are characterized through a Green's identity. Besides the normal-normal component, another trace involving combination of first order derivatives of the tensor is continuous across the face. Due to the smoothness of polynomials, the symmetric tensor element is also continuous at vertices, and on the plane orthogonal to each edge. Third, a finite element for sym curl-conforming trace-free tensors is constructed following the same approach. Finally, a finite element divdiv complex, as well as the bubble functions complex, in three dimensions are established.   
\end{abstract}

\maketitle


\section{Introduction}
In this paper, we shall construct finite elements for space \[
\boldsymbol{H}(\div{\div },\Omega; \mathbb{S}):=\{\boldsymbol{\tau}\in \boldsymbol{L}^{2}(\Omega; \mathbb{S}): \div {\div}\boldsymbol{\tau}\in L^{2}(\Omega)\}, \; \Omega\subset \mathbb R^3,
\]
which consists of symmetric tensors such that $\div {\div}\boldsymbol{\tau}\in L^{2}(\Omega)$ with the inner $\div$ applied row-wisely to $\boldsymbol \tau$ resulting a column vector for which the outer $\div$ operator is applied. $H(\div\div)$-conforming finite elements can be applied to discretize the linearized Einstein-Bianchi system~\cite[Section~4.11]{Quenneville-Belair2015} and the mixed formulation of the biharmonic equation~\cite{PaulyZulehner2020}. 

The construction in three dimensions is much harder than that in two dimensions. 
The essential difficulty arises from the underline divdiv Hilbert complex
\begin{align*}
\resizebox{1.0\hsize}{!}{$
\boldsymbol{RT}\xrightarrow{\subset} \boldsymbol H^1(\Omega;\mathbb R^3)\xrightarrow{\dev\grad}\boldsymbol H(\sym\curl,\Omega;\mathbb T)\xrightarrow{\sym\curl} \boldsymbol H(\div\div, \Omega;\mathbb S) \xrightarrow{\div{\div}} L^2(\Omega)\xrightarrow{}0,
$}
\end{align*}
where $\bs{RT}= \{a\boldsymbol x + \boldsymbol b: a\in \mathbb R, \boldsymbol b \in \mathbb R^3\}$, $\boldsymbol H^1(\Omega;\mathbb R^3)$ and $L^2(\Omega)$ are standard Sobolev spaces, and $\boldsymbol H(\sym\curl,\Omega;\mathbb T)$ is the space of traceless tensor $\boldsymbol \sigma\in L^2(\Omega;\mathbb T)$ such that $\sym \curl \boldsymbol \sigma \in L^2(\Omega; \mathbb S)$ with the row-wise $\curl$ operator. In the divdiv complex in three dimensions, the Sobolev space before $\boldsymbol H(\div\div, \Omega;\mathbb S)$ consists of tensor functions rather than vector functions in two dimensions.
By comparison, the divdiv Hilbert complex in two dimensions is
\begin{equation*}
\boldsymbol {RT}\xrightarrow{\subset} \boldsymbol  H^1(\Omega;\mathbb R^2)\xrightarrow{\sym\curl} \boldsymbol{H}(\div{\div},\Omega; \mathbb{S}) \xrightarrow{\div{\div }} L^2(\Omega)\xrightarrow{}0.
\end{equation*}
Finite element spaces for $\boldsymbol  H^1(\Omega;\mathbb R^2)$ are relatively mature.
 Then the design of divdiv conforming finite elements in two dimensions is relatively easy; see~\cite{ChenHuang2020} and also Section \S~\ref{sec:2D}. 


We start our construction from the two polynomial complexes
\begin{equation}\label{eq:introdivdivcomplex3dPolydouble}
\resizebox{.92\hsize}{!}{$
\xymatrix{
\boldsymbol{RT}\ar@<0.4ex>[r]^-{\subset} & \mathbb P_{k+2}(\Omega; \mathbb R^3)\ar@<0.4ex>[r]^-{\dev\grad}\ar@<0.4ex>[l]^-{\boldsymbol \pi_{RT}} & \mathbb P_{k+1}(\Omega; \mathbb T)\ar@<0.4ex>[r]^-{\sym\curl}\ar@<0.4ex>[l]^-{\cdot\boldsymbol x}  & \mathbb P_k(\Omega; \mathbb S) \ar@<0.4ex>[r]^-{\div{\div}}\ar@<0.4ex>[l]^-{\times\boldsymbol x} & \mathbb P_{k-2}(\Omega)  \ar@<0.4ex>[r]^-{} \ar@<0.4ex>[l]^-{\boldsymbol x\boldsymbol x^{\intercal}}
& 0 \ar@<0.4ex>[l]^-{\supset} }
$}
\end{equation}
 and reveal several decompositions of polynomial vector and tensors spaces from~\eqref{eq:introdivdivcomplex3dPolydouble}. We then present a Green's identity 
\begin{align*}
(\div\div\boldsymbol \tau, v)_K&=(\boldsymbol \tau, \nabla^2v)_K -\sum_{F\in\mathcal F(K)}\sum_{e\in\mathcal E(F)}(\boldsymbol n_{F,e}^{\intercal}\boldsymbol \tau \boldsymbol n, v)_e\\
&\quad - \sum_{F\in\mathcal F(K)}\left[(\boldsymbol  n^{\intercal}\boldsymbol \tau\boldsymbol  n, \partial_n v)_{F} -  ( 2\div_F(\boldsymbol \tau\boldsymbol n)+\partial_n (\boldsymbol  n^{\intercal}\boldsymbol \tau\boldsymbol  n), v)_F\right], 
\end{align*}
and give a characterization of two traces for $\boldsymbol \tau \in \boldsymbol{H}(\div{\div },K; \mathbb{S})$
$$
\boldsymbol  n^{\intercal}\boldsymbol \tau\boldsymbol  n \in H_n^{-1/2}(\partial K), \quad \text{ and } \; 2\div_F(\boldsymbol\tau \boldsymbol n)+ \partial_n(\boldsymbol n^{\intercal} \boldsymbol \tau\boldsymbol n) \in H_t^{-3/2}(\partial K),
$$
see Section \S~\ref{subsec:trace} for detailed definition of the negative Sobolev space for traces. 

Based on the  decomposition of polynomial tensors and the characterization of traces, we are able to construct two types of $H(\div\div)$-conforming finite element spaces on a tetrahedron. Here we present the BDM-type (full polynomial) space below. Let $K$ be a tetrahedron and let $k\geq 3$ be a positive integer. The shape function space is simply $\mathbb P_k(K;\mathbb S)$. The set of edges of $K$ is denoted by $\mathcal E(K)$, the set of faces by $\mathcal F(K)$, and the set of vertices by $\mathcal V(K)$. 
For each edge, we chose two normal vectors $\boldsymbol n_1$ and $\boldsymbol n_2$. The degrees of freedom (d.o.f) are given by
\begin{align}
\boldsymbol \tau (\delta) & \quad\forall~\delta\in \mathcal V(K), \label{intro:Hdivdivfem3ddof1}\\
(\boldsymbol  n_i^{\intercal}\boldsymbol \tau\boldsymbol n_j, q)_e & \quad\forall~q\in\mathbb P_{k-2}(e),  e\in\mathcal E(K),\; i,j=1,2,\label{intro:Hdivdivfem3ddof2}\\
(\boldsymbol  n^{\intercal}\boldsymbol \tau\boldsymbol  n, q)_F & \quad\forall~q\in\mathbb P_{k-3}(F),  F\in\mathcal F(K),\label{intro:Hdivdivfem3ddof3}\\
(2\div_F(\boldsymbol\tau \boldsymbol n)+ \partial_n(\boldsymbol n^{\intercal} \boldsymbol \tau\boldsymbol n), q)_F & \quad\forall~q\in\mathbb P_{k-1}(F),  F\in\mathcal F(K),\label{intro:Hdivdivfem3ddof4}\\
(\boldsymbol \tau, \boldsymbol \varsigma)_K & \quad\forall~\boldsymbol \varsigma\in\nabla^2\mathbb P_{k-2}(K), \label{intro:Hdivdivfem3ddof5} \\
(\boldsymbol \tau, \boldsymbol \varsigma)_K & \quad\forall~\boldsymbol \varsigma\in \sym(\mathbb P_{k-2}(K; \mathbb T)\times\boldsymbol x), \label{intro:Hdivdivfem3ddof55} \\
(\boldsymbol \tau\boldsymbol n, \boldsymbol  n\times \boldsymbol x q)_{F_1} & \quad\forall~q\in\mathbb P_{k-2}(F_1),\label{intro:Hdivdivfem3ddof6}
\end{align}
where $F_1\in\mathcal F(K)$ is an arbitrary but fixed face. The last degrees of freedom~\eqref{intro:Hdivdivfem3ddof6} will be regarded as interior degrees of freedom to the tetrahedron $K$. Namely when a face $F$ is chosen in different elements, the degrees of freedom~\eqref{intro:Hdivdivfem3ddof6} are double-valued when defining the global finite element space. The RT-type (incomplete polynomial) space can be obtained by further reducing the index of degree of freedoms by $1$ except the moment with $\nabla^2\mathbb P_{k-2}(K)$. 
To the best of our knowledge, these are the first $H(\div\div)$-conforming finite elements for symmetric tensors in three dimensions.
We notice in the recent work \cite{Hu;Ma;Zhang:2020family}, a new family of divdiv-conforming finite elements are introduced for triangular and tetrahedral grids in a more unified way. The constructed finite element spaces there are in $\bs H(\div\div, \Omega; \mathbb S)\cap \bs H(\div, \Omega; \mathbb S)$, which are smoother than ours. 


To help the understanding of our construction, we sketch a decomposition of a finite element space associated to a generic differential operator $\dd$ in Fig.~\ref{fig:femdec}, where $\dd^*$ is the $L^2$ adjoint of $\dd$. 
\begin{figure}[htbp]
\begin{center}
\includegraphics[width=5.4cm]{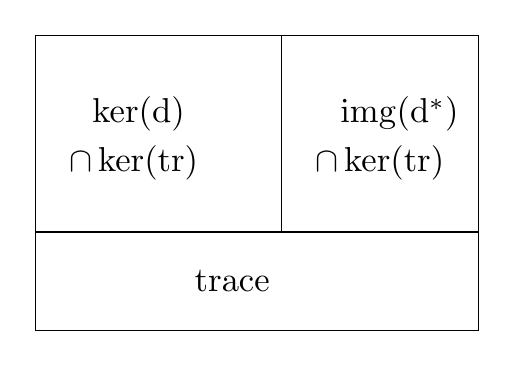}
\caption{Decomposition of a generic finite element space}
\label{fig:femdec}
\end{center}
\end{figure}
The boundary degree of freedoms~\eqref{intro:Hdivdivfem3ddof3}-\eqref{intro:Hdivdivfem3ddof4} are obviously motivated by the Green's formulae and the characterization of the trace of $\boldsymbol{H}(\div{\div },\Omega; \mathbb{S})$. The extra continuity~\eqref{intro:Hdivdivfem3ddof1}-\eqref{intro:Hdivdivfem3ddof2} is to ensure the cancellation of the edge term when adding element-wise Green's identity over a mesh. All together~\eqref{intro:Hdivdivfem3ddof1}-\eqref{intro:Hdivdivfem3ddof4} will determine the trace on the boundary of a tetrahedron. 

The interior moments of $\nabla^2\mathbb P_{k-2}(K)$ is to determine the image $\div\div (\mathbb P_{k}(K;\mathbb S)\cap \ker(\tr))$, which is isomorphism to ${\rm img}(\nabla^2)\cap \ker(\tr)$ -- the upper right block in Fig.~\ref{fig:femdec}. Together with $\sym(\mathbb P_{k-2}(K; \mathbb T)\times\boldsymbol x)$, the volume moments can determine the polynomial of degree only up to $k-1$. We then use the vanished trace and the symmetry of the tensor to figure out the rest d.o.f. 
The degrees of freedom~\eqref{intro:Hdivdivfem3ddof55}-\eqref{intro:Hdivdivfem3ddof6} will determine $\ker(\div\div)\cap \ker(\tr)$ -- the upper left block in Fig.~\ref{fig:femdec}.

For the symmetric tensor space, it seems odd to have degrees of freedom not symmetric, as a face is singled out. In view of Fig.~\ref{fig:femdec} and the exactness of the polynomial divdiv complex~\eqref{eq:introdivdivcomplex3dPolydouble}, a symmetric set of d.o.f. is replacing~\eqref{intro:Hdivdivfem3ddof55}-\eqref{intro:Hdivdivfem3ddof6} by 
\begin{equation}\label{intro:Hdivdivfem3ddofbubble}
(\boldsymbol \tau, \boldsymbol \varsigma)_K  \quad\forall~\boldsymbol \varsigma\in \sym\curl\boldsymbol B_{k+1}(\sym\curl, K;\mathbb T),
\end{equation}
where $\boldsymbol B_{k+1}(\sym\curl, K;\mathbb T) = \mathbb P_{k+1}(K;\mathbb T)\cap \boldsymbol H_0(\sym\curl, K;\mathbb T)$ is the so-called bubble function space and will be characterized precisely in Section \S~\ref{sec:bubble}. 
Although~\eqref{intro:Hdivdivfem3ddofbubble} is more symmetric, it is indeed not simpler than~\eqref{intro:Hdivdivfem3ddof55}-\eqref{intro:Hdivdivfem3ddof6} in implementation as the formulation of $\sym\curl\boldsymbol B_{k+1}(\sym\curl, K;\mathbb T)$ is much more complicated than polynomials on a face.


With the help of the $H(\div\div)$-conforming finite elements for symmetric tensors and two traces $\bs n\times \sym (\boldsymbol\tau\times\boldsymbol n)\times \bs n$ and $\boldsymbol n\cdot \boldsymbol\tau\times\boldsymbol n$ of space $\boldsymbol H(\sym \curl,K; \mathbb T)$, we construct ${H}(\sym\curl)$-conforming finite elements for trace-free tensors with $\mathbb P_{\ell+1}(K;\mathbb T)$ as
the space of shape functions with $\ell\geq \max\{k-1, 3\}$.
The degrees of freedom are
{\small
\begin{align}
\boldsymbol \tau(\delta) & \quad\forall~\delta\in \mathcal V(K), \label{intro:Hsymcurlfem3ddof1}\\
(\sym\curl\boldsymbol \tau )(\delta) & \quad\forall~\delta\in \mathcal V(K), \label{intro:Hsymcurlfem3ddof2}\\
(\boldsymbol  n_i^{\intercal}(\sym\curl\boldsymbol \tau )\boldsymbol n_j, q)_e & \quad\forall~q\in\mathbb P_{\ell-2}(e),  e\in\mathcal E(K), i,j=1,2, \label{intro:Hsymcurlfem3ddof3}\\
(\boldsymbol  n_i^{\intercal}\boldsymbol \tau\boldsymbol t, q)_e & \quad\forall~q\in\mathbb P_{\ell-1}(e),  e\in\mathcal E(K), i=1,2,\label{intro:Hsymcurlfem3ddof4}\\
(\boldsymbol n_{2}^{\intercal}(\curl\boldsymbol \tau)\boldsymbol n_1 + \partial_{t}(\boldsymbol t^{\intercal}\bs\tau\boldsymbol t), q)_e & \quad\forall~q\in\mathbb P_{\ell}(e),  e\in\mathcal E(K),\label{intro:Hsymcurlfem3ddof5}\\
(\boldsymbol n\times\sym(\boldsymbol\tau\times\boldsymbol n)\times\boldsymbol n, \boldsymbol \varsigma)_F &\quad\forall~\boldsymbol \varsigma\in (\nabla_F^{\bot})^2\, \mathbb P_{\ell-1}(F)\oplus
 \sym (\boldsymbol x\otimes \mathbb P_{\ell-1}(F;\mathbb R^2)), \label{intro:Hsymcurlfem3ddof6}\\
(\boldsymbol n\cdot \boldsymbol\tau\times\boldsymbol n, \boldsymbol q)_F & \quad\forall~\boldsymbol q\in\nabla_F\mathbb P_{\ell-3}(F)\oplus\boldsymbol x^{\perp}\mathbb P_{\ell-1}(F),  F\in\mathcal F(K),\label{intro:Hsymcurlfem3ddof7}\\
(\boldsymbol \tau, \boldsymbol q)_K & \quad\forall~\boldsymbol q\in\boldsymbol B_{\ell+1}(\sym\curl, K;\mathbb T). \label{intro:Hsymcurlfem3ddof8} 
\end{align}}

Combining previous finite elements for tensors and the vectorial Hermite element in three dimensions,
we arrive at a finite element divdiv complex in three dimensions, and the associated finite element bubble divdiv complex. 
Recently a finite element divdiv complex in three dimensions involving the $H(\div\div)$-conforming finite elements for symmetric tensors constructed in this paper is devised in \cite{Hu;Liang;Ma:2021Finite}. The ${H}(\sym\curl)$-conforming finite elements for trace-free tensors and $H^1$-conforming finite elements for vectors employed in \cite{Hu;Liang;Ma:2021Finite} are smoother than ours.
Two dimensional finite element divdiv complexes can be found in~\cite{Chen;Hu;Huang:2018Multigrid,ChenHuang2020,Hu;Ma;Zhang:2020family}. 

The rest of this paper is organized as follows. We present some operations for vectors and tensors in Section~\ref{sec:notation}. Two polynomial complexes related to the divdiv complex, and direct sum decompositions of polynomial spaces are shown in Section~\ref{sec:polycomplex}.
We derive the Green's identity and characterize the trace of $ \boldsymbol{H}(\div\div, \Omega; \mathbb{S}) $ on polyhedrons in Section~\ref{sec:greentrace}, and then construct the conforming finite elements for $\boldsymbol{H}(\div{\div },\Omega; \mathbb{S})$ in three dimensions in Section~\ref{sec:fem}.
In Section~\ref{sec:tracefreefem} we construct conforming finite elements for $\boldsymbol{H}(\sym\curl,\Omega; \mathbb{T})$.
With previous devised finite elements for tensors, we form a finite element divdiv complex in three dimensions in Section~\ref{sec:femdivdivcomplex}.

\section{Matrix and Vector Operations}\label{sec:notation}
In this section, we shall survey operations for vectors and tensors. In particular, we shall distinguish operators applied to columns and rows of a matrix.

\subsection{Matrix-vector products}
The matrix-vector product $\boldsymbol A\boldsymbol b$ can be interpreted as the inner product of $\boldsymbol b$ with the row vectors of $\boldsymbol A$. We thus define the dot operator
$\boldsymbol A\cdot \boldsymbol b := \boldsymbol A \boldsymbol b.$ Similarly we can define the row-wise cross product from the right $\boldsymbol A\times \boldsymbol b$. 
Here rigorously speaking when a column vector $\boldsymbol b$ is treat as a row vector, notation $\boldsymbol b^{\intercal}$ should be used. In most places, however, we will sacrifice this precision for the ease of notation. When the vector is on the left of the matrix, the operation is defined column-wise. For example, $\boldsymbol b \cdot \boldsymbol A : = \boldsymbol b^{\intercal}\boldsymbol A$. For dot products, we will still mainly use the conventional notation, e.g. $\boldsymbol b\cdot \boldsymbol A\cdot \boldsymbol c = \boldsymbol b^{\intercal} \boldsymbol A\boldsymbol c$. But for the cross products, we emphasize again the cross product of a vector from the left is column-wise and from the right is row-wise. The transpose rule still works, i.e. $\boldsymbol b\times \boldsymbol A = -(\boldsymbol A^{\intercal}\times \boldsymbol b )^{\intercal}$. Here again, we mix the usage of column vector $\boldsymbol b$ and row vector $\boldsymbol b^{\intercal}$. 

%

The ordering of performing the row and column products does not matter which leads to the associative rule of the triple products
$$
\boldsymbol b\times \boldsymbol A\times \boldsymbol c := (\boldsymbol b\times \boldsymbol A)\times \boldsymbol c = \boldsymbol b\times (\boldsymbol A\times \boldsymbol c).
$$
Similar rules hold for $\boldsymbol b\cdot \boldsymbol A\cdot \boldsymbol c$ and $\boldsymbol b\cdot \boldsymbol A\times \boldsymbol c$ and thus parentheses can be safely skipped when no differentiation is involved. 

For two column vectors $\boldsymbol u, \boldsymbol v$, the tensor product $\boldsymbol u\otimes \boldsymbol v := \boldsymbol u\boldsymbol v^{\intercal}$ is a matrix which is also known as the dyadic product $\boldsymbol u\boldsymbol v: = \boldsymbol u\boldsymbol v^{\intercal}$ with more clean notation (one $^{\intercal}$ is skipped). The row-wise product and column-wise product with another vector will be applied to the neighboring vector
\begin{align}
\label{eq:xuv}
\boldsymbol x\cdot (\boldsymbol u\boldsymbol v) = (\boldsymbol x\cdot \boldsymbol u) \boldsymbol v^{\intercal}, \quad (\boldsymbol u\boldsymbol v)\cdot \boldsymbol x = \boldsymbol u (\boldsymbol v\cdot \boldsymbol x), \\
\label{eq:xtimesuv}
\boldsymbol x\times (\boldsymbol u\boldsymbol v) = (\boldsymbol x\times \boldsymbol u) \boldsymbol v, \quad (\boldsymbol u\boldsymbol v)\times \boldsymbol x = \boldsymbol u (\boldsymbol v\times \boldsymbol x).
\end{align}

\subsection{Differentiation}
We treat Hamilton operator $\nabla = (\partial_1, \partial_2, \partial_3)^{\intercal}$ as a column vector. For a vector function $\boldsymbol u = (u_1, u_2, u_3)^{\intercal}$, $\curl \boldsymbol u= \nabla \times \boldsymbol u$, and $\div \boldsymbol u = \nabla \cdot \boldsymbol u$ are standard differential operations. Define $\nabla \boldsymbol u := \nabla \boldsymbol u^{\intercal} = (\partial_i u_j)$, which can be understood as the dyadic product of Hamilton operator $\nabla$ and column vector $\boldsymbol u$.

Apply these matrix-vector operations to the Hamilton operator $\nabla$, we get column-wise differentiation $\nabla \cdot \boldsymbol A, \nabla \times \boldsymbol A,$
and row-wise differentiation
$\boldsymbol A\cdot \nabla, \boldsymbol A\times \nabla.$ Conventionally, the differentiation is applied to the function after the $\nabla$ symbol. So a more conventional notation is
\begin{align*}
\boldsymbol A\cdot \nabla  : = (\nabla \cdot \boldsymbol A^{\intercal})^{\intercal}, \quad \boldsymbol A\times \nabla : = - (\nabla \times \boldsymbol A^{\intercal})^{\intercal}.
\end{align*}
By moving the differential operator to the right, the notation is simplified and the transpose rule for matrix-vector products can be formally used. Again the right most column vector $\nabla$ is treated as a row vector $\nabla^{\intercal}$ to make the notation cleaner. 

In the literature, differential operators are usually applied row-wisely to tensors. To distinguish with $\nabla$ notation, we define operators in letters as
\begin{align*}
\grad \boldsymbol u &:= \boldsymbol u \nabla^{\intercal} = (\partial_j u_i ) = (\nabla \boldsymbol u)^{\intercal},\\
\curl \boldsymbol A &: = - \boldsymbol A\times \nabla = (\nabla \times \boldsymbol A^{\intercal})^{\intercal},\\
\div \boldsymbol A &: = \boldsymbol A\cdot \nabla = (\nabla \cdot \boldsymbol A^{\intercal})^{\intercal}.
\end{align*}
Then the double divergence operator can be written as
\begin{equation*}
\div\div\boldsymbol A := \nabla\cdot \boldsymbol A\cdot \nabla.
\end{equation*}
Again as the column and row operations are independent, the ordering of operations is not important and parentheses is skipped. 



\subsection{Matrix decompositions}
Denote the space of all  $3\times3$ matrices by $\mathbb{M}$, all symmetric $3\times3$ matrices by $\mathbb{S}$, all skew-symmetric $3\times3$ matrices by $\mathbb{K}$, and all trace-free $3\times3$ matrices by $\mathbb{T}$. 
For any matrix $\boldsymbol B\in \mathbb M$, we can decompose it into symmetric and skew-symmetric parts as
$$
\boldsymbol B = {\rm sym}(\boldsymbol B) + {\rm skw}(\boldsymbol B):= \frac{1}{2}(\boldsymbol B + \boldsymbol B^{\intercal}) + \frac{1}{2}(\boldsymbol B - \boldsymbol B^{\intercal}).
$$
We can also decompose it into a direct sum of a trace free matrix and a diagonal matrix as
\begin{equation}\label{eq:devtr}
\boldsymbol B = {\rm dev} \boldsymbol B + \frac{1}{3}\tr(\boldsymbol B)\boldsymbol I := (\boldsymbol B - \frac{1}{3}\tr(\boldsymbol B)\boldsymbol I) + \frac{1}{3}\tr(\boldsymbol B)\boldsymbol I.
\end{equation}
Define $\sym\curl$ operator for a matrix $\boldsymbol A$
$$
\sym\curl \boldsymbol A := \frac{1}{2}( \nabla \times \boldsymbol A^{\intercal} + (\nabla \times \boldsymbol A^{\intercal})^{\intercal}) = \frac{1}{2}( \nabla \times \boldsymbol A^{\intercal} - \boldsymbol A\times\nabla ).
$$



We define an isomorphism of $\mathbb R^3$ and the space of skew-symmetric matrices $\mathbb K$ as follows: for a vector $\boldsymbol \omega =
( \omega_1, \omega_2, \omega_3)^{\intercal}
\in \mathbb R^3,$
$$
\mskw \boldsymbol \omega := 
\begin{pmatrix}
 0 & -\omega_3 & \omega_2 \\
\omega_3 & 0 & - \omega_1\\
-\omega_2 & \omega_1 & 0
\end{pmatrix}. 
$$
Obviously $\mskw: \mathbb R^3 \to \mathbb K$ is a bijection. We define $\vskw: \mathbb M\to \mathbb R^3$ by $\vskw := \mskw^{-1}\circ \skw$.
%
%

We will use the following identities  which can be verified by direct calculation. 
\begin{align}
{\rm skw}(\nabla \boldsymbol u) &= \frac{1}{2} (\mskw \nabla \times \boldsymbol u),\notag\\
\label{eq:skwcurl}{\rm skw}(\curl\boldsymbol A) &= \frac{1}{2} \mskw\left[\div(\boldsymbol A^{\intercal})-\grad(\tr(\boldsymbol A))\right], \\
\label{eq:divmskw} \div \mskw \boldsymbol u &= - \curl \boldsymbol u,\\
\label{eq:curlgrad} \curl (u \boldsymbol I)&=- \mskw \grad(u),\\
\label{eq:trcross}\tr(\boldsymbol\tau\times\boldsymbol x) &=-2\boldsymbol x\cdot\vskw\boldsymbol\tau.
\end{align}
More identities involving the matrix operation and differentiation are summarized in~\cite{ArnoldHu2020}; see also~\cite{Chen;Huang:2021Finite}. 


\subsection{Projections to a plane}
Given a plane $F$ with normal vector $\boldsymbol n$, for a vector $\boldsymbol v\in \mathbb R^3$, we have the orthogonal decomposition
$$
\boldsymbol v = \Pi_n \boldsymbol v + \Pi_F \boldsymbol v := (\boldsymbol v\cdot \boldsymbol n)\boldsymbol n + (\boldsymbol n\times \boldsymbol v)\times \boldsymbol n.
$$
The vector $\Pi_F^{\bot}\boldsymbol v :=\boldsymbol n\times \boldsymbol v$ is also on the plane $F$ and is a rotation of $\Pi_F \boldsymbol v$ by $90^{\circ}$ counter-clockwise with respect to $\boldsymbol n$. 
We treat Hamilton operator $\nabla = (\partial_1, \partial_2, \partial_3)^{\intercal}$ as a column vector and define
$$
\nabla_F^{\bot} := \boldsymbol n\times \nabla, \quad \nabla_F: = \Pi_F \nabla = (\boldsymbol n\times \nabla)\times \boldsymbol n.
$$
For a scalar function $v$,
\begin{align*}
\grad_F v : = \nabla_F v = \Pi_F (\nabla v), \\
 \curl_F v := \nabla_F^{\bot} v = \boldsymbol n \times \nabla v,
\end{align*}
 are the surface gradient of $v$ and surface $\curl$, respectively. For a vector function $\boldsymbol v$, $\nabla_F\cdot \boldsymbol v$ is the surface divergence
$$
\div_F\boldsymbol v := \nabla_F\cdot \boldsymbol v = \nabla_F\cdot(\Pi_F\boldsymbol v).
$$
By the cyclic invariance of the mix product and the fact $\boldsymbol  n$ is constant, the surface rot operator is
\begin{equation*}
{\rm rot}_F \boldsymbol  v := \nabla_F^{\bot}\cdot \boldsymbol  v = (\boldsymbol  n\times \nabla)\cdot \boldsymbol  v = \boldsymbol  n\cdot (\nabla \times \boldsymbol  v),
\end{equation*}
which is the normal component of $\nabla \times \boldsymbol  v$. 
The tangential trace of $\nabla \times \boldsymbol  u$ is 
\begin{equation}\label{eq:tangentialtrace}
\boldsymbol  n\times (\nabla \times \boldsymbol  v) = \nabla (\boldsymbol  n\cdot \boldsymbol  v) - \partial_n \boldsymbol  v. 
\end{equation}
By definition,
\begin{equation}\label{eq:rotFdivF}
{\rm rot}_F \boldsymbol  v = - \div_F (\boldsymbol  n\times \boldsymbol  v), \quad
\div_F \boldsymbol  v = {\rm rot}_F (\boldsymbol  n\times \boldsymbol  v).
\end{equation}
Note that the three dimensional $\curl$ operator restricted to a two dimensional plane $F$ results in two operators: $\curl_F$ maps a scalar to a vector, which is a rotation of $\grad_F$, and $\rot_F$ maps a vector to a scalar which can be thought as a rotated version of $\div_F$. The surface differentiations satisfy the property $\div_F\curl_F = 0$ and $\rot_F\grad_F = 0$ and when $F$ is simply connected, $\ker(\div_F) = {\rm img}(\curl_F)$ and $\ker(\rot_F) = {\rm img}(\grad_F)$.

Differentiation for two dimensional tensors can be defined similarly.  

\section{Divdiv Complex and Polynomial Complexes}\label{sec:polycomplex}
In this section, we shall consider the divdiv complex and establish two related polynomial complexes. We assume $\Omega \subset \mathbb R^3$ is a bounded and Lipschitz domain, which is topologically trivial in the sense that it is homeomorphic to a ball. Without loss of generality, we also assume $(0,0,0) \in \Omega$. 

Recall that a Hilbert complex is a sequence of Hilbert spaces connected by a sequence of linear operators satisfying the property: the composition of two consecutive operators is vanished. A Hilbert complex is exact means the range of each map is the kernel of the succeeding map. As $\Omega$ is topologically trivial, the following de Rham Complex of $\Omega$ is exact
\begin{equation}\label{eq:derham}
0\xrightarrow{} H^1(\Omega)\xrightarrow{\grad}\boldsymbol H(\curl;\Omega)\xrightarrow{\curl}\boldsymbol H(\div;\Omega)\xrightarrow{\div}L^2(\Omega) \xrightarrow{}0.
\end{equation}
We will abbreviate a Hilbert complex as a complex. 

\subsection{The $\div\div$ complex}
The $\div\div$ complex in three dimensions reads as~\cite{ArnoldHu2020,PaulyZulehner2020}
\begin{equation}\label{eq:divdivcomplex3d}
\resizebox{.92\hsize}{!}{$
\boldsymbol{RT}\xrightarrow{\subset} \boldsymbol H^1(\Omega;\mathbb R^3)\xrightarrow{\dev\grad}\boldsymbol H(\sym\curl,\Omega;\mathbb T)\xrightarrow{\sym\curl} \boldsymbol H(\div\div, \Omega;\mathbb S) \xrightarrow{\div{\div}} L^2(\Omega)\xrightarrow{}0,
$}
\end{equation}
where $\boldsymbol{RT}:= \{a\boldsymbol x + \boldsymbol b: a\in \mathbb R, \boldsymbol b \in \mathbb R^3\}$ is the space of shape funcions of the lowest order Raviart-Thomas element~\cite{RaviartThomas1977}.
For completeness, we prove the exactness of the complex~\eqref{eq:divdivcomplex3d} following~\cite{PaulyZulehner2020}.

\begin{theorem}\label{thm:divdivcomplex}
Assume $\Omega$ is a bounded and topologically trivial Lipschitz domain in $\mathbb R^3$. Then~\eqref{eq:divdivcomplex3d} is an exact Hilbert complex.   
\end{theorem}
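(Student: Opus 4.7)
The plan is to verify the complex property and then establish exactness at each node, following the Bernstein-Gelfand-Gelfand (BGG) strategy of Pauly-Zulehner that derives the divdiv complex from two copies of the de Rham complex \eqref{eq:derham} on the topologically trivial domain $\Omega$.

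First I would check the complex property by direct computation using the identities of Section~\ref{sec:notation}. If $\bs u=a\bs x+\bs b\in\bs{RT}$ then $\grad\bs u=a\bs I$, so $\dev\grad\bs u=0$. For $\sym\curl\circ\dev\grad$, since $\curl\grad=0$ row-wise and \eqref{eq:curlgrad} gives $\curl(f\bs I)=-\mskw\grad f$, we get $\curl(\dev\grad\bs u)=\tfrac13\mskw\grad\div\bs u$, which is skew-symmetric, and its symmetric part vanishes. For $\div\div\circ\sym\curl$, the symmetry of $\nabla\otimes\nabla$ as a bilinear form on matrices gives $\div\div\bs A=\div\div\sym\bs A$, so $\div\div\sym\curl\bs\sigma=\div\div\curl\bs\sigma=0$ since $\div\curl=0$ row-wise.

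Next, for exactness at $\bs H^1(\Omega;\mathbb R^3)$: if $\dev\grad\bs u=0$ then $\grad\bs u=f\bs I$ for a scalar $f$; applying $\curl$ and using \eqref{eq:curlgrad} gives $\mskw\grad f=0$, so $f$ is constant and $\bs u\in\bs{RT}$.

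The three remaining exactness claims are the heart of the theorem and are each reduced to the exactness of \eqref{eq:derham}:

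\textbf{At $\bs H(\sym\curl,\Omega;\mathbb T)$.} Given $\bs\sigma$ traceless with $\sym\curl\bs\sigma=0$, the matrix $\curl\bs\sigma$ is skew, so $\curl\bs\sigma=\mskw\bs w$ for a unique $\bs w\in L^2$. By \eqref{eq:divmskw} and $\div\curl\bs\sigma=0$ we obtain $\curl\bs w=0$, hence $\bs w=\grad\varphi$ for some $\varphi\in H^1$ by the de Rham complex \eqref{eq:derham}. Then adjust $\bs\sigma$ by an explicit correction built from $\varphi$ so that the corrected tensor has zero curl row-wise; applying de Rham row-wise produces a vector potential $\bs u$ with $\bs\sigma=\dev\grad\bs u$ after a further adjustment to install the trace-free constraint using the identity \eqref{eq:skwcurl}.

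\textbf{At $\bs H(\div\div,\Omega;\mathbb S)$.} Given symmetric $\bs\tau$ with $\div\div\bs\tau=0$, the vector $\div\bs\tau$ lies in $\ker(\div)$, so by \eqref{eq:derham} we write $\div\bs\tau=\curl\bs q$. Then $\bs\tau-\curl^{\intercal}\bs q'$ has vanishing row-wise divergence for a suitable tensor $\bs q'$ built from $\bs q$, so another application of de Rham row-wise supplies a tensor potential $\bs\sigma_0$ with $\curl\bs\sigma_0=\bs\tau$. The BGG correction: subtract a skew-symmetric piece $\mskw\bs\mu$ from $\bs\sigma_0$ and use \eqref{eq:skwcurl} to solve for $\bs\mu$ so that the resulting tensor is traceless, then symmetrize using $\bs\tau=\sym\bs\tau$.

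\textbf{Surjectivity onto $L^2(\Omega)$.} Given $f\in L^2(\Omega)$, solve the biharmonic Dirichlet problem $\Delta^2 u=f$ with $u\in H^2_0(\Omega)$; then $\bs\tau:=\nabla^2 u\in\bs L^2(\Omega;\mathbb S)$ satisfies $\div\div\bs\tau=\Delta^2 u=f\in L^2$, so $\bs\tau\in\bs H(\div\div,\Omega;\mathbb S)$ and $\div\div\bs\tau=f$.

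The main obstacle will be exactness at $\bs H(\div\div,\Omega;\mathbb S)$: one must produce a traceless potential $\bs\sigma$ for a symmetric $\bs\tau$, and a naive row-wise application of de Rham only yields a general matrix potential. Installing both the trace-free and symmetry constraints simultaneously is the place where the BGG diagram chase between two copies of \eqref{eq:derham} linked by $\mskw$-type vertical maps is essential, and the identities \eqref{eq:skwcurl}--\eqref{eq:trcross} are the natural tools for carrying out the required corrections.
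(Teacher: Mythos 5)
Your proposal is correct and, for the complex property and the exactness at $\boldsymbol H^1(\Omega;\mathbb R^3)$, $\boldsymbol H(\sym\curl,\Omega;\mathbb T)$ and $\boldsymbol H(\div\div,\Omega;\mathbb S)$, it follows essentially the same BGG-style reduction to the de Rham complex~\eqref{eq:derham} that the paper uses: identify the skew part via $\mskw$, correct by a term of the form $\mskw\boldsymbol v$ or $w\boldsymbol I$ so that a row-wise de Rham potential exists, then install the symmetry or trace-free constraint using~\eqref{eq:skwcurl}--\eqref{eq:curlgrad}. (One cosmetic remark: at the $\boldsymbol H(\div\div)$ node the paper's final correction is slightly slicker than the one you sketch --- it simply takes $\sym$ of $\bs\sigma=-\mskw\boldsymbol v+\curl\widetilde{\bs\tau}$, which kills the skew piece for free, and then discards the trace part of $\widetilde{\bs\tau}$ because $\sym\curl((\tr\widetilde{\bs\tau})\boldsymbol I)=0$; no separate equation for a corrector $\boldsymbol\mu$ needs to be solved.) The one genuinely different step is surjectivity onto $L^2(\Omega)$: the paper proves $\div\div\boldsymbol H(\div\div,\Omega;\mathbb M)=L^2(\Omega)$ by applying the de Rham exactness twice and then observes via~\eqref{eq:divmskw} that skew-symmetric tensors contribute nothing, so the symmetric part already surjects; you instead solve the biharmonic Dirichlet problem $\Delta^2u=f$, $u\in H^2_0(\Omega)$, and take $\boldsymbol\tau=\nabla^2u$. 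Your route is shorter and automatically produces a symmetric potential in $\boldsymbol L^2(\Omega;\mathbb S)$ with $\div\div\boldsymbol\tau=f$ distributionally (coercivity of $(\nabla^2u,\nabla^2v)$ on $H^2_0(\Omega)$ holds on any bounded Lipschitz domain, so this is legitimate), at the price of invoking an elliptic existence result rather than staying entirely inside the de Rham/algebraic framework that the rest of the proof --- and its later polynomial analogue --- is built on.
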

\begin{proof}
We verify the composition of consecutive operators is vanished from the left to the right. Take a function $\boldsymbol v = a\boldsymbol x + \boldsymbol b\in \bs{RT}$, then $\grad \boldsymbol v = a \boldsymbol I$ and $\dev \boldsymbol I = 0$. For any $\boldsymbol v\in \mathcal C^2(\Omega;\mathbb R^3)$, it holds from~\eqref{eq:curlgrad} that
\begin{align*}
\sym\curl\dev\grad\boldsymbol v&=\sym\curl\left(\grad\boldsymbol v-\frac{1}{3}(\div\boldsymbol v) \boldsymbol I\right)=-\frac{1}{3}\sym\curl((\div\boldsymbol v) \boldsymbol I) \\
&=\frac{1}{3}\sym\mskw(\grad(\div\boldsymbol v))=0.
\end{align*}
By the density argument, we get $\sym\curl\dev\grad\boldsymbol H^1(\Omega;\mathbb R^3)=\bs0$. 
For any $\bs\tau\in\mathcal C^3(\Omega;\mathbb T)$, 
\[
\div\div\sym\curl\bs\tau= \frac{1}{2}\nabla \cdot (\nabla \times \boldsymbol A^{\intercal} - \boldsymbol A\times\nabla )\cdot \nabla =0.
\]
Again by the density argument, $\div\div\sym\curl\boldsymbol H(\sym\curl,\Omega;\mathbb T)=0$.
Thus~\eqref{eq:divdivcomplex3d} is a complex.

We then verify the exactness of~\eqref{eq:divdivcomplex3d} from the right to the left.

\medskip
\noindent {\em 1. $\div\div\boldsymbol H(\div\div, \Omega;\mathbb S)=L^2(\Omega)$.} 

Recursively applying the exactness of de Rham complex~\eqref{eq:derham}, we can prove $\div\div\boldsymbol H(\div\div, \Omega;\mathbb M)=L^2(\Omega)$ without the symmetry requirement, where the space $\boldsymbol H(\div\div, \Omega;\mathbb M)=\{\boldsymbol \tau \in L^2(\Omega; \mathbb M): \div\div \boldsymbol \tau \in L^2(\Omega)\}$.

Any skew-symmetric $\boldsymbol \tau$ can be written as $\boldsymbol \tau =\mskw\boldsymbol v$ for $\boldsymbol v = \vskw (\boldsymbol \tau)$. Assume $\boldsymbol v\in \mathcal C^2(\Omega; \mathbb R^3)$, it follows from~\eqref{eq:divmskw} that
\begin{equation}\label{eq:divdivskw0}
\div{\div}\boldsymbol \tau=\div{\div}\mskw\boldsymbol v= - \div(\curl\boldsymbol v)=0.
\end{equation}
Since $\div\div\bs\tau=0$ for any smooth skew-symmetric tensor field $\bs\tau$, we obtain
$$
\div\div\boldsymbol H(\div\div, \Omega;\mathbb S)=\div\div\boldsymbol H(\div\div, \Omega;\mathbb M)=L^2(\Omega).
$$

\medskip
\noindent {\em 2. $\boldsymbol H(\div\div, \Omega;\mathbb S) \cap\ker(\div\div)=\sym\curl\boldsymbol H(\sym\curl,\Omega;\mathbb T)$, i.e. if $\div\div\boldsymbol \sigma = 0$ and $\bs\sigma\in \boldsymbol H(\div\div, \Omega;\mathbb S)$, then there exists a $\bs\tau\in\boldsymbol H(\sym\curl,\Omega;\mathbb T)$, s.t. $\boldsymbol \sigma = \sym\curl\bs\tau$}. 

Since $\div(\div\boldsymbol \sigma) = 0$, by the exactness of the de Rham complex and identity ~\eqref{eq:divmskw},  there exists $\boldsymbol v \in\boldsymbol L^2(\Omega;\mathbb R^3)$ such that
\[
\div\bs\sigma=\curl\boldsymbol v=-\div(\mskw\boldsymbol v).
\]
Namely $\div (\boldsymbol \sigma + \mskw\boldsymbol v) = 0$. Again by the exactness of the de Rham complex, there exists $\widetilde{\bs\tau} \in\boldsymbol H^1(\Omega;\mathbb M)$ such that
\[
\bs\sigma=-\mskw\boldsymbol v+\curl\widetilde{\bs\tau}.
\]
By the symmetry of $\bs\sigma$, we have
$$
\bs\sigma=\sym\curl\widetilde{\bs\tau}=\sym\curl(\dev\widetilde{\bs\tau})+\frac{1}{3}\sym\curl\left((\tr\widetilde{\bs\tau})\boldsymbol I\right).
$$
From~\eqref{eq:curlgrad} we get
$$
\sym\curl\left((\tr\widetilde{\bs\tau})\boldsymbol I\right)=-\sym(\mskw\grad(\tr\widetilde{\bs\tau}))=0,
$$
which indicates $\bs\sigma=\sym\curl\bs\tau$ with $\bs\tau=\dev\widetilde{\bs\tau}\in\boldsymbol H^1(\Omega;\mathbb T)$. 

\medskip
\noindent {\em 3. $\boldsymbol H(\sym\curl,\Omega;\mathbb T)\cap\ker(\sym\curl)=\dev\grad\boldsymbol H^1(\Omega;\mathbb R^3)$, i.e. if $\sym\curl\boldsymbol \tau = \boldsymbol 0$ and $\bs\tau\in \boldsymbol H(\sym\curl,\Omega;\mathbb T)$, then there exists a $\boldsymbol v\in\boldsymbol H^1(\Omega;\mathbb R^3)$, s.t. $\boldsymbol \tau = \dev\grad\boldsymbol v$}.

Since $\sym(\curl\boldsymbol \tau)= \boldsymbol 0$ and $\tr\bs\tau=0$, we have from~\eqref{eq:skwcurl} that
\[
\curl\bs\tau=\skw(\curl\bs\tau)=\frac{1}{2}\mskw\left[\div(\bs\tau^{\intercal})-\grad(\tr(\bs\tau))\right]=\frac{1}{2}\mskw(\div(\bs\tau^{\intercal})).
\]
Then by~\eqref{eq:divmskw},
\[
\curl(\div(\bs\tau^{\intercal}))=-\div(\mskw\div(\bs\tau^{\intercal}))=-2\div(\curl\bs\tau)=\bs0.
\]
Thus there exists $w\in H^1(\Omega)$ satsifying $\div(\bs\tau^{\intercal})=2\grad w$, which together with~\eqref{eq:curlgrad} implies
\[
\curl\bs\tau=\mskw\grad w=-\curl(w\boldsymbol I).
\]
Namely $\curl (\boldsymbol \tau + w\boldsymbol I) = 0$. Hence there exists $\boldsymbol v\in \boldsymbol H^1(\Omega;\mathbb R^3)$ such that $\bs\tau=-w\boldsymbol I+\grad \boldsymbol v$. Noting that $\bs\tau$ is trace-free, we achieve
\[
\bs\tau=\dev\bs\tau=\dev\grad \boldsymbol v.
\]

\medskip
\noindent {\em 4. $\boldsymbol H^1(\Omega;\mathbb R^3)\cap\ker(\dev\grad)=\bs{RT}$, i.e. if $\dev\grad\boldsymbol v = \boldsymbol 0$ and $\boldsymbol v\in \boldsymbol H^1(\Omega;\mathbb R^3)$, then $\boldsymbol v\in\bs{RT}$}. 

Notice that
\begin{equation}\label{eq:20210205-1}
\grad\boldsymbol v =\frac{1}{3}(\div\boldsymbol v)\boldsymbol I.
\end{equation}
Apply $\curl$ on both sides of~\eqref{eq:20210205-1} and use~\eqref{eq:curlgrad} to get
$$
-\mskw\grad(\div\boldsymbol v)=\curl((\div\boldsymbol v)\boldsymbol I)=3\curl(\grad\boldsymbol v)=\bs0.
$$
Hence $\div\boldsymbol v$ is a constant, which combined with~\eqref{eq:20210205-1} implies that $\boldsymbol v$ is a linear function. Assume $\boldsymbol v=\boldsymbol A\boldsymbol x+\boldsymbol b$ with $\boldsymbol A\in\mathbb M$ and $\boldsymbol b\in\mathbb R^3$, then~\eqref{eq:20210205-1} becomes $\boldsymbol A=\frac{1}{3}\tr(\boldsymbol A)\boldsymbol I$, i.e. $\boldsymbol A$ is diagonal and consequently $\boldsymbol v\in\bs{RT}$.

Thus the complex~\eqref{eq:divdivcomplex3d} is exact.
\end{proof}

\subsection{A polynomial divdiv complex}
Given a bounded domain $G\subset\mathbb{R}^{3}$ and a
non-negative integer $m$, 
let $\mathbb P_m(G)$ stand for the set of all polynomials in $G$ with the total degree no more than $m$, and $\mathbb P_m(G; \mathbb{X})$ with $\mathbb X$ being $\mathbb{M}, \mathbb{S}, \mathbb{K}, \mathbb{T}$ or $\mathbb{R}^3$ denote the tensor or vector version. Recall that $\dim \mathbb P_{k}(G) = { k + 3 \choose 3}$, $\dim \mathbb M = 9, \dim \mathbb S = 6, \dim \mathbb K = 3$ and $\dim \mathbb T = 8$. For a linear operator $T$ defined on a finite dimensional linear space $V$, we have the relation
\begin{equation}\label{eq:dim}
\dim V = \dim \ker(T) + \dim \img(T),
\end{equation}
which can be used to count $\dim \img(T)$ provided the space $\ker(T)$ is identified and vice verse. 

The polynomial de Rham complex is
\begin{equation}\label{eq:polyderham}
\mathbb R\xrightarrow{\subset} \mathbb P_{k+1}(\Omega)\xrightarrow{\grad} \mathbb P_{k}(\Omega;\mathbb R^3)\xrightarrow{\curl}\mathbb P_{k-1}(\Omega;\mathbb R^3) \xrightarrow{\div} \mathbb P_{k-2}(\Omega)\xrightarrow{}0.
\end{equation}
As $\Omega$ is topologically trivial, complex~\eqref{eq:polyderham} is also exact, i.e., the range of each map is the kernel of the succeeding map. 

\begin{lemma}
The polynomial complex
\begin{equation}\label{eq:divdivcomplex3dPoly}
\resizebox{.92\hsize}{!}{$
\boldsymbol{RT}\xrightarrow{\subset} \mathbb P_{k+2}(\Omega; \mathbb R^3)\xrightarrow{\dev\grad}\mathbb P_{k+1}(\Omega; \mathbb T)\xrightarrow{\sym\curl} \mathbb P_k(\Omega; \mathbb S) \xrightarrow{\div{\div}} \mathbb P_{k-2}(\Omega)\xrightarrow{}0
$}
\end{equation}
is exact.
\end{lemma}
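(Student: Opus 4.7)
The plan is to mimic step by step the proof of Theorem~\ref{thm:divdivcomplex} for the Sobolev divdiv complex, substituting each invocation of the de~Rham complex by the \emph{polynomial} de~Rham complex~\eqref{eq:polyderham}. Because~\eqref{eq:polyderham} is exact with well-controlled polynomial degrees, every potential function produced in the Sobolev proof can be chosen to lie in the appropriate $\mathbb P_m$, so the whole argument descends to the polynomial level. The complex property itself is immediate: since $\mathbb P_{k+2}(\Omega;\mathbb R^3)\subset \bs H^1(\Omega;\mathbb R^3)$, $\mathbb P_{k+1}(\Omega;\mathbb T)\subset \bs H(\sym\curl,\Omega;\mathbb T)$, and $\mathbb P_k(\Omega;\mathbb S)\subset \bs H(\div\div,\Omega;\mathbb S)$, the vanishing of consecutive compositions follows from Theorem~\ref{thm:divdivcomplex}.

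To prove exactness I would move from right to left, exactly as in the Sobolev proof. For surjectivity of $\div\div:\mathbb P_k(\Omega;\mathbb S)\to\mathbb P_{k-2}(\Omega)$, given $f\in\mathbb P_{k-2}(\Omega)$ I would first use~\eqref{eq:polyderham} to solve $\div\boldsymbol u=f$ with $\boldsymbol u\in\mathbb P_{k-1}(\Omega;\mathbb R^3)$, then apply~\eqref{eq:polyderham} row-wise to obtain $\bs\tau\in\mathbb P_k(\Omega;\mathbb M)$ with $\div\bs\tau=\boldsymbol u$, and finally symmetrize: by~\eqref{eq:divdivskw0} the skew part does not contribute, so $\sym\bs\tau\in\mathbb P_k(\Omega;\mathbb S)$ satisfies $\div\div\sym\bs\tau=f$. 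For exactness at $\mathbb P_k(\Omega;\mathbb S)$, if $\bs\sigma\in\mathbb P_k(\Omega;\mathbb S)$ with $\div\div\bs\sigma=0$, then~\eqref{eq:polyderham} yields $\boldsymbol v\in\mathbb P_{k-1}(\Omega;\mathbb R^3)$ with $\div\bs\sigma=\curl\boldsymbol v=-\div(\mskw\boldsymbol v)$, and a second application of~\eqref{eq:polyderham} (row-wise on matrices) gives $\widetilde{\bs\tau}\in\mathbb P_{k+1}(\Omega;\mathbb M)$ with $\bs\sigma=-\mskw\boldsymbol v+\curl\widetilde{\bs\tau}$. Symmetry of $\bs\sigma$ and the identity $\sym\curl((\tr\widetilde{\bs\tau})\boldsymbol I)=0$ deduced from~\eqref{eq:curlgrad} then give $\bs\sigma=\sym\curl(\dev\widetilde{\bs\tau})$ with $\dev\widetilde{\bs\tau}\in\mathbb P_{k+1}(\Omega;\mathbb T)$.

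Exactness at $\mathbb P_{k+1}(\Omega;\mathbb T)$ follows the same pattern: starting from $\bs\tau\in\mathbb P_{k+1}(\Omega;\mathbb T)$ with $\sym\curl\bs\tau=0$, identity~\eqref{eq:skwcurl} together with $\tr\bs\tau=0$ yields $\curl\bs\tau=\frac{1}{2}\mskw\div(\bs\tau^{\intercal})$; then $\curl(\div\bs\tau^{\intercal})=-2\div\curl\bs\tau=\bs0$ and~\eqref{eq:polyderham} produces $w\in\mathbb P_{k+1}(\Omega)$ with $\div(\bs\tau^{\intercal})=2\grad w$. Combining with~\eqref{eq:curlgrad} gives $\curl(\bs\tau+w\boldsymbol I)=0$, so~\eqref{eq:polyderham} yields $\boldsymbol v\in\mathbb P_{k+2}(\Omega;\mathbb R^3)$ with $\bs\tau+w\boldsymbol I=\grad\boldsymbol v$, and taking the deviator recovers $\bs\tau=\dev\grad\boldsymbol v$. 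Finally, exactness at $\mathbb P_{k+2}(\Omega;\mathbb R^3)$ is the kernel computation from Step~4 of Theorem~\ref{thm:divdivcomplex}, which already takes place entirely within polynomials: $\dev\grad\boldsymbol v=0$ forces $\boldsymbol v$ to be linear of the form $a\boldsymbol x+\boldsymbol b\in\bs{RT}$.

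The only real bookkeeping is to confirm that each potential produced by~\eqref{eq:polyderham} lands in the advertised degree, but this is automatic from the degree shifts in~\eqref{eq:polyderham}. There is no genuine obstacle beyond this degree accounting; the polynomial analogue of the Koszul-type splitting embodied in the inverse operators $\boldsymbol\pi_{RT}$, $\cdot\boldsymbol x$, $\times\boldsymbol x$, $\boldsymbol x\boldsymbol x^{\intercal}$ of~\eqref{eq:introdivdivcomplex3dPolydouble} could alternatively be invoked to give explicit right inverses, and a dimension count using~\eqref{eq:dim} with $\dim\mathbb P_m(\Omega)=\binom{m+3}{3}$ and $\dim\mathbb M=9,\dim\mathbb S=6,\dim\mathbb T=8$ furnishes a completely independent verification once the complex property is established.
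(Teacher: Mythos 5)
Your proposal is correct, and for two of the four exactness steps it takes a genuinely different route from the paper. For exactness at $\mathbb P_k(\Omega;\mathbb S)$ the paper does \emph{not} construct a polynomial potential directly: it proves surjectivity of $\div\div$, then counts $\dim(\mathbb P_k(\Omega;\mathbb S)\cap\ker(\div\div))$ against $\dim\sym\curl\,\mathbb P_{k+1}(\Omega;\mathbb T)$ via rank--nullity, whereas you rerun the Sobolev construction (two applications of the polynomial de~Rham complex, then symmetrize and take the deviator), which is a clean constructive alternative that avoids the dimension bookkeeping. For exactness at $\mathbb P_{k+1}(\Omega;\mathbb T)$ the paper instead \emph{reuses} the Sobolev exactness to get some $\boldsymbol v\in\boldsymbol H^1$ with $\bs\tau=\dev\grad\boldsymbol v$ and then bootstraps the polynomial degree ($\grad\div\boldsymbol v=3\vskw(\curl\bs\tau)\in\mathbb P_k$ forces $\div\boldsymbol v\in\mathbb P_{k+1}$, hence $\grad\boldsymbol v\in\mathbb P_{k+1}$ and $\boldsymbol v\in\mathbb P_{k+2}$), while you redo the whole potential construction inside the polynomial de~Rham complex; both are valid, yours being more self-contained and the paper's being shorter given Theorem~\ref{thm:divdivcomplex}. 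The surjectivity of $\div\div$ and the kernel computation at $\mathbb P_{k+2}(\Omega;\mathbb R^3)$ match the paper. One small slip: in your argument at $\mathbb P_k(\Omega;\mathbb S)$, the potential $\boldsymbol v$ with $\curl\boldsymbol v=\div\bs\sigma$ lives in $\mathbb P_{k}(\Omega;\mathbb R^3)$, not $\mathbb P_{k-1}(\Omega;\mathbb R^3)$ (the polynomial de~Rham complex places $\curl$ from $\mathbb P_k$ onto divergence-free fields in $\mathbb P_{k-1}$); this does not affect the conclusion, since $\mskw\boldsymbol v\in\mathbb P_k(\Omega;\mathbb K)$ still keeps $\bs\sigma+\mskw\boldsymbol v$ in $\mathbb P_k(\Omega;\mathbb M)$ and hence $\widetilde{\bs\tau}\in\mathbb P_{k+1}(\Omega;\mathbb M)$ as you claim.
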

\begin{proof}
Clearly~\eqref{eq:divdivcomplex3dPoly} is a complex due to Theorem~\ref{thm:divdivcomplex}.
We then verify the exactness.

\medskip
\noindent {\em 1. $\mathbb P_{k+2}(\Omega; \mathbb R^3) \cap\ker(\dev\grad)=\bs{RT}$.} By the exactness of the complex~\eqref{eq:divdivcomplex3d}, 
\[
\bs{RT}\subseteq\mathbb P_{k+2}(\Omega; \mathbb R^3) \cap\ker(\dev\grad)\subseteq\boldsymbol H^1(\Omega;\mathbb R^3) \cap\ker(\dev\grad)=\bs{RT}.
\]

\medskip
\noindent {\em 2. $\mathbb P_{k+1}(\Omega; \mathbb T)\cap\ker(\sym\curl)=\dev\grad\mathbb P_{k+2}(\Omega;\mathbb R^3)$, i.e. if $\sym\curl\boldsymbol \tau = 0$ and $\boldsymbol\tau\in\mathbb P_{k+1}(\Omega; \mathbb T)$, then there exists a $\boldsymbol v\in\mathbb P_{k+2}(\Omega;\mathbb R^3)$, s.t. $\bs\tau =\dev\grad\boldsymbol v$}.

By $\sym\curl\boldsymbol \tau = 0$, there exists $\boldsymbol v\in\boldsymbol H^1(\Omega; \mathbb R^3)$ satisfying $\boldsymbol\tau=\dev\grad\boldsymbol v$, i.e. $\boldsymbol\tau=\grad\boldsymbol v-\frac{1}{3}(\div\boldsymbol v)\boldsymbol I$. Then we get from~\eqref{eq:curlgrad} that
\[
\mskw(\grad\div\boldsymbol v)=-\curl((\div\boldsymbol v)\boldsymbol I)=3\curl(\boldsymbol\tau-\grad\boldsymbol v)=3\curl\boldsymbol\tau, 
\]
which implies $\grad\div\boldsymbol v=3\vskw(\curl\boldsymbol\tau)\in\mathbb P_{k}(\Omega; \mathbb R^3)$. Hence
$\div\boldsymbol v\in\mathbb P_{k+1}(\Omega)$. And thus $\grad\boldsymbol v=\boldsymbol\tau+\frac{1}{3}(\div\boldsymbol v)\boldsymbol I\in\mathbb P_{k+1}(\Omega; \mathbb M)$. As a result $\boldsymbol v\in\mathbb P_{k+2}(\Omega; \mathbb R^3)$. 

\medskip
\noindent {\em 3. $\div\div\mathbb P_k(\Omega;\mathbb S)=\mathbb P_{k-2}(\Omega)$.} 
Recursively applying the exactness of de Rham complex~\eqref{eq:polyderham}, we can prove $\div\div\mathbb P_k(\Omega; \mathbb M)=\mathbb P_{k-2}(\Omega)$. 
Then from~\eqref{eq:divdivskw0} we have that
\[
\div{\div}\, \mathbb P_k(\Omega; \mathbb S)=\div{\div}\, \mathbb P_k(\Omega; \mathbb M)=\mathbb P_{k-2}(\Omega).
\]

\medskip
\noindent {\em 4. $\mathbb P_k(\Omega; \mathbb S) \cap\ker(\div\div)=\sym\curl\mathbb P_{k+1}(\Omega;\mathbb T)$}. 

Obviously $\sym\curl\mathbb P_{k+1}(\Omega;\mathbb T) \subseteq (\mathbb P_k(\Omega; \mathbb S) \cap\ker(\div\div))$.
As $\div\div: \mathbb P_k(\Omega; \mathbb S)\to\mathbb P_{k-2}(\Omega)$ is surjective by step 3, using~\eqref{eq:dim}, we have
\begin{align}
\dim\mathbb P_k(\Omega; \mathbb S)\cap\ker(\div{\div})&=\dim\mathbb P_k(\Omega; \mathbb S)-\dim\mathbb P_{k-2}(\Omega) \notag\\
& = 6 { k + 3 \choose 3} - { k + 1 \choose 3} \notag\\
&= \frac{1}{6}(5k^3+36k^2+67k+36). \label{eq:20200507-2} 
\end{align}
Thank to results in steps 1 and 2, we can count the dimension of $\sym\curl \, \mathbb P_{k+1}(\Omega; \mathbb T)$
\begin{align}
\dim\sym\curl \, \mathbb P_{k+1}(\Omega; \mathbb T) &= \dim\mathbb P_{k+1}(\Omega; \mathbb T)-\dim\dev\grad\mathbb P_{k+2}(\Omega; \mathbb R^3) \notag\\
& = \dim\mathbb P_{k+1}(\Omega; \mathbb T)-(\dim \mathbb P_{k+2}(\Omega; \mathbb R^3) - \dim \bs{RT}) \notag\\
& = 8 { k + 4 \choose 3} - 3{ k + 5 \choose 3} + 4 \notag\\
&= \frac{1}{6}(5k^3+36k^2+67k+36). \label{eq:20200507-3}
\end{align}
We conclude that $\mathbb P_k(\Omega; \mathbb S)\cap\ker(\div{\div})=\sym\curl \, \mathbb P_{k+1}(\Omega; \mathbb T)$ as the dimensions matches, cf.~\eqref{eq:20200507-2} and~\eqref{eq:20200507-3}.

Therefore the complex~\eqref{eq:divdivcomplex3dPoly} is exact.
\end{proof}

\subsection{A Koszul complex}
The Koszul complex corresponding to the de Rham complex~\eqref{eq:polyderham} is
\begin{equation}\label{eq:Koszul}
0
\xrightarrow{}
\mathbb P_{k-2}(\Omega) 
\xrightarrow{\boldsymbol x}
\mathbb P_{k-1}(\Omega;\mathbb R^3)
\xrightarrow{\times \boldsymbol  x}
\mathbb P_{k}(\Omega;\mathbb R^3)
\xrightarrow{\cdot \boldsymbol  x} \mathbb P_{k+1}(\Omega) \xrightarrow{} 0,
\end{equation}
where the operators are appended to the right of the polynomial, i.e. $v \boldsymbol x$, $\boldsymbol v\times \boldsymbol x $, or $\boldsymbol v\cdot \boldsymbol x$. 
The following complex is a generalization of the Koszul complex~\eqref{eq:Koszul} to the divdiv complex~\eqref{eq:divdivcomplex3dPoly}, where operator $\boldsymbol \pi_{RT}: \mathcal C^1(\Omega; \mathbb R^3)\to \boldsymbol{RT}$ is defined as
\[
\boldsymbol \pi_{RT}\boldsymbol  v:=\boldsymbol  v(0,0,0)+\frac{1}{3}(\div\boldsymbol  v)(0,0,0)\boldsymbol  x,
\]
and other operators are appended to the right of the polynomial, i.e., $p \boldsymbol x\boldsymbol x^{\intercal}$, $\boldsymbol \tau\times \boldsymbol x $, or $\boldsymbol \tau\cdot \boldsymbol x$. The Koszul operator $\boldsymbol x\boldsymbol x^{\intercal}$ can be constructed based on Poincar\'e operators constructed in~\cite{ChristiansenHuSande2020}, but others are simpler. 

\begin{lemma}\label{lem:Koszul}
The following polynomial sequence
\begin{equation}\label{eq:divdivKoszulcomplex3dPoly}
\resizebox{.922\hsize}{!}{$
0\xrightarrow{\subset}\mathbb P_{k-2}(\Omega) \xrightarrow{\boldsymbol x\boldsymbol x^{\intercal}} \mathbb P_k(\Omega; \mathbb S) \xrightarrow{\times\boldsymbol x} \mathbb P_{k+1}(\Omega; \mathbb T)\xrightarrow{\cdot\boldsymbol x} \mathbb P_{k+2}(\Omega; \mathbb R^3)\xrightarrow{\boldsymbol \pi_{RT}}\boldsymbol{RT}\xrightarrow{}0
$}
\end{equation}
is an exact Hilbert complex.
\end{lemma}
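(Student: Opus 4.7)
The plan is to verify the complex condition, check injectivity of $p\mapsto p\boldsymbol x\boldsymbol x^{\intercal}$ and surjectivity of $\boldsymbol\pi_{RT}$ at the endpoints, and then establish exactness at each of the three interior nodes by an explicit lifting construction. In every case I would apply~\eqref{eq:Koszul} row-by-row to produce a preimage in a matrix polynomial space, and then correct it using the freedom in the kernel of the next Koszul map down so as to enforce the symmetry or trace-free constraint.

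The complex property follows from the product identities. $(p\boldsymbol x\boldsymbol x^{\intercal})\times\boldsymbol x=p\boldsymbol x(\boldsymbol x\times\boldsymbol x)^{\intercal}=\boldsymbol 0$ by~\eqref{eq:xtimesuv}; for symmetric $\boldsymbol\sigma$, $(\boldsymbol\sigma\times\boldsymbol x)\boldsymbol x=\boldsymbol 0$ holds row-by-row while $\tr(\boldsymbol\sigma\times\boldsymbol x)=-2\boldsymbol x\cdot\vskw\boldsymbol\sigma=0$ by~\eqref{eq:trcross}, so the image lies in $\mathbb P_{k+1}(\Omega;\mathbb T)$; for trace-free $\boldsymbol\tau$, $(\boldsymbol\tau\boldsymbol x)(0)=\boldsymbol 0$ and $\div(\boldsymbol\tau\boldsymbol x)=(\div\boldsymbol\tau)\cdot\boldsymbol x+\tr\boldsymbol\tau=(\div\boldsymbol\tau)\cdot\boldsymbol x$ also vanishes at the origin. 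Injectivity of $\boldsymbol x\boldsymbol x^{\intercal}$ is transparent, and $\boldsymbol\pi_{RT}|_{\bs{RT}}=\operatorname{id}$ gives surjectivity at the right.

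For interior exactness I would argue node by node. At $\mathbb P_k(\Omega;\mathbb S)$: if $\boldsymbol\tau\times\boldsymbol x=\boldsymbol 0$, each row vanishes under $\times\boldsymbol x$, so~\eqref{eq:Koszul} gives $\boldsymbol\tau=\boldsymbol p\boldsymbol x^{\intercal}$ with $\boldsymbol p\in\mathbb P_{k-1}(\Omega;\mathbb R^3)$, and the symmetry relations $p_ix_j=p_jx_i$ in the polynomial UFD force $\boldsymbol p=q\boldsymbol x$ for some $q\in\mathbb P_{k-2}(\Omega)$. At $\mathbb P_{k+1}(\Omega;\mathbb T)$: a row-wise lift yields $\boldsymbol\tau=\boldsymbol\sigma\times\boldsymbol x$ with $\boldsymbol\sigma\in\mathbb P_k(\Omega;\mathbb M)$; trace-freeness combined with~\eqref{eq:trcross} gives $\boldsymbol x\cdot\vskw\boldsymbol\sigma=0$, so~\eqref{eq:Koszul} produces $\boldsymbol v\in\mathbb P_{k-1}(\Omega;\mathbb R^3)$ with $\boldsymbol v\times\boldsymbol x=2\vskw\boldsymbol\sigma$; a short $\mskw$ computation shows $\boldsymbol\sigma+\boldsymbol v\boldsymbol x^{\intercal}\in\mathbb P_k(\Omega;\mathbb S)$, and since $(\boldsymbol v\boldsymbol x^{\intercal})\times\boldsymbol x=\boldsymbol 0$ by~\eqref{eq:xtimesuv} the correction still lifts $\boldsymbol\tau$. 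At $\mathbb P_{k+2}(\Omega;\mathbb R^3)$: the condition $\boldsymbol w(0)=\boldsymbol 0$ allows a row-wise Koszul lift $\boldsymbol w=\boldsymbol W\boldsymbol x$ with $\boldsymbol W\in\mathbb P_{k+1}(\Omega;\mathbb M)$, and $(\tr\boldsymbol W)(0)=(\div\boldsymbol w)(0)=0$ lets a further Koszul step write $\tr\boldsymbol W=\boldsymbol u\cdot\boldsymbol x$ with $\boldsymbol u\in\mathbb P_k(\Omega;\mathbb R^3)$; an explicit trace-free correction of the shape $\dev\boldsymbol W+\tfrac12\boldsymbol x\boldsymbol u^{\intercal}-\tfrac16(\tr\boldsymbol W)\boldsymbol I$, with coefficients pinned down by imposing trace-freeness and reproducing $\boldsymbol w$, furnishes the desired preimage in $\mathbb P_{k+1}(\Omega;\mathbb T)$.

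The hardest step will be the middle node. The row-wise lift only places $\boldsymbol\sigma$ in $\mathbb P_k(\Omega;\mathbb M)$, and the freedom to correct for symmetry is restricted to $\ker(\times\boldsymbol x)\cap\mathbb P_k(\Omega;\mathbb M)=\{\boldsymbol v\boldsymbol x^{\intercal}:\boldsymbol v\in\mathbb P_{k-1}(\Omega;\mathbb R^3)\}$. What makes this work is that~\eqref{eq:trcross} converts the trace-free constraint on $\boldsymbol\tau$ precisely into the Koszul-kernel condition $\boldsymbol x\cdot\vskw\boldsymbol\sigma=0$, which is exactly what is needed for~\eqref{eq:Koszul} to produce a correcting $\boldsymbol v$ of the correct degree; the $\mskw$ identity $\boldsymbol v\boldsymbol x^{\intercal}-\boldsymbol x\boldsymbol v^{\intercal}=\mskw(\boldsymbol x\times\boldsymbol v)$ then shows that the prescribed choice of $\boldsymbol v$ really kills the skew part of $\boldsymbol\sigma$. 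I would verify this matching of Koszul-kernel conditions and the degree bookkeeping $\boldsymbol v\in\mathbb P_{k-1}(\Omega;\mathbb R^3)$ most carefully.
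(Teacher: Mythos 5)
Your proposal is correct, and at one node it takes a genuinely different route from the paper. The complex property, the surjectivity of $\boldsymbol \pi_{RT}$, and the exactness arguments at $\mathbb P_{k+2}(\Omega;\mathbb R^3)$ and at $\mathbb P_k(\Omega;\mathbb S)$ coincide with the paper's (the paper phrases the lifts via the fundamental theorem of calculus rather than a row-wise appeal to~\eqref{eq:Koszul}, and your explicit correction $\dev\boldsymbol W+\tfrac12\boldsymbol x\boldsymbol u^{\intercal}-\tfrac16(\tr\boldsymbol W)\boldsymbol I$ is exactly the paper's $\boldsymbol \tau_1+\tfrac32\boldsymbol x\boldsymbol q_1^{\intercal}-\tfrac12(\boldsymbol q_1^{\intercal}\boldsymbol x)\boldsymbol I$ with $\boldsymbol q_1=\tfrac13\boldsymbol u$). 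The real divergence is at $\mathbb P_{k+1}(\Omega;\mathbb T)$: the paper proves $\mathbb P_{k+1}(\Omega;\mathbb T)\cap\ker((\cdot)\cdot\boldsymbol x)=\mathbb P_k(\Omega;\mathbb S)\times\boldsymbol x$ purely by dimension count, reusing the counts already established at the other nodes, whereas you give a constructive lift: a row-wise Koszul preimage $\boldsymbol\sigma\in\mathbb P_k(\Omega;\mathbb M)$, the observation via~\eqref{eq:trcross} that $\tr\boldsymbol\tau=0$ forces $(\vskw\boldsymbol\sigma)\cdot\boldsymbol x=0$, a further Koszul step producing $\boldsymbol v\in\mathbb P_{k-1}(\Omega;\mathbb R^3)$ with $\boldsymbol v\times\boldsymbol x=2\vskw\boldsymbol\sigma$, and the identity $\boldsymbol v\boldsymbol x^{\intercal}-\boldsymbol x\boldsymbol v^{\intercal}=\mskw(\boldsymbol x\times\boldsymbol v)$ to show $\boldsymbol\sigma+\boldsymbol v\boldsymbol x^{\intercal}$ is symmetric while $(\boldsymbol v\boldsymbol x^{\intercal})\times\boldsymbol x=\boldsymbol 0$ preserves the lift. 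I checked the signs: $\skw(\boldsymbol v\boldsymbol x^{\intercal})=-\tfrac12\mskw(\boldsymbol v\times\boldsymbol x)=-\skw\boldsymbol\sigma$, so the correction does work, and the degree bookkeeping $\boldsymbol v\in\mathbb P_{k-1}$ is right. Your version is slightly longer but self-contained and produces an explicit preimage; the paper's is shorter because the dimension arithmetic was needed anyway for the companion complex~\eqref{eq:divdivcomplex3dPoly}. Either is acceptable.
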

\begin{proof}
In the sequence~\eqref{eq:divdivKoszulcomplex3dPoly} only the mapping $\mathbb P_k(\Omega; \mathbb S) \stackrel{\times \boldsymbol x}{\longrightarrow}  \mathbb P_{k+1}(\Omega; \mathbb T)$ is less obvious, which can be justified by the identity~\eqref{eq:trcross}. 

To verify~\eqref{eq:divdivKoszulcomplex3dPoly} is a complex, we use the product rule~\eqref{eq:xuv}-\eqref{eq:xtimesuv}:
$$
p \boldsymbol x\boldsymbol x^{\intercal} \times \boldsymbol x = p \boldsymbol x(\boldsymbol x \times \boldsymbol x )^{\intercal} = 0, \quad (\boldsymbol\tau\times\boldsymbol x)\cdot \boldsymbol x = 0.
$$
To verify $\boldsymbol \pi_{RT}(\boldsymbol \tau \cdot \boldsymbol x) = 0$ for $\boldsymbol \tau \in \mathbb P_{k+1}(\Omega; \mathbb T)$, we use the formulae
\begin{equation}\label{eq:divtaux}
\div(\boldsymbol \tau\cdot \boldsymbol x)=\div(\boldsymbol\tau^{\intercal})\cdot \boldsymbol x + \tr\boldsymbol\tau = \boldsymbol x^{\intercal}\div(\boldsymbol\tau^{\intercal}),
\end{equation}
and therefore evaluating at $\boldsymbol 0$ is zero. 


We then verify the exactness from  right-to-left.

\medskip
\noindent {\em 1. $\boldsymbol\pi_{RT}\mathbb P_{k+2}(\Omega; \mathbb R^3)=\boldsymbol{RT}$.} 

It is straightforward to verify
\begin{equation}\label{eq:piRTprop}
\boldsymbol \pi_{RT}\boldsymbol  v=\boldsymbol  v\quad \forall~\boldsymbol  v\in\boldsymbol{RT}.
\end{equation}
Namely $\boldsymbol \pi_{RT}$ is a projector. Consequently, the operator $\boldsymbol \pi_{RT}: \mathbb P_{k+2}(\Omega; \mathbb R^3)\to\boldsymbol {RT}$ is surjective as $\boldsymbol {RT}\subset \mathbb P_{1}(\Omega; \mathbb R^3)$. 

\medskip
\noindent {\em 2. $\mathbb P_{k+2}(\Omega; \mathbb R^3)\cap\ker(\boldsymbol \pi_{RT})=\mathbb P_{k+1}(\Omega; \mathbb T)\cdot\boldsymbol x$, i.e. if $\boldsymbol\pi_{RT}\boldsymbol v=\boldsymbol0$ and $\boldsymbol v\in\mathbb P_{k+2}(\Omega; \mathbb R^3)$, then there exists a $\bs\tau\in\mathbb P_{k+1}(\Omega; \mathbb T)$, s.t. $\boldsymbol v =\bs\tau\cdot\boldsymbol x$}. 

Since $\boldsymbol  v(0,0,0)=\boldsymbol 0$, by the fundamental theorem of calculus, $$\boldsymbol v = \left (\int_0^1 \grad \boldsymbol v( t \boldsymbol x) \dd t \right )\boldsymbol x.$$ Using the decomposition~\eqref{eq:devtr}, we conclude that there exist $\boldsymbol \tau_1\in\mathbb P_{k+1}(\Omega; \mathbb T)$ and $q\in\mathbb P_{k+1}(\Omega)$ such that
$\boldsymbol  v=\boldsymbol \tau_1\boldsymbol x+q\boldsymbol x$. 
Again by~\eqref{eq:divtaux}, we have
$$
\boldsymbol\pi_{RT}(q\boldsymbol x)=\boldsymbol\pi_{RT}\boldsymbol v-\boldsymbol\pi_{RT}(\boldsymbol \tau_1\boldsymbol x)=\boldsymbol 0,
$$
which indicates $(\div(q\boldsymbol  x))(0,0,0)=0$. As $\div(q\boldsymbol  x )=(\boldsymbol x\cdot\nabla)q+3q$, we conclude $q(0,0,0)=0$. Again using the fundamental theorem of calculus to conclude that there exists $\boldsymbol  q_1\in\mathbb P_{k}(\Omega; \mathbb R^3)$ such that $q=\boldsymbol  q_1^{\intercal}\boldsymbol x$.
Taking $\boldsymbol \tau=\boldsymbol \tau_1+\frac{3}{2}\boldsymbol  x\boldsymbol  q_1^{\intercal}-\frac{1}{2}\boldsymbol  q_1^{\intercal}\boldsymbol  x\boldsymbol  I\in\mathbb P_{k+1}(\Omega; \mathbb T)$, we get
\[
\boldsymbol \tau\boldsymbol x=\boldsymbol \tau_1\boldsymbol x+\boldsymbol  x\boldsymbol  q_1^{\intercal}\boldsymbol x=\boldsymbol \tau_1\boldsymbol x+q\boldsymbol x=\boldsymbol  v.
\]

\medskip
\noindent {\em 3. $\mathbb P_k(\Omega; \mathbb S)\cap\ker((\cdot)\times\boldsymbol x)=\mathbb P_{k-2}(\Omega)\boldsymbol x\boldsymbol  x^{\intercal}$, i.e. if $\boldsymbol \tau\times\boldsymbol x=\boldsymbol 0$ and $\boldsymbol \tau\in\mathbb P_k(\Omega; \mathbb S)$, then there exists a $q\in\mathbb P_{k-2}(\Omega)$, s.t. $\boldsymbol \tau=q\boldsymbol x\boldsymbol  x^{\intercal}$}. 
 
Thanks to $\boldsymbol \tau\times\boldsymbol x=\boldsymbol 0$, there exists $\boldsymbol v\in \mathbb P_{k-1}(\Omega; \mathbb R^3)$ such that $\boldsymbol \tau=\boldsymbol  v\boldsymbol  x^{\intercal}$. By the symmetry of $\boldsymbol \tau$, it follows
\[
(\boldsymbol x\boldsymbol v^{\intercal})\times\boldsymbol x=(\boldsymbol  v\boldsymbol  x^{\intercal})^{\intercal}\times\boldsymbol x=\boldsymbol \tau\times\boldsymbol x=\boldsymbol 0,
\]
which indicates $\boldsymbol v\times\boldsymbol x=\boldsymbol 0$. Then there exists $q\in\mathbb P_{k-2}(\Omega)$ satisfying $\boldsymbol  v=q\boldsymbol  x$.
Hence $\boldsymbol \tau=q\boldsymbol x\boldsymbol  x^{\intercal}$. 

\medskip
\noindent {\em 4. $\mathbb P_{k+1}(\Omega; \mathbb T)\cap\ker((\cdot)\cdot\boldsymbol x)=\mathbb P_k(\Omega; \mathbb S)\times\boldsymbol x$.} 

It follows from steps 1 and 2 that
\begin{align}
\dim(\mathbb P_{k+1}(\Omega; \mathbb T)\cap\ker((\cdot)\cdot\boldsymbol x)) &= \dim\mathbb P_{k+1}(\Omega; \mathbb T)-\dim(\mathbb P_{k+1}(\Omega; \mathbb T)\boldsymbol x) \notag\\
&= \dim\mathbb P_{k+1}(\Omega; \mathbb T)-\dim (\mathbb P_{k+2}(\Omega; \mathbb R^3)\cap\ker(\boldsymbol \pi_{RT})) \notag\\
&= \dim\mathbb P_{k+1}(\Omega; \mathbb T)-\dim \mathbb P_{k+2}(\Omega; \mathbb R^3)+4 \notag\\
&=\frac{1}{6}(5k^3+36k^2+67k+36). \label{eq:20200508}
\end{align}
And by step 3,
\begin{align*}
\dim(\mathbb P_k(\Omega; \mathbb S)\times\boldsymbol x)&=\dim\mathbb P_k(\Omega; \mathbb S)-\dim(\mathbb P_k(\Omega; \mathbb S)\cap\ker((\cdot)\times\boldsymbol x)) \\
&=\dim\mathbb P_k(\Omega; \mathbb S)-\dim(\mathbb P_{k-2}(\Omega)\boldsymbol x\boldsymbol  x^{\intercal}) \\
&=\frac{1}{6}(5k^3+36k^2+67k+36),
\end{align*}
which together with~\eqref{eq:20200508} implies $\mathbb P_{k+1}(\Omega; \mathbb T)\cap\ker((\cdot)\cdot\boldsymbol x)=\mathbb P_k(\Omega; \mathbb S)\times\boldsymbol x$.

Therefore the complex~\eqref{eq:divdivKoszulcomplex3dPoly} is exact.
\end{proof}

\subsection{Decomposition of polynomial tensors}
Those two complexes~\eqref{eq:divdivcomplex3dPoly} and~\eqref{eq:divdivKoszulcomplex3dPoly} can be combined into one double direction complex
\begin{equation*}
\resizebox{.92\hsize}{!}{$
\xymatrix{
\boldsymbol{RT}\ar@<0.4ex>[r]^-{\subset} & \mathbb P_{k+2}(\Omega; \mathbb R^3)\ar@<0.4ex>[r]^-{\dev\grad}\ar@<0.4ex>[l]^-{\boldsymbol \pi_{RT}} & \mathbb P_{k+1}(\Omega; \mathbb T)\ar@<0.4ex>[r]^-{\sym\curl}\ar@<0.4ex>[l]^-{\cdot\boldsymbol x}  & \mathbb P_k(\Omega; \mathbb S) \ar@<0.4ex>[r]^-{\div{\div}}\ar@<0.4ex>[l]^-{\times\boldsymbol x} & \mathbb P_{k-2}(\Omega)  \ar@<0.4ex>[r]^-{} \ar@<0.4ex>[l]^-{\boldsymbol x\boldsymbol x^{\intercal}}
& 0 \ar@<0.4ex>[l]^-{\supset} }.
$}
\end{equation*}
Unlike the Koszul complex for vectors functions, we do not have the identity property applied to homogenous polynomials. Fortunately decomposition of polynomial spaces using Koszul and differential operators still holds.

Let $\mathbb H_k(\Omega):=\mathbb P_k(\Omega)/\mathbb P_{k-1}(\Omega)$ be the space of homogeneous polynomials of degree $k$. Then by Euler's formula
\begin{equation}\label{eq:homogeneouspolyprop}
\boldsymbol x\cdot\nabla q=kq\quad\forall~q\in\mathbb H_k(\Omega).
\end{equation}
Due to~\eqref{eq:homogeneouspolyprop}, we have
\begin{align}\label{eq:radialderivativeprop}
\mathbb P_k(\Omega)\cap\ker(\boldsymbol x\cdot\nabla) & =\mathbb P_0(\Omega),\\
\label{eq:radialderivativeprop1}
\mathbb P_k(\Omega)\cap\ker(\boldsymbol x\cdot\nabla +\ell) &=0
\end{align}
for any positive number $\ell$.

It follows from~\eqref{eq:piRTprop} and the complex~\eqref{eq:divdivKoszulcomplex3dPoly} that
\[
\mathbb P_{k+2}(\Omega; \mathbb R^3)= \mathbb P_{k+1}(\Omega; \mathbb T)\boldsymbol x\oplus\boldsymbol{RT}.
\]
We then move to the space $\mathbb P_{k+1}(\Omega; \mathbb T)$.
\begin{lemma}
We have the decomposition
\begin{equation}\label{eq:polyspacedecomp2}
\mathbb P_{k+1}(\Omega; \mathbb T) = (\mathbb P_k(\Omega; \mathbb S)\times\boldsymbol x)\oplus\dev\grad\mathbb P_{k+2}(\Omega; \mathbb R^3).
\end{equation}
\end{lemma}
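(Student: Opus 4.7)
The plan is to establish the decomposition by combining a dimension count with a proof that the intersection of the two summands is trivial. The two complexes already give us enough machinery to do each piece efficiently.

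First I would verify the dimension equality $\dim(\mathbb P_k(\Omega;\mathbb S)\times\boldsymbol x)+\dim\dev\grad\mathbb P_{k+2}(\Omega;\mathbb R^3)=\dim\mathbb P_{k+1}(\Omega;\mathbb T)$. The first dimension was already computed as $\tfrac{1}{6}(5k^3+36k^2+67k+36)$ during Step 4 of the Koszul Lemma. For the second, the exactness of the polynomial divdiv complex at $\mathbb P_{k+2}(\Omega;\mathbb R^3)$ gives $\ker(\dev\grad)=\bs{RT}$, whence $\dim\dev\grad\mathbb P_{k+2}(\Omega;\mathbb R^3)=3\binom{k+5}{3}-4$. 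A routine polynomial expansion then matches the sum against $8\binom{k+4}{3}=\dim\mathbb P_{k+1}(\Omega;\mathbb T)$.

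Next I would show the intersection is zero. Suppose $\boldsymbol\tau=\dev\grad\boldsymbol v=\boldsymbol\sigma\times\boldsymbol x$ with $\boldsymbol v\in\mathbb P_{k+2}(\Omega;\mathbb R^3)$, $\boldsymbol\sigma\in\mathbb P_k(\Omega;\mathbb S)$. Apply $(\cdot)\cdot\boldsymbol x$ and use the already-verified fact $(\boldsymbol\sigma\times\boldsymbol x)\cdot\boldsymbol x=0$ from the Koszul complex to get $(\dev\grad\boldsymbol v)\cdot\boldsymbol x=0$, which unfolds to
\[
(\boldsymbol x\cdot\nabla)\boldsymbol v=\tfrac{1}{3}(\div\boldsymbol v)\boldsymbol x.
\]
Decomposing $\boldsymbol v=\sum_{m=0}^{k+2}\boldsymbol v_m$ into homogeneous pieces and equating terms of degree $m$ using \eqref{eq:homogeneouspolyprop} reduces this to $3m\boldsymbol v_m=(\div\boldsymbol v_m)\boldsymbol x$. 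The case $m=0$ is automatic. For $m\geq 1$ we conclude $\boldsymbol v_m=p_{m-1}\boldsymbol x$ with $p_{m-1}\in\mathbb H_{m-1}(\Omega)$; a second application of Euler via $\div(p_{m-1}\boldsymbol x)=(m+2)p_{m-1}$ yields $(2m-2)p_{m-1}=0$, so $p_{m-1}=0$ for $m\geq 2$, while for $m=1$ the vector $\boldsymbol v_1=p_0\boldsymbol x$ lies in $\bs{RT}$. Hence $\boldsymbol v\in\bs{RT}$, so $\boldsymbol\tau=\dev\grad\boldsymbol v=0$.

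The combination of these two steps yields the direct sum. The main obstacle I anticipate is the homogeneous-degree bookkeeping in the intersection argument: one has to be sure the Euler-scaling constants on the two sides of $(\dev\grad\boldsymbol v)\cdot\boldsymbol x=0$ collapse to a nonzero coefficient $(2m-2)$ for every $m\geq 2$, which is precisely what prevents a nontrivial homogeneous component from surviving. Once that calculation is in hand, everything else is bookkeeping with the dimension formulas and the two exact sequences already established.
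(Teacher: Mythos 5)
Your proposal is correct and follows essentially the same route as the paper: a dimension count based on the exactness of the two polynomial complexes, plus a proof that the intersection is trivial starting from the observation $(\dev\grad\boldsymbol v)\cdot\boldsymbol x=\boldsymbol 0$. The only cosmetic difference is in how that condition is resolved — you decompose $\boldsymbol v$ into homogeneous components and track the Euler-scaling constants $(2m-2)$ directly, whereas the paper applies $\div$ to both sides and invokes $\mathbb P_k(\Omega)\cap\ker(\boldsymbol x\cdot\nabla)=\mathbb P_0(\Omega)$; both arguments conclude $\boldsymbol v\in\boldsymbol{RT}$ and hence $\dev\grad\boldsymbol v=\boldsymbol 0$.
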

\begin{proof}
Let us count the dimension. 
$$
\dim \mathbb P_{k+1}(\Omega; \mathbb T) = 8 { k + 4 \choose 3},
$$
while by the exactness of the Koszul complex~\eqref{eq:divdivKoszulcomplex3dPoly}
\begin{align*}
\dim \mathbb P_k(\Omega; \mathbb S)\times\boldsymbol x &= \dim \mathbb P_k(\Omega; \mathbb S) - \boldsymbol x\boldsymbol x^{\intercal} \mathbb P_{k-2}(\Omega) \\
&= 6 { k + 3 \choose 3} -  { k + 1 \choose 3}, \\
\dim \dev\grad\mathbb P_{k+2}(\Omega; \mathbb R^3) &= \dim \mathbb P_{k+2}(\Omega; \mathbb R^3) - \ker(\dev\grad)\\
& = 3 { k + 5 \choose 3} - 4. 
\end{align*} 
By direct computation, the dimension of space in the left hand side is the summation of the dimension of the two spaces in the right hand side in~\eqref{eq:polyspacedecomp2}. So we only need to prove that the sum in~\eqref{eq:polyspacedecomp2} is a direct sum.

Take $\bs\tau=\dev\grad\boldsymbol q$  for some $\boldsymbol q\in\mathbb P_{k+2}(\Omega; \mathbb R^3)$, and also assume $\boldsymbol \tau\in\mathbb P_k(\Omega; \mathbb S)\times\boldsymbol x$. We have $\bs\tau\cdot\boldsymbol x=(\dev\grad\boldsymbol q)\cdot\boldsymbol x=\boldsymbol 0$, that is
\begin{equation}\label{eq:20210206}
(\grad\boldsymbol q)\cdot\boldsymbol x=\frac{1}{3}(\div\boldsymbol q)\boldsymbol x.
\end{equation}
Since $\div((\grad\boldsymbol q)\cdot\boldsymbol x)=(1+\boldsymbol x\cdot\grad)\div\boldsymbol q$, applying the divergence operator $\div$ on both side of~\eqref{eq:20210206} gives
$$
(1+\boldsymbol x\cdot\grad)\div\boldsymbol q=\frac{1}{3}(3+\boldsymbol x\cdot\grad)\div\boldsymbol q.
$$
Hence $(\boldsymbol x\cdot\grad)\div\boldsymbol q=0$, which together with~\eqref{eq:radialderivativeprop} indicates $\div\boldsymbol q\in\mathbb P_{0}(\Omega)$. Due to~\eqref{eq:20210206}, $(\grad\boldsymbol q)\cdot\boldsymbol x$ is a linear function. It follows from ~\eqref{eq:homogeneouspolyprop} that $\boldsymbol q\in\mathbb P_1(\Omega)$ and  $\bs\tau=\dev\grad\boldsymbol q\in\mathbb P_{0}(\Omega;\mathbb T)$, which together with $\bs\tau\cdot\boldsymbol x=\bs0$ implies $\bs\tau=\bs0$. 
\end{proof}

Finally we present a decomposition of space $\mathbb P_{k}(\Omega; \mathbb S)$.
Let
\[
\mathbb C_k(\Omega; \mathbb S):=\sym \curl \, \mathbb  P_{k+1}(\Omega; \mathbb T),\quad \mathbb C_k^{\oplus}(\Omega; \mathbb S):=\boldsymbol  x\boldsymbol  x^{\intercal}\mathbb P_{k-2}(\Omega).
\]
Their dimensions are
\begin{equation}\label{eq:dimC}
\dim\mathbb C_k(\Omega; \mathbb S)=\frac{1}{6}(5k^3+36k^2+67k+36),\quad \dim\mathbb C_k^{\oplus}(\Omega; \mathbb S)=\frac{1}{6}(k^3-k).
\end{equation}
The calculation of $\dim\mathbb C_k^{\oplus}(\Omega; \mathbb S)$ is easy and $\dim\mathbb C_k(\Omega; \mathbb S)$ is detailed in~\eqref{eq:20200507-3}. 

\begin{lemma}\label{lem:symmpolyspacedirectsum}
We have
\begin{enumerate}[\rm (i)]
\item $\displaystyle \div\div (\boldsymbol x\boldsymbol x^{\intercal} q) = (k+4)(k+3) q$ for any $q\in \mathbb H_k(\Omega).$

\medskip
\item $\div\div: \mathbb C_k^{\oplus}(\Omega; \mathbb S)\to\mathbb P_{k-2}(\Omega)$ is a bijection.

\medskip
 \item
$\displaystyle
\mathbb P_{k}(\Omega; \mathbb S)=\mathbb C_k(\Omega; \mathbb S)\oplus \mathbb C_k^{\oplus}(\Omega; \mathbb S).
$
\end{enumerate}
\end{lemma}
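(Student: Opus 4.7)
The plan is to prove the three claims in sequence, using (i) as the key explicit identity from which (ii) and (iii) follow by dimension counting combined with earlier results.

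For (i) I would compute directly from the definition of the row-wise divergence. Writing $(\boldsymbol x\boldsymbol x^{\intercal}q)_{ij}=q\,x_ix_j$, one application of $\div$ produces a contribution $\boldsymbol x\cdot\nabla q$ from differentiating $q$ and a contribution $4q$ from differentiating the two factors $x_i,x_j$; Euler's identity \eqref{eq:homogeneouspolyprop} then collapses the result to $\div(\boldsymbol x\boldsymbol x^{\intercal}q)=(k+4)q\boldsymbol x$. A second divergence and a second application of Euler (now to $(k+4)q\in\mathbb H_k(\Omega)$) yields the claimed scalar $(k+4)(k+3)q$. This is the only real computation in the argument.

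For (ii), the map $q\mapsto \boldsymbol x\boldsymbol x^{\intercal}q$ is manifestly a linear isomorphism $\mathbb P_{k-2}(\Omega)\to\mathbb C_k^{\oplus}(\Omega;\mathbb S)$, so the two spaces have equal dimension and it suffices to prove that $\div\div$ is injective on $\mathbb C_k^{\oplus}(\Omega;\mathbb S)$. The plan is to decompose $q=\sum_{j=0}^{k-2}q_j$ with $q_j\in\mathbb H_j(\Omega)$ and apply (i) componentwise to obtain
\[
\div\div(\boldsymbol x\boldsymbol x^{\intercal}q)=\sum_{j=0}^{k-2}(j+4)(j+3)\,q_j.
\]
Because the scalars $(j+4)(j+3)$ are strictly positive and the summands live in pairwise distinct homogeneous degrees, vanishing of the sum forces each $q_j=0$, hence $q=0$.

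For (iii), I first verify that the sum is direct and then match dimensions. If $\boldsymbol\sigma\in\mathbb C_k(\Omega;\mathbb S)\cap\mathbb C_k^{\oplus}(\Omega;\mathbb S)$, then $\boldsymbol\sigma=\sym\curl\boldsymbol\tau$ for some $\boldsymbol\tau\in\mathbb P_{k+1}(\Omega;\mathbb T)$, so $\div\div\boldsymbol\sigma=0$ by the complex property in \eqref{eq:divdivcomplex3dPoly}; since $\boldsymbol\sigma$ simultaneously lies in $\mathbb C_k^{\oplus}(\Omega;\mathbb S)$, part (ii) forces $\boldsymbol\sigma=\boldsymbol 0$. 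Adding the two dimension formulas in \eqref{eq:dimC} gives $\tfrac{1}{6}(6k^3+36k^2+66k+36)=(k+1)(k+2)(k+3)=6\binom{k+3}{3}=\dim\mathbb P_k(\Omega;\mathbb S)$, so the direct sum exhausts the full space. The only place any subtlety enters is (i); once that identity is in hand, (ii) and (iii) are clean dimension-count arguments resting on the exactness of \eqref{eq:divdivcomplex3dPoly} and \eqref{eq:divdivKoszulcomplex3dPoly}.
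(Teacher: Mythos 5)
Your proposal is correct and follows essentially the same route as the paper: the explicit computation $\div\div(\boldsymbol x\boldsymbol x^{\intercal}q)=(\boldsymbol x\cdot\nabla+3)(\boldsymbol x\cdot\nabla+4)q$ combined with Euler's formula for (i), the homogeneous decomposition of $\mathbb P_{k-2}(\Omega)$ for (ii), and triviality of the intersection plus a dimension count using \eqref{eq:dimC} for (iii). The only cosmetic difference is that in (iii) you invoke the bijectivity from (ii) to kill the intersection, while the paper reuses the operator identity together with \eqref{eq:radialderivativeprop1}; these are equivalent.
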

\begin{proof}
Since
$
{\div}(\boldsymbol  x\boldsymbol  x^{\intercal}q)=(\div(\boldsymbol  x q)+q)\boldsymbol  x
$ and $\div(\boldsymbol  x q)=(\boldsymbol x\cdot\nabla)q+3q$,  we get
\begin{equation}\label{eq:20200512}
\div\div (\boldsymbol x\boldsymbol x^{\intercal} q)=\div(((\boldsymbol x\cdot\nabla+4)q)\boldsymbol x)=(\boldsymbol x\cdot\nabla+3)(\boldsymbol x\cdot\nabla+4)q.
\end{equation}
Hence property (i) follows from~\eqref{eq:homogeneouspolyprop}.
Property (ii) is obtained by writing $\mathbb P_{k-2}(\Omega) = \bigoplus_{i=0}^{k-2} \mathbb H_i(\Omega)$.
Now we prove property (iii). 
First the dimension of space in the left hand side is the summation of the dimension of the two spaces in the right hand side in (iii).
Assume $q\in\mathbb P_{k-2}(\Omega)$ satisfies $\boldsymbol x\boldsymbol  x^{\intercal}q\in\mathbb C_k(\Omega; \mathbb S)$, which means
\[
\div{\div}(\boldsymbol  x\boldsymbol  x^{\intercal}q)=0.
\]
Thus $q=0$ from~\eqref{eq:20200512} and~\eqref{eq:radialderivativeprop1} and consequently property (iii) holds.
\end{proof}

For the simplification of the degree of freedoms, we need another decomposition of the symmetric tensor polynomial space, which can be derived from the polynomial Hessian complex 
\begin{equation}\label{eq:hesscomplex3dPolydouble}
\resizebox{.91\hsize}{!}{$
\xymatrix{
\mathbb P_1(\Omega)\ar@<0.4ex>[r]^-{\subset} & \mathbb P_{k+2}(\Omega)\ar@<0.4ex>[r]^-{\hess}\ar@<0.4ex>[l]^-{\pi_{1}v} & \mathbb P_{k}(\Omega;\mathbb S)\ar@<0.4ex>[r]^-{\curl}\ar@<0.4ex>[l]^-{\boldsymbol x^{\intercal}\boldsymbol\tau\boldsymbol x}  & \mathbb P_{k-1}(\Omega;\mathbb T) \ar@<0.4ex>[r]^-{{\div}}\ar@<0.4ex>[l]^-{\sym(\boldsymbol\tau\times\boldsymbol x)} & \mathbb P_{k-2}(\Omega;\mathbb R^3)  \ar@<0.4ex>[r]^-{} \ar@<0.4ex>[l]^-{\dev(\boldsymbol v\boldsymbol x^{\intercal})}
& 0 \ar@<0.4ex>[l]^-{\supset} },
$}
\end{equation}
where $\pi_{1}v:=v(0,0,0)+\boldsymbol  x^{\intercal}(\nabla v)(0,0,0).$
A proof of the exactness of~\eqref{eq:hesscomplex3dPolydouble} is similar to that of Lemma~\ref{lem:Koszul} and can be found in~\cite{Chen;Huang:2020Discrete}. 
Based on~\eqref{eq:hesscomplex3dPolydouble}, we have the following decomposition of symmetric polynomial  tensors. 

\begin{lemma}\label{lem:PkS}
It holds
 \begin{equation}\label{eq:hesspolyspacedecomp2}
\mathbb P_{k}(\Omega; \mathbb S) = \nabla^2 \mathbb P_{k+2}(\Omega)\oplus\sym(\mathbb P_{k-1}(\Omega; \mathbb T)\times\boldsymbol x).
\end{equation}
\end{lemma}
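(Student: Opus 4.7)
The plan is to proceed in two steps, mirroring the strategy used for the decomposition~\eqref{eq:polyspacedecomp2}: first match dimensions on both sides of~\eqref{eq:hesspolyspacedecomp2} using the exactness of the polynomial Hessian complex~\eqref{eq:hesscomplex3dPolydouble}, and then show that the intersection of the two summands is trivial by applying the Koszul-type operator $\boldsymbol\tau \mapsto \boldsymbol x^{\intercal}\boldsymbol\tau\boldsymbol x$ that appears in~\eqref{eq:hesscomplex3dPolydouble}.

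For the dimension count, reading~\eqref{eq:hesscomplex3dPolydouble} in the differential direction, $\ker(\hess) = \mathbb P_1(\Omega)$ is four-dimensional, so
\[
\dim \nabla^2 \mathbb P_{k+2}(\Omega) = \binom{k+5}{3} - 4.
\]
Reading it in the Koszul direction, the map $\dev(\boldsymbol v \boldsymbol x^{\intercal})\colon \mathbb P_{k-2}(\Omega; \mathbb R^3) \to \mathbb P_{k-1}(\Omega; \mathbb T)$ is injective since the sequence terminates at $0$ on its left, so by exactness $\ker \bigl(\sym((\cdot)\times\boldsymbol x)\bigr) = \dev\bigl(\mathbb P_{k-2}(\Omega;\mathbb R^3)\boldsymbol x^{\intercal}\bigr)$ has dimension $3\binom{k+1}{3}$, giving
\[
\dim \sym(\mathbb P_{k-1}(\Omega; \mathbb T)\times\boldsymbol x) = 8\binom{k+2}{3} - 3\binom{k+1}{3}.
\]
A routine binomial identity then verifies that these two dimensions add to $6\binom{k+3}{3} = \dim \mathbb P_k(\Omega;\mathbb S)$, so it suffices to prove that the sum is direct.

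For the direct sum, I would take $\boldsymbol\tau$ in the intersection, write $\boldsymbol\tau = \hess v = \sym(\boldsymbol\sigma \times \boldsymbol x)$, and apply the Koszul-type contraction $\boldsymbol x^{\intercal}(\cdot)\boldsymbol x$. On the Koszul side, using the associativity of the triple product and $\boldsymbol x \times \boldsymbol x = \boldsymbol 0$,
\[
\boldsymbol x^{\intercal}\sym(\boldsymbol\sigma\times\boldsymbol x)\boldsymbol x = \boldsymbol x^{\intercal}(\boldsymbol\sigma\times\boldsymbol x)\boldsymbol x = \boldsymbol x \cdot\boldsymbol\sigma\cdot(\boldsymbol x\times\boldsymbol x) = 0.
\]
On the Hessian side, a direct expansion of $\sum_{i,j} x_i x_j \partial_i\partial_j v$ gives
\[
\boldsymbol x^{\intercal}(\hess v)\boldsymbol x = (\boldsymbol x\cdot\nabla)^2 v - (\boldsymbol x\cdot\nabla)v.
\]
Decomposing $v = \sum_{m} v_m$ into homogeneous pieces $v_m \in \mathbb H_m(\Omega)$ and invoking Euler's identity~\eqref{eq:homogeneouspolyprop}, the vanishing of the left-hand side forces $m(m-1)v_m = 0$, hence $v_m = 0$ for all $m \geq 2$. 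Therefore $v\in\mathbb P_1(\Omega)$ and $\boldsymbol\tau = \hess v = \boldsymbol 0$.

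The main obstacle is purely bookkeeping: one needs the two identities $\boldsymbol x^{\intercal}(\hess v)\boldsymbol x = (\boldsymbol x\cdot\nabla)^2 v - (\boldsymbol x\cdot\nabla)v$ and $\boldsymbol x^{\intercal}\sym(\boldsymbol\sigma\times\boldsymbol x)\boldsymbol x = 0$ to be verified cleanly. Both are short direct calculations, but they carry the entire argument: the Koszul operator $\boldsymbol x^{\intercal}(\cdot)\boldsymbol x$ annihilates the Koszul image $\sym(\mathbb P_{k-1}(\Omega;\mathbb T)\times\boldsymbol x)$ and reduces to a non-degenerate radial Euler operator on the Hessian image, which is exactly what makes the decomposition split.
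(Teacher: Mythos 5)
Your proposal is correct and follows essentially the same route as the paper: a dimension count based on the exactness of the polynomial Hessian complex~\eqref{eq:hesscomplex3dPolydouble}, followed by directness via the contraction $\boldsymbol x^{\intercal}(\cdot)\boldsymbol x$, which annihilates $\sym(\mathbb P_{k-1}(\Omega;\mathbb T)\times\boldsymbol x)$ and equals the Euler operator $(\boldsymbol x\cdot\nabla)\bigl((\boldsymbol x\cdot\nabla)-1\bigr)$ on $\nabla^2\mathbb P_{k+2}(\Omega)$, forcing $v\in\mathbb P_1(\Omega)$. The only cosmetic difference is that you spell out the two key identities that the paper uses implicitly or in passing.
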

\begin{proof}
Obviously the space on the right is contained in the space on the left. We then count the dimensions of spaces on both sides:
\begin{align}
\dim \mathbb P_{k}(\Omega; \mathbb S) &= 6 {k+3 \choose 3} = (k+3)(k+2)(k+1), \notag\\
\dim \nabla^2 \mathbb P_{k+2}(\Omega) &= \dim \mathbb P_{k+2}(\Omega)  - \dim \mathbb P_1(\Omega) = {k+5 \choose 3} - 4, \notag\\
\dim \sym(\mathbb P_{k-1}(\Omega; \mathbb T)\times\boldsymbol x) & = \dim \mathbb P_{k-1}(\Omega; \mathbb T) - \dim \mathbb P_{k-2}(\Omega;\mathbb R^3) \notag \\
& = 8  {k+2 \choose 3} - 3 {k+1 \choose 3} = \frac{1}{6} (k + 1) k (5 k + 19) \label{eq:dimsymx}.
\end{align}
Then by direct calculation, 
$$
\dim \nabla^2 \mathbb P_{k+2}(\Omega) + \dim \sym(\mathbb P_{k-1}(\Omega; \mathbb T)\times\boldsymbol x) = \dim \mathbb P_{k}(\Omega; \mathbb S) = k^3+6k^2+11k+6.
$$
We only need to prove that the sum is direct.

For any $\boldsymbol\tau=\nabla^2q$ with $q\in\mathbb P_{k+2}(\Omega)$ satisfying $\boldsymbol\tau\in\sym(\mathbb P_{k-1}(\Omega; \mathbb T)\times\boldsymbol x)$, it follows $(\boldsymbol x\cdot\nabla)((\boldsymbol x\cdot\nabla)q-q)=\boldsymbol x^{\intercal}(\nabla^2q)\boldsymbol x=0$. Applying~\eqref{eq:radialderivativeprop} and~\eqref{eq:homogeneouspolyprop}, we get $q\in\mathbb P_{1}(\Omega)$ and $\nabla^2q = 0$. Thus the decomposition~\eqref{eq:hesspolyspacedecomp2} holds.
\end{proof}

Similarly for a two dimensional domain $F\subset \mathbb R^2$, we have the following divdiv polynomial complex and its Koszul complex
\begin{equation}\label{eq:divdivcomplexPolydouble2D}
\xymatrix{
\boldsymbol{RT}\ar@<0.4ex>[r]^-{\subset} & \; \mathbb P_{k+1}(F;\mathbb R^2)\; \ar@<0.4ex>[r]^-{\sym\curl_F}\ar@<0.4ex>[l]^-{\boldsymbol \pi_{RT}}  & \; \mathbb P_k(F;\mathbb S) \ar@<0.4ex>[r]^-{\div _F {\div}_F}\; \ar@<0.4ex>[l]^-{\cdot\boldsymbol x^{\bot}} & \; \mathbb P_{k-2}(F)  \; \ar@<0.4ex>[r]^-{} \ar@<0.4ex>[l]^-{\boldsymbol x\boldsymbol x^{\intercal}}
& 0 \ar@<0.4ex>[l]^-{\supset} },
\end{equation}
where $\boldsymbol \pi_{RT}\boldsymbol  v:=\boldsymbol  v(0,0)+\frac{1}{2}(\div\boldsymbol  v)(0,0)\boldsymbol  x$, $\boldsymbol x^{\bot} = (x_2, - x_1)^{\intercal}$ is the rotation of $\boldsymbol x = (x_1, x_2)^{\intercal}$. A two dimensional Hessian polynomial complex and its Koszul complex are
\begin{equation}\label{eq:hessiancomplexPolydouble2D}
\xymatrix{
\mathbb P_1(F)\ar@<0.4ex>[r]^-{\subset} & \; \mathbb P_{k+1}(F)\; \ar@<0.4ex>[r]^-{\nabla^2_F}\ar@<0.4ex>[l]^-{\pi_1}  & \; \mathbb P_k(F;\mathbb S) \ar@<0.4ex>[r]^-{{\rm rot}_F}\; \ar@<0.4ex>[l]^-{\boldsymbol x^{\intercal}\boldsymbol \tau\boldsymbol x} & \; \mathbb P_{k-2}(F)  \; \ar@<0.4ex>[r]^-{} \ar@<0.4ex>[l]^-{\sym(\boldsymbol x^{\bot}\boldsymbol v^{\intercal})}
& 0 \ar@<0.4ex>[l]^-{\supset} },
\end{equation}
where $\pi_{1}v:=v(0,0)+\boldsymbol  x^{\intercal}(\nabla v)(0,0).$ Verification of the exactness of these two complexes and corresponding space decompositions can be found in~\cite{ChenHuang2020}.

\section{Green's Identities and Traces}\label{sec:greentrace}
We first present a Green's identity based on which we can characterize two traces of $ \boldsymbol{H}(\div\div, \Omega; \mathbb{S}) $ on polyhedrons and give a sufficient continuity condition for a piecewise smooth function to be in $ \boldsymbol{H}(\div\div, \Omega; \mathbb{S}) $. 

\subsection{Notation}
Let $\{\mathcal {T}_h\}_{h>0}$ be a regular family of polyhedral meshes
of $\Omega$. Our finite element spaces are constructed for tetrahedrons but some results, e.g., traces and Green's formulae etc, hold for general polyhedrons.   For each element $K\in\mathcal{T}_h$, denote by $\boldsymbol{n}_K $ the
unit outward normal vector to $\partial K$,  which will be abbreviated as $\boldsymbol{n}$ for simplicity.
Let $\mathcal{F}_h$, $\mathcal{F}^i_h$, $\mathcal{E}_h$, $\mathcal{E}^i_h$, $\mathcal{V}_h$ and $\mathcal{V}^i_h$ be the union of all faces, interior faces, all edges, interior edges, vertices and interior vertices
of the partition $\mathcal {T}_h$, respectively.
For any $F\in\mathcal{F}_h$,
fix a unit normal vector $\boldsymbol{n}_F$ and two unit tangent vectors $\boldsymbol{t}_{F,1}$ and $\boldsymbol{t}_{F,2}$, which will be abbreviated as $\boldsymbol{t}_{1}$ and $\boldsymbol{t}_{2}$ without causing any confusions.
For any $e\in\mathcal{E}_h$,
fix a unit tangent vector $\boldsymbol{t}_e$ and two unit normal vectors $\boldsymbol{n}_{e,1}$ and $\boldsymbol{n}_{e,2}$, which will be abbreviated as $\boldsymbol{n}_{1}$ and $\boldsymbol{n}_{2}$ without causing any confusions.
For $K$ being a polyhedron, denote by $\mathcal{F}(K)$, $\mathcal{E}(K)$ and $\mathcal{V}(K)$ the set of all faces, edges and vertices of $K$, respectively. 
For any $F\in\mathcal{F}_h$, let $\mathcal{E}(F)$ be the set of all edges of $F$. And for each $e\in\mathcal{E}(F)$, denote by $\boldsymbol n_{F,e}$ the unit vector
being parallel to $F$ and outward normal to $\partial F$.
Furthermore, set
\[
\mathcal{F}^i(K):=\mathcal{F}(K)\cap\mathcal{F}^i_h, \quad\mathcal{E}^i(F):=\mathcal{E}(F)\cap\mathcal{E}^i_h. 
\]

\subsection{Green's identities}
We first derive a Green's identity for smooth functions on polyhedrons.

\begin{lemma} [Green's identity in 3D]\label{lm:Green}
Let $K$ be a polyhedron, and let $\boldsymbol  \tau\in \mathcal C^2(K; \mathbb S)$ and $v\in H^2(K)$. 
Then we have
\begin{align}
(\div\div\boldsymbol \tau, v)_K&=(\boldsymbol \tau, \nabla^2v)_K -\sum_{F\in\mathcal F(K)}\sum_{e\in\mathcal E(F)}(\boldsymbol n_{F,e}^{\intercal}\boldsymbol \tau \boldsymbol n, v)_e \notag\\
&\quad - \sum_{F\in\mathcal F(K)}\left[(\boldsymbol  n^{\intercal}\boldsymbol \tau\boldsymbol  n, \partial_n v)_{F} -  ( 2\div_F(\boldsymbol \tau\boldsymbol n)+\partial_n (\boldsymbol  n^{\intercal}\boldsymbol \tau\boldsymbol  n), v)_F\right]. \label{eq:greenidentitydivdiv}
\end{align}
\end{lemma}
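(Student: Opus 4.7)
The plan is to integrate by parts twice to move both divergences off $\boldsymbol\tau$ onto $v$, then reorganize the resulting boundary contributions using the orthogonal decompositions $\nabla v = (\partial_n v)\boldsymbol n + \nabla_F v$ and $\boldsymbol\tau\boldsymbol n = (\boldsymbol n^{\intercal}\boldsymbol\tau\boldsymbol n)\boldsymbol n + \Pi_F(\boldsymbol\tau\boldsymbol n)$, and finally apply a surface integration by parts on each face to move the surface gradient off $v$, which produces the edge terms.

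First I would compute, by two applications of the divergence theorem in $K$,
\begin{equation*}
(\div\div\boldsymbol\tau, v)_K
= (\boldsymbol\tau, \nabla^2 v)_K
- \sum_{F\in\mcal F(K)}\int_F (\boldsymbol\tau\boldsymbol n)\cdot\nabla v \, dS
+ \sum_{F\in\mcal F(K)}\int_F v\,(\boldsymbol n\cdot\div\boldsymbol\tau)\, dS,
\end{equation*}
where symmetry of $\boldsymbol\tau$ makes row-wise and column-wise divergences consistent. Split $(\boldsymbol\tau\boldsymbol n)\cdot\nabla v = (\boldsymbol n^{\intercal}\boldsymbol\tau\boldsymbol n)\partial_n v + \Pi_F(\boldsymbol\tau\boldsymbol n)\cdot\nabla_F v$. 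The first piece is already in the desired form. For the second piece I would apply the surface Green's identity on the flat face $F$:
\begin{equation*}
\int_F \Pi_F(\boldsymbol\tau\boldsymbol n)\cdot\nabla_F v\, dS
= -\int_F v\,\div_F(\boldsymbol\tau\boldsymbol n)\, dS
+ \sum_{e\in\mcal E(F)} \int_e v\,\boldsymbol n_{F,e}^{\intercal}\boldsymbol\tau\boldsymbol n\, ds,
\end{equation*}
using that $\div_F \boldsymbol v = \nabla_F\cdot(\Pi_F \boldsymbol v)$ and that $\boldsymbol n_{F,e}$ is tangent to $F$, so only the tangential part of $\boldsymbol\tau\boldsymbol n$ contributes on the edges.

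The key step is then to rewrite $\boldsymbol n\cdot\div\boldsymbol\tau$ on each face as a sum of surface and normal derivatives. Because $F$ is planar, $\boldsymbol n=\boldsymbol n_F$ extends as a constant vector field in a tubular neighborhood of $F$, and thus $\boldsymbol n\cdot\div\boldsymbol\tau = \div(\boldsymbol\tau\boldsymbol n)$ near $F$. Decomposing this ambient divergence into its normal derivative and its surface divergence, which is legal because $\boldsymbol n$ is locally constant, yields the identity
\begin{equation*}
\boldsymbol n\cdot\div\boldsymbol\tau
= \partial_n(\boldsymbol n^{\intercal}\boldsymbol\tau\boldsymbol n) + \div_F(\boldsymbol\tau\boldsymbol n) \quad \text{on } F.
\end{equation*}
Substituting this into the face integral of $v(\boldsymbol n\cdot\div\boldsymbol\tau)$ produces a $\div_F(\boldsymbol\tau\boldsymbol n)$ contribution which combines with the one already coming from the surface integration by parts to give the factor of $2$ in front of $\div_F(\boldsymbol\tau\boldsymbol n)$, while the $\partial_n(\boldsymbol n^{\intercal}\boldsymbol\tau\boldsymbol n)$ term lines up exactly with the other term inside the square bracket in \eqref{eq:greenidentitydivdiv}. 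Collecting everything gives the claimed identity.

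The main obstacle is bookkeeping rather than analysis: one must consistently track the normal/tangential split of $\boldsymbol\tau\boldsymbol n$ and $\nabla v$ at each stage and verify that the extension of $\boldsymbol n$ as a constant vector in a neighborhood of each face is harmless, so that the identity $\boldsymbol n\cdot\div\boldsymbol\tau = \partial_n(\boldsymbol n^{\intercal}\boldsymbol\tau\boldsymbol n) + \div_F(\boldsymbol\tau\boldsymbol n)$ really holds on the face (this uses the planarity of faces of a polyhedron in an essential way). Everything else follows from routine applications of the divergence theorem in $K$ and on $F$, and from the symmetry of $\boldsymbol\tau$.
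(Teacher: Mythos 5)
Your proposal is correct and follows essentially the same route as the paper: two applications of the divergence theorem in $K$, the split $\nabla v = (\partial_n v)\boldsymbol n + \nabla_F v$ with a surface integration by parts producing the edge terms, and the face-wise identity $\boldsymbol n\cdot\div\boldsymbol\tau = \div_F(\boldsymbol\tau\boldsymbol n) + \partial_n(\boldsymbol n^{\intercal}\boldsymbol\tau\boldsymbol n)$ supplying the factor of $2$. The paper's proof is exactly this computation, with the planarity of the faces used implicitly where you make it explicit.
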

\begin{proof}
We start from the standard integration by parts
\begin{align*}
\begin{aligned}
(\operatorname{div}\div \boldsymbol  \tau, v)_{K} &=-(\div\boldsymbol \tau, \nabla v)_{K}+\sum_{F \in \mathcal F(K)}(\boldsymbol  n^{\intercal}\div  \boldsymbol   \tau, v)_F \\
&=\left(\boldsymbol  \tau, \nabla^{2} v\right)_{K}-\sum_{F \in \mathcal F(K)} (\boldsymbol  \tau \boldsymbol  n, \nabla v)_F+\sum_{F \in \mathcal F(K)}(\boldsymbol  n^{\intercal} \div  \boldsymbol   \tau, v)_F.
\end{aligned}
\end{align*}
We then decompose $\nabla v = \partial_nv\boldsymbol n+ \nabla_F v$ and apply the Stokes theorem to get
\begin{align*}
(\boldsymbol  \tau \boldsymbol  n, \nabla v)_F&=(\boldsymbol \tau \boldsymbol n, \partial_nv\boldsymbol n+ \nabla_F v)_F \\
&=(\boldsymbol n^{\intercal}\boldsymbol \tau \boldsymbol n, \partial_nv)_F -( \div_F(\boldsymbol \tau\boldsymbol n),  v)_F + \sum_{e\in\mathcal E(F)} (\boldsymbol n_{F,e}^{\intercal}\boldsymbol \tau \boldsymbol n, v)_e.
\end{align*}
Now we rewrite the term
$$
(\boldsymbol  n^{\intercal}\div  \boldsymbol   \tau, v)_F = (\div  (\boldsymbol  \tau \boldsymbol n), v)_F = ( \div_F(\boldsymbol \tau\boldsymbol n),  v)_F + (\partial_n \boldsymbol (\boldsymbol  n^{\intercal}\boldsymbol \tau\boldsymbol  n),  v)_{F} .
$$
Thus the Green's identity~\eqref{eq:greenidentitydivdiv} follows by merging all terms.
\end{proof}
When the domain is smooth in the sense that $\mathcal E(K)$ is an empty set, the term $\sum\limits_{F\in\mathcal F(K)}\sum\limits_{e\in\mathcal E(F)}(\boldsymbol n_{F,e}^{\intercal}\boldsymbol \tau \boldsymbol n, v)_e $ disappears. When $v$ is continuous on edge $e$, this term will define a jump of the tensor. 

A similar Green's identity in two dimensions is included here for later usage. To avoid confusion with three dimensional version, $\bs n_e$ is used to emphasize it is a normal vector of edge $e$ of a polygon $F$ and differential operators with subscript $F$ are used. 
\begin{lemma} [Green's identity in 2D]\label{lm:Green2D}
Let $F$ be a polygon, and let $\boldsymbol  \tau\in \mathcal C^2(F; \mathbb S)$ and $v\in H^2(F)$. Then we have
\begin{align}
(\div_F\div_F\boldsymbol \tau, v)_F&=(\boldsymbol \tau, \nabla_F^2v)_F -\sum_{e\in\mathcal E(K)}\sum_{\delta\in\partial e}\sign_{e,\delta}(\boldsymbol  t^{\intercal}\boldsymbol \tau\boldsymbol  n_e)(\delta)v(\delta) \notag\\
&\quad - \sum_{e\in\mathcal E(K)}\left[(\boldsymbol  n^{\intercal}\boldsymbol \tau\boldsymbol  n_e, \partial_n v)_{e}-(2\partial_{t}(\boldsymbol  t^{\intercal}\boldsymbol \tau\boldsymbol  n)+\partial_n (\boldsymbol  n_e^{\intercal}\boldsymbol \tau\boldsymbol  n_e),  v)_{e}\right], \notag
\end{align}
where
\[
\sign_{e,\delta}:=\begin{cases}
1, & \textrm{ if } \delta \textrm{ is the end point of } e, \\
-1, & \textrm{ if } \delta \textrm{ is the start point of } e.
\end{cases}
\]
\end{lemma}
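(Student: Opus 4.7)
The plan is to mirror the 3D proof of Lemma~\ref{lm:Green} step by step, with the dimensional reduction that faces become edges and edges become vertices. This is a purely computational statement, so the task is bookkeeping of signs and a correct decomposition of the edge contributions.

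First I would apply integration by parts twice in the standard way:
\begin{align*}
(\div_F\div_F\boldsymbol \tau, v)_F
&= -(\div_F\boldsymbol\tau, \nabla_F v)_F + \sum_{e\in\mathcal E(F)} (\boldsymbol n_e^{\intercal}\div_F\boldsymbol\tau, v)_e \\
&= (\boldsymbol\tau, \nabla_F^2 v)_F - \sum_{e\in\mathcal E(F)} (\boldsymbol\tau\boldsymbol n_e, \nabla_F v)_e + \sum_{e\in\mathcal E(F)} (\boldsymbol n_e^{\intercal}\div_F\boldsymbol\tau, v)_e.
\end{align*}
This already produces the desired interior term $(\boldsymbol\tau, \nabla_F^2 v)_F$ and reduces the problem to identifying the two edge contributions.

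Next, on each edge $e$ I would use the orthonormal frame $(\boldsymbol n_e, \boldsymbol t)$ to decompose $\nabla_F v = (\partial_n v)\boldsymbol n_e + (\partial_t v)\boldsymbol t$, which gives
\[
(\boldsymbol\tau\boldsymbol n_e, \nabla_F v)_e = (\boldsymbol n_e^{\intercal}\boldsymbol\tau\boldsymbol n_e,\partial_n v)_e + (\boldsymbol t^{\intercal}\boldsymbol\tau\boldsymbol n_e,\partial_t v)_e.
\]
Symmetrically, since $\boldsymbol n_e$ is constant along $e$, I would rewrite
\[
\boldsymbol n_e^{\intercal}\div_F\boldsymbol\tau = \div_F(\boldsymbol\tau\boldsymbol n_e) = \partial_n(\boldsymbol n_e^{\intercal}\boldsymbol\tau\boldsymbol n_e) + \partial_t(\boldsymbol t^{\intercal}\boldsymbol\tau\boldsymbol n_e).
\]
These two identities play the role that the surface decomposition $\nabla v = \partial_n v\,\boldsymbol n + \nabla_F v$ and $\boldsymbol n^{\intercal}\div\boldsymbol\tau = \div_F(\boldsymbol\tau\boldsymbol n)+\partial_n(\boldsymbol n^{\intercal}\boldsymbol\tau\boldsymbol n)$ played in three dimensions.

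The only non-routine ingredient — and the place where the vertex terms with $\sign_{e,\delta}$ appear — is a one-dimensional integration by parts along each edge applied to the tangential piece:
\[
(\boldsymbol t^{\intercal}\boldsymbol\tau\boldsymbol n_e, \partial_t v)_e = -(\partial_t(\boldsymbol t^{\intercal}\boldsymbol\tau\boldsymbol n_e), v)_e + \sum_{\delta\in\partial e}\sign_{e,\delta}(\boldsymbol t^{\intercal}\boldsymbol\tau\boldsymbol n_e)(\delta)v(\delta).
\]
This replaces the Stokes theorem used on each face in the 3D proof; the $\sign_{e,\delta}$ factor records whether $\delta$ is the endpoint or startpoint relative to $\boldsymbol t$. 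The main obstacle is simply keeping this orientation bookkeeping consistent, because the tangential derivative $\partial_t(\boldsymbol t^{\intercal}\boldsymbol\tau\boldsymbol n_e)$ appears with a doubled coefficient in the final formula — once from moving the derivative off $v$ and once from the $\div_F(\boldsymbol\tau\boldsymbol n_e)$ piece already at hand.

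Finally, I would collect everything: the normal-normal edge terms $(\boldsymbol n_e^{\intercal}\boldsymbol\tau\boldsymbol n_e,\partial_n v)_e$ combine into a single sum; the two copies of $\partial_t(\boldsymbol t^{\intercal}\boldsymbol\tau\boldsymbol n_e)$ combine with $\partial_n(\boldsymbol n_e^{\intercal}\boldsymbol\tau\boldsymbol n_e)$ to yield exactly $2\partial_t(\boldsymbol t^{\intercal}\boldsymbol\tau\boldsymbol n_e)+\partial_n(\boldsymbol n_e^{\intercal}\boldsymbol\tau\boldsymbol n_e)$; and the endpoint contributions assemble into the vertex sum $\sum_e\sum_{\delta\in\partial e}\sign_{e,\delta}(\boldsymbol t^{\intercal}\boldsymbol\tau\boldsymbol n_e)(\delta)v(\delta)$, matching the stated formula.
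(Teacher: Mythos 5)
Your proposal is correct: the paper states the 2D lemma without proof, and your argument is precisely the dimensional reduction of the paper's proof of the 3D version (two integrations by parts, the frame decomposition $\nabla_F v=(\partial_n v)\boldsymbol n_e+(\partial_t v)\boldsymbol t$, the rewriting $\boldsymbol n_e^{\intercal}\div_F\boldsymbol\tau=\div_F(\boldsymbol\tau\boldsymbol n_e)=\partial_n(\boldsymbol n_e^{\intercal}\boldsymbol\tau\boldsymbol n_e)+\partial_t(\boldsymbol t^{\intercal}\boldsymbol\tau\boldsymbol n_e)$, and a 1D integration by parts along each edge producing the signed vertex terms), with the two copies of $\partial_t(\boldsymbol t^{\intercal}\boldsymbol\tau\boldsymbol n_e)$ correctly accounting for the factor $2$. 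The only implicit ingredients worth noting are that the symmetry of $\boldsymbol\tau$ is used in $\boldsymbol n_e^{\intercal}\div_F\boldsymbol\tau=\div_F(\boldsymbol\tau\boldsymbol n_e)$ and that $H^2(F)\hookrightarrow C^0(\overline F)$ in two dimensions makes the vertex values $v(\delta)$ meaningful.
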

Here the trace $2\partial_{t}(\boldsymbol  t^{\intercal}\boldsymbol \tau\boldsymbol  n_e)+\partial_n (\boldsymbol  n_e^{\intercal}\boldsymbol \tau\boldsymbol  n_e) = \partial_{t}(\boldsymbol  t^{\intercal}\boldsymbol \tau\boldsymbol  n_e)+\boldsymbol  n_e^{\intercal}\div\boldsymbol \tau$ is called the effective transverse shear force respectively for $\bs\tau$ being a moment and $\boldsymbol  n_e^{\intercal}\boldsymbol \tau\boldsymbol  n_e$ is the normal bending moment in the context of elastic mechanics~\cite{FengShi1996}.


\subsection{Traces and continuity across the boundary}\label{subsec:trace}
The Green's identity~\eqref{eq:greenidentitydivdiv} motives the definition of two trace operators for function $\boldsymbol \tau\in \boldsymbol{H}(\div{\div },K; \mathbb{S})$:
$$
{\rm tr}_1(\boldsymbol \tau) = \boldsymbol n^{\intercal}\boldsymbol \tau\boldsymbol  n, \quad
{\rm tr}_2(\boldsymbol \tau) = 2\div_F(\boldsymbol \tau\boldsymbol n)+\partial_n (\boldsymbol  n^{\intercal}\boldsymbol \tau\boldsymbol  n). 
$$

We first recall the trace of the space $\boldsymbol{H}(\div{\div },K; \mathbb{S})$ on the boundary of polyhedron $K$  (cf.~\cite[Lemma 3.2]{Fuhrer;Heuer;Niemi:2019ultraweak} and~\cite{Sinwel2009,PechsteinSchoeberl2018}).
Let $H_{00}^{1/2}(F)$ be the closure of $\mathcal C_0^{\infty}(F)$ with respect to the norm $\|\cdot\|_{H^{1/2}(\partial K)}$, which includes all functions in $H^{1/2}(F)$ whose continuation to the whole boundary $\partial K$ by zero belongs to $H^{1/2}(\partial K)$.
Define trace spaces
\begin{align*}
H_{n,0}^{1/2}(\partial K)&:=\{\partial_n v|_{\partial K}: v\in H^2(K)\cap H_0^1(K)\} \\
&\;=\{g\in L^2(\partial K): g|_F\in H_{00}^{1/2}(F)\;\;\forall~F\in\mathcal F(K)\}
\end{align*}
with norm
\[
\|g\|_{H_{n,0}^{1/2}(\partial K)}:=\inf_{v\in H^2(K)\cap H_0^1(K)\atop \partial_n v=g}\|v\|_2,
\]
and
\begin{align*}
H_{t,0}^{3/2}(\partial K)&:=\{v|_{\partial K}: v\in H^2(K), \partial_nv|_{\partial K}=0, v|_{e}=0 \textrm{ for each edge } e\in\mathcal E(K) \}
\end{align*}
with norm
\[
\|g\|_{H_{t,0}^{3/2}(\partial K)}:=\inf_{v\in H^2(K)\atop \partial_n v=0, v=g}\|v\|_2.
\]
Let $H_n^{-1/2}(\partial K):=(H_{n,0}^{1/2}(\partial K))'$ for $\tr_1$, and $H_t^{-3/2}(\partial K):=(H_{t,0}^{3/2}(\partial K))'$ for $\tr_2$. 


\begin{lemma}[Lemma 3.2 in~\cite{Fuhrer;Heuer;Niemi:2019ultraweak}]\label{lem:Hdivdivtrace}
For any $\boldsymbol \tau\in\boldsymbol{H}(\div{\div },K; \mathbb{S})$,  it holds
\[
\|\boldsymbol  n^{\intercal}\boldsymbol \tau\boldsymbol  n\|_{H_n^{-1/2}(\partial K)} + \|2\div_F(\boldsymbol\tau \boldsymbol n)+ \partial_n(\boldsymbol n^{\intercal} \boldsymbol \tau\boldsymbol n)\|_{H_t^{-3/2}(\partial K)}\lesssim \|\boldsymbol{\tau}\|_{\boldsymbol{H}(\div{\div },K)}.
\]
Conversely, for any $g_n\in H_n^{-1/2}(\partial K)$ and $g_t\in H_t^{-3/2}(\partial K)$, there exists some $\boldsymbol \tau\in\boldsymbol{H}(\div{\div },K; \mathbb{S})$ such that
\[
\boldsymbol  n^{\intercal}\boldsymbol \tau\boldsymbol  n|_{\partial K}=g_n, \quad 2\div_F(\boldsymbol\tau \boldsymbol n)+ \partial_n(\boldsymbol n^{\intercal} \boldsymbol \tau\boldsymbol n)=g_t,
\]
\[ 
\|\boldsymbol{\tau}\|_{\boldsymbol{H}(\div{\div },K)} \lesssim \|g_n\|_{H_n^{-1/2}(\partial K)}+\|g_t\|_{H_t^{-3/2}(\partial K)}.
\]
The hidden constants depend only the shape of the domain $K$.
\end{lemma}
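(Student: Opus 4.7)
The plan is to derive both inequalities from the Green's identity in Lemma~\ref{lm:Green} using duality and density for the direct estimate, and a biharmonic lifting for the bounded right inverse.

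For the direct inequality, I first take $\bs\tau\in\mathcal C^\infty(\bar K;\mathbb S)$ so that every term in~\eqref{eq:greenidentitydivdiv} is a genuine integral, and then extract each trace in isolation. To handle $\bs n^\intercal\bs\tau\bs n$, I test against an arbitrary $v\in H^2(K)\cap H_0^1(K)$: the condition $v|_{\partial K}=0$ kills both the edge sum and the $\tr_2$ contribution, leaving
$$\langle \bs n^\intercal\bs\tau\bs n,\partial_n v\rangle_{\partial K}=(\bs\tau,\nabla^2 v)_K-(\div\div\bs\tau,v)_K.$$
The right side is dominated by $\|\bs\tau\|_{\bs H(\div\div,K)}\|v\|_{H^2(K)}$; taking the infimum of $\|v\|_{H^2(K)}$ over all lifts $v$ of a fixed $g=\partial_n v|_{\partial K}$ yields the desired $H_n^{-1/2}(\partial K)$ bound. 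For the other trace I test instead against $v\in H^2(K)$ with $\partial_n v|_{\partial K}=0$ and $v|_e=0$ for every $e\in\mathcal E(K)$; the $\tr_1$ term and all edge sums in~\eqref{eq:greenidentitydivdiv} then drop out, and the analogous Cauchy--Schwarz plus infimum argument bounds $\|2\div_F(\bs\tau\bs n)+\partial_n(\bs n^\intercal\bs\tau\bs n)\|_{H_t^{-3/2}(\partial K)}$. Density of $\mathcal C^\infty(\bar K;\mathbb S)$ in $\bs H(\div\div,K;\mathbb S)$ then extends both bounds to the whole space.

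For the converse, the strategy is to realize the lift as a Hessian via a biharmonic problem. Given $g_n\in H_n^{-1/2}(\partial K)$, consider: find $w\in H^2(K)\cap H_0^1(K)$ such that
$$(\nabla^2 w,\nabla^2 \varphi)_K=\langle g_n,\partial_n \varphi\rangle_{\partial K}\qquad \forall\,\varphi\in H^2(K)\cap H_0^1(K).$$
This is uniquely solvable by Lax--Milgram since the biharmonic form is coercive on $H^2(K)\cap H_0^1(K)$, with $\|w\|_{H^2(K)}\lesssim\|g_n\|_{H_n^{-1/2}(\partial K)}$. Testing against $\varphi\in\mathcal C_0^\infty(K)$ gives $\Delta^2 w=0$ in $K$, hence $\bs\tau_n:=\nabla^2 w\in\bs H(\div\div,K;\mathbb S)$; applying~\eqref{eq:greenidentitydivdiv} to $\bs\tau_n$ against arbitrary $\varphi\in H^2(K)\cap H_0^1(K)$ and comparing with the variational equation identifies $\tr_1(\bs\tau_n)=g_n$. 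An analogous construction with test space $\{\varphi\in H^2(K):\partial_n\varphi|_{\partial K}=0,\;\varphi|_e=0\;\forall\,e\in\mathcal E(K)\}$ and data $g_t$ produces $\bs\tau_t$ with $\tr_2(\bs\tau_t)=g_t$. Setting $\bs\tau=\bs\tau_n+\bs\tau_t$, after subtracting off the (already-bounded) unwanted cross traces, delivers the desired tensor and its norm estimate.

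The hard part is controlling the cross traces in the superposition: $\bs\tau_n$ will in general carry a nonzero $\tr_2(\bs\tau_n)\in H_t^{-3/2}(\partial K)$, and likewise $\bs\tau_t$ a nonzero $\tr_1$. One can either iterate the construction or collapse both into a single simultaneous problem
$$u\in H^2(K)/\mathbb P_1(K),\quad (\nabla^2 u,\nabla^2 v)_K=\langle g_n,\partial_n v\rangle_{\partial K}+\langle g_t,v\rangle_{\partial K}\quad\forall\,v\in H^2(K),$$
whose well-posedness hinges on showing that the two boundary pairings on the right descend to bounded functionals on $H^2(K)$; this is precisely the content of the edge-compatibility built into the definitions of $H_{n,0}^{1/2}(\partial K)$ and $H_{t,0}^{3/2}(\partial K)$. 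Once this compatibility is verified, coercivity of the biharmonic form on $H^2(K)/\mathbb P_1(K)$ is standard and the rest is routine. A completely abstract alternative would combine the direct estimate with a closed-range Hahn--Banach duality, but the inf--sup condition needed there is equivalent to the same coercivity.
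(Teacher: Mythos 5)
First, a point of comparison: the paper does not prove this lemma at all --- it is quoted verbatim from F\"uhrer--Heuer--Niemi (their Lemma 3.2) --- so there is no in-paper proof to measure your attempt against; the assessment below is of your argument on its own terms. Your proof of the direct estimate is correct and standard: restricting the Green's identity~\eqref{eq:greenidentitydivdiv} to test functions $v\in H^2(K)\cap H_0^1(K)$, respectively to $v\in H^2(K)$ with $\partial_n v|_{\partial K}=0$ and $v|_e=0$ for all edges, isolates each trace; Cauchy--Schwarz and the infimum over lifts produce exactly the dual norms $H_n^{-1/2}(\partial K)$ and $H_t^{-3/2}(\partial K)$, and density of smooth tensors in $\boldsymbol H(\div\div,K;\mathbb S)$ is the only (standard, but unproved) ingredient you lean on.

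The converse, however, has a genuine gap at exactly the point you flag as ``the hard part,'' and neither of your two proposed repairs closes it. The two separate constrained biharmonic liftings are fine: each produces a tensor realizing one prescribed trace with the correct bound. But (a) ``iterating the construction'' is a Neumann series for $I+K$ with $K(g_n,g_t)=({\rm tr}_1\boldsymbol\tau_t,\,{\rm tr}_2\boldsymbol\tau_n)$; this map is bounded but comes with no smallness or compactness, so there is no reason the series converges. And (b) the combined problem $(\nabla^2u,\nabla^2v)_K=\langle g_n,\partial_nv\rangle+\langle g_t,v\rangle$ for all $v\in H^2(K)$ is not well posed, because its right-hand side is not a well-defined functional on $H^2(K)$: $g_n$ acts only on $H_{n,0}^{1/2}(\partial K)$ and $g_t$ only on $H_{t,0}^{3/2}(\partial K)$, whereas for a general $v\in H^2(K)$ one has neither $\partial_nv|_{\partial K}\in H_{n,0}^{1/2}(\partial K)$ (face-wise membership in $H_{00}^{1/2}(F)$ fails) nor $v|_{\partial K}\in H_{t,0}^{3/2}(\partial K)$ (the constraints $\partial_n v|_{\partial K}=0$ and $v|_e=0$ fail). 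Your claim that the needed extension ``is precisely the content of the edge-compatibility built into the definitions'' has the logic backwards: those definitions restrict the admissible test functions, they do not extend the functionals, and a Hahn--Banach extension would be bounded only in the wrong (stronger) norm. The paper's own remark immediately after the lemma --- that the full trace pairing $(\div\div\boldsymbol\tau,v)-(\boldsymbol\tau,\nabla^2v)_K$ ``cannot be equivalently decoupled'' --- is precisely a warning that passing from the two individual liftings to a joint one is the delicate step; it requires an additional argument (a bounded splitting of boundary data compatible with the two test spaces, which is the substance of the cited result) that your sketch does not supply.
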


Notice that the term $(\boldsymbol n_{F,e}^{\intercal}\boldsymbol \tau \boldsymbol n, v)_e$ in the Green's identity~\eqref{eq:greenidentitydivdiv} is not covered by Lemma~\ref{lem:Hdivdivtrace}.
Indeed, the full characterization of the trace of $\boldsymbol{H}(\div{\div },K; \mathbb{S})$ is defined by $(\div{\div }\bs\tau, v)-\left(\boldsymbol  \tau, \nabla^{2} v\right)_{K}$, which cannot be equivalently decoupled~\cite[Lemma~3.2]{Fuhrer;Heuer;Niemi:2019ultraweak}.
It is possible, however, to face-wisely localize the trace if imposing additional smoothness. 

We then present a sufficient continuity condition for piecewise smooth functions to be in $\boldsymbol{H}(\div{\div },\Omega; \mathbb{S})$. 

\begin{lemma}[cf. Proposition 3.6 in~\cite{Fuhrer;Heuer;Niemi:2019ultraweak}]\label{lem:Hdivdivpatching}
Let $\boldsymbol \tau\in \boldsymbol  L^2(\Omega;\mathbb S)$ such that
\begin{enumerate}[\rm (i)]
\item $\boldsymbol \tau|_K\in \boldsymbol{H}(\div{\div },K; \mathbb{S})$ for each polyhedron  $K\in\mathcal T_h$;

\smallskip
\item $(2\div_F(\boldsymbol\tau \boldsymbol n_F)+ \partial_{n_F}(\boldsymbol n^{\intercal} \boldsymbol \tau\boldsymbol n))|_F\in L^2(F)$ is single-valued for each $F\in\mathcal F_h^i$;

\smallskip
\item $(\boldsymbol  n^{\intercal}\boldsymbol \tau\boldsymbol  n)|_F\in L^2(F)$ is single-valued for each $F\in\mathcal F_h^i$;

\smallskip
\item $(\boldsymbol  n_i^{\intercal}\boldsymbol \tau\boldsymbol  n_j)|_e\in L^2(e)$ is single-valued for each $e\in\mathcal E_h^i$, $\, i, j=1, 2$,

\end{enumerate}
then $\boldsymbol \tau\in \boldsymbol{H}(\div{\div },\Omega; \mathbb{S})$.
\end{lemma}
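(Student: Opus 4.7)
The plan is to show that the piecewise double divergence $f\in L^2(\Omega)$ defined by $f|_K:=\div\div(\boldsymbol\tau|_K)$ coincides with the distributional $\div\div$ of $\boldsymbol\tau$ on $\Omega$; since $f\in L^2(\Omega)$ by hypothesis (i), this gives $\boldsymbol\tau\in\boldsymbol H(\div\div,\Omega;\mathbb S)$. For an arbitrary test function $v\in\mathcal C_0^\infty(\Omega)$, I would start from
\begin{equation*}
\langle\div\div\boldsymbol\tau,v\rangle_{\Omega}=(\boldsymbol\tau,\nabla^2 v)_\Omega=\sum_{K\in\mathcal T_h}(\boldsymbol\tau,\nabla^2 v)_K,
\end{equation*}
and apply the elementwise Green's identity of Lemma~\ref{lm:Green} (extended from $\mathcal C^2$ to $\boldsymbol H(\div\div,K;\mathbb S)$ by a density argument, which is legitimate because conditions (ii)--(iv) promote the distributional traces from Lemma~\ref{lem:Hdivdivtrace} to genuine $L^2$ functions whose pairings with the smooth $v$ and $\partial_n v$ are classical integrals) to rewrite each summand as $(\div\div\boldsymbol\tau,v)_K$ plus face and edge boundary contributions. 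Since $v$ has compact support in $\Omega$, every term supported on a face or edge lying in $\partial\Omega$ drops out, so only interior faces and edges remain, and the task reduces to showing that all such interior contributions cancel.

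On an interior face $F$ shared by elements $K_1,K_2$ with outward normals $\boldsymbol n_{K_1}=-\boldsymbol n_{K_2}=:\boldsymbol n$, the two contributions to $-\sum_F(\boldsymbol n^{\intercal}\boldsymbol\tau\boldsymbol n,\partial_n v)_F$ combine into the pairing of the jump of $\boldsymbol n^{\intercal}\boldsymbol\tau\boldsymbol n$ across $F$ against $\partial_n v$: the scalar $\boldsymbol n^{\intercal}\boldsymbol\tau\boldsymbol n$ is invariant under $\boldsymbol n\mapsto-\boldsymbol n$ while $\partial_n v$ flips sign, producing a genuine jump, which vanishes by (iii). Dually, $2\div_F(\boldsymbol\tau\boldsymbol n)+\partial_n(\boldsymbol n^{\intercal}\boldsymbol\tau\boldsymbol n)$ is odd under $\boldsymbol n\mapsto-\boldsymbol n$ and is paired against the single-valued $v$, so the sum from the two sides of $F$ is again a jump, which vanishes by (ii).

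For the edge terms I would reindex the triple sum $\sum_K\sum_{F\in\mathcal F(K),\,e\in\mathcal E(F)}(\boldsymbol n_{F,e}^{\intercal}\boldsymbol\tau\boldsymbol n,v)_e$ first over interior edges $e$, then over faces $F\supset e$, and finally over the two elements sharing each such face (noting that if $e\in\mathcal E_h^i$ then every $F\supset e$ is automatically interior, since $F\subset\partial\Omega$ would force $e\subset\partial\Omega$). Because $\boldsymbol n_{F,e}$ and $\boldsymbol n_F$ both lie in the plane normal to $\boldsymbol t_e$, which is spanned by the two edge normals $\boldsymbol n_1,\boldsymbol n_2$, the scalar $\boldsymbol n_{F,e}^{\intercal}\boldsymbol\tau\boldsymbol n_F$ is a linear combination of the four entries $\boldsymbol n_i^{\intercal}\boldsymbol\tau\boldsymbol n_j$ ($i,j=1,2$) with coefficients that depend only on the geometric position of $F$ relative to $e$ and are therefore identical for $K_1$ and $K_2$; hence the jump across $F$ is a linear combination of the jumps of $\boldsymbol n_i^{\intercal}\boldsymbol\tau\boldsymbol n_j$, each of which vanishes by (iv). Combining the three cancellations yields $(\boldsymbol\tau,\nabla^2 v)_\Omega=(f,v)_\Omega$ for every $v\in\mathcal C_0^\infty(\Omega)$, so $\div\div\boldsymbol\tau=f\in L^2(\Omega)$ in the sense of distributions and $\boldsymbol\tau\in\boldsymbol H(\div\div,\Omega;\mathbb S)$. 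The main obstacle I anticipate is the edge bookkeeping: organizing the triple sum so that each interior face appears with correctly paired orientations, and verifying that the expansion of $\boldsymbol n_{F,e}^{\intercal}\boldsymbol\tau\boldsymbol n_F$ in the $(\boldsymbol n_1,\boldsymbol n_2)$-basis uses element-independent coefficients, so that hypothesis (iv) really does translate into the needed cancellation.
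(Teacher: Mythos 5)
Your proposal is correct and follows essentially the same route as the paper: apply the elementwise Green's identity of Lemma~\ref{lm:Green} to a test function $v\in\mathcal C_0^\infty(\Omega)$, observe that each interior face appears twice with opposite orientation so that the single-valuedness hypotheses (ii)--(iv) kill the face and edge contributions, and conclude that the distributional $\div\div\boldsymbol\tau$ equals the piecewise one. Your explicit parity bookkeeping for the face terms and the expansion of $\boldsymbol n_{F,e}^{\intercal}\boldsymbol\tau\boldsymbol n_F$ in the span of $\boldsymbol n_1,\boldsymbol n_2$ are exactly the details the paper leaves implicit in its one-line cancellation remark.
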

\begin{proof}
For any $v\in \mathcal C_0^{\infty}(\Omega)$, we get from the Green's identity~\eqref{eq:greenidentitydivdiv} that
\begin{align*}
(\boldsymbol \tau, \nabla^2v)&=\sum_{K\in\mathcal T_h}(\div\div\boldsymbol \tau, v)_K+\sum_{K\in\mathcal T_h}\sum_{F\in\mathcal F^i(K)}\sum_{e\in\mathcal E^i(F)}(\boldsymbol n_{F,e}^{\intercal}\boldsymbol \tau \boldsymbol n, v)_e\\
&\quad+\sum_{K\in\mathcal T_h}\sum_{F\in\mathcal F^i(K)}\left[(\boldsymbol  n^{\intercal}\boldsymbol \tau\boldsymbol  n, \partial_n v)_{F} -  ( 2\div_F(\boldsymbol \tau\boldsymbol n)+\partial_n (\boldsymbol  n^{\intercal}\boldsymbol \tau\boldsymbol  n), v)_F\right].
\end{align*}
Since the terms in (ii)-(iv) are single-valued and each interior face is repeated twice in the summation with opposite orientation, it follows
\[
\langle\div\div\boldsymbol \tau, v\rangle=\sum_{K\in\mathcal T_h}(\div\div\boldsymbol \tau, v)_K.
\]
Thus we have $\boldsymbol \tau\in \boldsymbol{H}(\div{\div },\Omega; \mathbb{S})$ by the definition of derivatives of the distribution, and $(\div\div\boldsymbol \tau)|_K=\div\div(\boldsymbol \tau|_K)$ for each $K\in\mathcal T_h$.
\end{proof}

For any piecewise smooth $\boldsymbol \tau\in \boldsymbol{H}(\div{\div },\Omega; \mathbb{S})$,    
the single-valued term $(\boldsymbol  n_i^{\intercal}\boldsymbol \tau\boldsymbol  n_j)|_e$ in (iv) in Lemma~\ref{lem:Hdivdivpatching} implies that there is some compatible condition for $\boldsymbol \tau$ at each vertex $\delta\in\mathcal V_h^i$.
Indeed, for any $\delta\in\mathcal V_h^i$ and $F\in\mathcal F_h^i$ with $\delta$ being a vertex of $F$, let $\boldsymbol n_1=\boldsymbol t_1\times\boldsymbol n_F$ and $\boldsymbol n_2=\boldsymbol t_2\times\boldsymbol n_F$, where $\boldsymbol t_1$ and $\boldsymbol t_2$ are the unit tangential vectors of two edges of $F$ sharing $\delta$. Then by (iv) we have
\[
\resizebox{.98\hsize}{!}{$
\llbracket\boldsymbol  n_1^{\intercal}\boldsymbol \tau\boldsymbol  n_1\rrbracket_F(\delta)=\llbracket\boldsymbol  n_2^{\intercal}\boldsymbol \tau\boldsymbol  n_2\rrbracket_F(\delta)=\llbracket\boldsymbol  n_F^{\intercal}\boldsymbol \tau\boldsymbol  n_F\rrbracket_F(\delta)=\llbracket\boldsymbol  n_1^{\intercal}\boldsymbol \tau\boldsymbol  n_F\rrbracket_F(\delta)=\llbracket\boldsymbol  n_2^{\intercal}\boldsymbol \tau\boldsymbol  n_F\rrbracket_F(\delta)=0,
$}
\]
where $\llbracket\cdot\rrbracket_F$ is the jump across $F$.
Hence this suggests the tensor value at vertex as the degree of freedom when defining the finite element.


Continuity of $(\boldsymbol  n_i^{\intercal}\boldsymbol \tau\boldsymbol  n_j)|_e$ is a sufficient but not necessary condition for functions in $\boldsymbol{H}(\div{\div },\Omega; \mathbb{S})$. Sufficient and necessary conditions are presented in~\cite[Proposition 3.6]{Fuhrer;Heuer;Niemi:2019ultraweak}.

\section{Didiv Conforming Finite Elements}\label{sec:fem}
In this section we construct conforming finite element space for $\boldsymbol{H}(\div{\div },\Omega; \mathbb{S})$ and prove the unisolvence.

\subsection{Finite element spaces for symmetric tensors}
Let $K$ be a tetrahedron. 
Take the space of shape functions
\[
\boldsymbol \Sigma_{\ell,k}(K):= \mathbb C_{\ell}(K;\mathbb S)\oplus\mathbb C_k^{\oplus}(K;\mathbb S)
\]
with $k\geq 3$ and $\ell\geq \max\{k-1, 3\}$. Recall that 
\[
\mathbb C_{\ell}(K; \mathbb S)=\sym \curl \, \mathbb  P_{\ell +1}(K; \mathbb T),\quad \mathbb C_k^{\oplus}(K; \mathbb S)=\boldsymbol  x\boldsymbol  x^{\intercal}\mathbb P_{k-2}(K).
\]
 By Lemma~\ref{lem:symmpolyspacedirectsum}, we have
\[
\mathbb P_{\min\{\ell,k\}}(K;\mathbb S)\subseteq\boldsymbol \Sigma_{\ell,k}(K) \subseteq \mathbb P_{\max\{\ell,k\}}(K;\mathbb S) \quad\textrm{ and }\quad \boldsymbol \Sigma_{k,k}(K)=\mathbb P_k(K;\mathbb S).
\]
The most interesting cases are $\ell=k-1$ and $\ell = k$, which correspond to RT (incomplete polynomial) and BDM (complete polynomial) $H(\div)$-conforming elements for the vector functions, respectively. 

For each edge, we chose two normal vectors $\boldsymbol n_1$ and $\boldsymbol n_2$. The degrees of freedom are given by
\begin{align}
\boldsymbol \tau (\delta) & \quad\forall~\delta\in \mathcal V(K), \label{Hdivdivfem3ddof1}\\
(\boldsymbol  n_i^{\intercal}\boldsymbol \tau\boldsymbol n_j, q)_e & \quad\forall~q\in\mathbb P_{\ell-2}(e),  e\in\mathcal E(K),\; i,j=1,2,\label{Hdivdivfem3ddof2}\\
(\boldsymbol  n^{\intercal}\boldsymbol \tau\boldsymbol  n, q)_F & \quad\forall~q\in\mathbb P_{\ell-3}(F),  F\in\mathcal F(K),\label{Hdivdivfem3ddof3}\\
(2\div_F(\boldsymbol\tau \boldsymbol n)+ \partial_n(\boldsymbol n^{\intercal} \boldsymbol \tau\boldsymbol n), q)_F & \quad\forall~q\in\mathbb P_{\ell-1}(F),  F\in\mathcal F(K),\label{Hdivdivfem3ddof4}\\
(\boldsymbol \tau, \boldsymbol \varsigma)_K & \quad\forall~\boldsymbol \varsigma\in\nabla^2\mathbb P_{k-2}(K), \label{Hdivdivfem3ddof5} \\
(\boldsymbol \tau, \boldsymbol \varsigma)_K & \quad\forall~\boldsymbol \varsigma\in \sym(\mathbb P_{\ell-2}(K; \mathbb T)\times\boldsymbol x), \label{Hdivdivfem3ddof55} \\
(\boldsymbol \tau\boldsymbol n, \boldsymbol  n\times \boldsymbol x q)_{F_1} & \quad\forall~q\in\mathbb P_{\ell-2}(F_1),\label{Hdivdivfem3ddof6}
\end{align}
where $F_1\in\mathcal F(K)$ is an arbitrary but fixed face. The degrees of freedom~\eqref{Hdivdivfem3ddof6} will be regarded as interior degrees of freedom to the tetrahedron $K$, that is the degrees of freedom~\eqref{Hdivdivfem3ddof6} will be double-valued if $F\in\mathcal F_h^i$ is selected in different elements. 

Before we prove the unisolvence, we give characterization of the space of shape functions restricted to edges and faces, and derive some consequence of vanishing degree of freedoms. 

\begin{lemma}\label{lem:boundpolyl}
For any $\boldsymbol \tau\in\boldsymbol \Sigma_{\ell,k}(K)$, we have
\[
\boldsymbol  n_i^{\intercal}\boldsymbol\tau\boldsymbol  n_j|_e\in\mathbb P_{\ell}(e),\quad \boldsymbol  n^{\intercal}\boldsymbol\tau\boldsymbol  n|_F\in\mathbb P_{\ell}(F),\quad
2\div_F(\boldsymbol\tau \boldsymbol n)+ \partial_n(\boldsymbol n^{\intercal} \boldsymbol \tau\boldsymbol n)|_F\in\mathbb P_{\ell-1}(F)
\]
for each edge $e\in\mathcal E(K)$, each face $F\in\mathcal F(K)$ and $i,j=1,2$.
\end{lemma}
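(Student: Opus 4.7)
My plan is to use the direct sum $\boldsymbol\Sigma_{\ell,k}(K)=\mathbb C_\ell(K;\mathbb S)\oplus\mathbb C_k^{\oplus}(K;\mathbb S)$ to split $\boldsymbol\tau=\boldsymbol\sigma_1+\boldsymbol\sigma_2$ with $\boldsymbol\sigma_1\in\mathbb C_\ell(K;\mathbb S)\subseteq\mathbb P_\ell(K;\mathbb S)$ and $\boldsymbol\sigma_2=q\boldsymbol x\boldsymbol x^{\intercal}$ for some $q\in\mathbb P_{k-2}(K)$, and verify the three assertions for each summand separately. All three are immediate for $\boldsymbol\sigma_1$: its restriction to an edge or a face is a polynomial of total degree at most $\ell$, contraction against constant normal vectors preserves this degree, and the operators $\div_F$ and $\partial_n$ each drop the polynomial degree by one, so the effective shear of $\boldsymbol\sigma_1$ lies in $\mathbb P_{\ell-1}(F)$.

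The work is therefore concentrated on $\boldsymbol\sigma_2 = q\boldsymbol x\boldsymbol x^{\intercal}$, and the only algebraic fact I will use is that $\boldsymbol n\cdot\boldsymbol x$ is constant on a face $F$ (since $\nabla(\boldsymbol n\cdot\boldsymbol x)=\boldsymbol n$ and $\Pi_F\boldsymbol n=\boldsymbol 0$), and likewise $\boldsymbol n_i\cdot\boldsymbol x$ is constant along any edge $e$ whose tangent is orthogonal to $\boldsymbol n_i$. These immediately give
\[
\boldsymbol n_i^{\intercal}\boldsymbol\sigma_2\boldsymbol n_j\big|_e=(\boldsymbol n_i\cdot\boldsymbol x)(\boldsymbol n_j\cdot\boldsymbol x)\,q\big|_e\in \mathbb P_{k-2}(e)\subseteq \mathbb P_\ell(e),
\]
\[
\boldsymbol n^{\intercal}\boldsymbol\sigma_2\boldsymbol n\big|_F=(\boldsymbol n\cdot\boldsymbol x)^2\,q\big|_F\in\mathbb P_{k-2}(F)\subseteq \mathbb P_\ell(F),
\]
which settles the first two assertions for $\boldsymbol\sigma_2$ (using $\ell\geq k-1>k-2$).

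For the third assertion applied to $\boldsymbol\sigma_2$ I will compute $\boldsymbol\sigma_2\boldsymbol n=q(\boldsymbol x\cdot\boldsymbol n)\boldsymbol x$; writing $c_F:=(\boldsymbol x\cdot\boldsymbol n)|_F$, the restriction to $F$ equals $c_F\,q\boldsymbol x|_F$, a vector polynomial of degree at most $k-1$, so $\div_F(\boldsymbol\sigma_2\boldsymbol n)|_F = c_F\div_F(q\boldsymbol x)|_F\in\mathbb P_{k-2}(F)$. Similarly, from $\boldsymbol n^{\intercal}\boldsymbol\sigma_2\boldsymbol n = q(\boldsymbol x\cdot\boldsymbol n)^2$ I get $\partial_n[q(\boldsymbol x\cdot\boldsymbol n)^2]|_F=c_F^2(\partial_n q)|_F+2c_F q|_F\in\mathbb P_{k-2}(F)$. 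Summing, the effective transverse shear of $\boldsymbol\sigma_2$ lies in $\mathbb P_{k-2}(F)\subseteq\mathbb P_{\ell-1}(F)$. The point requiring genuine care, rather than any deep obstacle, is the degree-counting in this third claim: the hypothesis $\ell\geq k-1$ is exactly what is needed to absorb the degree-$(k-2)$ output into $\mathbb P_{\ell-1}(F)$, whereas the first two assertions hold under the weaker bound $\ell\geq k-2$.
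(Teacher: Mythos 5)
Your proof is correct and follows essentially the same route as the paper: both reduce to the summand $q\boldsymbol x\boldsymbol x^{\intercal}\in\mathbb C_k^{\oplus}(K;\mathbb S)$ and exploit that $\boldsymbol n_i^{\intercal}\boldsymbol x$ is constant on edges and $\boldsymbol n^{\intercal}\boldsymbol x$ is constant on faces, concluding via $\ell\geq k-1$. The only cosmetic difference is that the paper first rewrites $2\div_F(\boldsymbol\tau\boldsymbol n)+\partial_n(\boldsymbol n^{\intercal}\boldsymbol\tau\boldsymbol n)$ as $\div_F(\boldsymbol\tau\boldsymbol n)+\boldsymbol n^{\intercal}\div\boldsymbol\tau$ before computing, whereas you differentiate the two terms directly.
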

\begin{proof}
Take any $\boldsymbol\tau=\boldsymbol  x\boldsymbol  x^{\intercal}q\in\mathbb C_k^{\oplus}(K;\mathbb S)$ with $q\in\mathbb P_{k-2}(K)$.
Since $\boldsymbol  n_i^{\intercal}\boldsymbol  x$ is constant on each edge of $K$ and $\boldsymbol  n^{\intercal}\boldsymbol  x$ is constant on each face of $K$,
\[
\boldsymbol  n_i^{\intercal}\boldsymbol \tau\boldsymbol n_j|_{e}=(\boldsymbol  n_i^{\intercal}\boldsymbol  x)(\boldsymbol  n_j^{\intercal}\boldsymbol  x)q\in\mathbb P_{k-2}(e),\quad \boldsymbol  n^{\intercal}\boldsymbol \tau\boldsymbol  n|_F=(\boldsymbol  n^{\intercal}\boldsymbol  x)^2q\in\mathbb P_{k-2}(F),
\]
and
\begin{align*}
2\div_F(\boldsymbol\tau \boldsymbol n)+ \partial_n(\boldsymbol n^{\intercal} \boldsymbol \tau\boldsymbol n) &= 
 (\div_F(\boldsymbol\tau \boldsymbol n)+\boldsymbol n^{\intercal}\div\boldsymbol \tau)|_F \\&=
 \boldsymbol  n^{\intercal}\boldsymbol  x(\div_F(\boldsymbol x q)+\div(\boldsymbol  x q)+q)\in\mathbb P_{k-2}(F).
\end{align*}
Thus we conclude the results from the requirement $\ell\geq k-1$.
\end{proof}

\begin{lemma}\label{lem:vanishdof}
For any $\boldsymbol \tau\in\boldsymbol \Sigma_{\ell,k}(K)$ with the degrees of freedom~\eqref{Hdivdivfem3ddof1}-\eqref{Hdivdivfem3ddof55} vanishing, we have 
\begin{align}
\label{eq:20200707-1}
 \boldsymbol  n_i^{\intercal}\boldsymbol\tau\boldsymbol  n_j|_e&=0\quad\forall~e\in\mathcal E(K),\; i,j=1,2,\\
\label{eq:20200707-21}
\boldsymbol  n^{\intercal}\boldsymbol\tau\boldsymbol  n|_F &= 0\quad\forall~F\in\mathcal F(K),\\
\label{eq:20200707-22}
(2\div_F(\boldsymbol\tau \boldsymbol n)+ \partial_n(\boldsymbol n^{\intercal} \boldsymbol \tau\boldsymbol n))|_F & = 0\quad\forall~F\in\mathcal F(K),
\\
\notag
\div\div \bs\tau &= 0,\\
\label{eq:20200707-3}
(\boldsymbol \tau, \boldsymbol \varsigma)_K &=0 \quad\forall~\boldsymbol \varsigma\in\mathbb P_{\ell-1}(K;\mathbb S).
\end{align}
\end{lemma}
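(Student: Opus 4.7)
The proof proceeds by establishing the vanishing claims in the order listed, since each step depends on the previous ones through a Green's-identity argument.

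First, I would fix an edge $e \in \mathcal E(K)$ and recall from Lemma~\ref{lem:boundpolyl} that $\boldsymbol n_i^{\intercal}\boldsymbol\tau\boldsymbol n_j|_e \in \mathbb P_{\ell}(e)$. The vertex degrees of freedom~\eqref{Hdivdivfem3ddof1} force vanishing at the two endpoints of $e$, leaving a polynomial in an $(\ell-1)$-dimensional subspace. The edge moments~\eqref{Hdivdivfem3ddof2} then impose $\ell-1$ orthogonality conditions against $\mathbb P_{\ell-2}(e)$, which is enough to conclude~\eqref{eq:20200707-1}. For a face $F$, I note that on each edge $e \subset \partial F$, the face normal $\boldsymbol n$ lies in the plane orthogonal to $\boldsymbol t_e$, hence is a linear combination of $\boldsymbol n_1, \boldsymbol n_2$; so $\boldsymbol n^{\intercal}\boldsymbol\tau\boldsymbol n|_e = 0$ by what we just proved, and $\boldsymbol n^{\intercal}\boldsymbol\tau\boldsymbol n|_F \in \mathbb P_{\ell}(F)$ vanishes on $\partial F$. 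Writing it as $\lambda_1\lambda_2\lambda_3 p$ with $p \in \mathbb P_{\ell-3}(F)$ and testing~\eqref{Hdivdivfem3ddof3} against $q = p$ yields~\eqref{eq:20200707-21}. For~\eqref{eq:20200707-22}, the trace lies in $\mathbb P_{\ell-1}(F)$ and~\eqref{Hdivdivfem3ddof4} provides orthogonality against all of $\mathbb P_{\ell-1}(F)$, so testing against itself gives zero.

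For $\div\div\boldsymbol\tau = 0$, the key observation is that $\div\div\boldsymbol\tau \in \mathbb P_{k-2}(K)$: this follows because $\div\div$ annihilates $\mathbb C_\ell(K; \mathbb S)$ and maps $\mathbb C_k^{\oplus}(K; \mathbb S)$ into $\mathbb P_{k-2}(K)$ by Lemma~\ref{lem:symmpolyspacedirectsum}. I apply the Green's identity~\eqref{eq:greenidentitydivdiv} with test function $v \in \mathbb P_{k-2}(K)$. The volume term $(\boldsymbol\tau, \nabla^2 v)_K$ vanishes by degrees of freedom~\eqref{Hdivdivfem3ddof5}; the edge terms vanish by~\eqref{eq:20200707-1} (since $\boldsymbol n_{F,e}$ and $\boldsymbol n$ are both orthogonal to $\boldsymbol t_e$); the two face terms vanish by~\eqref{eq:20200707-21} and~\eqref{eq:20200707-22}. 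Hence $(\div\div\boldsymbol\tau, v)_K = 0$ for all $v \in \mathbb P_{k-2}(K)$, and choosing $v = \div\div\boldsymbol\tau$ gives the result.

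For the final orthogonality~\eqref{eq:20200707-3}, I invoke Lemma~\ref{lem:PkS} to decompose
\[
\mathbb P_{\ell-1}(K; \mathbb S) = \nabla^2 \mathbb P_{\ell+1}(K) \oplus \sym\bigl(\mathbb P_{\ell-2}(K; \mathbb T) \times \boldsymbol x\bigr).
\]
Orthogonality against the second summand is exactly~\eqref{Hdivdivfem3ddof55}. For the first summand, I apply Green's identity~\eqref{eq:greenidentitydivdiv} once more, now with $v \in \mathbb P_{\ell+1}(K)$: every boundary term vanishes by the already-proved~\eqref{eq:20200707-1}--\eqref{eq:20200707-22}, and $(\div\div\boldsymbol\tau, v)_K = 0$ since $\div\div\boldsymbol\tau = 0$, leaving $(\boldsymbol\tau, \nabla^2 v)_K = 0$. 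Combining the two summands yields~\eqref{eq:20200707-3}.

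The main subtlety is simply the bookkeeping in Step~1--Step~3: one must verify that the dimensions of the traces of the shape function space, which are bounded by $\ell$ or $\ell-1$ by Lemma~\ref{lem:boundpolyl}, are matched \emph{exactly} by the imposed boundary degrees of freedom so that no residual freedom remains. Once these trace vanishings are in hand, the remaining two claims are essentially automatic consequences of the Green's identity together with the interior moments.
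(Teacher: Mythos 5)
Your proposal is correct and follows essentially the same route as the paper: trace vanishing via Lemma~\ref{lem:boundpolyl} and the standard Lagrange-type argument on edges and faces, then $\div\div\boldsymbol\tau=0$ from the Green's identity tested against $\mathbb P_{k-2}(K)$, and finally~\eqref{eq:20200707-3} from the Green's identity combined with~\eqref{Hdivdivfem3ddof55} and the decomposition~\eqref{eq:hesspolyspacedecomp2}. You merely spell out a few steps the paper leaves implicit (e.g., that the face normal lies in $\mathrm{span}\{\boldsymbol n_1,\boldsymbol n_2\}$ on each boundary edge), so no substantive difference.
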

\begin{proof}
According to Lemma~\ref{lem:boundpolyl}, we acquire~\eqref{eq:20200707-1}-\eqref{eq:20200707-22} from the vanishing degrees of freedom~\eqref{Hdivdivfem3ddof1}-\eqref{Hdivdivfem3ddof4} directly. The scalar function $\boldsymbol  n^{\intercal}\boldsymbol\tau\boldsymbol  n|_F$ is the standard Lagrange element and the vanishing function value $\boldsymbol \tau(\delta)$ at vertices are used to ensure~\eqref{eq:20200707-21}. 

Noting that
$\div\div\bs\tau\in \mathbb P_{k-2}(K)$, 
we get from the Green's identity~\eqref{eq:greenidentitydivdiv}, ~\eqref{eq:20200707-1}-\eqref{eq:20200707-22} and the vanishing degrees of freedom~\eqref{Hdivdivfem3ddof5} that $\div\div\bs\tau=0$. Applying the Green's identity~\eqref{eq:greenidentitydivdiv} and ~\eqref{eq:20200707-1}-\eqref{eq:20200707-22}, it follows
\[
(\boldsymbol \tau, \nabla^2v)_K=0\quad\forall~v\in H^2(K),
\]
which together with~\eqref{Hdivdivfem3ddof55} and the decomposition~\eqref{eq:hesspolyspacedecomp2} yields~\eqref{eq:20200707-3}.
\end{proof}

With previous preparations, 
we prove the unisolvence as follows. 
For any $\bs\tau\in\boldsymbol \Sigma_{\ell,k}(K)$ satisfying $\div\div\boldsymbol \tau=0$, we have $\bs\tau\in\mathbb P_{\ell}(K;\mathbb S)$ as no contribution from $\mathbb C_k^{\oplus}(K;\mathbb S)$. By~\eqref{eq:20200707-3} the volume moments can only determine the polynomial of degree up to $\ell-1$. 

We then use the vanished trace. Similarly as the RT and BDM elements~\cite{BoffiBrezziFortin2013},  the vanishing normal-normal trace~\eqref{eq:20200707-21} implies the normal-normal part of $\bs\tau$ is zero. To determine the normal-tangential terms, further degrees of freedoms are needed. 

Unlike the traditional approach by transforming back to the reference element, we will chose an intrinsic coordinate. For ease of presentation, denote the four faces in $\mathcal F(K)$ by $F_i$, which is opposite to the $i$th vertex of $K$, and by $\boldsymbol n_i$ the outward unit normal vector of $F_i$ for $i=1,2,3,4$. 
Let $\boldsymbol t_i$ be the unit tangential vector of the edge from vertex $4$ to vertex $i$; see Fig.~\ref{fig:localcoor}. 
The set of three vectors $\{\boldsymbol t_1, \boldsymbol t_2, \boldsymbol t_3\}$ forms a basis of $\mathbb R^3$ although they may not be orthogonal in general. Consequently $\{\boldsymbol t_i\boldsymbol t_j^{\intercal}\}_{i,j=1}^3$ forms a basis of the second order tensor and $(\boldsymbol t_i, \boldsymbol n_i) \neq 0$ for $i=1,2,3$.
\begin{figure}[htbp]
\begin{center}
\includegraphics[width=6cm]{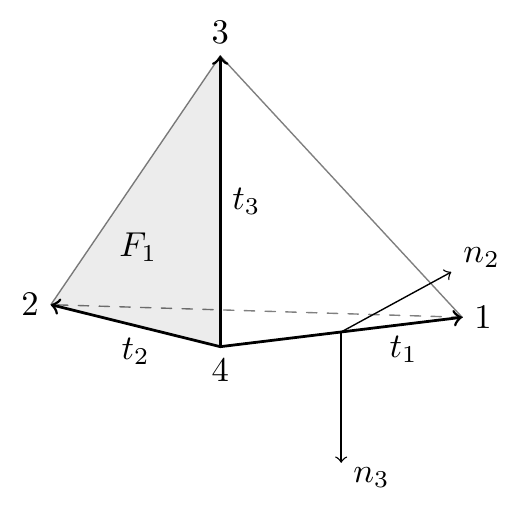}
\caption{Local coordinate formed by three edge vectors.}
\label{fig:localcoor}
\end{center}
\end{figure}
Let $\lambda_i(\boldsymbol x)$ be the $i$th barycentric coordinate with respect to the tetrahedron $K$ for $i=1,2,3,4$. Then $\lambda_i|_{F_i} = 0$ and $\nabla \lambda_i = - c_i \boldsymbol n_i$ for some $c_i>0$.

\begin{theorem}\label{lem:unisovlenHdivdivfem}
The degrees of freedom~\eqref{Hdivdivfem3ddof1}-\eqref{Hdivdivfem3ddof6} are unisolvent for $\boldsymbol \Sigma_{\ell,k}(K)$.
\end{theorem}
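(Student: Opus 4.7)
My plan is the standard two-step unisolvence argument. First I would verify that the total number of DOFs in~\eqref{Hdivdivfem3ddof1}-\eqref{Hdivdivfem3ddof6} equals $\dim\boldsymbol\Sigma_{\ell,k}(K) = \dim\mathbb C_\ell(K;\mathbb S) + \dim\mathbb C_k^\oplus(K;\mathbb S)$ supplied by the direct sum of the shape function space together with~\eqref{eq:dimC}. The DOF side contributes $4\cdot 6$ from vertices, $6\cdot 3\cdot\dim\mathbb P_{\ell-2}(e)$ from edges (the factor $3$ coming from the symmetry of $\boldsymbol\tau$ which leaves only the three pairs $(i,j)\in\{(1,1),(1,2),(2,2)\}$), $4(\dim\mathbb P_{\ell-3}(F)+\dim\mathbb P_{\ell-1}(F))$ from faces, the interior contributions $\dim\nabla^2\mathbb P_{k-2}(K)$ and $\dim\sym(\mathbb P_{\ell-2}(K;\mathbb T)\times\boldsymbol x)$ (computed from the Hessian complex~\eqref{eq:hesscomplex3dPolydouble} and the Koszul complex~\eqref{eq:divdivKoszulcomplex3dPoly} respectively, with the appropriate index shifts, as in~\eqref{eq:dimsymx}), and $\dim\mathbb P_{\ell-2}(F_1)$ from the extra face DOF. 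Matching both sides is then polynomial bookkeeping.

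\textbf{Reduction via the kernel of $\div\div$.} Assume all DOFs vanish. Lemma~\ref{lem:vanishdof} immediately yields $\div\div\boldsymbol\tau = 0$, the vanishing of all three boundary trace quantities, and the $L^2$-orthogonality $(\boldsymbol\tau,\boldsymbol\varsigma)_K = 0$ for every $\boldsymbol\varsigma\in\mathbb P_{\ell-1}(K;\mathbb S)$. Write $\boldsymbol\tau = \boldsymbol\tau_1 + \boldsymbol\tau_2$ with $\boldsymbol\tau_1\in\mathbb C_\ell(K;\mathbb S)$ and $\boldsymbol\tau_2\in\mathbb C_k^\oplus(K;\mathbb S)$. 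By the polynomial divdiv complex~\eqref{eq:divdivcomplex3dPoly} we have $\div\div\boldsymbol\tau_1 = 0$, hence $\div\div\boldsymbol\tau_2 = 0$; Lemma~\ref{lem:symmpolyspacedirectsum}(ii) (bijectivity of $\div\div$ on $\mathbb C_k^\oplus$) then forces $\boldsymbol\tau_2 = 0$. Consequently $\boldsymbol\tau \in \mathbb C_\ell(K;\mathbb S) \subseteq \mathbb P_\ell(K;\mathbb S)$, and the remaining task is to show that such a $\boldsymbol\tau$, carrying all the residual boundary/volume conditions together with~\eqref{Hdivdivfem3ddof6}, must vanish.

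\textbf{Main obstacle: killing the residual $\mathbb P_\ell$ part.} Here I would expand $\boldsymbol\tau = \sum_{1\le i\le j\le 3} c_{ij}\sym(\boldsymbol t_i\boldsymbol t_j^\intercal)$ in the intrinsic basis from Fig.~\ref{fig:localcoor}. The orthogonality $\boldsymbol n_i^\intercal\boldsymbol t_j = 0$ for $j\in\{1,2,3\}\setminus\{i\}$ reduces $\boldsymbol n_i^\intercal\boldsymbol\tau\boldsymbol n_i|_{F_i} = 0$ to $c_{ii}|_{F_i}=0$, i.e.\ $\lambda_i\mid c_{ii}$ for $i=1,2,3$; the face $F_4$ (where all three $\boldsymbol n_4^\intercal\boldsymbol t_j$ are nonzero) contributes one coupled linear constraint on the $c_{ij}$ divisible by $\lambda_4$. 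Combined with the vanishing edge normal-normal components (which lift the factorization $\lambda_i\lambda_j\mid \cdots$ along shared edges) and the vanishing vertex values, this produces a chain of barycentric divisibilities that pulls the boundary constraints into the volume. The effective-shear conditions on each face then remove further face-local freedom, and the volume orthogonality $(\boldsymbol\tau,\boldsymbol\varsigma)_K=0$ for $\boldsymbol\varsigma\in\mathbb P_{\ell-1}(K;\mathbb S)$ acts as a bubble-type cut that pins $\boldsymbol\tau$ down to a residual subspace whose dimension I expect to match $\dim\mathbb P_{\ell-2}(F_1)$. The test functions $\boldsymbol n\times\boldsymbol x q$ with $q\in\mathbb P_{\ell-2}(F_1)$ in~\eqref{Hdivdivfem3ddof6} are drawn from the two-dimensional Koszul complex~\eqref{eq:divdivcomplexPolydouble2D} on $F_1$, which is exactly why they form a dual basis of this residual and close out $\boldsymbol\tau = 0$. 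The hardest step is the careful bookkeeping that isolates this residual space and verifies the non-degeneracy of the~\eqref{Hdivdivfem3ddof6} pairing on it; this is where the interplay between the 3D factorizations, the volume moment decomposition~\eqref{eq:hesspolyspacedecomp2}, and the 2D divdiv polynomial complex on $F_1$ must be handled simultaneously.
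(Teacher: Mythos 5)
Your setup is sound and matches the paper's: the dimension count (including the factor $3$ per edge from symmetry), the appeal to Lemma~\ref{lem:vanishdof}, and the elimination of the $\mathbb C_k^{\oplus}$ component via $\div\div\boldsymbol\tau=0$ and Lemma~\ref{lem:symmpolyspacedirectsum}(ii) are all correct and are exactly how the paper reduces to $\boldsymbol\tau\in\mathbb P_{\ell}(K;\mathbb S)$ with vanishing traces and vanishing moments against $\mathbb P_{\ell-1}(K;\mathbb S)$.

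The genuine gap is in your final step, which is where all the remaining work lives and which you only gesture at. Your claim that the residual space after the other constraints has dimension $\dim\mathbb P_{\ell-2}(F_1)$ and that the pairing in~\eqref{Hdivdivfem3ddof6} is non-degenerate on it is precisely the assertion to be proved; saying the test functions come from the two-dimensional Koszul complex is a heuristic, not an argument, and the dimension count alone cannot deliver it (equality of dimensions only shows the DOFs \emph{could} be unisolvent, not that they are). What is missing is the concrete sequential mechanism: (a) in the intrinsic basis $\tau_{ij}=\boldsymbol n_i^{\intercal}\boldsymbol\tau\boldsymbol n_j/((\boldsymbol t_i^{\intercal}\boldsymbol n_i)(\boldsymbol t_j^{\intercal}\boldsymbol n_j))$, first kill each diagonal entry $\tau_{ii}$, $i=1,2,3$, by $\tau_{ii}|_{F_i}=0$ plus the volume moment against $q_{\ell-1}\boldsymbol n_i\boldsymbol n_i^{\intercal}$; (b) only then does the second trace on $F_1$ collapse to $2\div_{F_1}(\boldsymbol\tau\boldsymbol n_1)|_{F_1}=0$ (because $\tau_{11}\equiv0$ in $K$ makes $\partial_{n_1}(\boldsymbol n_1^{\intercal}\boldsymbol\tau\boldsymbol n_1)$ vanish identically), so that $\boldsymbol n_1\times(\boldsymbol\tau\boldsymbol n_1)=\nabla_{F_1}(b_{F_1}q_{\ell-2})$ and~\eqref{Hdivdivfem3ddof6} kills $q_{\ell-2}$ via integration by parts and the surjectivity of $\div_{F_1}(\boldsymbol x\,\cdot\,):\mathbb P_{\ell-2}(F_1)\to\mathbb P_{\ell-2}(F_1)$; and (c) once the whole first row of $\boldsymbol\tau$ vanishes, symmetry leaves only $\tau_{23}\,\sym(\boldsymbol t_2\boldsymbol t_3^{\intercal})$, whose remaining component is controlled by the normal--normal trace on the \emph{fourth} face, $\boldsymbol n_4^{\intercal}\boldsymbol\tau\boldsymbol n_4|_{F_4}=0$, giving $\lambda_4\mid\tau_{23}$ and then $\tau_{23}=0$ by the volume moments. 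Step (c) in particular is the answer to the question your sketch never addresses: why a single distinguished face $F_1$ suffices. Without (b) and (c) spelled out, the proof does not close.
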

\begin{proof}
We first count the number of the degrees of freedom~\eqref{Hdivdivfem3ddof1}-\eqref{Hdivdivfem3ddof6}. Calculation of d.o.f.~\eqref{Hdivdivfem3ddof55} can be found in~\eqref{eq:dimsymx}.
The number of d.o.f.~\eqref{Hdivdivfem3ddof1}-\eqref{Hdivdivfem3ddof6} is
\begin{align*}
&24+18(\ell-1)+2[(\ell-1)(\ell-2)+(\ell+1)\ell] \\
&\quad+\frac{1}{6}(k^3-k)-4+\frac{1}{6}\ell(\ell-1)(5\ell+14) +\frac{1}{2}\ell(\ell-1) \\
=&\frac{1}{6}(5\ell^3+36\ell^2+67\ell+36)+\frac{1}{6}(k^3-k),
\end{align*}
which is same as $\dim\boldsymbol \Sigma_{\ell,k}(K)$ cf.~\eqref{eq:dimC}. 


Take any $\boldsymbol \tau\in\boldsymbol \Sigma_{\ell,k}(K)$ and
suppose all the degrees of freedom~\eqref{Hdivdivfem3ddof1}-\eqref{Hdivdivfem3ddof6} vanish. We are going to prove the function $\boldsymbol \tau = 0$. 
Using the local coordinate sketched in Fig.~\ref{fig:localcoor}, we can expand $\boldsymbol \tau$ as
\[
\bs\tau=\sum_{i,j=1}^3\tau_{ij}\boldsymbol t_i\boldsymbol t_j^{\intercal} \quad\textrm{with}\quad \tau_{ij}=\frac{\boldsymbol n_i^{\intercal}\bs\tau\boldsymbol n_j}{(\boldsymbol t_i^{\intercal}\boldsymbol n_i)(\boldsymbol t_j^{\intercal}\boldsymbol n_j)}.
\]
As $\boldsymbol \tau$ is symmetric, $\tau_{ij} = \tau_{ji}$. 
By~\eqref{eq:20200707-21}, it follows
\[
\tau_{ii}|_{F_i}=\boldsymbol n_i^{\intercal}\bs\tau\boldsymbol n_i|_{F_i}=0, \quad i = 1,2,3.
\]
Thus there exists $q_{\ell-1}\in \mathbb P_{\ell-1}(K)$ satisfying $\tau_{ii}=\lambda_iq_{\ell-1}$ for $i=1,2,3$.
Taking $\boldsymbol \varsigma = q_{\ell-1}\boldsymbol n_i\boldsymbol n_i^{\intercal}$ in~\eqref{eq:20200707-3} will produce 
\begin{equation}\label{eq:diagonal}
\tau_{ii}=0, \quad i = 1,2,3.
\end{equation}
Namely the diagonal of $\boldsymbol \tau$ is zero.
So far, in the chosen coordinate, $\boldsymbol n_4^{\intercal}\bs\tau\boldsymbol n_4 = 0$ has no simple formulation and will be used later on.

On the other hand, from~\eqref{eq:20200707-1} we have $\Pi_{F_1}(\bs\tau\boldsymbol n_1)\in H_0(\div_{F_1}, F_1) $.
As $\boldsymbol n_1^{\intercal}\boldsymbol \tau \boldsymbol n_1 = \tau_{11} = 0$ in $K$ cf.~\eqref{eq:diagonal}, it follows $\partial_{n_1}(\boldsymbol n_1^{\intercal}\boldsymbol \tau \boldsymbol n_1)|_{F_1}=0$. Therefore~\eqref{eq:20200707-22} becomes
\[
2\div_{F_1}(\boldsymbol\tau \boldsymbol n_1)|_{F_1}=0.
\]
Hence there exists $q_{\ell-2}\in \mathbb P_{\ell-2}(F_1)$ such that $\boldsymbol n_1\times(\boldsymbol\tau \boldsymbol n_1)=\nabla_{F_1}(b_{F_1}q_{\ell-2})$, 
where $b_{F_1}$ is the cubic bubble function on face $F_1$.
Together with~\eqref{Hdivdivfem3ddof6} and the fact $\div_{F_1}(\boldsymbol x\mathbb P_{\ell-2}(F_1))=\mathbb P_{\ell-2}(F_1)$, 
we get $(\boldsymbol n_1\times(\boldsymbol\tau \boldsymbol n_1))|_{F_1}=\boldsymbol 0$.
Thus $(\boldsymbol\tau \boldsymbol n_1)|_{F_1}=\boldsymbol 0$.
Then there exists $\boldsymbol q_{\ell-1}\in \mathbb P_{\ell-1}(K;\mathbb R^3)$ such that $\boldsymbol\tau \boldsymbol n_1=\lambda_1\boldsymbol q_{\ell-1}$,  combined with~\eqref{eq:20200707-3} yields $\boldsymbol\tau \boldsymbol n_1=\boldsymbol 0$. That is the first row of $\boldsymbol \tau$ is zero, i.e. $\tau_{11}=\tau_{12}=\tau_{13}=0$. 

By the symmetry, now
$
\bs\tau=2\tau_{23}\sym(\boldsymbol t_2\boldsymbol t_3^{\intercal})
$. Multiplying $\bs\tau$ by $\boldsymbol n_4$ from both sides and restricting to $F_4$, we have
\[
\tau_{23}|_{F_4}=\frac{1}{2}\frac{\boldsymbol n_4^{\intercal}\bs\tau\boldsymbol n_4}{(\boldsymbol t_2^{\intercal}\boldsymbol n_4)(\boldsymbol t_3^{\intercal}\boldsymbol n_4)}|_{F_4}=0.
\]
The denominator is non-zero as $\boldsymbol t_2, \boldsymbol t_3$ are non tangential vectors of face $F_4$. 
Again there exists $q_{\ell-1}\in \mathbb P_{\ell-1}(K)$ satisfying $\tau_{23}=\lambda_4q_{\ell-1}$.
Taking $\boldsymbol \varsigma=\sym(\boldsymbol t_2\boldsymbol t_3^{\intercal})q_{\ell-1}$ in~\eqref{eq:20200707-3} gives $\tau_{23}=0$. We thus have proved $\boldsymbol \tau = 0$ and consequently the unisolvence. 
\end{proof}

Due to~\eqref{Hdivdivfem3ddof4}, it is arduous to figure out the explicit basis functions of  $\boldsymbol \Sigma_{\ell,k}(K)$, which are dual to the degrees of freedom~\eqref{Hdivdivfem3ddof1}-\eqref{Hdivdivfem3ddof6}.
Alternatively we can hybridize the degrees of freedom~\eqref{Hdivdivfem3ddof4}, and use the basis functions of the standard Lagrange element~\cite{ChenHuang2020}. 

\subsection{Polynomial bubble function spaces and the bubble complex}\label{sec:bubble}
Let
\begin{align*}
\mathring{\boldsymbol \Sigma}_{\ell,k}(K)&:=\{\boldsymbol  \tau\in\boldsymbol \Sigma_{\ell,k}(K): \textrm{all degrees of freedom~\eqref{Hdivdivfem3ddof1}-\eqref{Hdivdivfem3ddof4} vanish}\}.
\end{align*}
Together with vanishing~\eqref{Hdivdivfem3ddof5}, we can  conclude that $\div\div \boldsymbol \tau = 0$. In view of Fig.~\ref{fig:femdec} and Lemma~\ref{lem:vanishdof}, the last two set of d.o.f.~\eqref{Hdivdivfem3ddof55}-\eqref{Hdivdivfem3ddof6} can be replaced by 
\begin{equation*}
(\boldsymbol \tau, \boldsymbol \varsigma)_K \quad \forall~\boldsymbol \varsigma\in \mathring{\boldsymbol \Sigma}_{\ell,k}(K) \cap \ker(\div\div),
\end{equation*}
Next we give characterization of $\mathring{\boldsymbol \Sigma}_{\ell,k}(K) \cap \ker(\div\div)$.

By the exactness of divdiv complex, if $\div\div \boldsymbol \tau = 0$ and $\tr(\boldsymbol \tau) = 0$, it is possible that $\boldsymbol \tau = \sym \curl \boldsymbol \sigma$ for some $\boldsymbol \sigma \in \boldsymbol B_{\ell+1}(\sym\curl, K;\mathbb T) : = \boldsymbol H_0(\sym \curl,K; \mathbb T)\cap \mathbb P_{\ell +1}(K;\mathbb T)$. 
We will give an explicit characterization of $\boldsymbol B_{\ell+1}(\sym\curl, K;\mathbb T)$, show $\mathring{\boldsymbol \Sigma}_{\ell,k}(K) \cap \ker(\div\div)= \sym \curl \boldsymbol B_{\ell+1}(\sym\curl, K;\mathbb T)$, and consequently get a set of computable and symmetric d.o.f..

We begin with a characterization of the trace of functions in $\boldsymbol H(\sym \curl,K; \mathbb T)$.
\begin{lemma} [Green's identity]\label{lm:Green}
Let $K$ be a polyhedron, and let $\boldsymbol  \tau\in \boldsymbol H^1(K; \mathbb M)$ and $\bs\sigma \in \boldsymbol H^1(K; \mathbb S)$. Then we have
\begin{align}
(\sym\curl\boldsymbol \tau, \boldsymbol \sigma)_K=(\boldsymbol \tau, \curl\boldsymbol \sigma)_K & - \sum_{F\in\mathcal F(K)}(\sym \Pi_F (\boldsymbol \tau\times\boldsymbol n)\Pi_F, \Pi_F \boldsymbol\sigma \Pi_F)_F \notag\\
& - \sum_{F\in\mathcal F(K)}(\boldsymbol n\cdot \boldsymbol \tau\times\boldsymbol n, \boldsymbol  n\cdot \boldsymbol\sigma \Pi_F)_F. \notag
\end{align}
\end{lemma}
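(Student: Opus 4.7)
The plan is to start from the standard row-wise integration by parts for the $\curl$ operator on matrices and then decompose the resulting boundary term using the tangential/normal splitting on each face. Specifically, I would first observe that, since $\boldsymbol\sigma$ is symmetric and the Frobenius inner product pairs only the symmetric part of any matrix with $\boldsymbol\sigma$,
\begin{equation*}
(\sym\curl\boldsymbol\tau, \boldsymbol\sigma)_K = (\curl\boldsymbol\tau, \boldsymbol\sigma)_K.
\end{equation*}
Applying the elementary vectorial integration by parts $(\nabla \times \boldsymbol u, \boldsymbol w)_K = (\boldsymbol u, \nabla \times \boldsymbol w)_K - \int_{\partial K} (\boldsymbol u \times \boldsymbol n) \cdot \boldsymbol w$ row by row to each of the three row vectors of $\boldsymbol\tau$ and $\boldsymbol\sigma$, and using the identities $\curl \boldsymbol A = -\boldsymbol A \times \nabla$ from Section~\ref{sec:notation}, I get
\begin{equation*}
(\curl\boldsymbol\tau, \boldsymbol\sigma)_K = (\boldsymbol\tau, \curl\boldsymbol\sigma)_K - \sum_{F\in\mathcal F(K)}(\boldsymbol\tau \times \boldsymbol n, \boldsymbol\sigma)_F.
\end{equation*}

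Next I would massage the boundary term $(\boldsymbol\tau\times\boldsymbol n, \boldsymbol\sigma)_F$ on each face. The key observation is that the rows of $\boldsymbol\tau\times\boldsymbol n$ are automatically tangent to $F$, since each is the cross product of a row of $\boldsymbol\tau$ with $\boldsymbol n$; in matrix terms $(\boldsymbol\tau\times\boldsymbol n)\boldsymbol n=\bs 0$, which is equivalent to $(\boldsymbol\tau\times\boldsymbol n)\Pi_F = \boldsymbol\tau\times\boldsymbol n$. I would then split the remaining (row) index via the orthogonal decomposition $\boldsymbol I = \Pi_F + \Pi_n$ acting from the left, writing
\begin{equation*}
\boldsymbol\tau\times\boldsymbol n = \Pi_F(\boldsymbol\tau\times\boldsymbol n) + \boldsymbol n\bigl(\boldsymbol n\cdot \boldsymbol\tau\times\boldsymbol n\bigr).
\end{equation*}
This produces two boundary contributions, which I will identify with the two face terms in the claimed identity.

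For the tangential-tangential piece I use the symmetry and idempotence of $\Pi_F$ to move one $\Pi_F$ onto $\boldsymbol\sigma$: combining $(\boldsymbol\tau\times\boldsymbol n)\Pi_F = \boldsymbol\tau\times\boldsymbol n$ with the self-adjointness of the Frobenius pairing gives
\begin{equation*}
(\Pi_F(\boldsymbol\tau\times\boldsymbol n),\boldsymbol\sigma)_F
= (\Pi_F(\boldsymbol\tau\times\boldsymbol n)\Pi_F, \Pi_F\boldsymbol\sigma\Pi_F)_F,
\end{equation*}
and since $\Pi_F\boldsymbol\sigma\Pi_F$ is symmetric, the left factor can be replaced by its symmetric part, yielding exactly the first face term. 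For the normal-tangential piece, I use the identity $\boldsymbol u \boldsymbol v^{\intercal} : \boldsymbol\sigma = \boldsymbol u^{\intercal}\boldsymbol\sigma\boldsymbol v$ with $\boldsymbol u = \boldsymbol n$ and $\boldsymbol v = (\boldsymbol n\cdot\boldsymbol\tau\times\boldsymbol n)^{\intercal}$, which is tangential; hence $\boldsymbol n^{\intercal}\boldsymbol\sigma$ can be replaced by $\boldsymbol n\cdot \boldsymbol\sigma\Pi_F$ without altering the integrand. This produces the second face term.

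The only real subtlety will be bookkeeping for which projection acts on rows versus columns and verifying that the two ``$\Pi_F$'' symbols in $\sym\Pi_F(\boldsymbol\tau\times\boldsymbol n)\Pi_F$ are consistent with the conventions of Section~\ref{sec:notation}; there is no hard analytic step, only the density argument to extend from $\mathcal C^1$ to $\boldsymbol H^1$ if desired. I expect the main obstacle to be purely notational: cleanly arguing that $(\boldsymbol\tau\times\boldsymbol n)\Pi_F = \boldsymbol\tau\times\boldsymbol n$ (so the column projection is free) and that all tensor contractions with a symmetric $\boldsymbol\sigma$ may silently drop skew parts, so the two ``$\sym$'' and ``$\Pi_F\cdot\Pi_F$'' decorations are consistent on both sides of the inner product.
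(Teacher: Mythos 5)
Your proposal is correct and follows essentially the same route as the paper: drop $\sym$ against the symmetric $\boldsymbol\sigma$, integrate the row-wise $\curl$ by parts to produce $-(\boldsymbol\tau\times\boldsymbol n,\boldsymbol\sigma)_{\partial K}$, and then split each face term into the tangential--tangential and normal--tangential pieces using $\Pi_F$ and the symmetry of $\Pi_F\boldsymbol\sigma\Pi_F$. The bookkeeping you flag (the column projection being free because $(\boldsymbol\tau\times\boldsymbol n)\boldsymbol n=\boldsymbol 0$, and dropping skew parts against symmetric factors) is exactly how the paper closes the argument.
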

\begin{proof}
As $\boldsymbol \sigma$ is symmetric,
\begin{align*}
(\sym\curl\boldsymbol \tau, \boldsymbol \sigma)_K&= (\curl\boldsymbol \tau, \boldsymbol \sigma)_K = (\boldsymbol \tau, \curl \boldsymbol \sigma)_K - (\boldsymbol \tau \times \boldsymbol n, \boldsymbol \sigma)_{\partial K}.
\end{align*}
On each face, we expand the boundary term
\begin{align*}
(\boldsymbol \tau\times \boldsymbol n, \boldsymbol \sigma)_{F} = (\Pi_F (\boldsymbol \tau\times \boldsymbol n)\Pi_F, \Pi_F \boldsymbol\sigma \Pi_F)_F + (\boldsymbol n\cdot \boldsymbol \tau\times\boldsymbol n , \boldsymbol  n\cdot \boldsymbol\sigma \Pi_F)_F.
\end{align*}
Then we use the fact $\Pi_F \boldsymbol\sigma \Pi_F$ is symmetric to arrive the desired identity. 
\end{proof}

Based on the Green's identity, we introduce the following trace operators for $\boldsymbol H(\sym\curl)$ space
\begin{enumerate}
\item $\tr_1(\boldsymbol \tau) := \Pi_F \sym (\boldsymbol\tau\times\boldsymbol n)\Pi_F$,

 \item $\tr_1^{\bot}(\boldsymbol \tau) := \boldsymbol n \times \sym (\boldsymbol\tau\times\boldsymbol n)\times \boldsymbol n$,

 \item $\tr_2(\boldsymbol \tau) := \boldsymbol n\cdot \boldsymbol \tau\times\boldsymbol n$.
\end{enumerate}
Both $\tr_1(\boldsymbol \tau)$ and $\tr_1^{\bot}(\boldsymbol \tau)$ are symmetric tensors on each face and $\tr_2(\boldsymbol \tau)$ is a vector function. Obviously $\tr_1(\boldsymbol \tau) = 0$ if and only if  $\tr_1^{\bot}(\boldsymbol \tau) = 0$  as   $\tr_1^{\bot}(\boldsymbol \tau)$ is just a rotation of $\tr_1(\boldsymbol \tau)$.
Using the trace operators, $\boldsymbol H(\sym\curl)$ polynomial bubble function space can be defined as
\begin{align*}
\boldsymbol B_{\ell+1}(\sym\curl, K;\mathbb T):=\{&  
\bs\tau\in\mathbb P_{\ell+1}(K;\mathbb T):(\boldsymbol n\cdot \boldsymbol \tau\times\boldsymbol n)|_F=\bs0, \\
&(\boldsymbol n\times \sym(\boldsymbol\tau\times\boldsymbol n)\times\boldsymbol n)|_F=\bs0\quad\forall~F\in\mathcal F(K)
\}.
\end{align*}

We shall give an explicit characterization of $\boldsymbol B_{\ell+1}(\sym\curl, K;\mathbb T)$. 
\begin{lemma}\label{lem:symcurlbubbleedgevanish}
Let $\bs\tau\in \boldsymbol B_{\ell+1}(\sym\curl, K;\mathbb T)$. It holds
\begin{equation}\label{eq:edge}
\bs\tau|_e=\boldsymbol 0\quad\forall~e\in\mathcal E(K).
\end{equation}
\end{lemma}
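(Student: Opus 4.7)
The plan is to fix an arbitrary edge $e \in \mathcal E(K)$ and systematically decode the two face-based trace conditions of $\boldsymbol B_{\ell+1}(\sym\curl, K;\mathbb T)$ into component-wise constraints on $\bs\tau$ restricted to $e$. Let $\boldsymbol t$ be the unit tangent of $e$, and let $F_1, F_2$ be the two faces of $K$ sharing $e$, with outward unit normals $\boldsymbol n_1, \boldsymbol n_2$; since $F_1 \neq F_2$, these normals are linearly independent and both perpendicular to $\boldsymbol t$. I will set up the orthonormal frame $\{\boldsymbol t, \boldsymbol s_1, \boldsymbol n_1\}$ with $\boldsymbol s_1 := \boldsymbol n_1 \times \boldsymbol t$, and expand $\boldsymbol n_2 = \alpha \boldsymbol s_1 + \beta \boldsymbol n_1$ with $\alpha^2 + \beta^2 = 1$ and $\alpha \neq 0$ (because $F_1 \neq F_2$), so that $\boldsymbol s_2 := \boldsymbol n_2 \times \boldsymbol t = -\alpha \boldsymbol n_1 + \beta \boldsymbol s_1$. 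Throughout, I write $\tau_{ab} = \boldsymbol a^{\intercal} \bs\tau \boldsymbol b$ for the matrix entries of $\bs\tau$ in such frames.

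First I translate the $F_1$-conditions. The identity $\boldsymbol n \cdot \bs\tau \times \boldsymbol n = (\bs\tau^{\intercal}\boldsymbol n) \times \boldsymbol n$ shows that $(\boldsymbol n_1 \cdot \bs\tau \times \boldsymbol n_1)|_{F_1} = \boldsymbol 0$ is equivalent to $\bs\tau^{\intercal}\boldsymbol n_1 \parallel \boldsymbol n_1$ on $F_1$; evaluated at $e$ this yields $\tau_{n_1 t} = \tau_{n_1 s_1} = 0$. For the symmetric-tangential trace I use $(\bs\tau \times \boldsymbol n_1)\boldsymbol v = \bs\tau(\boldsymbol n_1 \times \boldsymbol v)$ and test $\boldsymbol u^{\intercal}\sym(\bs\tau \times \boldsymbol n_1)\boldsymbol u = 0$ with $\boldsymbol u = \boldsymbol t, \boldsymbol s_1, \boldsymbol t + \boldsymbol s_1$, producing $\tau_{t s_1} = 0$, $\tau_{s_1 t} = 0$, and $\tau_{tt} = \tau_{s_1 s_1}$. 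Combined with $\tr(\bs\tau) = 0$, the restriction $\bs\tau|_e$ is then determined by three scalar functions $a := \tau_{tt}$, $b := \tau_{t n_1}$, $c := \tau_{s_1 n_1}$, with $\tau_{n_1 n_1} = -2a$ and $\tau_{s_1 s_1} = a$.

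Next I impose the $F_2$-conditions on this reduced form. The two conditions $\tau_{s_2 t} = 0$ and $\tau_{n_2 t} = 0$ are automatic from the $F_1$-vanishings. The condition $\tau_{t s_2} = 0$ computes to $-\alpha b = 0$, and since $\alpha \neq 0$ this gives $b = 0$. The relation $\tau_{s_2 s_2} = \tau_{tt}$ expands, using $\alpha^2 + \beta^2 = 1$, to $3\alpha a + \beta c = 0$, while $\tau_{n_2 s_2} = 0$ expands to $\alpha c - 3\beta a = 0$. Multiplying the first of this pair by $\alpha$, the second by $\beta$, and subtracting yields $3(\alpha^2 + \beta^2) a = 0$, so $a = 0$, and back-substitution then gives $c = 0$. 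Hence every entry of $\bs\tau$ vanishes on $e$, proving $\bs\tau|_e = \boldsymbol 0$.

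The main obstacle is the bookkeeping between the two face frames: one must express the $F_2$-conditions in the chosen $\{\boldsymbol t, \boldsymbol s_1, \boldsymbol n_1\}$ frame, carefully exploit the automatic cancellations produced by the already-derived $F_1$-conditions, and then recognize that the remaining $2 \times 2$ linear system in $(a, c)$ is non-degenerate precisely because $F_1 \neq F_2$ forces $\alpha \neq 0$; the role of the trace-free hypothesis is to reduce the diagonal entries so that these equations close rather than leaving a free parameter.
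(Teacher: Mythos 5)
Your proof is correct and follows essentially the same route as the paper's: both translate the two face trace conditions $\boldsymbol n\cdot\boldsymbol\tau\times\boldsymbol n=\boldsymbol 0$ and $\boldsymbol n\times\sym(\boldsymbol\tau\times\boldsymbol n)\times\boldsymbol n=\boldsymbol 0$ into the same component-wise identities $\boldsymbol n_i^{\intercal}\boldsymbol\tau\boldsymbol t=\boldsymbol n_i^{\intercal}\boldsymbol\tau\boldsymbol s_i=\boldsymbol t^{\intercal}\boldsymbol\tau\boldsymbol s_i=0$ and $\boldsymbol t^{\intercal}\boldsymbol\tau\boldsymbol t=\boldsymbol s_i^{\intercal}\boldsymbol\tau\boldsymbol s_i$ on the edge, and then finish with linear algebra in $\mathbb T$. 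The only difference is organizational: you eliminate five components via the $F_1$-frame and trace-freeness and then solve an explicit nondegenerate system in $(a,b,c)$, whereas the paper instead proves that the eight associated test tensors are linearly independent, hence span $\mathbb T$.
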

\begin{proof}
It is straightforward to verify~\eqref{eq:edge} on the reference tetrahedron for which $\boldsymbol e = (1,0,0)$ and two normal vectors of the face containing $\boldsymbol e$ is $\boldsymbol n_1= (1,0,0)$ and $\boldsymbol n_2=(0,0,1)$. To avoid complicated transformation of trace operators, we provide a proof using an intrinsic basis of $\mathbb T$ on $K$. 

Take any edge $e\in\mathcal E(K)$ with the tangential vector $\boldsymbol t$. Let $\boldsymbol n_1$ and $\boldsymbol n_2$ be the unit outward normal vectors of two faces sharing edge $e$. Set $\boldsymbol s_i:=\boldsymbol t\times\boldsymbol n_i$ for $i=1,2$.
By direction computation, we get on edge $e$ for $i=1,2$ that
$$
\boldsymbol n_i^{\intercal}\bs\tau\boldsymbol t=(\boldsymbol n_i\cdot\bs\tau\times\boldsymbol n_i)\cdot\boldsymbol s_i=0,
$$
$$
\boldsymbol n_i^{\intercal}\bs\tau\boldsymbol s_i=-(\boldsymbol n_i\cdot\bs\tau\times\boldsymbol n_i)\cdot\boldsymbol t=0,
$$
$$
\boldsymbol t^{\intercal}\bs\tau\boldsymbol t - \boldsymbol s_i^{\intercal}\bs\tau\boldsymbol s_i=2\boldsymbol t\cdot \sym(\boldsymbol\tau\times\boldsymbol n_i)\cdot\boldsymbol s_i=2\boldsymbol s_i\cdot(\boldsymbol n_i\times \sym(\boldsymbol\tau\times\boldsymbol n_i)\times\boldsymbol n_i)\cdot\boldsymbol t=0,
$$
$$
\boldsymbol t^{\intercal}\bs\tau\boldsymbol s_i=-\boldsymbol t\cdot \sym(\boldsymbol\tau\times\boldsymbol n_i)\cdot\boldsymbol t=\boldsymbol s_i\cdot(\boldsymbol n_i\times \sym(\boldsymbol\tau\times\boldsymbol n_i)\times\boldsymbol n_i)\cdot\boldsymbol s_i=0.
$$
Both $\textrm{span}\{\boldsymbol s_1, \boldsymbol s_2\}$ and $\textrm{span}\{\boldsymbol n_1, \boldsymbol n_2\}$ form the same normal vector space of edge $e$, then the last identity implies
$$
\boldsymbol t^{\intercal}\bs\tau\boldsymbol n_i=0.
$$
Then it is sufficient to prove the eight trace-free tensors 
\begin{equation}\label{eq:nst}
\boldsymbol n_1\boldsymbol t^{\intercal},\; \boldsymbol n_2\boldsymbol t^{\intercal},\; \boldsymbol n_1\boldsymbol s_1^{\intercal},\; \boldsymbol n_2\boldsymbol s_2^{\intercal},\; \boldsymbol t\,\boldsymbol n_1^{\intercal},\; \boldsymbol t\,\boldsymbol n_2^{\intercal},\; \boldsymbol t\,\boldsymbol t^{\intercal}-\boldsymbol s_1\boldsymbol s_1^{\intercal},\; \boldsymbol t\,\boldsymbol t^{\intercal}-\boldsymbol s_2\boldsymbol s_2^{\intercal}
\end{equation}
are linear independent. Assume there exist $c_i\in\mathbb R$, $i=1,\cdots,8$ such that 
\begin{align*}
c_1\boldsymbol n_1\boldsymbol t^{\intercal} + c_2\boldsymbol n_2\boldsymbol t^{\intercal} + c_3\boldsymbol n_1\boldsymbol s_1^{\intercal} + c_4\boldsymbol n_2\boldsymbol s_2^{\intercal} + c_5\boldsymbol t\,\boldsymbol n_1^{\intercal} + c_6\boldsymbol t\,\boldsymbol n_2^{\intercal}&\\+c_7(\boldsymbol t\,\boldsymbol t^{\intercal}-\boldsymbol s_1\boldsymbol s_1^{\intercal}) + c_8(\boldsymbol t\,\boldsymbol t^{\intercal}-\boldsymbol s_2\boldsymbol s_2^{\intercal})&=\bs0.
\end{align*}
Multiplying the last equation by $\boldsymbol t$ from the right and left respectively, we obtain
$$
c_1\boldsymbol n_1 + c_2\boldsymbol n_2 + (c_7+c_8)\boldsymbol t=\bs0, \quad c_5\boldsymbol n_1^{\intercal} + c_6\boldsymbol n_2^{\intercal} + (c_7+c_8)\boldsymbol t^{\intercal}=\bs0.
$$
Hence $c_1=c_2=c_5=c_6=c_7+c_8=0$, which yields
$$
c_3\boldsymbol n_1\boldsymbol s_1^{\intercal} + c_4\boldsymbol n_2\boldsymbol s_2^{\intercal} + c_7(\boldsymbol s_2\boldsymbol s_2^{\intercal}-\boldsymbol s_1\boldsymbol s_1^{\intercal})=\bs0.
$$
Multiplying the last equation by $\boldsymbol n_1$ from the right, it follows
$$
(\boldsymbol s_2\cdot\boldsymbol n_1)(c_4\boldsymbol n_2 + c_7\boldsymbol s_2)=\bs0.
$$
As a result $c_4=c_7=0$, and then $c_3=0$.
\end{proof}

We write $\mathbb P_{\ell+1}(K;\mathbb T)$ as $\mathbb P_{\ell+1}(K)\otimes \mathbb T$ and use the barycentric coordinate representation of a polynomial. That is a polynomial $p\in \mathbb P_{\ell+1}(K)$ has a unique representation in terms of
\begin{equation}\label{eq:bary}
 p = \lambda_{1}^{\alpha_1}\lambda_{2}^{\alpha_2}\lambda_{3}^{\alpha_3}\lambda_{4}^{\alpha_4}, \quad \sum_{i=1}^4\alpha_i = \ell+1, \alpha_i \in \mathbb N.
\end{equation}
Lemma~\ref{lem:symcurlbubbleedgevanish} implies that $p$ must contain a face bubble $b_F = \lambda_{i}\lambda_{j}\lambda_{k}$ where $(i,j,k)$ are three vertices of $F$. Otherwise, if $p = \lambda_i^{\alpha_i}\lambda_j^{\alpha_j}, \alpha_i + \alpha_j = \ell+1$, then $p$ is not zero on the edge $(i,j)$. 

We consider the subspace $b_F \mathbb P_{\ell-2}(K)\otimes \mathbb T$ and identify its intersection with $\ker(\tr)$. Due to the face bubble $b_F$, the polynomial is zero on the other faces. So we only need to consider the trace on face $F$. Without loss of generality, we can chose the coordinate s.t. $n_F = (0,0,1)$. Chose the canonical basis of $\mathbb T$ associated to this coordinate. Then by direct calculation to find out $\ker(\tr)\cap \mathbb T$ consists of
$$
\begin{pmatrix}
0 & 0 & 1 \\
0 & 0 & 0 \\
0 & 0 & 0
\end{pmatrix},
\begin{pmatrix}
0 & 0 & 0 \\
0 & 0 & 1 \\
0 & 0 & 0
\end{pmatrix},
\text{ and }
\begin{pmatrix}
1 & 0 & 0 \\
0 & 1 & 0 \\
0 & 0 & -2
\end{pmatrix}.
$$
Switch to an intrinsic basis, we obtain the following explicit characterization of $\boldsymbol B_{\ell + 1}(\sym\curl, K;\mathbb T)$.
\begin{lemma}
For each face $F$, we chose two unit tangent vectors $\boldsymbol t_1, \boldsymbol t_2$ s.t. $(\boldsymbol t_1, \boldsymbol t_2, \boldsymbol n_F)$ forms an orthonormal basis of $\mathbb R^3$. Then
\begin{equation}\label{eq:span1}
\boldsymbol B_{\ell+1}(\sym\curl, K;\mathbb T)  = {\rm span}\{ p b_F \psi_i^F, p\in \mathbb P_{\ell-2}(K), F\in \mathcal F(K), i = 1,2,3\},
\end{equation}
where the three bubble functions are:
$$
\psi_1^F = \boldsymbol t_1 \boldsymbol n_F^{\intercal}, \quad \psi_2^F = \boldsymbol t_2 \boldsymbol n_F^{\intercal}, \quad  \psi_3^F = \boldsymbol t_1  \boldsymbol t_1^{\intercal} +  \boldsymbol t_2  \boldsymbol t_2^{\intercal} - 2 \boldsymbol n_F  \boldsymbol n_F^{\intercal}.
$$
\end{lemma}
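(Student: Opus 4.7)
The plan is to establish both inclusions. For $\supseteq$, two facts suffice. First, $b_F = \lambda_i\lambda_j\lambda_k$, with $i,j,k$ the vertices of $F$, vanishes on every face of $K$ other than $F$, since each such face is opposite to one of $i,j,k$; hence both $\boldsymbol H(\sym\curl)$-traces of $pb_F\psi_i^F$ are trivially zero on the remaining three faces. Second, each $\psi_i^F$ satisfies $\sym(\psi_i^F\times\boldsymbol n_F)=\boldsymbol 0$ and $\boldsymbol n_F\cdot(\psi_i^F\times\boldsymbol n_F)=\boldsymbol 0$ pointwise, so the traces also vanish on $F$. The latter is a short calculation from~\eqref{eq:xtimesuv}: for $i=1,2$, $\psi_i^F\times\boldsymbol n_F=\boldsymbol t_i(\boldsymbol n_F\times\boldsymbol n_F)^{\intercal}=\boldsymbol 0$; for $i=3$, using $\boldsymbol t_1\times\boldsymbol t_2=\boldsymbol n_F$ we obtain $\psi_3^F\times\boldsymbol n_F = \boldsymbol t_2\boldsymbol t_1^{\intercal} - \boldsymbol t_1\boldsymbol t_2^{\intercal}$, which is skew-symmetric and has zero $\boldsymbol n_F$-row. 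Trace-freeness of each $\psi_i^F$ is immediate.

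For the converse $\subseteq$, take $\bs\tau\in\boldsymbol B_{\ell+1}(\sym\curl,K;\mathbb T)$. By Lemma~\ref{lem:symcurlbubbleedgevanish} the restriction $\bs\tau|_e$ vanishes on every edge $e$. Fix a face $F$; the pointwise constraints defining $\boldsymbol B_{\ell+1}(\sym\curl,K;\mathbb T)$ force $\bs\tau|_F$ to take values in the three-dimensional constant-tensor subspace $W_F:=\mathrm{span}\{\psi_1^F,\psi_2^F,\psi_3^F\}\subset\mathbb T$, so $\bs\tau|_F = \sum_{i=1}^3\phi_i^F\,\psi_i^F$ with $\phi_i^F\in\mathbb P_{\ell+1}(F)$. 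Linear independence of the $\psi_i^F$ together with edge vanishing forces $\phi_i^F|_{\partial F}=0$, so $\phi_i^F=(b_F|_F)\,q_i^F$ for some $q_i^F\in\mathbb P_{\ell-2}(F)$. Pick any polynomial extensions $\tilde q_i^F\in\mathbb P_{\ell-2}(K)$ and set
\[
\bs\tau' := \bs\tau - \sum_{F\in\mathcal F(K)}\sum_{i=1}^3 b_F\,\tilde q_i^F\,\psi_i^F.
\]
Since $b_{F'}|_F=0$ for $F'\neq F$, the subtracted sum reproduces $\bs\tau|_F$ on each face $F$; hence $\bs\tau'|_{\partial K}=\boldsymbol 0$, and $\bs\tau'=b_K\bs\rho$ for some $\bs\rho\in\mathbb P_{\ell-3}(K;\mathbb T)$.

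It remains to absorb $b_K\bs\rho$ into the span $V$ on the right. A short Frobenius-duality computation shows that the orthogonal complement of $W_F$ inside $\mathbb T$ equals $\{\bs\xi\in\mathbb T:\bs\xi\boldsymbol n_F=\boldsymbol 0\}$. Since any three of the four face normals of a tetrahedron span $\mathbb R^3$, the intersection $\bigcap_{F\in\mathcal F(K)} W_F^{\perp}$ is trivial, so $\sum_F W_F=\mathbb T$. Fix any linear right-inverse to the summation map $\bigoplus_F W_F\to\mathbb T$ and apply it coefficient-wise to $\bs\rho$ to obtain a decomposition $\bs\rho=\sum_F\bs\rho_F$ with $\bs\rho_F\in\mathbb P_{\ell-3}(K)\otimes W_F$. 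Writing $b_K=\lambda_{m(F)}b_F$, where $m(F)$ denotes the vertex opposite $F$, we conclude
\[
b_K\bs\rho = \sum_{F\in\mathcal F(K)} b_F\bigl(\lambda_{m(F)}\bs\rho_F\bigr)\in V,
\]
which completes the proof. The step I expect to be the main obstacle is this interior-bubble reduction, since it relies on the algebraic identity $\sum_F W_F=\mathbb T$ whose verification uses the specific geometry of the tetrahedron; once that is granted, the face-by-face boundary reduction is essentially bookkeeping built on Lemma~\ref{lem:symcurlbubbleedgevanish}.
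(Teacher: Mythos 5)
Your proof is correct, and for the reverse inclusion it is organized differently from (and somewhat more carefully than) the paper's argument. The paper works monomial-by-monomial in the barycentric representation: each monomial must contain a face bubble $b_F$ (by Lemma~\ref{lem:symcurlbubbleedgevanish}), the pointwise kernel of the trace constraints in $\mathbb T$ is computed to be three-dimensional in a coordinate system with $\boldsymbol n_F=(0,0,1)$, and the conclusion is drawn by asserting that $\ker(\tr)\cap(\mathbb P_{\ell+1}\otimes\mathbb T)$ is the union over $F$ of $\ker(\tr)\cap(b_F\mathbb P_{\ell-2}(K)\otimes\mathbb T)$, each of which is identified with ${\rm span}\{pb_F\psi_i^F\}$. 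That last identification is loose as stated (for a single $F$, the left-hand set also contains $b_K\boldsymbol\xi$ for $\boldsymbol\xi\notin W_F$, since the trace on $F$ only constrains the restriction to $F$); the issue disappears only after summing over all faces, which the paper essentially defers to the next lemma. Your proof handles exactly this point head-on: you reduce face by face, peel off a remainder $b_K\bs\rho$, and absorb it using $\sum_F W_F=\mathbb T$, proved via the clean duality $W_F^{\perp}\cap\mathbb T=\{\bs\xi\in\mathbb T:\bs\xi\boldsymbol n_F=\boldsymbol 0\}$ and the fact that the face normals span $\mathbb R^3$ — which is a weaker but sufficient form of the paper's later observation that $\{\psi_1^F,\psi_2^F:F\in\mathcal F(K)\}$ is a basis of $\mathbb T$. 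The one place where you lean on an unproved assertion is the claim that the pointwise constraints force $\bs\tau|_F$ into the \emph{exactly} three-dimensional space $W_F$: having shown $W_F\subseteq\ker(\tr)\cap\mathbb T$, you still need that the five scalar constraints (two from $\boldsymbol n\cdot\boldsymbol\tau\times\boldsymbol n$, three from the symmetric tangential tensor) are linearly independent on the eight-dimensional $\mathbb T$; this is the routine direct calculation the paper performs in the adapted coordinate frame, and it should be stated or cited rather than implied.
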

\begin{proof}
Using the formulae~\eqref{eq:xuv}-\eqref{eq:xtimesuv}, by the direct calculation, we can easily show $\psi_i^F\in \ker(\tr)\cap \mathbb T$ for each face $F$ and $i=1,2,3$. As $\dim \ker(\tr)\cap \mathbb T = 3$, we conclude that
$$ \ker (\tr) \cap (b_F \mathbb P_{\ell-2}(K)\otimes \mathbb T) = {\rm span} \{ p b_F \psi_i^F, p\in \mathbb P_{\ell-2}(K), i = 1,2,3\}.$$
By Lemma~\ref{lem:symcurlbubbleedgevanish}  we know that 
$$
\ker(\tr)\cap (\mathbb P_{\ell+1}\otimes \mathbb T) =  \cup_F \ker(\tr)\cap (b_F \mathbb P_{\ell-2}(K)\otimes \mathbb T) 
$$
and thus~\eqref{eq:span1} follows. 
\end{proof}

We only give a generating set of the bubble function space as the $12$ constant matrices $\{ \psi_1^F, \psi_2^F, \psi_3^F, F\in \mathcal F(K)\}, $ are not linear independent. Next we find out a basis from this generating set. 
\begin{lemma}
Let $(i,j,k)$ be three vertices of face $F$ and $\mathbb P_{\ell-2}(F) = \{ \lambda_{i}^{\alpha_1}\lambda_{j}^{\alpha_2}\lambda_{k}^{\alpha_3}, \alpha_1+\alpha_2+\alpha_3 = \ell-2, \alpha_i\in \mathbb N, i=1,2,3\}$.
 Define $\boldsymbol B_{F,\ell + 1}: = b_F\mathbb P_{\ell-2}(F)\otimes {\rm span}\{ \psi_1^F, \psi_2^F, \psi_3^F\}$ and $\boldsymbol B_{K, \ell + 1} = b_K\mathbb P_{\ell-3}(K)\otimes {\rm span}\{ \psi_1^F, \psi_2^F, F\in \mathcal F(K)\}$. Then
\begin{equation}\label{eq:basisofB}
\boldsymbol B_{\ell+1}(\sym\curl, K;\mathbb T)  = \oplus_{F\in \mathcal F(K)} \boldsymbol B_{F,\ell + 1} \oplus \boldsymbol B_{K,\ell + 1},
\end{equation} 
and consequently
$$\dim \boldsymbol B_{\ell+1}(\sym\curl, K;\mathbb T)=\frac{2}{3}\ell(\ell-1)(2\ell+5)=\frac{1}{3}(4\ell^3+6\ell^2-10\ell).$$ 
\end{lemma}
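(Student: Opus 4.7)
The plan is to combine two ingredients: a barycentric decomposition of $\mathbb P_{\ell-2}(K)$ relative to each face, and the fact that the eight rank-one matrices $\{\psi_1^F,\psi_2^F:F\in\mathcal F(K)\}$ form a basis of $\mathbb T$. Starting from the generating set in~\eqref{eq:span1}, every generator $p\,b_F\,\psi_i^F$ with $p\in\mathbb P_{\ell-2}(K)$ must be absorbed into one of the four face blocks $\boldsymbol B_{F,\ell+1}$ or into the interior block $\boldsymbol B_{K,\ell+1}$, and the resulting sum must be direct. I will then count dimensions to confirm.

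First, for the face $F\in\mathcal F(K)$ with opposite vertex carrying barycentric coordinate $\lambda_l$, split
\[
\mathbb P_{\ell-2}(K)=\mathbb P_{\ell-2}(F)\oplus\lambda_l\,\mathbb P_{\ell-3}(K),
\]
so that any $p\in\mathbb P_{\ell-2}(K)$ writes as $p=p_F+\lambda_l q$ with $p_F\in\mathbb P_{\ell-2}(F)$ and $q\in\mathbb P_{\ell-3}(K)$. Since $b_K=\lambda_l b_F$,
\[
p\,b_F\,\psi_i^F=p_F\,b_F\,\psi_i^F+b_K\,q\,\psi_i^F.
\]
The first summand lies in $\boldsymbol B_{F,\ell+1}$ by definition. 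The second summand is in $b_K\mathbb P_{\ell-3}(K)\otimes\mathbb T$, so provided $\{\psi_1^{F'},\psi_2^{F'}:F'\in\mathcal F(K)\}$ spans $\mathbb T$, even the $i=3$ case gets rewritten as a sum of terms from $\boldsymbol B_{K,\ell+1}$. This gives the containment $\supseteq$ in~\eqref{eq:basisofB}; the other containment is immediate.

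The key step is therefore to show that $\{\psi_1^F,\psi_2^F:F\in\mathcal F(K)\}$ spans (hence is a basis of) $\mathbb T$. I will argue by computing the orthogonal complement in $\mathbb M$ under the Frobenius inner product. A matrix $\boldsymbol A\in\mathbb M$ is orthogonal to $\psi_i^F=\boldsymbol t_i^F(\boldsymbol n_F)^{\intercal}$ for both $i=1,2$ iff $\boldsymbol A\boldsymbol n_F\cdot\boldsymbol t=0$ for every $\boldsymbol t\perp\boldsymbol n_F$, i.e.\ iff $\boldsymbol n_F$ is an eigenvector of $\boldsymbol A$. Requiring this for all four $\boldsymbol n_F$, which span $\mathbb R^3$, forces $\boldsymbol A=c\boldsymbol I$. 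Hence the annihilator in $\mathbb M$ is one-dimensional and the span of the eight $\psi_i^F$ inside $\mathbb M$ has dimension $9-1=8$; as these matrices are all trace-free, they span the $8$-dimensional space $\mathbb T$. This is the main obstacle, and once resolved, $\boldsymbol B_{K,\ell+1}=b_K\mathbb P_{\ell-3}(K)\otimes\mathbb T$ absorbs all interior contributions.

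For the directness of the sum, suppose $\sum_{F}\tau_F+\tau_K=0$. Restricting to a face $F_0$, the bubble $b_K$ vanishes and so does $b_{F}$ for $F\neq F_0$ (each such $b_F$ contains the barycentric coordinate opposite to $F_0$), so $\tau_{F_0}|_{F_0}=0$. Since $\tau_{F_0}=b_{F_0}\sum_{i=1}^{3}q_i\psi_i^{F_0}$ with $q_i\in\mathbb P_{\ell-2}(F_0)$, $b_{F_0}$ is nonzero on the interior of $F_0$, and $\psi_1^{F_0},\psi_2^{F_0},\psi_3^{F_0}$ are linearly independent (their ranges and symmetric/nonsymmetric parts distinguish them), each $q_i=0$, so $\tau_{F_0}=0$. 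Then $\tau_K=0$, and since $b_K$ is nonzero in the interior of $K$ and the eight matrices form a basis of $\mathbb T$, the coefficients defining $\tau_K$ vanish. Finally, the dimension count is
\[
\dim\!\bigoplus_{F}\boldsymbol B_{F,\ell+1}+\dim\boldsymbol B_{K,\ell+1}=4\cdot 3\binom{\ell}{2}+8\binom{\ell}{3}=6\ell(\ell-1)+\tfrac{4}{3}\ell(\ell-1)(\ell-2)=\tfrac{2}{3}\ell(\ell-1)(2\ell+5),
\]
which matches the asserted dimension and simultaneously confirms the decomposition~\eqref{eq:basisofB}.
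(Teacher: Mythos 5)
Your proposal is correct and follows essentially the same skeleton as the paper's proof: the splitting $b_F\,\mathbb P_{\ell-2}(K)=b_F\,\mathbb P_{\ell-2}(F)\oplus b_K\,\mathbb P_{\ell-3}(K)$ obtained from the barycentric representation, the fact that the eight matrices $\{\psi_1^F,\psi_2^F:F\in\mathcal F(K)\}$ form a basis of $\mathbb T$ (used to absorb the interior $\psi_3^F$-contributions into $\boldsymbol B_{K,\ell+1}$), and the final dimension count. The one place you genuinely diverge is the basis claim for $\mathbb T$: the paper disposes of it by pointing back to the linear-independence argument for the eight tensors in~\eqref{eq:nst} (or to a citation), whereas you compute the Frobenius annihilator in $\mathbb M$ and observe that a matrix having all four face normals as eigenvectors must be a multiple of $\boldsymbol I$. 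That argument is correct (any three normals are independent, and the fourth has all coefficients nonzero in that basis, forcing equal eigenvalues) and is arguably cleaner and more self-contained than the paper's cross-reference; you also spell out the directness of the sum by restricting to each face, which the paper leaves implicit. One cosmetic slip: the generator-decomposition step proves the containment $\boldsymbol B_{\ell+1}(\sym\curl,K;\mathbb T)\subseteq\oplus_F\boldsymbol B_{F,\ell+1}+\boldsymbol B_{K,\ell+1}$, not ``$\supseteq$'' as you label it; the reverse inclusion is the one that is immediate since every element of $\boldsymbol B_{F,\ell+1}$ and $\boldsymbol B_{K,\ell+1}$ visibly belongs to the generating set~\eqref{eq:span1}. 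Both inclusions are in fact covered by your argument, so this is a labeling issue only.
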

\begin{proof}
The $12$ constant matrices $\{ \psi_1^F, \psi_2^F, \psi_3^F, F\in \mathcal F(K)\}$ are not linear independent as $\dim \mathbb T = 8$. Among them, $\{ \psi_1^F, \psi_2^F, F\in \mathcal F(K)\}$ forms a basis of $\mathbb T$ which can be proved as verifying the linear independence of~\eqref{eq:nst} in Lemma~\ref{lem:symcurlbubbleedgevanish} or see~\cite{HuLiang2020}. 

For each $p b_F$, with $p\in \mathbb P_{\ell - 2}(K)$, we can group into either $b_K\mathbb P_{\ell-3}(K)$ or $b_F\mathbb P_{\ell-2}(F)$  depending on if the polynomial $p|_F$ is zero or not, respectively. That is, for one fixed face $F$:
$$
b_F \mathbb P_{\ell - 2}(K) = b_F\mathbb P_{\ell-2}(F) \oplus b_K\mathbb P_{\ell-3}(K). 
$$
The sum is direct in view of the barycentric representation~\eqref{eq:bary} of a polynomial.
Then coupled with $\{\psi_i^F\}$, we get the basis~\eqref{eq:basisofB} of the bubble function space.

The dimension of $\boldsymbol B_{\ell+1}(\sym\curl, K;\mathbb T) $ is 
\begin{align*}
4\cdot 3 \cdot \dim\mathbb P_{\ell-2}(F) + 8 \dim\mathbb P_{\ell-3}(K) = \frac{1}{3}(4\ell^3+6\ell^2-10\ell),
\end{align*}
as required.
\end{proof}



We then verify $\sym \curl \boldsymbol B_{\ell+1}(\sym\curl, K;\mathbb T) \subset \mathring{\boldsymbol \Sigma}_{\ell,k}(K)$ by verifying all boundary d.o.f. are vanished. 
\begin{lemma}\label{lem:symcurledgeprop}
Let $\bs\tau \in \boldsymbol B_{\ell+1}(\sym\curl, K;\mathbb T)$. Assume edge $e\in\mathcal E(K)$ is shared by faces $F_i$ and $F_j$. It holds $\boldsymbol  n_i^{\intercal}(\sym\curl\boldsymbol \tau )\boldsymbol n_j|_e=0$.
\end{lemma}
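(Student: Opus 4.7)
The plan is to exploit the explicit basis decomposition~\eqref{eq:basisofB}, so that by linearity it suffices to verify $\boldsymbol n_i^{\intercal}(\sym\curl\bs\tau)\boldsymbol n_j|_e = 0$ on each generator $\bs\tau = f\psi$, where $\psi\in\{\psi_1^F,\psi_2^F,\psi_3^F\}$ is constant and $f$ is either $p\,b_F$ (face bubble) or $p\,b_K$ (interior bubble). Since $\psi$ is constant, the row-wise curl takes the simple form $(\curl\bs\tau)_{ab} = \epsilon_{bkl}(\partial_k f)\psi_{al}$, so the question reduces to analysing $\nabla f|_e$.

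For the interior bubble $\bs\tau = p\,b_K\psi$ and for the face bubble $\bs\tau = p\,b_F\psi$ with $F$ \emph{not} containing $e$, a direct barycentric count gives the easy case: the scalar $f$ contains at least two factors $\lambda_\mu,\lambda_\nu$ that vanish on $e$, so $f$ has a zero of order two along $e$. Consequently $\nabla f|_e = \bs 0$, $\curl\bs\tau|_e = \bs 0$, and the claim is trivial.

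The remaining case is $\bs\tau = p\,b_F\psi$ with $F\in\{F_i,F_j\}$; by the symmetry of $\sym\curl\bs\tau$ we may assume $F = F_i$. Let $\delta_1,\delta_2$ be the vertices of $e$, so that $b_{F_i} = \lambda_j\lambda_{\delta_1}\lambda_{\delta_2}$. A straightforward barycentric computation yields $b_{F_i}|_e = 0$ and $\nabla b_{F_i}|_e = -c_j\lambda_{\delta_1}\lambda_{\delta_2}\boldsymbol n_j$, so that $\nabla f|_e = \alpha\,\boldsymbol n_j$ for some scalar $\alpha$. Substituting into the index formula gives
\begin{equation*}
\boldsymbol n_\mu^{\intercal}(\curl\bs\tau)\boldsymbol n_\nu\big|_e \;=\; \alpha\,\boldsymbol n_\mu^{\intercal}\psi(\boldsymbol n_\nu\times\boldsymbol n_j).
\end{equation*}
For $(\mu,\nu) = (i,j)$ this vanishes immediately since $\boldsymbol n_j\times\boldsymbol n_j = \bs 0$. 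For $(\mu,\nu) = (j,i)$ we are left with $\alpha\,\boldsymbol n_j^{\intercal}\psi(\boldsymbol n_i\times\boldsymbol n_j)$, and here the structure of each $\psi_a^{F_i}$ is used: for $a=1,2$, $\psi_a^{F_i}(\boldsymbol n_i\times\boldsymbol n_j) = \boldsymbol t_a^{F_i}\bigl(\boldsymbol n_i\cdot(\boldsymbol n_i\times\boldsymbol n_j)\bigr) = \bs 0$; for $a=3$, we exploit that $\boldsymbol n_i\times\boldsymbol n_j$ is perpendicular to $\boldsymbol n_i$ and therefore lies in the tangent plane of $F_i$, which gives $\psi_3^{F_i}(\boldsymbol n_i\times\boldsymbol n_j) = \boldsymbol n_i\times\boldsymbol n_j$, whose inner product with $\boldsymbol n_j$ again vanishes. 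Averaging the two identities yields $\boldsymbol n_i^{\intercal}(\sym\curl\bs\tau)\boldsymbol n_j|_e = 0$.

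The main obstacle is the geometric bookkeeping in the last step: one must check all three basis tensors $\psi_1^F,\psi_2^F,\psi_3^F$, and the argument for $\psi_3^F$ is the only non-trivial one, hinging on the observation that $\boldsymbol n_i\times\boldsymbol n_j$ is tangent to $F_i$ (equivalently, parallel to the edge tangent~$\boldsymbol t$).
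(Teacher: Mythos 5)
Your proof is correct and follows essentially the same route as the paper's: reduce by linearity to the generators $p\,b_F\psi$, use $b_F|_e=0$ to see that only $\nabla b_F|_e$ (which is parallel to the normal of the other face containing $e$) survives, and then check each constant tensor $\psi_1^F,\psi_2^F,\psi_3^F$ directly. Your unified index formula $\alpha\,\boldsymbol n_\mu^{\intercal}\psi(\boldsymbol n_\nu\times\boldsymbol n_j)$ merely streamlines the case-by-case cross-product computations that the paper carries out for $l=1,2$ and $l=3$ separately.
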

\begin{proof}
For the ease of notation, let $\boldsymbol \sigma=\sym\curl\boldsymbol \tau$. Suppose $\bs\tau=\sum\limits_{F\in\mathcal F(K)}\sum\limits_{l=1}^3q_{F,l} b_F \psi_l^F$ with $q_{F,l}\in \mathbb P_{\ell-2}(K)$.
By $b_F|_e=0$, we get
$$
\boldsymbol n_i^{\intercal}\boldsymbol \sigma\boldsymbol n_j|_e=\sum_{F\in\mathcal F(K)}\sum_{l=1}^3q_{F,l}|_e(\boldsymbol n_i^{\intercal} \sym\curl(b_F \psi_l^F)\boldsymbol n_j)|_e.
$$
Since $\lambda_i|_e=\lambda_j|_e=0$, we can see that $(\boldsymbol n_i\times\boldsymbol n_F\cdot\nabla b_F)|_e=(\boldsymbol n_j\times\boldsymbol n_F\cdot\nabla b_F)|_e=0$. Thus for $l=1,2$,
\begin{align*}
&\quad\;(\boldsymbol n_i^{\intercal} \sym\curl(b_F \psi_l^F)\boldsymbol n_j)|_e\\
&=-(\boldsymbol n_i\cdot(b_F \boldsymbol t_l\boldsymbol n_F)\times\nabla\cdot\boldsymbol n_j)|_e - (\boldsymbol n_j\cdot(b_F\boldsymbol t_l\boldsymbol n_F)\times\nabla\cdot\boldsymbol n_i)|_e \\
&=\boldsymbol n_i\cdot\boldsymbol t_l(\boldsymbol n_F\times\boldsymbol n_j\cdot\nabla b_F)|_e + \boldsymbol n_j\cdot\boldsymbol t_l(\boldsymbol n_F\times\boldsymbol n_i\cdot\nabla b_F)|_e\\
&=0.
\end{align*}

Next consider $l=3$. When $F\neq F_j$, the face bubble $b_F$ has a factor $\lambda_j$, which implies $(\boldsymbol n_j\times\nabla b_F)|_e=\bs0$. Thus
$$
(\boldsymbol n_i^{\intercal}\curl(b_F \psi_3^F)\boldsymbol n_j)|_e =-(\boldsymbol n_i\cdot(b_F \psi_3^F)\times\nabla\cdot\boldsymbol n_j)|_e=(\boldsymbol n_i\cdot\psi_3^F\cdot(\boldsymbol n_j\times\nabla b_F))|_e=0.
$$
When $F= F_j$, the face bubble $b_F$ has a factor $\lambda_i$. 
By the fact that $(\boldsymbol t_1, \boldsymbol t_2, \boldsymbol n_j)$ forms an orthonormal basis of $\mathbb R^3$,
\begin{align*}
\boldsymbol n_i\cdot\boldsymbol t_2(\boldsymbol t_2\times\boldsymbol n_j\cdot\nabla\lambda_i)&=\boldsymbol n_i\cdot(\boldsymbol n_j\times\boldsymbol t_1)(\boldsymbol t_1\cdot\nabla\lambda_i)=-(\boldsymbol t_1\cdot\nabla\lambda_i)(\boldsymbol n_j\times\boldsymbol n_i\cdot\boldsymbol t_1) \\
&=-\boldsymbol n_i\cdot\boldsymbol t_1(\boldsymbol n_j\times\nabla\lambda_i\cdot\boldsymbol t_1),
\end{align*}
which implies
$$
\boldsymbol n_i\cdot\boldsymbol t_1(\boldsymbol n_j\times\nabla\lambda_i\cdot\boldsymbol t_1) + \boldsymbol n_i\cdot\boldsymbol t_2(\boldsymbol n_j\times\nabla\lambda_i\cdot\boldsymbol t_2)=0.
$$
As a result,
$$
(\boldsymbol n_i^{\intercal}\curl(b_F \psi_3^F)\boldsymbol n_j)|_e = \boldsymbol n_i\cdot\boldsymbol t_1(\boldsymbol n_j\times\nabla b_F\cdot\boldsymbol t_1)|_e + \boldsymbol n_i\cdot\boldsymbol t_2(\boldsymbol n_j\times\nabla b_F\cdot\boldsymbol t_2)|_e=0.
$$
Similarly $(\boldsymbol n_j^{\intercal}\curl(b_F \psi_3^F)\boldsymbol n_i)|_e=0$ holds. Hence $(\boldsymbol n_i^{\intercal}\sym\curl(b_F \psi_3^F)\boldsymbol n_j)|_e=0$.

Therefore $\boldsymbol  n_i^{\intercal}\boldsymbol \sigma\boldsymbol n_j|_e=0$.
\end{proof}

Next we show the two traces $\tr_2(\bs \tau)$ is in $H(\div_F)$ and $\tr_1(\bs \tau)$ in $H(\div_F\div_F)$. 
\begin{lemma}\label{lm:tracerelation}
When $\boldsymbol \sigma = \sym\curl \, \boldsymbol\tau$ with $\boldsymbol\tau\in\boldsymbol H^2(K;\mathbb M)$, we can express the trace in terms of the differential operators on surface $F$ of $K$
\begin{align}
\label{eq:trace1} \boldsymbol n^{\intercal}\boldsymbol \sigma\boldsymbol n &=\div_F(\boldsymbol n\cdot \boldsymbol \tau\times\boldsymbol n),\\
\label{eq:trace2} \nabla_F^{\bot}\cdot(\boldsymbol n\times\boldsymbol \sigma\cdot\boldsymbol n)+\boldsymbol  n^{\intercal} \div\boldsymbol \sigma  &=-\mathrm{rot}_F\mathrm{rot}_F(\boldsymbol n\times \sym(\boldsymbol\tau\times\boldsymbol n)\times\boldsymbol n)\\
& = \div_F\div_F(\Pi_F \sym(\boldsymbol\tau\times\boldsymbol n) \Pi_F). \notag
\end{align}
\end{lemma}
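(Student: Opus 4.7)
The plan is to treat the two identities separately: \eqref{eq:trace1} follows from a short Levi--Civita calculation, while \eqref{eq:trace2} is handled by first rewriting its left-hand side as the classical divdiv trace $\tr_2(\boldsymbol\sigma):=2\div_F(\boldsymbol\sigma\boldsymbol n)+\partial_n(\boldsymbol n^{\intercal}\boldsymbol\sigma\boldsymbol n)$ and then establishing the two resulting equalities in a local orthonormal frame with $\boldsymbol n=\boldsymbol e_3$.

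For \eqref{eq:trace1}, the scalar identity $\boldsymbol n^{\intercal}\sym\boldsymbol A\,\boldsymbol n=\boldsymbol n^{\intercal}\boldsymbol A\,\boldsymbol n$ reduces the claim to $\boldsymbol n^{\intercal}(\curl\boldsymbol\tau)\boldsymbol n=\div_F(\boldsymbol n\cdot\boldsymbol\tau\times\boldsymbol n)$. Writing $(\curl\boldsymbol\tau)_{ij}=\epsilon_{jkl}\partial_k\tau_{il}$ and using that $\boldsymbol n$ is constant on $F$, I would compute
\[
\boldsymbol n^{\intercal}(\curl\boldsymbol\tau)\boldsymbol n = n_in_j\epsilon_{jkl}\partial_k\tau_{il} = \partial_k\bigl(n_in_l\epsilon_{lkj}\tau_{ij}\bigr) = \div(\boldsymbol n\cdot\boldsymbol\tau\times\boldsymbol n),
\]
via the cyclic invariance $\epsilon_{jkl}=\epsilon_{ljk}$ and an index relabeling. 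The vector $\boldsymbol n\cdot\boldsymbol\tau\times\boldsymbol n$ is tangential (its projection on $\boldsymbol n$ vanishes by skew-symmetry of $\epsilon$ against the symmetric product $n_pn_l$), so $\div$ and $\div_F$ agree on it.

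For the rewriting step in \eqref{eq:trace2}, I apply \eqref{eq:rotFdivF} to the tangential vector $\boldsymbol n\times\boldsymbol\sigma\boldsymbol n$ and use $\boldsymbol n\times(\boldsymbol n\times\boldsymbol w)=-\Pi_F\boldsymbol w$ to obtain $\nabla_F^{\bot}\cdot(\boldsymbol n\times\boldsymbol\sigma\boldsymbol n)=\div_F(\boldsymbol\sigma\boldsymbol n)$. Combined with $\boldsymbol n^{\intercal}\div\boldsymbol\sigma=\div(\boldsymbol\sigma\boldsymbol n)=\div_F(\boldsymbol\sigma\boldsymbol n)+\partial_n(\boldsymbol n^{\intercal}\boldsymbol\sigma\boldsymbol n)$, coming from the constancy of $\boldsymbol n$, this yields the $\tr_2(\boldsymbol\sigma)$ form of the left-hand side. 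It remains to show that $\tr_2(\sym\curl\boldsymbol\tau)$ coincides with both $-\rot_F\rot_F(\boldsymbol n\times\sym(\boldsymbol\tau\times\boldsymbol n)\times\boldsymbol n)$ and $\div_F\div_F(\Pi_F\sym(\boldsymbol\tau\times\boldsymbol n)\Pi_F)$.

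In the frame $\boldsymbol n=\boldsymbol e_3$ the components of $\boldsymbol B:=\Pi_F\sym(\boldsymbol\tau\times\boldsymbol n)\Pi_F$ reduce to $B_{11}=\tau_{12}$, $B_{22}=-\tau_{21}$, $B_{12}=B_{21}=\tfrac12(\tau_{22}-\tau_{11})$, so
\[
\div_F\div_F\boldsymbol B=\partial_1^2\tau_{12}-\partial_2^2\tau_{21}+\partial_1\partial_2(\tau_{22}-\tau_{11}).
\]
Computing $\sigma_{13},\sigma_{23},\sigma_{33}$ from $\sigma_{ij}=\tfrac12(\epsilon_{jkl}\partial_k\tau_{il}+\epsilon_{ikl}\partial_k\tau_{jl})$ and assembling $2(\partial_1\sigma_{13}+\partial_2\sigma_{23})+\partial_3\sigma_{33}$, the cross terms involving $\tau_{31},\tau_{32},\tau_{33}$ cancel in pairs, leaving exactly the same expression. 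The equality with $-\rot_F\rot_F(\boldsymbol n\times\sym(\boldsymbol\tau\times\boldsymbol n)\times\boldsymbol n)$ reduces to the general identity $-\rot_F\rot_F(\boldsymbol n\times\boldsymbol A\times\boldsymbol n)=\div_F\div_F\boldsymbol A$ for any tangential symmetric $\boldsymbol A$, itself a short coordinate check (the four components of $\boldsymbol n\times\boldsymbol A\times\boldsymbol n$ are $-A_{22},A_{12},A_{21},-A_{11}$, which $\rot_F\rot_F$ combines with signs opposite to those of $\div_F\div_F\boldsymbol A$). The main obstacle is the cancellation in the coordinate calculation: the extra term $\partial_n(\boldsymbol n^{\intercal}\boldsymbol\sigma\boldsymbol n)$ produced by the divergence splitting is precisely what kills the derivatives of the normal rows $\tau_{3j}$ and of $\tau_{33}$ that would otherwise obstruct the purely tangential form on the right-hand side.
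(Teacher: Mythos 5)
Your proof is correct, but it takes a genuinely different route from the paper's. The paper stays coordinate-free: for \eqref{eq:trace1} it rewrites $\boldsymbol n^{\intercal}\boldsymbol\sigma\boldsymbol n$ as $\rot_F(\boldsymbol n\cdot\boldsymbol\tau\,\Pi_F)$ using the $\nabla_F^{\bot}$ calculus and then invokes \eqref{eq:rotFdivF}; for \eqref{eq:trace2} it expands $\nabla_F^{\bot}\cdot(\boldsymbol n\times\boldsymbol\sigma\cdot\boldsymbol n)$ and $\boldsymbol n\cdot\div\boldsymbol\sigma$ separately via the tangential-trace identity \eqref{eq:tangentialtrace} and the splitting $\nabla=\boldsymbol n\partial_n+\nabla_F$, arriving at $\nabla_F^{\bot}\cdot\sym(\boldsymbol n\times\boldsymbol\tau\,\Pi_F)\cdot\nabla_F^{\bot}$. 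You instead do a Levi--Civita computation for \eqref{eq:trace1} and, for \eqref{eq:trace2}, first identify the left-hand side with the divdiv trace $2\div_F(\boldsymbol\sigma\boldsymbol n)+\partial_n(\boldsymbol n^{\intercal}\boldsymbol\sigma\boldsymbol n)$ --- an observation the paper only records in the remark \emph{after} the lemma --- and then verify the remaining equalities in a frame with $\boldsymbol n=\boldsymbol e_3$. I checked the details: the reduction $\nabla_F^{\bot}\cdot(\boldsymbol n\times\boldsymbol\sigma\boldsymbol n)=\div_F(\boldsymbol\sigma\boldsymbol n)$ follows from \eqref{eq:rotFdivF}, the components of $\Pi_F\sym(\boldsymbol\tau\times\boldsymbol n)\Pi_F$ are as you state, the $\tau_{33}$ terms cancel inside $2\div_F(\boldsymbol\sigma\boldsymbol n)$ while the $\tau_{31},\tau_{32}$ terms cancel against $\partial_3\sigma_{33}$ exactly as claimed, and the sign check between $-\rot_F\rot_F(\boldsymbol n\times\boldsymbol A\times\boldsymbol n)$ and $\div_F\div_F\boldsymbol A$ is right. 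What your route buys is transparency: every cancellation is explicit, and recognizing the left-hand side as $\tr_2(\boldsymbol\sigma)$ gives the identity a clear meaning before any computation. What it costs is the choice of a frame (legitimate here since $F$ is planar with constant $\boldsymbol n$) and the loss of the paper's reusable frame-free intermediate identities such as $\boldsymbol n^{\intercal}\boldsymbol\sigma\boldsymbol n=\rot_F(\boldsymbol n\cdot\boldsymbol\tau\,\Pi_F)$. The only thing missing is the written-out algebra for $\sigma_{13},\sigma_{23},\sigma_{33}$, which you have correctly described but should display in a final version.
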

\begin{proof}
By
\begin{align*}
\boldsymbol n^{\intercal}\boldsymbol\sigma\boldsymbol n&=\frac{1}{2}\boldsymbol n\cdot(\nabla\times(\boldsymbol\tau^{\intercal})-\boldsymbol\tau\times\nabla)\cdot\boldsymbol n=\frac{1}{2}\nabla_F^{\bot}\cdot(\boldsymbol\tau^{\intercal})\cdot\boldsymbol n + \frac{1}{2}\boldsymbol n\cdot\boldsymbol\tau\cdot\nabla_F^{\bot}
\end{align*}
and the fact $\nabla_F^{\bot}\cdot(\boldsymbol\tau^{\intercal})\cdot\boldsymbol n=\boldsymbol n\cdot\boldsymbol\tau\cdot\nabla_F^{\bot}$, we get
$$
\boldsymbol n^{\intercal}\boldsymbol\sigma\boldsymbol n=\boldsymbol n\cdot\boldsymbol\tau\cdot\nabla_F^{\bot}=\mathrm{rot}_F (\boldsymbol n\cdot\boldsymbol \tau \Pi_F).
$$
Then the identity~\eqref{eq:trace1} holds from~\eqref{eq:rotFdivF}.

Next we prove~\eqref{eq:trace2}. Employing~\eqref{eq:tangentialtrace} with $\boldsymbol v=\boldsymbol\tau^{\intercal}\cdot\boldsymbol n$,
\begin{align*}
\nabla_F^{\bot}\cdot(\boldsymbol n\times\boldsymbol \sigma\cdot\boldsymbol n) &=\frac{1}{2}\nabla_F^{\bot}\cdot\big(\boldsymbol n\times(\nabla\times(\boldsymbol\tau^{\intercal})-\boldsymbol\tau\times\nabla)\cdot\boldsymbol n\big) \\
 &=\frac{1}{2}\nabla_F^{\bot}\cdot\big(\boldsymbol n\times(\nabla\times(\boldsymbol\tau^{\intercal}\cdot\boldsymbol n))\big) + \frac{1}{2}\nabla_F^{\bot}\cdot(\boldsymbol n\times\boldsymbol\tau)\cdot\nabla_F^{\bot} \\
 &=\frac{1}{2}\nabla_F^{\bot}\cdot\big(\nabla(\boldsymbol n\cdot\boldsymbol\tau^{\intercal}\cdot\boldsymbol n) - \partial_n(\boldsymbol\tau^{\intercal}\cdot\boldsymbol n)\big) + \frac{1}{2}\nabla_F^{\bot}\cdot(\boldsymbol n\times\boldsymbol\tau)\cdot\nabla_F^{\bot} \\
 &=-\frac{1}{2}\nabla_F^{\bot}\cdot\big(\partial_n(\boldsymbol\tau^{\intercal}\cdot\boldsymbol n)\big) + \frac{1}{2}\nabla_F^{\bot}\cdot(\boldsymbol n\times\boldsymbol\tau)\cdot\nabla_F^{\bot}.
\end{align*}
On the other side, we have
\begin{align*}
\boldsymbol n\cdot \div\boldsymbol \sigma &=\boldsymbol  n\cdot\boldsymbol\sigma\cdot\nabla=\frac{1}{2}\boldsymbol n\cdot(\nabla\times(\boldsymbol\tau^{\intercal}))\cdot\nabla=\frac{1}{2}\nabla_F^{\bot}\cdot(\boldsymbol\tau^{\intercal})\cdot\nabla \\
&=\frac{1}{2}\nabla_F^{\bot}\cdot(\boldsymbol\tau^{\intercal})\cdot(\boldsymbol n\partial_n+\nabla_F)=\frac{1}{2}\nabla_F^{\bot}\cdot\big(\partial_n(\boldsymbol\tau^{\intercal}\cdot\boldsymbol n)\big) + \frac{1}{2}\nabla_F^{\bot}\cdot(\boldsymbol\tau^{\intercal})\cdot\nabla_F \\
&=\frac{1}{2}\nabla_F^{\bot}\cdot\big(\partial_n(\boldsymbol\tau^{\intercal}\cdot\boldsymbol n)\big) - \frac{1}{2}\nabla_F^{\bot}\cdot(\boldsymbol\tau^{\intercal}\times\boldsymbol n)\cdot\nabla_F^{\bot}.
\end{align*}
The sum of the last two identities gives
\begin{align*}
\nabla_F^{\bot}\cdot(\boldsymbol n\times\boldsymbol \sigma\cdot\boldsymbol n)+\boldsymbol  n\cdot \div\boldsymbol \sigma &=\nabla_F^{\bot}\cdot\sym(\boldsymbol n\times\boldsymbol\tau\Pi_F)\cdot\nabla_F^{\bot}.
\end{align*}
Therefore~\eqref{eq:trace2} follows from $\sym(\boldsymbol n\times\boldsymbol\tau\Pi_F)=-\boldsymbol  n\times \sym(\boldsymbol\tau\times\boldsymbol n)\times\boldsymbol n$.
\end{proof}

Note that $ \nabla_F^{\bot}\cdot(\boldsymbol n\times\boldsymbol \sigma\cdot\boldsymbol n)+\boldsymbol  n^{\intercal} \div\boldsymbol \sigma$ is an equivalent formulation of the second trace of $\boldsymbol \sigma$. 
%
Combining Lemmas~\ref{lem:symcurledgeprop} and~\ref{lm:tracerelation} gives the following result.
\begin{lemma}\label{lm:symbubble}
It holds
\begin{equation}\label{eq:symcurlindivdivbubble}
\sym\curl \boldsymbol B_{\ell+1}(\sym\curl, K;\mathbb T) \subseteq (\mathring{\boldsymbol \Sigma}_{\ell,k}(K)\cap\ker(\div\div)).
\end{equation}
\end{lemma}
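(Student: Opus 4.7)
Let $\boldsymbol \sigma = \sym\curl \boldsymbol \tau$ for an arbitrary $\boldsymbol \tau \in \boldsymbol B_{\ell+1}(\sym\curl, K;\mathbb T)$. Membership $\boldsymbol \sigma \in \boldsymbol \Sigma_{\ell,k}(K)$ is immediate since $\boldsymbol \tau \in \mathbb P_{\ell+1}(K;\mathbb T)$ gives $\boldsymbol \sigma \in \sym\curl\mathbb P_{\ell+1}(K;\mathbb T) = \mathbb C_\ell(K;\mathbb S)$, and $\div\div\boldsymbol \sigma = 0$ follows from the polynomial divdiv complex~\eqref{eq:divdivcomplex3dPoly} (or directly from $\nabla\cdot(\nabla\times\boldsymbol A^\intercal-\boldsymbol A\times\nabla)\cdot\nabla=0$). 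The remaining task is to verify that all boundary degrees of freedom~\eqref{Hdivdivfem3ddof1}--\eqref{Hdivdivfem3ddof4} vanish for $\boldsymbol \sigma$.

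The plan is to process the four families of boundary d.o.f. in the order face$\to$edge$\to$vertex$\to$shear, exploiting the two bubble conditions in the definition of $\boldsymbol B_{\ell+1}(\sym\curl,K;\mathbb T)$ in tandem with the identities of Lemma~\ref{lm:tracerelation} and the edge vanishing already established in Lemma~\ref{lem:symcurledgeprop}. For the normal--normal face trace~\eqref{Hdivdivfem3ddof3}, I would apply~\eqref{eq:trace1} to get $\boldsymbol n^\intercal \boldsymbol \sigma \boldsymbol n = \div_F(\boldsymbol n\cdot\boldsymbol \tau\times\boldsymbol n)$ and invoke the bubble condition $(\boldsymbol n\cdot\boldsymbol \tau\times\boldsymbol n)|_F=\boldsymbol 0$ directly. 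For the edge moments~\eqref{Hdivdivfem3ddof2}, Lemma~\ref{lem:symcurledgeprop} gives $\boldsymbol n_i^\intercal\boldsymbol \sigma\boldsymbol n_j|_e=0$ for the two face normals $\boldsymbol n_i,\boldsymbol n_j$ of the faces sharing $e$; together with the previous step this covers all four pairings $i,j\in\{1,2\}$.

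For the vertex values~\eqref{Hdivdivfem3ddof1}, I would argue pointwise: at a vertex $\delta$ of $K$ the three face normals $\boldsymbol n_1,\boldsymbol n_2,\boldsymbol n_3$ of the three faces meeting at $\delta$ are linearly independent and each pair determines an edge through $\delta$, so the edge result yields $\boldsymbol n_i^\intercal \boldsymbol \sigma(\delta)\boldsymbol n_j = 0$ for all $i,j\in\{1,2,3\}$, forcing $\boldsymbol \sigma(\delta) = \boldsymbol 0$. Finally, for the effective transverse shear~\eqref{Hdivdivfem3ddof4}, I would use that $\boldsymbol n$ is constant on $F$ to rewrite
\[
\div(\boldsymbol \sigma\boldsymbol n) = \div_F(\boldsymbol \sigma\boldsymbol n) + \partial_n(\boldsymbol n^\intercal\boldsymbol \sigma\boldsymbol n) = \boldsymbol n^\intercal \div\boldsymbol \sigma,
\]
so that
\[
2\div_F(\boldsymbol \sigma\boldsymbol n)+\partial_n(\boldsymbol n^\intercal\boldsymbol \sigma\boldsymbol n) = \div_F(\boldsymbol \sigma\boldsymbol n)+\boldsymbol n^\intercal\div\boldsymbol \sigma = \nabla_F^\bot\cdot(\boldsymbol n\times\boldsymbol \sigma\cdot\boldsymbol n)+\boldsymbol n^\intercal\div\boldsymbol \sigma,
\]
which by~\eqref{eq:trace2} equals $-\rot_F\rot_F(\boldsymbol n\times\sym(\boldsymbol \tau\times\boldsymbol n)\times\boldsymbol n)$ and vanishes by the second bubble condition.

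The main substantive step has already been done in Lemma~\ref{lem:symcurledgeprop}; once that edge-vanishing is in hand, the remaining arguments are just bookkeeping via the trace identities of Lemma~\ref{lm:tracerelation}. The only mild subtlety is making sure I correctly reformulate $2\div_F(\boldsymbol \sigma\boldsymbol n)+\partial_n(\boldsymbol n^\intercal\boldsymbol \sigma\boldsymbol n)$ in a form matching the right-hand side of~\eqref{eq:trace2}; this is where I expect the calculation to require the most care, since a sign error or a missing factor would disconnect the bubble condition on $\boldsymbol n\times\sym(\boldsymbol \tau\times\boldsymbol n)\times\boldsymbol n$ from the quantity being tested against $q\in\mathbb P_{\ell-1}(F)$.
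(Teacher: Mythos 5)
Your proof is correct and follows essentially the same route as the paper: the face degrees of freedom \eqref{Hdivdivfem3ddof3}--\eqref{Hdivdivfem3ddof4} are killed via the trace identities \eqref{eq:trace1}--\eqref{eq:trace2} of Lemma~\ref{lm:tracerelation}, the edge moments \eqref{Hdivdivfem3ddof2} via Lemma~\ref{lem:symcurledgeprop} (you are right, and slightly more careful than the paper, that the diagonal pairings $\boldsymbol n_i^{\intercal}\boldsymbol\sigma\boldsymbol n_i|_e$ come from the already-vanished face trace rather than from that lemma), and $\div\div\boldsymbol\sigma=0$ from the complex; your reformulation $2\div_F(\boldsymbol\sigma\boldsymbol n)+\partial_n(\boldsymbol n^{\intercal}\boldsymbol\sigma\boldsymbol n)=\nabla_F^{\bot}\cdot(\boldsymbol n\times\boldsymbol\sigma\cdot\boldsymbol n)+\boldsymbol n^{\intercal}\div\boldsymbol\sigma$ is exactly the equivalence the paper invokes implicitly. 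The only step where you genuinely diverge is the vertex values \eqref{Hdivdivfem3ddof1}: the paper argues structurally that each summand of $\boldsymbol\tau$ carries a face bubble $b_F$, so $b_F$ and $\nabla b_F$ both vanish at every vertex and hence $\boldsymbol\sigma(\delta)=\boldsymbol 0$, whereas you deduce $\boldsymbol\sigma(\delta)=\boldsymbol 0$ by linear algebra from the vanishing of $\boldsymbol n_i^{\intercal}\boldsymbol\sigma\boldsymbol n_j$ for the three linearly independent face normals meeting at $\delta$ — both arguments are valid.
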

\begin{proof}
For $\boldsymbol \tau \in \boldsymbol B_{\ell+1}(\sym\curl, K;\mathbb T)$, by construction, $\boldsymbol n\cdot \boldsymbol \tau\times\boldsymbol n = 0$ and $\boldsymbol n\times \sym(\boldsymbol\tau\times\boldsymbol n)\times\boldsymbol n = 0$ on $\partial K$. Let $\boldsymbol \sigma = \sym \curl \boldsymbol \tau$. Then by Lemma~\ref{lm:tracerelation}, d.o.f.~\eqref{Hdivdivfem3ddof3}-\eqref{Hdivdivfem3ddof4} vanish. By Lemma~\ref{lem:symcurledgeprop},~\eqref{Hdivdivfem3ddof2} vanish. As $\boldsymbol \tau$ contains a face bubble, $\boldsymbol \sigma$  will have an  edge bubble function which means $\boldsymbol \sigma(\delta) = 0$ for all $\delta \in \mathcal V(K)$. Therefore  $\sym\curl \boldsymbol B_{\ell+1}(\sym\curl, K;\mathbb T) \subseteq \mathring{\boldsymbol \Sigma}_{\ell,k}(K)$. The property $\div\div (\sym \curl \boldsymbol \tau) = 0$ is from the divdiv complex.
\end{proof}

Indeed the $``\subseteq"$  in~\eqref{eq:symcurlindivdivbubble} can be changed to ``=". This will be clean after we present a bubble complex. 

\begin{lemma}\label{lem:divdivbubble}
For each $K\in\mathcal T_h$, it holds
\begin{equation}\label{eq:divdivL2local0}
\div\div\mathring{\boldsymbol \Sigma}_{\ell,k}(K)=\mathbb P_{k-2}(K)\cap \mathbb P_1^{\bot}(K),
\end{equation}
where $\mathbb P_1^{\bot}(K)$ is a subspace of $L^2(K)$ being orthogonal to $\mathbb P_1(K)$ with respect to the $L^2$-inner product $(\cdot, \cdot)_K$. Consequently
\begin{equation}\label{eq:dimdivdivbubble}
\dim(\mathring{\boldsymbol \Sigma}_{\ell,k}(K)\cap\ker(\div\div))=\frac{1}{6}\ell(\ell-1)(5\ell+17). 
\end{equation}
\end{lemma}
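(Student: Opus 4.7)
The plan is to establish the image identity~\eqref{eq:divdivL2local0} by proving two inclusions separately, and then extract the dimension formula via rank--nullity. For the inclusion $\subseteq$, I would start with the Green's identity~\eqref{eq:greenidentitydivdiv}: by Lemma~\ref{lem:vanishdof}, the vanishing of the boundary d.o.f.~\eqref{Hdivdivfem3ddof1}--\eqref{Hdivdivfem3ddof4} kills every face-- and edge--term in~\eqref{eq:greenidentitydivdiv}, so $(\div\div\bs\tau,v)_K=(\bs\tau,\nabla^2 v)_K$ for every $v\in H^2(K)$; testing against $v\in\mathbb P_1(K)$ gives $\div\div\bs\tau\perp\mathbb P_1(K)$. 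Moreover, $\div\div$ annihilates the $\mathbb C_{\ell}(K;\mathbb S)$-part of the shape function space and maps $\mathbb C_k^{\oplus}(K;\mathbb S)$ into $\mathbb P_{k-2}(K)$ by Lemma~\ref{lem:symmpolyspacedirectsum}, so $\div\div\bs\tau\in \mathbb P_{k-2}(K)\cap\mathbb P_1^{\bot}(K)$.

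For the reverse inclusion $\supseteq$, given $u\in\mathbb P_{k-2}(K)\cap\mathbb P_1^{\bot}(K)$, I would construct the preimage by prescribing d.o.f.~and invoking unisolvence (Theorem~\ref{lem:unisovlenHdivdivfem}). The Hessian map $v\mapsto\nabla^2 v$ is a linear bijection from $\mathbb P_{k-2}(K)\cap\mathbb P_1^{\bot}(K)$ onto $\nabla^2\mathbb P_{k-2}(K)$, so the prescription $(\bs\tau,\nabla^2 v)_K:=(u,v)_K$ provides a consistent assignment of values for~\eqref{Hdivdivfem3ddof5}. Setting the boundary d.o.f.~and the remaining interior d.o.f.~\eqref{Hdivdivfem3ddof55}--\eqref{Hdivdivfem3ddof6} to zero, unisolvence produces a unique $\bs\tau\in\mathring{\boldsymbol\Sigma}_{\ell,k}(K)$, and the Green's identity then gives $(\div\div\bs\tau-u,v)_K=0$ for all $v\in\mathbb P_{k-2}(K)\cap\mathbb P_1^{\bot}(K)$. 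Since $\div\div\bs\tau-u$ already lies in that subspace by the first inclusion, I conclude $\div\div\bs\tau=u$.

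For~\eqref{eq:dimdivdivbubble}, applying rank--nullity to $\div\div:\mathring{\boldsymbol\Sigma}_{\ell,k}(K)\to\mathbb P_{k-2}(K)\cap\mathbb P_1^{\bot}(K)$ reduces the claim to a dimension count. By unisolvence, $\dim\mathring{\boldsymbol\Sigma}_{\ell,k}(K)$ equals the number of interior d.o.f.~\eqref{Hdivdivfem3ddof5}--\eqref{Hdivdivfem3ddof6}; since $\dim\nabla^2\mathbb P_{k-2}(K)=\dim(\mathbb P_{k-2}(K)\cap\mathbb P_1^{\bot}(K))$, the~\eqref{Hdivdivfem3ddof5} contribution cancels against the image dimension, leaving $\dim\sym(\mathbb P_{\ell-2}(K;\mathbb T)\times\boldsymbol x)+\dim\mathbb P_{\ell-2}(F_1)$, which a direct computation using~\eqref{eq:dimsymx} simplifies to $\tfrac{1}{6}\ell(\ell-1)(5\ell+17)$. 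The main technical point that needs care is the consistency of the d.o.f.~prescription in the surjectivity step, which is exactly the bijectivity of the Hessian from $\mathbb P_{k-2}(K)\cap\mathbb P_1^{\bot}(K)$ onto $\nabla^2\mathbb P_{k-2}(K)$ used above.
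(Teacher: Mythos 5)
Your argument is correct, and it differs from the paper's in one substantive way: the surjectivity step. The paper first invokes the analytic result $\div\div\boldsymbol H_0^2(K;\mathbb S)=L^2(K)\cap\mathbb P_1^{\bot}(K)$ of Costabel--McIntosh to produce a preimage $\widetilde{\bs\tau}\in\boldsymbol H_0^2(K;\mathbb S)$ of the target $v$, and then matches the interior d.o.f.\ of a finite element function $\bs\tau$ to those of $\widetilde{\bs\tau}$ so that Green's identity forces $\div\div(\bs\tau-\widetilde{\bs\tau})\perp\mathbb P_{k-2}(K)$. You instead construct the preimage entirely inside $\boldsymbol\Sigma_{\ell,k}(K)$ by prescribing the d.o.f.~\eqref{Hdivdivfem3ddof5} through $(\bs\tau,\nabla^2 v)_K:=(u,v)_K$, setting everything else to zero, and invoking unisolvence; the well-definedness of this prescription is exactly the orthogonality $u\perp\mathbb P_1(K)$, which you correctly flag as the key technical point. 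Your route is more self-contained -- it avoids the external $H^2_0$ surjectivity result -- at the cost of leaning more heavily on the unisolvence theorem as a black box; the paper's route is the one that generalizes to the global statement $\div\div\boldsymbol\Sigma_h^{\mathbb S}=\mathcal Q_h$ in Lemma~\ref{lem:divdivonto}, where an $H^2$ preimage on all of $\Omega$ is genuinely needed. Your dimension count is an equivalent rearrangement of the paper's: the cancellation of the $\nabla^2\mathbb P_{k-2}(K)$ block against $\dim\mathbb P_{k-2}(K)-4$ reproduces $\frac{1}{6}\ell(\ell-1)(5\ell+14)+\frac12\ell(\ell-1)=\frac{1}{6}\ell(\ell-1)(5\ell+17)$.
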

\begin{proof}
From the integration by parts, it is obviously true that $\div\div\mathring{\boldsymbol \Sigma}_{\ell,k}(K)\subseteq\mathbb P_{k-2}(K)\cap \mathbb P_1^{\bot}(K)$.
On the other side, for any $v\in\mathbb P_{k-2}(K)\cap \mathbb P_1^{\bot}(K)$, due to the fact that $\div\div \boldsymbol H_0^2(K;\mathbb S)=L^2(K)\cap \mathbb P_1^{\bot}(K)$~\cite{CostabelMcIntosh2010}, 
there exists $\widetilde{\bs\tau}\in\boldsymbol H_0^2(K;\mathbb S)$ such that
\[
\div\div\widetilde{\bs\tau}=v. 
\]
Then take $\boldsymbol \tau \in\mathring{\boldsymbol \Sigma}_{\ell,k}(K)$ with the rest d.o.f. 
\begin{align*}
(\boldsymbol \tau-\widetilde{\bs\tau}, \boldsymbol \varsigma)_K=0&  \quad\forall~\boldsymbol \varsigma\in\nabla^2\mathbb P_{k-2}(K)\oplus\sym(\mathbb P_{\ell-2}(K; \mathbb T)\times\boldsymbol x), \\
((\boldsymbol \tau-\widetilde{\bs\tau})\boldsymbol n, \boldsymbol  n\times \boldsymbol x q)_{F_1}=0 &\quad\forall~q\in\mathbb P_{\ell-2}(F_1).
\end{align*}
Applying the Green's identity~\eqref{eq:greenidentitydivdiv}, we get
\[
(\div\div(\boldsymbol \tau-\widetilde{\bs\tau}), q)_K=0\quad\forall~q\in\mathbb P_{k-2}(K).
\]
This implies $\div\div\boldsymbol \tau=\div\div\widetilde{\bs\tau}=v$. Namely~\eqref{eq:divdivL2local0} holds.

An immediate result of~\eqref{eq:divdivL2local0} is
\begin{align*}
\dim(\mathring{\boldsymbol \Sigma}_{\ell,k}(K)\cap\ker(\div\div))& =\dim\mathring{\boldsymbol \Sigma}_{\ell,k}(K)-\dim\mathbb P_{k-2}(K)+4 \\
&=\frac{1}{6}\ell(\ell-1)(5\ell+14) +\frac{1}{2}\ell(\ell-1)\\
&=\frac{1}{6}\ell(\ell-1)(5\ell+17).
\end{align*}
\end{proof}

Define
$$
\boldsymbol B_{\ell+2}(K;\mathbb R^3):=\{\boldsymbol v\in\mathbb P_{\ell+2}(K;\mathbb R^3): \boldsymbol v|_{\partial K}=\bs0\}=b_K\mathbb P_{\ell-2}(K;\mathbb R^3).
$$
Now we are in the position to present the so-called bubble complex. 

\begin{theorem}\label{th:bubblecomplex}
The bubble function spaces for the divdiv complex
\begin{equation}\label{eq:divdivcomplex3dfembubble}
\resizebox{.92\hsize}{!}{$
0\xrightarrow{} \boldsymbol B_{\ell+2}(K;\mathbb R^3)\xrightarrow{\dev\grad} \boldsymbol B_{\ell+1}(\sym\curl, K;\mathbb T) \xrightarrow{\sym\curl} \mathring{\boldsymbol \Sigma}_{\ell,k}(K) \xrightarrow{\div{\div}} \mathbb P_{k-2}(K)\cap \mathbb P_1^{\bot}(K)\xrightarrow{}0
$}
\end{equation}
form an exact Hilbert complex. 
\end{theorem}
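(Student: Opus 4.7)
My plan is to prove this in three stages: verify the sequence is a complex, handle the two endpoint exactness statements (injectivity of $\dev\grad$ and surjectivity of $\div\div$), and then close the middle via dimension counting together with a direct trace-based construction.

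For the complex property, the identities $\sym\curl\circ\dev\grad = 0$ and $\div\div\circ\sym\curl = 0$ are inherited from the polynomial divdiv complex~\eqref{eq:divdivcomplex3dPoly}. Three inclusions still need verification. The inclusion $\sym\curl\boldsymbol B_{\ell+1}(\sym\curl,K;\mathbb T)\subseteq\mathring{\boldsymbol\Sigma}_{\ell,k}(K)$ is Lemma~\ref{lm:symbubble}. The inclusion $\div\div\,\mathring{\boldsymbol\Sigma}_{\ell,k}(K)\subseteq\mathbb P_{k-2}(K)\cap\mathbb P_1^{\bot}(K)$ follows by testing Green's identity~\eqref{eq:greenidentitydivdiv} against $v\in\mathbb P_1(K)$, whose Hessian vanishes and whose boundary contributions are annihilated by Lemma~\ref{lem:vanishdof} applied to the vanishing boundary d.o.f.~\eqref{Hdivdivfem3ddof1}-\eqref{Hdivdivfem3ddof4}. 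Finally, to show $\dev\grad\boldsymbol B_{\ell+2}(K;\mathbb R^3)\subseteq \boldsymbol B_{\ell+1}(\sym\curl,K;\mathbb T)$, I proceed face-by-face: for $\boldsymbol v|_{\partial K} = \boldsymbol 0$ the tangential derivatives vanish on each face $F$, so $(\grad\boldsymbol v)|_F = (\partial_n\boldsymbol v)\boldsymbol n^{\intercal}$ and $(\div\boldsymbol v)|_F = \boldsymbol n^{\intercal}\partial_n\boldsymbol v$; then, using~\eqref{eq:xtimesuv} and the identity $\boldsymbol I\times\boldsymbol n = \mskw\boldsymbol n$ (whose skew-symmetry together with $\boldsymbol n^{\intercal}\mskw\boldsymbol n = \boldsymbol 0$ kills the relevant traces), one concludes $\sym((\dev\grad\boldsymbol v)\times\boldsymbol n)|_F = 0$ and $\boldsymbol n\cdot(\dev\grad\boldsymbol v)\times\boldsymbol n|_F = 0$.

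Injectivity of $\dev\grad$ on $\boldsymbol B_{\ell+2}$ is immediate from polynomial exactness and the observation that a linear vector field vanishing on $\partial K$ must vanish identically. Surjectivity of $\div\div$, together with $\dim(\mathring{\boldsymbol\Sigma}_{\ell,k}\cap\ker(\div\div)) = \tfrac{1}{6}\ell(\ell-1)(5\ell+17)$, is Lemma~\ref{lem:divdivbubble}. The central middle equality I intend to establish directly is
\[
\sym\curl\boldsymbol B_{\ell+1}(\sym\curl,K;\mathbb T) = \mathring{\boldsymbol\Sigma}_{\ell,k}(K)\cap\ker(\div\div).
\]
The $\subseteq$ direction is Lemma~\ref{lm:symbubble}. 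For $\supseteq$, given $\boldsymbol\sigma$ on the right-hand side, polynomial exactness of~\eqref{eq:divdivcomplex3dPoly} furnishes $\boldsymbol\tau'\in\mathbb P_{\ell+1}(K;\mathbb T)$ with $\sym\curl\boldsymbol\tau' = \boldsymbol\sigma$; the vanishing of the two $\div\div$-traces of $\boldsymbol\sigma$ translates, through Lemma~\ref{lm:tracerelation}, into the surface identities $\div_F(\boldsymbol n\cdot\boldsymbol\tau'\times\boldsymbol n) = 0$ and $\div_F\div_F(\Pi_F\sym(\boldsymbol\tau'\times\boldsymbol n)\Pi_F) = 0$ on every face. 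Using the two-dimensional polynomial Hessian and divdiv complexes~\eqref{eq:divdivcomplexPolydouble2D}-\eqref{eq:hessiancomplexPolydouble2D} and the edge consistency supplied by Lemma~\ref{lem:symcurledgeprop}, these surface constraints permit the construction of $\boldsymbol v\in\mathbb P_{\ell+2}(K;\mathbb R^3)$ such that $\boldsymbol\tau := \boldsymbol\tau' - \dev\grad\boldsymbol v$ has vanishing $\sym\curl$-traces, hence lies in $\boldsymbol B_{\ell+1}(\sym\curl,K;\mathbb T)$. Once this equality is secured, the dimension identity
\[
\dim\boldsymbol B_{\ell+1}(\sym\curl,K;\mathbb T) - \dim\boldsymbol B_{\ell+2}(K;\mathbb R^3) = \tfrac{1}{3}(4\ell^3+6\ell^2-10\ell) - \tfrac{1}{2}\ell(\ell^2-1) = \tfrac{1}{6}\ell(\ell-1)(5\ell+17)
\]
combined with rank-nullity and the injectivity of $\dev\grad$ upgrades the containment $\dev\grad\boldsymbol B_{\ell+2}\subseteq\ker(\sym\curl)\cap\boldsymbol B_{\ell+1}(\sym\curl)$ to equality.

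The main obstacle I foresee is the face-by-face construction of $\boldsymbol v$ in the last step: the surface potentials extracted from the 2D polynomial complexes on different faces must patch consistently along shared edges, and this edge compatibility has to be reconciled with the global algebraic structure of $\boldsymbol\tau'$. The vertex- and edge-vanishing d.o.f.~\eqref{Hdivdivfem3ddof1}-\eqref{Hdivdivfem3ddof2} of $\boldsymbol\sigma$, combined with Lemma~\ref{lem:symcurledgeprop}, are precisely what I expect to make this patching possible.
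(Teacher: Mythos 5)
Your overall architecture (complex property, endpoints, then one middle equality proved directly and the other closed by dimension count) is sound, and your dimension identity $\frac13(4\ell^3+6\ell^2-10\ell)-\frac12\ell(\ell^2-1)=\frac16\ell(\ell-1)(5\ell+17)$ is correct. The problem is \emph{which} middle equality you chose to attack directly. You propose to prove $\sym\curl\boldsymbol B_{\ell+1}(\sym\curl,K;\mathbb T)\supseteq\mathring{\boldsymbol \Sigma}_{\ell,k}(K)\cap\ker(\div\div)$ by taking a polynomial potential $\boldsymbol\tau'$ of $\boldsymbol\sigma$ and correcting it by some $\dev\grad\boldsymbol v$ so that both $\sym\curl$-traces vanish on all four faces. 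This step is asserted, not proved: the surface identities $\div_F(\boldsymbol n\cdot\boldsymbol\tau'\times\boldsymbol n)=0$ and $\div_F\div_F(\Pi_F\sym(\boldsymbol\tau'\times\boldsymbol n)\Pi_F)=0$ only give you face-local potentials via the two-dimensional complexes, and you must then solve a genuine inverse-trace/extension problem — the pairs of surface potentials on the four faces have to be simultaneously realizable as the traces of a single $\dev\grad\boldsymbol v$ with $\boldsymbol v\in\mathbb P_{\ell+2}(K;\mathbb R^3)$, and such trace pairs are constrained, not arbitrary. You name this as "the main obstacle I foresee" but do not resolve it; moreover, the edge compatibility you hope to extract from Lemma~\ref{lem:symcurledgeprop} is not available there, since that lemma is a statement about tensors \emph{already in} $\boldsymbol B_{\ell+1}(\sym\curl,K;\mathbb T)$, not about an arbitrary potential $\boldsymbol\tau'$ of $\boldsymbol\sigma$.

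The paper's proof shows this hard step can be bypassed entirely by reversing the roles of the two middle segments. One proves exactness at $\boldsymbol B_{\ell+1}(\sym\curl,K;\mathbb T)$ directly: if $\sym\curl\boldsymbol\tau=0$, polynomial exactness of \eqref{eq:divdivcomplex3dPoly} gives a single global $\boldsymbol v\in\mathbb P_{\ell+2}(K;\mathbb R^3)$ with $\boldsymbol\tau=\dev\grad\boldsymbol v$, unique up to $\boldsymbol{RT}$; after normalizing by $\int_F\boldsymbol v\cdot\boldsymbol n=0$, the vanishing traces of $\boldsymbol\tau$ force first $\boldsymbol v\cdot\boldsymbol n|_F=0$, then $\boldsymbol v(\delta)=\boldsymbol 0$ at vertices, then $\Pi_F\boldsymbol v=\boldsymbol 0$, so $\boldsymbol v\in\boldsymbol B_{\ell+2}(K;\mathbb R^3)$ — no face-by-face patching is needed because the potential is global from the start. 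Rank–nullity then gives $\dim\sym\curl\boldsymbol B_{\ell+1}(\sym\curl,K;\mathbb T)=\frac16\ell(\ell-1)(5\ell+17)$, which matches $\dim(\mathring{\boldsymbol\Sigma}_{\ell,k}(K)\cap\ker(\div\div))$ from Lemma~\ref{lem:divdivbubble}, and together with the inclusion \eqref{eq:symcurlindivdivbubble} this yields the equality you were trying to prove by hand. I recommend you restructure your argument along these lines; as written, the central step of your proposal is a gap.
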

\begin{proof}
Take any $\boldsymbol v\in \boldsymbol B_{\ell+2}(K;\mathbb R^3)$ with $\boldsymbol v|_{\partial K}=\bs0$.
We have on each face $F\in\mathcal F(K)$,
\begin{equation}\label{eq:devgradtr1}
\boldsymbol n\cdot(\dev\grad\boldsymbol v)\times\boldsymbol n=\boldsymbol n\cdot(\grad\boldsymbol v)\times\boldsymbol n=-(\boldsymbol n\times\nabla)(\boldsymbol v\cdot\boldsymbol n)=\bs0,
\end{equation}
and
\begin{align}
\boldsymbol n\times \sym((\dev\grad\boldsymbol v)\times\boldsymbol n)\times\boldsymbol n&=\boldsymbol n\times \sym((\grad\boldsymbol v)\times\boldsymbol n)\times\boldsymbol n \notag\\
&=-\boldsymbol n\times \sym(\boldsymbol v\nabla_F^{\bot})\times\boldsymbol n \notag\\
&=-\boldsymbol n\times \sym((\Pi_F\boldsymbol v)\nabla_F^{\bot})\times\boldsymbol n=\bs0. \label{eq:devgradtr2}
\end{align}
Hence $\dev\grad\boldsymbol B_{\ell+2}(K;\mathbb R^3)\subseteq\boldsymbol B_{\ell+1}(\sym\curl, K;\mathbb T)\cap\ker(\sym\curl)$.
Thanks to Lemma~\ref{lm:symbubble} and~\eqref{eq:divdivL2local0}, we conclude that ~\eqref{eq:divdivcomplex3dfembubble} is a complex.

We then verify the exactness from the left to the right.

\medskip
\noindent {\em 1. $\boldsymbol B_{\ell+1}(\sym\curl, K;\mathbb T)\cap\ker(\sym\curl)=\dev\grad\boldsymbol B_{\ell+2}(K;\mathbb R^3)$, i.e. if $\sym\curl\boldsymbol \tau = \boldsymbol 0$ and $\bs\tau\in \boldsymbol B_{\ell+1}(\sym\curl, K;\mathbb T)$, then there exists a $\boldsymbol v\in\boldsymbol B_{\ell+2}(K;\mathbb R^3)$, s.t. $\boldsymbol \tau = \dev\grad\boldsymbol v$}. 

Firstly, by the exactness of the polynomial divdiv complex~\eqref{eq:divdivcomplex3dPoly}, there exists $\boldsymbol v\in\mathbb P_{\ell+2}(K;\mathbb R^3)$ such that $\bs\tau=\dev\grad\boldsymbol v$. As $\bs{RT}=\ker(\dev\grad)$, we  can further impose constraint $\int_F \boldsymbol v\cdot\boldsymbol n=0$ for each $F\in\mathcal F(K)$.
By~\eqref{eq:devgradtr1}, we get $ \boldsymbol v\cdot\boldsymbol n \mid _F\in\mathbb P_0(F)$. Hence $\boldsymbol v\cdot\boldsymbol n|_F=0$, which indicates $\boldsymbol v(\delta)=\bs0$ for each vertex $\delta\in\mathcal V(K)$.
By~\eqref{eq:devgradtr2}, we obtain $\sym((\Pi_F\boldsymbol v)\nabla_F^{\bot})=\bs0$, i.e. $\Pi_F\boldsymbol v\in\mathbb P_0(F;\mathbb R^2)+(\Pi_F\boldsymbol x)\mathbb P_0(F)$. This combined with $\boldsymbol v(\delta)=\bs0$ for each vertex $\delta\in\mathcal V(F)$ means $\Pi_F\boldsymbol v=\bs0$, and then $\boldsymbol v|_F=\bs0$ for each $F\in \mathcal F(K)$. Thus $\boldsymbol v\in \boldsymbol B_{\ell+2}(K;\mathbb R^3)$.

\medskip
\noindent {\em 2. $\sym\curl\boldsymbol B_{\ell+1}(\sym\curl, K;\mathbb T)=\mathring{\boldsymbol \Sigma}_{\ell,k}(K)\cap\ker(\div\div)$}. 

By step 1, we acquire
\begin{align}
&\quad\,\dim\sym\curl\boldsymbol B_{\ell+1}(\sym\curl, K;\mathbb T) \notag \\
&=\dim\boldsymbol B_{\ell+1}(\sym\curl, K;\mathbb T)-\dim\boldsymbol B_{\ell+2}(K;\mathbb R^3) \notag  \\
&=\dim\boldsymbol B_{\ell+1}(\sym\curl, K;\mathbb T)-\dim\mathbb P_{\ell-2}(K;\mathbb R^3) \notag \\
&=\frac{1}{6}\ell(\ell-1)(5\ell+17) \label{eq:dimsymcurlB},
\end{align}
which together with~\eqref{eq:dimdivdivbubble} indicates
$$
\dim\sym\curl\boldsymbol B_{\ell+1}(\sym\curl, K;\mathbb T)=\dim(\mathring{\boldsymbol \Sigma}_{\ell,k}(K)\cap\ker(\div\div)).$$ 
Together with~\eqref{eq:symcurlindivdivbubble} implies $\sym\curl\boldsymbol B_{\ell+1}(\sym\curl, K;\mathbb T)=\mathring{\boldsymbol \Sigma}_{\ell,k}(K)\cap\ker(\div\div)$.

\medskip
\noindent {\em 3. $\div\div\mathring{\boldsymbol \Sigma}_{\ell,k}(K)=\mathbb P_{k-2}(K)\cap \mathbb P_1^{\bot}(K)$.}  This is~\eqref{eq:divdivL2local0} proved in Lemma~\ref{lem:divdivbubble}.

\medskip

Therefore complex~\eqref{eq:divdivcomplex3dfembubble} is exact.
\end{proof}

As a result of complex~\eqref{eq:divdivcomplex3dfembubble}, we can replace the degrees of freedom~\eqref{Hdivdivfem3ddof55}-\eqref{Hdivdivfem3ddof6} by
\begin{align}\label{Hdivdivfem3ddofbubble}
(\boldsymbol \tau, \boldsymbol \varsigma)_K & \quad\forall~\boldsymbol \varsigma\in\sym\curl\boldsymbol B_{\ell+1}(\sym\curl, K;\mathbb T).
\end{align}
The dimension of~\eqref{Hdivdivfem3ddofbubble} is counted in~\eqref{eq:dimsymcurlB}, which also matches the sum of~\eqref{Hdivdivfem3ddof55}-\eqref{Hdivdivfem3ddof6}. 

We summarize the unisolvence below.

\begin{corollary}\label{cor:unisovlenHdivdivfem2}
The degrees of freedom~\eqref{Hdivdivfem3ddof1}-\eqref{Hdivdivfem3ddof5} and~\eqref{Hdivdivfem3ddofbubble} are unisolvent for $\boldsymbol \Sigma_{\ell,k}(K)$.
\end{corollary}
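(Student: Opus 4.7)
The plan is to reduce the claim to the two main tools already assembled: the vanishing-d.o.f.\ bookkeeping of Lemma~\ref{lem:vanishdof} and the exactness of the polynomial bubble complex~\eqref{eq:divdivcomplex3dfembubble} from Theorem~\ref{th:bubblecomplex}. Since the only change from Theorem~\ref{lem:unisovlenHdivdivfem} is to swap the two sets~\eqref{Hdivdivfem3ddof55}--\eqref{Hdivdivfem3ddof6} for the single block~\eqref{Hdivdivfem3ddofbubble}, the structure of the argument remains: count dimensions, then show that a $\boldsymbol\tau\in\boldsymbol\Sigma_{\ell,k}(K)$ killing every d.o.f.\ must vanish.

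First I would verify that the total number of proposed d.o.f.\ matches $\dim\boldsymbol\Sigma_{\ell,k}(K)$. The d.o.f.~\eqref{Hdivdivfem3ddof1}--\eqref{Hdivdivfem3ddof5} contribute exactly as in the proof of Theorem~\ref{lem:unisovlenHdivdivfem}; for~\eqref{Hdivdivfem3ddofbubble} the dimension was computed in~\eqref{eq:dimsymcurlB} to be $\tfrac{1}{6}\ell(\ell-1)(5\ell+17)$, which equals $\dim(\mathring{\boldsymbol\Sigma}_{\ell,k}(K)\cap\ker(\div\div))$ by Lemma~\ref{lem:divdivbubble}, and hence matches the combined dimension of the original interior moments~\eqref{Hdivdivfem3ddof55}--\eqref{Hdivdivfem3ddof6}. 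This gives the same total as in Theorem~\ref{lem:unisovlenHdivdivfem}.

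Next, assume $\boldsymbol\tau\in\boldsymbol\Sigma_{\ell,k}(K)$ makes all the prescribed d.o.f.\ vanish. The vanishing of~\eqref{Hdivdivfem3ddof1}--\eqref{Hdivdivfem3ddof4} places $\boldsymbol\tau\in\mathring{\boldsymbol\Sigma}_{\ell,k}(K)$. Inspecting the proof of Lemma~\ref{lem:vanishdof}, the conclusion $\div\div\boldsymbol\tau=0$ only uses the boundary d.o.f.\ together with the moment~\eqref{Hdivdivfem3ddof5} against $\nabla^2\mathbb P_{k-2}(K)$ (through the Green's identity~\eqref{eq:greenidentitydivdiv}), so we also have $\boldsymbol\tau\in\ker(\div\div)$. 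Therefore $\boldsymbol\tau\in\mathring{\boldsymbol\Sigma}_{\ell,k}(K)\cap\ker(\div\div)$.

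Now I invoke the exactness of the bubble complex~\eqref{eq:divdivcomplex3dfembubble} proved in Theorem~\ref{th:bubblecomplex}, which identifies
\[
\mathring{\boldsymbol\Sigma}_{\ell,k}(K)\cap\ker(\div\div) = \sym\curl\,\boldsymbol B_{\ell+1}(\sym\curl, K;\mathbb T).
\]
Consequently $\boldsymbol\tau$ itself is an admissible test function $\boldsymbol\varsigma$ in~\eqref{Hdivdivfem3ddofbubble}. Plugging $\boldsymbol\varsigma=\boldsymbol\tau$ into the vanishing d.o.f.~\eqref{Hdivdivfem3ddofbubble} yields $(\boldsymbol\tau,\boldsymbol\tau)_K=0$, whence $\boldsymbol\tau=0$. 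Together with the dimension count, this establishes unisolvence. The one subtle point — the only place where genuine work is hidden — is the characterization of $\mathring{\boldsymbol\Sigma}_{\ell,k}(K)\cap\ker(\div\div)$ as the image $\sym\curl\,\boldsymbol B_{\ell+1}(\sym\curl,K;\mathbb T)$, but this is supplied by Theorem~\ref{th:bubblecomplex}, so no additional obstacle remains.
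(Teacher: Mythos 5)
Your proposal is correct and follows exactly the route the paper intends: dimension count via \eqref{eq:dimsymcurlB} and Lemma~\ref{lem:divdivbubble}, reduction of a function with vanishing d.o.f.\ to $\mathring{\boldsymbol\Sigma}_{\ell,k}(K)\cap\ker(\div\div)$ using only \eqref{Hdivdivfem3ddof1}--\eqref{Hdivdivfem3ddof5}, identification of that kernel with $\sym\curl\,\boldsymbol B_{\ell+1}(\sym\curl,K;\mathbb T)$ by Theorem~\ref{th:bubblecomplex}, and the test $\boldsymbol\varsigma=\boldsymbol\tau$ in \eqref{Hdivdivfem3ddofbubble}. No gaps.
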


Notice that although $\boldsymbol B_{\ell+1}(\sym\curl, K;\mathbb T)$ is in a symmetric form, cf.~\eqref{eq:basisofB}, the degrees of freedom~\eqref{Hdivdivfem3ddofbubble} is indeed not simpler than~\eqref{Hdivdivfem3ddof55}-\eqref{Hdivdivfem3ddof6} in computation as $\sym\curl\boldsymbol B_{\ell+1}(\sym\curl, K;\mathbb T)$ is much more complicated than polynomials on a face. 

\subsection{Two dimensional divdiv conforming finite elements}\label{sec:2D}
Recently we have constructed divdiv conforming finite elements in two dimensions in~\cite{ChenHuang2020}. Here we briefly review the results and compare to the three dimensional case. 

Let $F$ be a triangle. Take the space of shape functions
\begin{equation}\label{eq:2DSigma}
\boldsymbol \Sigma_{\ell,k}(F):= \mathbb C_{\ell}(F;\mathbb S)\oplus\mathbb C_k^{\oplus}(F;\mathbb S)
\end{equation}
with $k\geq 3$ and $\ell\geq \max\{k-1, 3\}$ and 
\[
\mathbb C_{\ell}(F; \mathbb S)=\sym \curl_F \, \mathbb  P_{\ell +1}(F; \mathbb R^2),\quad \mathbb C_k^{\oplus}(F; \mathbb S)=\boldsymbol  x\boldsymbol  x^{\intercal}\mathbb P_{k-2}(F).
\]
Here the polynomial space for $\boldsymbol H(\sym \curl, F; \mathbb R^2)$ is the vector space not a tensor space, which simplifies the construction significantly. 

The degrees of freedom are given by
\begin{align}
\boldsymbol \tau (\delta) & \quad\forall~\delta\in \mathcal V(F), \label{Hdivdivfemdof1}\\
(\boldsymbol  n_{e}^{\intercal}\boldsymbol \tau\boldsymbol  n_{e}, q)_e & \quad\forall~q\in\mathbb P_{\ell-2}(e),  e\in\mathcal E(F),\label{Hdivdivfemdof2}\\
(\partial_{t}(\boldsymbol  t^{\intercal}\boldsymbol \tau\boldsymbol  n_{e})+\boldsymbol  n_{e}^{\intercal}\div_F\boldsymbol \tau, q)_e & \quad\forall~q\in\mathbb P_{\ell-1}(e),  e\in\mathcal E(F),\label{Hdivdivfemdof3}\\
(\boldsymbol \tau, \boldsymbol \varsigma)_F & \quad\forall~\boldsymbol \varsigma\in\nabla_F^2\mathbb P_{k-2}(F), \label{Hdivdivfemdof4}\\
(\boldsymbol \tau, \boldsymbol \varsigma)_F & \quad\forall~\boldsymbol \varsigma\in \sym (\boldsymbol x^{\bot} \mathbb P_{\ell-2}(F;\mathbb R^2)). \label{Hdivdivfemdof5}
\end{align}
Here to avoid confusion with three dimensional version, we use $\bs n_e$ to emphasize it is a normal vector of edge vector $e$. 

The unisolvence is again better understood with the help of Fig.~\ref{fig:femdec}. 
By the vanishing degrees of freedom~\eqref{Hdivdivfemdof1}-\eqref{Hdivdivfemdof3}, the trace is vanished. Then together with the vanishing ~\eqref{Hdivdivfemdof4}, $\div \div \boldsymbol \tau = 0$. The rest is to identify the intersection of the bubble space and the kernel of $\div\div$. Define
\begin{align*}
\mathring{\boldsymbol \Sigma}_{\ell,k}(F)&:=\{\boldsymbol  \tau\in\boldsymbol \Sigma_{\ell,k}(F): \textrm{all degrees of freedom~\eqref{Hdivdivfemdof1}-\eqref{Hdivdivfemdof3} vanish}\}.
\end{align*}
It turns out the space $\mathring{\boldsymbol \Sigma}_{\ell,k}(F)\cap \ker(\div_F\div_F)$ is much simpler in two dimensions. 

The key is the following formulae on the trace $\tr_2$. 
\begin{lemma}\label{lm:tauv}
When $\boldsymbol  \tau = \sym \curl_F \, \boldsymbol  v$, we have
\begin{align}
\label{eq:trace2-2d} \partial_{t}(\boldsymbol  t^{\intercal}\boldsymbol \tau\boldsymbol  n_{e})+\boldsymbol  n_{e}^{\intercal}\div_F\boldsymbol \tau & =  \partial_t(\boldsymbol  t^{\intercal}\partial_t\boldsymbol  v).
\end{align}
\end{lemma}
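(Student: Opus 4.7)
The plan is to derive the identity by two ingredients: first, a clean rewrite of $\boldsymbol n_e^{\intercal}\div_F\boldsymbol\tau$ that exploits the symmetry of $\boldsymbol\tau$ together with the constancy of the frame $(\boldsymbol t,\boldsymbol n_e)$ along $e$; second, explicit formulas for the two scalar traces $\boldsymbol t^{\intercal}\boldsymbol\tau\boldsymbol n_e$ and $\boldsymbol n_e^{\intercal}\boldsymbol\tau\boldsymbol n_e$ in terms of tangential and normal derivatives of the components of $\boldsymbol v$. Since the identity is pointwise on $e$, everything can be verified in an orthonormal frame at an arbitrary point of $e$ in which $\boldsymbol t$ and $\boldsymbol n_e$ are the basis vectors, so that $\partial_t$ and $\partial_n$ commute and $\boldsymbol t,\boldsymbol n_e$ may be treated as constant.

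First I would note that because $\boldsymbol n_e$ is constant along $e$ and $\boldsymbol\tau^{\intercal}=\boldsymbol\tau$, the row-wise divergence satisfies $\boldsymbol n_e^{\intercal}\div_F\boldsymbol\tau = \div_F(\boldsymbol\tau\boldsymbol n_e)$. Decomposing $\boldsymbol\tau\boldsymbol n_e = (\boldsymbol t^{\intercal}\boldsymbol\tau\boldsymbol n_e)\boldsymbol t + (\boldsymbol n_e^{\intercal}\boldsymbol\tau\boldsymbol n_e)\boldsymbol n_e$ and using the constancy of the frame gives
\[
\boldsymbol n_e^{\intercal}\div_F\boldsymbol\tau = \partial_t(\boldsymbol t^{\intercal}\boldsymbol\tau\boldsymbol n_e) + \partial_n(\boldsymbol n_e^{\intercal}\boldsymbol\tau\boldsymbol n_e),
\]
so that the left-hand side of \eqref{eq:trace2-2d} is $2\,\partial_t(\boldsymbol t^{\intercal}\boldsymbol\tau\boldsymbol n_e) + \partial_n(\boldsymbol n_e^{\intercal}\boldsymbol\tau\boldsymbol n_e)$, which is precisely the two-dimensional effective transverse shear trace appearing in Lemma~\ref{lm:Green2D}.

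Next I would use the row-wise definition of $\curl_F$ on vectors to evaluate the two columns of $\curl_F\boldsymbol v$; one checks directly that $(\curl_F\boldsymbol v)\boldsymbol n_e = \partial_t\boldsymbol v$ and $(\curl_F\boldsymbol v)\boldsymbol t = -\partial_n\boldsymbol v$. Symmetrizing and exploiting constancy of the frame yields
\[
\boldsymbol t^{\intercal}\boldsymbol\tau\boldsymbol n_e = \tfrac12\bigl(\partial_t(\boldsymbol t\cdot\boldsymbol v) - \partial_n(\boldsymbol n_e\cdot\boldsymbol v)\bigr), \qquad \boldsymbol n_e^{\intercal}\boldsymbol\tau\boldsymbol n_e = \partial_t(\boldsymbol n_e\cdot\boldsymbol v).
\]
Substituting these into the expression from the previous paragraph, the two $\partial_t\partial_n(\boldsymbol n_e\cdot\boldsymbol v)$ contributions cancel by equality of mixed partials, leaving $\partial_t^2(\boldsymbol t\cdot\boldsymbol v) = \partial_t(\boldsymbol t^{\intercal}\partial_t\boldsymbol v)$, which is the right-hand side.

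The computation is essentially mechanical; there is no deep obstacle, only bookkeeping. The one point to be careful about is the sign convention for the row-wise $\curl_F$ on vector fields and the rotational relation between $\boldsymbol t$ and $\boldsymbol n_e$, since the proof hinges on the fact that symmetrization of $\curl_F\boldsymbol v$ pairs the tangential derivative of the normal component against the normal derivative of the tangential component with just the right sign so that the mixed term disappears. Structurally, this identity is the two-dimensional degeneration of Lemma~\ref{lm:tracerelation}, with the surface operator $\div_F\div_F$ collapsing to the single second tangential derivative $\partial_t^2$.
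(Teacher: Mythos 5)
Your proof is correct, but it takes a different route from the paper's. The paper never computes $\boldsymbol n_e^{\intercal}\boldsymbol\tau\boldsymbol n_e$: it exploits the complex property $\div_F\curl_F\boldsymbol v=0$ together with the commutation $\div_F(\curl_F\boldsymbol v)^{\intercal}=\curl_F\div_F\boldsymbol v$ to get $\boldsymbol n_e^{\intercal}\div_F\boldsymbol\tau=\tfrac12\partial_t\div_F\boldsymbol v$ in one stroke, then writes $\div_F\boldsymbol v=\boldsymbol t^{\intercal}\partial_t\boldsymbol v+\boldsymbol n_e^{\intercal}\partial_n\boldsymbol v$ by rotational invariance of the trace and adds the formula for $\boldsymbol t^{\intercal}\boldsymbol\tau\boldsymbol n_e$, so the normal-derivative terms cancel at the level of $\div_F\boldsymbol v$. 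You instead use the symmetry of $\boldsymbol\tau$ to rewrite $\boldsymbol n_e^{\intercal}\div_F\boldsymbol\tau=\div_F(\boldsymbol\tau\boldsymbol n_e)=\partial_t(\boldsymbol t^{\intercal}\boldsymbol\tau\boldsymbol n_e)+\partial_n(\boldsymbol n_e^{\intercal}\boldsymbol\tau\boldsymbol n_e)$ — identifying the left side with the effective transverse shear of Lemma~\ref{lm:Green2D} — and then compute all three frame components of $\sym\curl_F\boldsymbol v$ explicitly, with the mixed partials $\partial_t\partial_n(\boldsymbol n_e\cdot\boldsymbol v)$ cancelling at the end. Your version is more mechanical and makes the structural content (the 2D degeneration of Lemma~\ref{lm:tracerelation}, where $\div_F\div_F$ collapses to $\partial_t^2$) more visible; the paper's is shorter because $\div_F\curl_F=0$ kills half the terms before any frame expansion. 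One caveat, which you already flag: the identities $(\curl_F\boldsymbol v)\boldsymbol n_e=\partial_t\boldsymbol v$ and $(\curl_F\boldsymbol v)\boldsymbol t=-\partial_n\boldsymbol v$ hold only for the orientation in which $\boldsymbol n_e$ is the rotation of $\boldsymbol t$ consistent with $\nabla_F^{\perp}=\boldsymbol n\times\nabla$ (so that $\nabla_F^{\perp}\cdot\boldsymbol n_e=\partial_t$); with the opposite convention the right-hand side of \eqref{eq:trace2-2d} acquires a minus sign. The paper's proof tacitly makes the same choice in the step $\boldsymbol n_e^{\intercal}\curl_F(\div_F\boldsymbol v)=\partial_t\div_F\boldsymbol v$, so this is an ambiguity of the statement rather than a defect of your argument.
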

\begin{proof}
Since $\div_F \curl_F \, \boldsymbol  v = 0$, we have
$$\boldsymbol  n^{\intercal}\div_F\boldsymbol \tau  = \frac{1}{2}\boldsymbol  n_{e}^{\intercal} \div_F ( \curl_F \, \boldsymbol  v)^{\intercal} = \frac{1}{2}\boldsymbol  n_{e}^{\intercal} \curl_F \div_F \boldsymbol  v =  \frac{1}{2} \partial_t \div_F \boldsymbol  v.$$
As $\div_F\boldsymbol  v = {\rm trace} (\nabla_F\boldsymbol  v)$ is invariant to the rotation, we can write it as
$$
\div_F \boldsymbol  v = \boldsymbol  t^{\intercal}\nabla_F\boldsymbol  v \boldsymbol  t + \boldsymbol  n_{e}^{\intercal}\nabla_F\boldsymbol  v \boldsymbol  n_{e} =  \boldsymbol  t^{\intercal}\partial_t\boldsymbol  v + \boldsymbol  n_{e}^{\intercal}\partial_n\boldsymbol  v.
$$
Then
$$
 \partial_{t}(\boldsymbol  t^{\intercal}\boldsymbol \tau\boldsymbol  n_{e})+\boldsymbol  n_{e}^{\intercal}\div_F\boldsymbol \tau  =\frac{1}{2}\partial_t[  \boldsymbol  t^{\intercal}\partial_t\boldsymbol  v -  \boldsymbol  n_{e}^{\intercal}\partial_n\boldsymbol  v + \div_F\boldsymbol  v]  =  \partial_t  (\boldsymbol  t^{\intercal}\partial_t\boldsymbol  v),
 $$
 i.e.~\eqref{eq:trace2-2d} holds.
\end{proof}

\begin{lemma}
The following bubble complex: 
\begin{equation*}
\bs0\xrightarrow{\subset} b_F\mathbb P_{\ell-2}(F;\mathbb R^2)\xrightarrow{\sym\curl_F} \mathring{\boldsymbol \Sigma}_{\ell,k}(F)\xrightarrow{\div_F\div_F} \mathbb P_{k-2}(F)\cap\mathbb P_1^{\perp}(F)\xrightarrow{}0
\end{equation*}
is exact.
\end{lemma}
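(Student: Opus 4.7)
The plan is to mimic the 3D proof of Theorem~\ref{th:bubblecomplex} step by step in the 2D setting, which is considerably simpler because the space preceding $\boldsymbol\Sigma_{\ell,k}(F)$ consists of vector functions rather than trace-free tensors. First I verify that the sequence is a complex. For $\boldsymbol v = b_F\boldsymbol p\in b_F\mathbb P_{\ell-2}(F;\mathbb R^2)$, we have $\boldsymbol v|_{\partial F}=\boldsymbol 0$. Setting $\boldsymbol\tau=\sym\curl_F\boldsymbol v\in \mathbb P_\ell(F;\mathbb S)\subseteq \boldsymbol\Sigma_{\ell,k}(F)$, I check each boundary d.o.f.~\eqref{Hdivdivfemdof1}-\eqref{Hdivdivfemdof3} vanishes: at any vertex $\delta$ the two edges through $\delta$ both carry $\boldsymbol v=\boldsymbol 0$, so the two tangential derivatives, and hence $\nabla\boldsymbol v(\delta)$, vanish, giving $\boldsymbol\tau(\delta)=\boldsymbol 0$; a direct computation in an orthonormal $(\boldsymbol t,\boldsymbol n_e)$ frame yields $\boldsymbol n_e^{\intercal}\boldsymbol\tau\boldsymbol n_e=\partial_t(\boldsymbol n_e\cdot\boldsymbol v)$, which vanishes on $e$ since $\boldsymbol v|_e=\boldsymbol 0$, so~\eqref{Hdivdivfemdof2} vanishes; and Lemma~\ref{lm:tauv} gives $\partial_t(\boldsymbol t^{\intercal}\boldsymbol\tau\boldsymbol n_e)+\boldsymbol n_e^{\intercal}\div_F\boldsymbol\tau=\partial_t(\boldsymbol t^{\intercal}\partial_t\boldsymbol v)=0$ on $e$, killing~\eqref{Hdivdivfemdof3}. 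The composition $\div_F\div_F\sym\curl_F=0$ is the 2D polynomial divdiv complex~\eqref{eq:divdivcomplexPolydouble2D}.

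For exactness at $b_F\mathbb P_{\ell-2}(F;\mathbb R^2)$, if $\sym\curl_F\boldsymbol v=\boldsymbol 0$ with $\boldsymbol v\in b_F\mathbb P_{\ell-2}(F;\mathbb R^2)$, then $\boldsymbol v\in\bs{RT}$ by~\eqref{eq:divdivcomplexPolydouble2D}, and any element of $\bs{RT}$ vanishing on all three edges of $F$ must be zero. For surjectivity of $\div_F\div_F$, I follow the proof of Lemma~\ref{lem:divdivbubble}: the image lies in $\mathbb P_1^{\perp}(F)$ because the 2D Green's identity (Lemma~\ref{lm:Green2D}) combined with the vanishing boundary d.o.f. yields $(\div_F\div_F\boldsymbol\tau,q)_F=(\boldsymbol\tau,\nabla_F^2q)_F$, which is zero for $q\in\mathbb P_1(F)$. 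Conversely, for $v\in\mathbb P_{k-2}(F)\cap\mathbb P_1^{\perp}(F)$, the 2D analog of~\cite{CostabelMcIntosh2010} provides $\widetilde{\boldsymbol\tau}\in\boldsymbol H_0^2(F;\mathbb S)$ with $\div_F\div_F\widetilde{\boldsymbol\tau}=v$; defining $\boldsymbol\tau\in\mathring{\boldsymbol\Sigma}_{\ell,k}(F)$ by matching the interior d.o.f.~\eqref{Hdivdivfemdof4}-\eqref{Hdivdivfemdof5} with $\widetilde{\boldsymbol\tau}$ and applying Green's identity to $\boldsymbol\tau-\widetilde{\boldsymbol\tau}$ gives $\div_F\div_F\boldsymbol\tau=v$.

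Exactness in the middle reduces to dimension counting. Surjectivity of $\div_F\div_F$ gives
\[
\dim(\mathring{\boldsymbol\Sigma}_{\ell,k}(F)\cap\ker(\div_F\div_F))=\dim\mathring{\boldsymbol\Sigma}_{\ell,k}(F)-\dim\mathbb P_{k-2}(F)+3,
\]
while injectivity of $\sym\curl_F$ on the bubble space yields $\dim\sym\curl_F(b_F\mathbb P_{\ell-2}(F;\mathbb R^2))=\dim b_F\mathbb P_{\ell-2}(F;\mathbb R^2)=\ell(\ell-1)$. Computing $\dim\mathring{\boldsymbol\Sigma}_{\ell,k}(F)=\dim\nabla_F^2\mathbb P_{k-2}(F)+\dim\sym(\boldsymbol x^{\bot}\mathbb P_{\ell-2}(F;\mathbb R^2))$ and evaluating the second summand via the 2D Koszul exactness in~\eqref{eq:hessiancomplexPolydouble2D}, the two dimensions agree, so the inclusion from Step~1 is an equality.

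The main obstacle is the verification of the two trace identities for $\sym\curl_F\boldsymbol v$ on edges (the normal-normal piece needs the direct calculation above, the transverse shear piece is Lemma~\ref{lm:tauv}) and the dimension bookkeeping in the last step, especially evaluating $\dim\sym(\boldsymbol x^{\bot}\mathbb P_{\ell-2}(F;\mathbb R^2))$; all remaining manipulations reproduce the 3D template of Theorem~\ref{th:bubblecomplex} almost verbatim.
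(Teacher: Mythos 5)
Your verification that the sequence is a complex, and your treatment of the two ends (injectivity of $\sym\curl_F$ on $b_F\mathbb P_{\ell-2}(F;\mathbb R^2)$ via $\ker(\sym\curl_F)=\boldsymbol{RT}$, and surjectivity of $\div_F\div_F$ following Lemma~\ref{lem:divdivbubble}) are sound. The gap is in the dimension count for exactness in the middle. You take $\dim\mathring{\boldsymbol\Sigma}_{\ell,k}(F)=\dim\nabla_F^2\mathbb P_{k-2}(F)+\dim\sym(\boldsymbol x^{\bot}\mathbb P_{\ell-2}(F;\mathbb R^2))$, i.e.\ you equate the dimension of the bubble space with the number of interior degrees of freedom. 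Since $\mathring{\boldsymbol\Sigma}_{\ell,k}(F)$ is defined as the common kernel of the boundary functionals \eqref{Hdivdivfemdof1}--\eqref{Hdivdivfemdof3}, that identity is equivalent to the linear independence of those functionals on $\boldsymbol\Sigma_{\ell,k}(F)$. Without it one only has $\dim\mathring{\boldsymbol\Sigma}_{\ell,k}(F)\ge\dim\boldsymbol\Sigma_{\ell,k}(F)-\#\{\text{boundary d.o.f.}\}$, which combined with the surjectivity of $\div_F\div_F$ yields $\dim\bigl(\mathring{\boldsymbol\Sigma}_{\ell,k}(F)\cap\ker(\div_F\div_F)\bigr)\ge\ell(\ell-1)$ — an inequality in the wrong direction for upgrading the inclusion $\sym\curl_F(b_F\mathbb P_{\ell-2}(F;\mathbb R^2))\subseteq\mathring{\boldsymbol\Sigma}_{\ell,k}(F)\cap\ker(\div_F\div_F)$ to an equality. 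In the paper the needed independence is a consequence of the unisolvence Theorem~\ref{thm:unisovlenHdivdivfem2D}, but that theorem is proved \emph{after}, and \emph{by means of}, this bubble complex, so your argument is circular in the paper's logical order. This is exactly where the 2D layout differs from the 3D template you are copying: in 3D the unisolvence (Theorem~\ref{lem:unisovlenHdivdivfem}) is established before the bubble complex, so the analogous count in Theorem~\ref{th:bubblecomplex} is legitimate there.

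The paper closes the middle step directly, without ever computing $\dim\mathring{\boldsymbol\Sigma}_{\ell,k}(F)$: given $\boldsymbol\tau$ in the kernel, the exactness of the polynomial complex \eqref{eq:divdivcomplexPolydouble2D} supplies $\boldsymbol v\in\mathbb P_{\ell+1}(F;\mathbb R^2)$ with $\sym\curl_F\boldsymbol v=\boldsymbol\tau$, normalized by $\int_e\boldsymbol v\cdot\boldsymbol n_e=0$ using $\ker(\sym\curl_F)=\boldsymbol{RT}$. The vanishing trace $\boldsymbol n_e^{\intercal}\boldsymbol\tau\boldsymbol n_e=\partial_t(\boldsymbol n_e^{\intercal}\boldsymbol v)$ then forces $\boldsymbol n_e^{\intercal}\boldsymbol v|_{\partial F}=0$, hence $\boldsymbol v(\delta)=\boldsymbol 0$ at every vertex; Lemma~\ref{lm:tauv} and the vanishing second trace give $\partial_{tt}(\boldsymbol t^{\intercal}\boldsymbol v)|_e=0$, so $\boldsymbol t^{\intercal}\boldsymbol v$ is linear on each edge and vanishes at its endpoints. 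Thus $\boldsymbol v|_{\partial F}=\boldsymbol 0$ and $\boldsymbol v=b_F\psi_{\ell-2}$. Replacing your dimension count with this argument — which uses only the two trace identities you have already invoked for the complex property — repairs the proof; alternatively you would have to prove the linear independence of the boundary functionals independently of the bubble complex.
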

\begin{proof}
The fact $\div_F\div_F: \mathring{\boldsymbol \Sigma}_{\ell,k}(F) \to \mathbb P_{k-2}(F)\cap\mathbb P_1^{\perp}(F)$ is surjective can be proved similarly to Lemma~\ref{lem:divdivbubble}. 

For $\boldsymbol \tau \in \mathring{\boldsymbol \Sigma}_{\ell,k}(F)\cap\ker(\div_F\div_F)$, from the complex~\eqref{eq:divdivcomplexPolydouble2D}, we can find $\boldsymbol v\in \mathbb P_{\ell+1}(F)$ s.t. $\sym \curl_F \boldsymbol v = \boldsymbol \tau$.  We will prove $\boldsymbol v|_{\partial F} = \boldsymbol 0$. 

Since $\bs{RT}=\ker(\sym\curl_F)$, we can further impose constraint $\int_e \boldsymbol v\cdot\boldsymbol n_{e}=0$ for each $e\in\mathcal E(F)$. The fact $(\boldsymbol n_{e}^{\intercal}\boldsymbol \tau\boldsymbol  n_{e})|_{\partial F}=0$ implies
\[
\partial_t(\boldsymbol  n_{e}^{\intercal}\boldsymbol  v)|_{\partial F}= (\boldsymbol  n_{e}^{\intercal}\boldsymbol \tau\boldsymbol  n_{e})|_{\partial F}=0.
\]
Hence $\boldsymbol  n_{e}^{\intercal}\boldsymbol  v|_{\partial F}=0$.
This also means $\boldsymbol  v(\delta)=\boldsymbol 0$ for each $\delta\in\mathcal V(F)$.

By Lemma~\ref{lm:tauv}, since
\begin{equation*}
\partial_{t}(\boldsymbol  t^{\intercal}\boldsymbol \tau\boldsymbol  n_{e})+\boldsymbol  n_{e}^{\intercal}\div_F\boldsymbol \tau= \partial_t(\boldsymbol  t^{\intercal}\partial_t\boldsymbol  v)
\end{equation*}
and $(\partial_{t}(\boldsymbol  t^{\intercal}\boldsymbol \tau\boldsymbol  n_{e})+\boldsymbol  n_{e}^{\intercal}\div_F\boldsymbol \tau)|_{\partial F}=0$, we acquire
\[
\partial_{tt}(\boldsymbol  t^{\intercal}\boldsymbol  v)|_{\partial F}=0.
\]
That is $\boldsymbol  t^{\intercal}\boldsymbol  v|_e\in\mathbb P_1(e)$ on each edge $e\in\mathcal E(F)$. Noting that $\boldsymbol  v(\delta)=\boldsymbol 0$ for each $\delta\in\mathcal V(F)$, we get $\boldsymbol t^{\intercal}\boldsymbol  v|_{\partial F}=0$ and consequently $\boldsymbol  v|_{\partial F}=\boldsymbol 0$, i.e., 
$$
\boldsymbol v = b_F\psi_{\ell-2},\quad \text{for some } \psi_{\ell-2} \in \mathbb P_{\ell-2}(F;\mathbb R^2).
$$
\end{proof}

We now prove the unisolvence as follows.

\begin{theorem}\label{thm:unisovlenHdivdivfem2D}
The degrees of freedom~\eqref{Hdivdivfemdof1}-\eqref{Hdivdivfemdof4} are unisolvent for $\boldsymbol \Sigma_{\ell,k}(F)$~\eqref{eq:2DSigma}.
\end{theorem}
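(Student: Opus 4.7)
The plan is to follow the template of Theorem~\ref{lem:unisovlenHdivdivfem} together with the two-dimensional bubble complex just established: first check that the shape-function space dimension matches the total number of degrees of freedom, then run a kernel argument showing that vanishing of all DoFs forces $\boldsymbol\tau=0$. The main simplification over 3D is that the preceding space in the complex is $\mathbb P_{\ell+1}(F;\mathbb R^2)$, a vector space rather than a traceless tensor space, so the boundary analysis and the final interior step are both cleaner.

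For the dimension count, I use the 2D analog of \eqref{eq:dimC} to get $\dim\boldsymbol\Sigma_{\ell,k}(F)=\dim\mathbb C_\ell(F;\mathbb S)+\dim\mathbb C_k^{\oplus}(F;\mathbb S)$, and match it against the DoF count: $9$ vertex values, $3(2\ell-1)$ edge moments from \eqref{Hdivdivfemdof2}-\eqref{Hdivdivfemdof3}, $\dim\mathbb P_{k-2}(F)-3$ from \eqref{Hdivdivfemdof4}, and $\dim\mathbb P_{\ell-2}(F;\mathbb R^2)=\ell(\ell-1)$ from \eqref{Hdivdivfemdof5} (the map $\boldsymbol q\mapsto\sym(\boldsymbol x^\bot\boldsymbol q^\intercal)$ is easily seen to be injective on $\mathbb P_{\ell-2}(F;\mathbb R^2)$). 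A direct expansion shows the two totals coincide. Next, assume all DoFs vanish. A 2D analog of Lemma~\ref{lem:boundpolyl} identifies the polynomial degrees of $(\boldsymbol n_e^\intercal\boldsymbol\tau\boldsymbol n_e)|_e$ and of $(\partial_t(\boldsymbol t^\intercal\boldsymbol\tau\boldsymbol n_e)+\boldsymbol n_e^\intercal\div_F\boldsymbol\tau)|_e$ and shows they lie in the polynomial spaces tested by \eqref{Hdivdivfemdof2}-\eqref{Hdivdivfemdof3}, so both edge traces vanish identically; the vertex DoF \eqref{Hdivdivfemdof1} kills the endpoint contributions in Lemma~\ref{lm:Green2D}. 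Consequently, testing Green's identity against any $v\in\mathbb P_{k-2}(F)$ and invoking \eqref{Hdivdivfemdof4} yields $(\div_F\div_F\boldsymbol\tau,v)_F=0$, hence $\div_F\div_F\boldsymbol\tau=0$. Writing $\boldsymbol\tau=\boldsymbol\tau_1+\boldsymbol\tau_2$ with $\boldsymbol\tau_1\in\mathbb C_\ell$ and $\boldsymbol\tau_2\in\mathbb C_k^{\oplus}$, and using the 2D analog of Lemma~\ref{lem:symmpolyspacedirectsum} (the bijection $\div_F\div_F:\mathbb C_k^{\oplus}(F;\mathbb S)\to\mathbb P_{k-2}(F)$), I conclude $\boldsymbol\tau_2=0$ and hence $\boldsymbol\tau\in\mathring{\boldsymbol\Sigma}_{\ell,k}(F)\cap\ker(\div_F\div_F)$.

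By the 2D bubble complex just established, this intersection equals $\sym\curl_F(b_F\mathbb P_{\ell-2}(F;\mathbb R^2))$, so there exists $\boldsymbol v\in b_F\mathbb P_{\ell-2}(F;\mathbb R^2)$ with $\boldsymbol\tau=\sym\curl_F\boldsymbol v$. Finally, testing \eqref{Hdivdivfemdof5} with $\boldsymbol\varsigma=\sym(\boldsymbol x^\bot\boldsymbol q^\intercal)$ and integrating by parts, since $\boldsymbol v|_{\partial F}=0$ all boundary contributions vanish and one obtains a pairing of the form $(\boldsymbol v,\mathcal A\boldsymbol q)_F=0$ for every $\boldsymbol q\in\mathbb P_{\ell-2}(F;\mathbb R^2)$, where $\mathcal A:\mathbb P_{\ell-2}(F;\mathbb R^2)\to\mathbb P_{\ell-2}(F;\mathbb R^2)$ is the $L^2$-adjoint of $\sym\curl_F$ evaluated on the Koszul operator $\sym(\boldsymbol x^\bot\,\cdot)$. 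The hard part is precisely the verification that $\mathcal A$ is injective: this is the 2D analog of the final nondegeneracy argument in Theorem~\ref{lem:unisovlenHdivdivfem} and should follow by pairing the exactness of the Hessian Koszul complex \eqref{eq:hessiancomplexPolydouble2D} with the matching dimension identity $\dim b_F\mathbb P_{\ell-2}(F;\mathbb R^2)=\dim\sym(\boldsymbol x^\bot\mathbb P_{\ell-2}(F;\mathbb R^2))=\ell(\ell-1)$. Once this is in hand, $\boldsymbol v=0$, hence $\boldsymbol\tau=0$, and the unisolvence is proved.
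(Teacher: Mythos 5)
Your proposal is correct and follows essentially the same route as the paper's proof: dimension count, vanishing of the two edge traces from \eqref{Hdivdivfemdof1}--\eqref{Hdivdivfemdof3}, $\div_F\div_F\boldsymbol\tau=0$ from \eqref{Hdivdivfemdof4} and the Green's identity, the two-dimensional bubble complex to write $\boldsymbol\tau=\sym\curl_F(b_F\psi_{\ell-2})$, and finally the bijectivity of $\rot_F$ on $\sym(\boldsymbol x^{\perp}\mathbb P_{\ell-2}(F;\mathbb R^2))$ from the Koszul complex \eqref{eq:hessiancomplexPolydouble2D} paired with \eqref{Hdivdivfemdof5} to force $\psi_{\ell-2}=0$ --- the paper just instantiates your injectivity of $\mathcal A$ by picking the explicit test function $\boldsymbol\varsigma=\sym(\boldsymbol x^{\perp}\phi_{\ell-2})$ with $\rot_F\boldsymbol\varsigma=\psi_{\ell-2}$, yielding $(b_F\psi_{\ell-2},\psi_{\ell-2})_F=0$. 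The one small imprecision is in the edge step: $\boldsymbol n_e^{\intercal}\boldsymbol\tau\boldsymbol n_e|_e$ lies in $\mathbb P_{\ell}(e)$ while \eqref{Hdivdivfemdof2} tests only against $\mathbb P_{\ell-2}(e)$, so the vertex values \eqref{Hdivdivfemdof1} are needed to complete a Lagrange-type argument on each edge (two endpoint values plus $\ell-1$ moments determine $\mathbb P_{\ell}(e)$), not merely to cancel the corner terms in Lemma~\ref{lm:Green2D}.
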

\begin{proof}
We first count the number of the degrees of freedom~\eqref{Hdivdivfemdof1}-\eqref{Hdivdivfemdof4}  and the dimension of the space, i.e., $\dim\boldsymbol \Sigma_{\ell,k}(K)$.
Both of them are $$\ell^2+5\ell+3+\frac{1}{2}k(k-1).$$
Then suppose all the degrees of freedom~\eqref{Hdivdivfemdof1}-\eqref{Hdivdivfemdof4} applied to $\boldsymbol \tau$ vanish. We are going to prove the function $\boldsymbol \tau = 0$. 

By the vanishing degrees of freedom~\eqref{Hdivdivfemdof1}-\eqref{Hdivdivfemdof3}, the two traces are vanished. Together with~\eqref{Hdivdivfemdof4}, the Green's identity implies $\div_F\div_F \boldsymbol \tau = 0$. Then $$
\boldsymbol \tau = \sym \curl_{F}  (b_F\psi_{\ell-2}),\quad \text{for some } \psi_{\ell-2} \in \mathbb P_{\ell-2}(F;\mathbb R^2).
$$
We then use the fact $\rot_F: \sym(\boldsymbol x^{\perp}\mathbb P_{\ell-2}(F;\mathbb R^2)) \to  \mathbb P_{\ell-2}(F;\mathbb R^2)$ is bijection, cf. the complex~\eqref{eq:hessiancomplexPolydouble2D}, to find $\phi_{\ell-2}$ s.t. $\rot_F (\sym(\boldsymbol x^{\perp}\phi_{\ell-2})) = \psi_{\ell-2}$.
Finally we finish the unisolvence proof by choosing $\boldsymbol \varsigma = \sym (\boldsymbol x^{\perp} \phi_{\ell -2})$ in~\eqref{Hdivdivfemdof5}. The fact 
$$
(\boldsymbol \tau, \boldsymbol \varsigma)_F = (\sym\curl_F(b_F\psi_{\ell-2}),  \sym (\boldsymbol x^{\perp} \phi_{\ell -2}))_F = ( b_F\psi_{\ell-2}, \psi_{\ell-2} )_F = 0
$$
will imply $\psi_{\ell-2} = 0$ and consequently $\boldsymbol \tau = 0$.  
\end{proof}
As finite element spaces for $\boldsymbol  H^1$ are relatively mature and the bubble function space of $\mathbb P_{\ell+1}(F;\mathbb R^2)\cap \boldsymbol H_0^1(F;\mathbb R^2) = b_F\mathbb P_{\ell-2}(F;\mathbb R^2)$, the design of divdiv conforming finite elements in two dimensions is relatively easy. By rotation, we can also get finite elements for the strain space $\boldsymbol{H}(\rot_F\rot_F,F; \mathbb{S})$; see~\cite[Section 3.4]{ChenHuang2020}.

\section{Finite Elements for Sym Curl-Conforming Trace-Free Tensors}\label{sec:tracefreefem}
In this section we construct conforming finite element spaces for $\boldsymbol{H}(\sym\curl,\Omega; \mathbb{T})$.

\subsection{A finite element space}
Let $K$ be a tetrahedron. For each edge $e$, we have a direction vector $\bs t$ and then chose two orthonormal vectors $\boldsymbol n_1$ and $\boldsymbol n_2$ being orthogonal to $e$ such that $\boldsymbol n_2=\boldsymbol t\times\boldsymbol n_1$ and $\boldsymbol n_1=\boldsymbol n_2\times\boldsymbol t$. 
Take the space of shape functions as $\mathbb P_{\ell+1}(K;\mathbb T)$.
The degrees of freedom $\mathcal N_{\ell+1}(K)$ are given by
{\small
\begin{align}
\boldsymbol \tau(\delta) & \quad\forall~\delta\in \mathcal V(K), \label{Hsymcurlfem3ddof1}\\
(\sym\curl\boldsymbol \tau )(\delta) & \quad\forall~\delta\in \mathcal V(K), \label{Hsymcurlfem3ddof2}\\
(\boldsymbol  n_i^{\intercal}(\sym\curl\boldsymbol \tau )\boldsymbol n_j, q)_e & \quad\forall~q\in\mathbb P_{\ell-2}(e),  e\in\mathcal E(K), i,j=1,2, \label{Hsymcurlfem3ddof3}\\
(\boldsymbol  n_i^{\intercal}\boldsymbol \tau\boldsymbol t, q)_e & \quad\forall~q\in\mathbb P_{\ell-1}(e),  e\in\mathcal E(K), i=1,2,\label{Hsymcurlfem3ddof4}\\
(\boldsymbol n_{2}^{\intercal}(\curl\boldsymbol \tau)\boldsymbol n_1 + \partial_{t}(\boldsymbol t^{\intercal}\bs\tau\boldsymbol t), q)_e & \quad\forall~q\in\mathbb P_{\ell}(e),  e\in\mathcal E(K),\label{Hsymcurlfem3ddof5}\\
(\boldsymbol n\times\sym(\boldsymbol\tau\times\boldsymbol n)\times\boldsymbol n, \boldsymbol \varsigma)_F &\quad\forall~\boldsymbol \varsigma\in (\nabla_F^{\bot})^2\, \mathbb P_{\ell-1}(F)\oplus
 \sym (\boldsymbol x\otimes \mathbb P_{\ell-1}(F;\mathbb R^2)), \label{Hsymcurlfem3ddof6}\\
(\boldsymbol n\cdot \boldsymbol\tau\times\boldsymbol n, \boldsymbol q)_F & \quad\forall~\boldsymbol q\in\nabla_F\mathbb P_{\ell-3}(F)\oplus\boldsymbol x^{\perp}\mathbb P_{\ell-1}(F),  F\in\mathcal F(K),\label{Hsymcurlfem3ddof7}\\
(\boldsymbol \tau, \boldsymbol q)_K & \quad\forall~\boldsymbol q\in\boldsymbol B_{\ell+1}(\sym\curl, K;\mathbb T). \label{Hsymcurlfem3ddof8} 
\end{align}}

The degree of freedom~\eqref{Hsymcurlfem3ddof2},~\eqref{Hsymcurlfem3ddof3}, and~\eqref{Hsymcurlfem3ddof8} are motivated by~\eqref{Hdivdivfem3ddof1},~\eqref{Hdivdivfem3ddof2}, and~\eqref{Hdivdivfem3ddofbubble}, respectively, as $\sym \curl \boldsymbol \tau \in \boldsymbol H(\div\div, K;\mathbb S)$. Recall that $\tr_2(\boldsymbol \tau)\in H(\div_F)$ and $\tr_1(\boldsymbol \tau) \in H(\div_F\div_F)$, cf. Lemma \ref{lm:tracerelation}. Let $\bs n_{F,e} = \bs t\times \bs n$ be the norm vector of $e$ sitting on the face $F$. 
For $\div_F$ elements on face $F$, the normal trace becomes
\begin{equation*}
(\boldsymbol n\cdot \boldsymbol \tau\times\boldsymbol n)\cdot \boldsymbol n_{F,e}=\boldsymbol n^{\intercal}\boldsymbol\tau\boldsymbol t,
\end{equation*}
which motivates d.o.f.~\eqref{Hsymcurlfem3ddof4}. Together with d.o.f.~\eqref{Hsymcurlfem3ddof7}, $\boldsymbol n\cdot \boldsymbol \tau\times\boldsymbol n$ can be determined. 
For the $\div_F\div_F$ element, the normal-normal trace becomes
\begin{equation}\label{eq:symcurltracepart1}
\boldsymbol n_{F,e}^{\intercal}(\Pi_F\sym(\boldsymbol\tau\times\boldsymbol n)\Pi_F)\boldsymbol n_{F,e}=\boldsymbol n_{F,e}^{\intercal}\sym(\boldsymbol\tau\times\boldsymbol n)\boldsymbol n_{F,e}= \boldsymbol n_{F,e}^{\intercal}\boldsymbol\tau\boldsymbol t,
\end{equation}
which can be also determined by~\eqref{Hsymcurlfem3ddof4}. Notice that for each edge $e$, there are two $\boldsymbol n_{F,e}$ inside one tetrahedron. In~\eqref{Hsymcurlfem3ddof4}, the two normal vectors $\bs n_1, \bs n_2$ are chosen independent of elements and~\eqref{Hsymcurlfem3ddof4} can determine the projection of vector $\bs \tau\bs t$ to the plane orthogonal to edge $e$ including $\boldsymbol n_{F,e}^{\intercal}\boldsymbol\tau\boldsymbol t$. 

The other trace of a $\div_F\div_F$ element will be determined by~\eqref{Hsymcurlfem3ddof3} and~\eqref{Hsymcurlfem3ddof5}, which is less obvious. The following lemma is borrowed from~\cite[Lemma 9 and Remark 8]{Hu;Liang;Ma:2021Finite}. 

\begin{lemma}\label{lem:symcurlkey}
Let $F\in\mathcal F(K)$ with a normal vector $\bs n_F$. For an edge $e\in\mathcal E(F)$, we fix a direction vector $\bs t$ for $e$ and chose two orthonormal vectors $\boldsymbol n_1$ and $\boldsymbol n_2$ being orthogonal to $e$ such that $\boldsymbol n_2=\boldsymbol t\times\boldsymbol n_1$ and $\boldsymbol n_1=\boldsymbol n_2\times\boldsymbol t$. Let $\boldsymbol n_{F,e}=\boldsymbol t \times\boldsymbol n_{F}$.
For any sufficiently smooth tensor $\bs\tau$, we have
\begin{align}
\boldsymbol n_{F,e}^{\intercal}(\curl\boldsymbol \tau)\boldsymbol n_F
&=(\boldsymbol n_F\cdot\boldsymbol n_1) (\boldsymbol n_F\cdot\boldsymbol n_2) \left [\boldsymbol n_{2}^{\intercal}(\sym\curl\boldsymbol \tau)\boldsymbol n_2-\boldsymbol n_{1}^{\intercal}(\sym\curl\boldsymbol \tau)\boldsymbol n_1\right ] \notag\\
&\quad - 2(\boldsymbol n_F\cdot\boldsymbol n_2)^2\boldsymbol n_{1}^{\intercal}(\sym\curl\boldsymbol \tau)\boldsymbol n_2 + \boldsymbol n_{2}^{\intercal}(\curl\boldsymbol \tau)\boldsymbol n_1, \label{eq:edgedofprop1}
\end{align}
For ${\rm tr}_1(\boldsymbol \tau)=\Pi_F\sym(\bs\tau\times\boldsymbol n_F)\Pi_F$, we have
\begin{equation}\label{eq:edgedofprop2}
\partial_{t}(\boldsymbol  t^{\intercal}{\rm tr}_1(\boldsymbol \tau)\boldsymbol  n_{F,e}) +\boldsymbol  n_{F,e}^{\intercal}\div_F({\rm tr}_1(\boldsymbol \tau)) = \boldsymbol n_{F,e}^{\intercal}(\curl\boldsymbol \tau)\boldsymbol n_F + \partial_{t}(\boldsymbol t^{\intercal}\bs\tau\boldsymbol t).
\end{equation}
Consequently it can be determined by d.o.f.~\eqref{Hsymcurlfem3ddof3} and~\eqref{Hsymcurlfem3ddof5}. 
\end{lemma}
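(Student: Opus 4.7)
Both identities are pointwise algebraic relations involving at most first derivatives of $\boldsymbol\tau$, and the plan is to verify each by a direct local computation adapted to the orthonormal frame on the edge $e$.

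For~\eqref{eq:edgedofprop1}, I would first exploit the fact that $\boldsymbol n_F$ and $\boldsymbol n_{F,e}$ are both orthogonal to $\boldsymbol t$ and therefore lie in $\mathrm{span}\{\boldsymbol n_1,\boldsymbol n_2\}$. Setting $a:=\boldsymbol n_F\cdot\boldsymbol n_1$ and $b:=\boldsymbol n_F\cdot\boldsymbol n_2$, so that $a^2+b^2=1$, and using $\boldsymbol t\times\boldsymbol n_1=\boldsymbol n_2$ together with the right-handedness relation $\boldsymbol t\times\boldsymbol n_2=-\boldsymbol n_1$, the decompositions
\[
\boldsymbol n_F=a\boldsymbol n_1+b\boldsymbol n_2,\qquad \boldsymbol n_{F,e}=\boldsymbol t\times\boldsymbol n_F=a\boldsymbol n_2-b\boldsymbol n_1
\]
follow immediately. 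Writing $C:=\curl\boldsymbol\tau$, I would then expand $\boldsymbol n_{F,e}^{\intercal}C\boldsymbol n_F$ bilinearly into four terms. The ``diagonal'' combination $ab(\boldsymbol n_2^{\intercal}C\boldsymbol n_2-\boldsymbol n_1^{\intercal}C\boldsymbol n_1)$ passes through $\sym C$ because $\boldsymbol n_i^{\intercal}C\boldsymbol n_i=\boldsymbol n_i^{\intercal}(\sym C)\boldsymbol n_i$. For the remaining off-diagonal piece I would substitute $a^2=1-b^2$ to obtain
\[
a^2\boldsymbol n_2^{\intercal}C\boldsymbol n_1-b^2\boldsymbol n_1^{\intercal}C\boldsymbol n_2=\boldsymbol n_2^{\intercal}C\boldsymbol n_1-2b^2\boldsymbol n_1^{\intercal}(\sym C)\boldsymbol n_2,
\]
which is exactly what appears on the right of~\eqref{eq:edgedofprop1}.

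For~\eqref{eq:edgedofprop2}, I would work in Cartesian coordinates aligned with the right-handed orthonormal frame $(\boldsymbol t,\boldsymbol n_F,\boldsymbol n_{F,e})$, so that the relation $\boldsymbol n_{F,e}=\boldsymbol t\times\boldsymbol n_F$ is built in and the two in-face directions are $\boldsymbol t$ and $\boldsymbol n_{F,e}$. A short coordinate-free manipulation based on the scalar triple product $\boldsymbol n_F\times\boldsymbol n_{F,e}=\boldsymbol t$ and the row-wise cross-product rule~\eqref{eq:xtimesuv} yields
\[
\boldsymbol t^{\intercal}{\rm tr}_1(\boldsymbol\tau)\boldsymbol n_{F,e}=\tfrac{1}{2}\bigl(\boldsymbol t^{\intercal}\boldsymbol\tau\boldsymbol t-\boldsymbol n_{F,e}^{\intercal}\boldsymbol\tau\boldsymbol n_{F,e}\bigr),\qquad \boldsymbol n_{F,e}^{\intercal}{\rm tr}_1(\boldsymbol\tau)\boldsymbol n_{F,e}=\boldsymbol n_{F,e}^{\intercal}\boldsymbol\tau\boldsymbol t.
\]
From these two closed-form expressions, differentiating the first by $\partial_t$ and applying $\boldsymbol n_{F,e}^{\intercal}\div_F$ to ${\rm tr}_1(\boldsymbol\tau)$ row-wise reduces the LHS of~\eqref{eq:edgedofprop2} to an explicit combination of $\partial_t(\boldsymbol t^{\intercal}\boldsymbol\tau\boldsymbol t)$, $\partial_t(\boldsymbol n_{F,e}^{\intercal}\boldsymbol\tau\boldsymbol n_{F,e})$ and $\partial_{n_{F,e}}(\boldsymbol n_{F,e}^{\intercal}\boldsymbol\tau\boldsymbol t)$. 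I would then compute $\boldsymbol n_{F,e}^{\intercal}(\curl\boldsymbol\tau)\boldsymbol n_F$ in the same frame via the index formula $(\curl\boldsymbol\tau)_{ij}=\epsilon_{jkl}\partial_k\tau_{il}$, add $\partial_t(\boldsymbol t^{\intercal}\boldsymbol\tau\boldsymbol t)$, and verify that the RHS matches term by term.

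The hard part will not be any conceptual step but bookkeeping of sign conventions: the definitions of $\sym\curl$, the row-wise cross product $\boldsymbol\tau\times\boldsymbol n_F$, the projection $\Pi_F$, and the face divergence $\div_F$ each carry orientation choices that must be tracked consistently with $\boldsymbol n_{F,e}=\boldsymbol t\times\boldsymbol n_F$ and with the right-handedness of $(\boldsymbol t,\boldsymbol n_1,\boldsymbol n_2)$. No analytic difficulty arises, since both identities are strictly pointwise.
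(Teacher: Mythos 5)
Your proposal is correct and follows essentially the same route as the paper: for \eqref{eq:edgedofprop1} the identical expansion $\boldsymbol n_F=c_1\boldsymbol n_1+c_2\boldsymbol n_2$, $\boldsymbol n_{F,e}=c_1\boldsymbol n_2-c_2\boldsymbol n_1$ with $c_1^2+c_2^2=1$, and for \eqref{eq:edgedofprop2} the same frame computation of the entries of ${\rm tr}_1(\boldsymbol\tau)$ (your two closed-form expressions are exactly the paper's intermediate identities) followed by expressing $\boldsymbol n_{F,e}^{\intercal}(\curl\boldsymbol\tau)\boldsymbol n_F$ in the $(\boldsymbol t,\boldsymbol n_{F,e})$ directions. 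The only cosmetic difference is that the paper carries out the second computation with the surface operators $\nabla_F=\boldsymbol t\partial_t+\boldsymbol n_{F,e}\partial_{n_{F,e}}$ and $(\boldsymbol n_F\times\nabla)\cdot$ rather than an index formula in adapted Cartesian coordinates.
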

\begin{proof}
On the plane orthogonal to $e$, the vectors $\bs n_1$ and $\bs n_2$ form an orthonormal basis. We expand $\boldsymbol n_F= c_1 \boldsymbol n_1+c_2\boldsymbol n_2$ in this coordinate, with $c_i = \boldsymbol n_F\cdot\boldsymbol n_i$ for $i=1,2$. Then $
\boldsymbol n_{F,e}=\boldsymbol t \times\boldsymbol n_{F}=c_1\boldsymbol n_2 - c_2 \boldsymbol n_1.$
Then in this coordinate
\begin{align*}
\boldsymbol n_{F,e}^{\intercal}(\curl\boldsymbol \tau)\boldsymbol n_F&=(c_1\boldsymbol n_2-c_2\boldsymbol n_1)^{\intercal}(\curl\boldsymbol \tau)(c_1\boldsymbol n_1+c_2\boldsymbol n_2) \\
&=c_1c_2(\boldsymbol n_{2}^{\intercal}(\curl\boldsymbol \tau)\boldsymbol n_2-\boldsymbol n_{1}^{\intercal}(\curl\boldsymbol \tau)\boldsymbol n_1) \\
&\quad+ c_1^2\boldsymbol n_{2}^{\intercal}(\curl\boldsymbol \tau)\boldsymbol n_1 - c_2^2\boldsymbol n_{1}^{\intercal}(\curl\boldsymbol \tau)\boldsymbol n_2.
\end{align*}
Thus we acquire~\eqref{eq:edgedofprop1} from the fact $c_1^2+c_2^2=1$.

On the other hand, by the fact $\nabla_F=\boldsymbol t\partial_{t}+\boldsymbol n_{F,e}\partial_{n_{F,e}}$, we obtain
\begin{align*}
&\partial_{t}(\boldsymbol t^{\intercal}{\rm tr}_1(\boldsymbol \tau)\boldsymbol  n_{F,e}) +\boldsymbol  n_{F,e}^{\intercal}\div_F({\rm tr}_1(\boldsymbol \tau)) \\
=&2\partial_{t}(\boldsymbol t^{\intercal}{\rm tr}_1(\boldsymbol \tau)\boldsymbol  n_{F,e}) + \partial_{n_{F,e}}(\boldsymbol  n_{F,e}^{\intercal} {\rm tr}_1(\boldsymbol \tau)\boldsymbol n_{F,e}) \\
=&2\partial_{t}(\boldsymbol  t^{\intercal} \sym(\bs\tau\times\boldsymbol n_F)\boldsymbol  n_{F,e}) + \partial_{n_{F,e}}(\boldsymbol  n_{F,e}^{\intercal} \sym(\bs\tau\times\boldsymbol n_F)\boldsymbol  n_{F,e}) \\
=&\partial_{t}(\boldsymbol t^{\intercal}\boldsymbol\tau\boldsymbol  t - \boldsymbol  n_{F,e}^{\intercal}\boldsymbol\tau\boldsymbol  n_{F,e}) + \partial_{n_{F,e}}(\boldsymbol n_{F,e}^{\intercal}\boldsymbol\tau\boldsymbol t),
\end{align*}
and
\begin{align*}
\boldsymbol n_{F,e}^{\intercal}(\curl\boldsymbol \tau)\boldsymbol n_F&=(\boldsymbol n_F\times\nabla)\cdot(\boldsymbol n_{F,e}^{\intercal}\bs\tau)=(\boldsymbol n_F\times\nabla)\cdot(\boldsymbol n_{F,e}^{\intercal}\bs\tau\boldsymbol t\boldsymbol t+\boldsymbol n_{F,e}^{\intercal}\bs\tau\boldsymbol n_{F,e}\boldsymbol n_{F,e}) \\
&=\partial_{n_{F,e}}(\boldsymbol n_{F,e}^{\intercal}\bs\tau\boldsymbol t) - \partial_{t}(\boldsymbol n_{F,e}^{\intercal}\bs\tau\boldsymbol n_{F,e}).
\end{align*}
Therefore~\eqref{eq:edgedofprop2} is true.
\end{proof}

The trace $\partial_{t}(\boldsymbol  t^{\intercal}{\rm tr}_1(\boldsymbol \tau)\boldsymbol  n_{F,e}) +\boldsymbol  n_{F,e}^{\intercal}\div_F({\rm tr}_1(\boldsymbol \tau))$ depends on $F$. For one edge $e$ in a tetrahedron $K$, there are two such traces. Lemma~\ref{lem:symcurlkey} shows that these two traces are linear dependent and only one d.o.f.~\eqref{Hsymcurlfem3ddof5} is needed. 

\begin{lemma}\label{lem:trace0}
Let $F\in\mathcal F(K)$ and $\boldsymbol \tau\in\mathbb P_{\ell+1}(K;\mathbb T)$.
If all the degrees of freedom~\eqref{Hsymcurlfem3ddof1}-\eqref{Hsymcurlfem3ddof7} vanish, then $\boldsymbol n\cdot \boldsymbol\tau\times\boldsymbol n =\boldsymbol 0$ and $\boldsymbol n\times\sym(\bs\tau\times\boldsymbol n)\times\boldsymbol n =\bs0$ on face $F$.
\end{lemma}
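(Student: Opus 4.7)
The plan is to first establish edge-wise vanishing of several scalar traces on each $e\in\mathcal E(F)$ and then invoke 2D unisolvence on $F$ through the interior face moments~\eqref{Hsymcurlfem3ddof6} and~\eqref{Hsymcurlfem3ddof7}.

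First, on each edge $e\in\mathcal E(F)$, I would expand the face normal $\boldsymbol n=\boldsymbol n_F$ in $\operatorname{span}(\boldsymbol n_1,\boldsymbol n_2)$ so that the scalar $\boldsymbol n^{\intercal}\boldsymbol\tau\boldsymbol t\in\mathbb P_{\ell+1}(e)$ satisfies two constraints: it vanishes at both endpoints of $e$ by~\eqref{Hsymcurlfem3ddof1}, and it is $L^2(e)$-orthogonal to $\mathbb P_{\ell-1}(e)$ by~\eqref{Hsymcurlfem3ddof4}. A dimension count ($2+\ell=\ell+2=\dim\mathbb P_{\ell+1}(e)$) then forces $\boldsymbol n^{\intercal}\boldsymbol\tau\boldsymbol t|_e=0$. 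The same argument applied to $\sym\curl\boldsymbol\tau\in\mathbb P_\ell(K;\mathbb S)$ with~\eqref{Hsymcurlfem3ddof2}-\eqref{Hsymcurlfem3ddof3} yields $\boldsymbol n_i^{\intercal}(\sym\curl\boldsymbol\tau)\boldsymbol n_j|_e=0$ for $i,j\in\{1,2\}$.

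The main obstacle is lifting these scalar edge-vanishings to the combined $\rot_F\rot_F$-conforming trace $\partial_{t}(\boldsymbol t^{\intercal}\tr_1(\boldsymbol\tau)\boldsymbol n_{F,e})+\boldsymbol n_{F,e}^{\intercal}\div_F\tr_1(\boldsymbol\tau)$, where $\tr_1(\boldsymbol\tau):=\boldsymbol n\times\sym(\boldsymbol\tau\times\boldsymbol n)\times\boldsymbol n$. Here I will use Lemma~\ref{lem:symcurlkey}: substituting the vanishing $\sym\curl$ components into~\eqref{eq:edgedofprop1} collapses $\boldsymbol n_{F,e}^{\intercal}(\curl\boldsymbol\tau)\boldsymbol n_F$ to $\boldsymbol n_2^{\intercal}(\curl\boldsymbol\tau)\boldsymbol n_1$ on $e$, and~\eqref{eq:edgedofprop2} then rewrites the combined trace as $\boldsymbol n_2^{\intercal}(\curl\boldsymbol\tau)\boldsymbol n_1+\partial_t(\boldsymbol t^{\intercal}\boldsymbol\tau\boldsymbol t)$. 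This is a polynomial of degree at most $\ell$ on $e$ whose moments against all of $\mathbb P_\ell(e)$ are killed by~\eqref{Hsymcurlfem3ddof5}, so it vanishes identically on $e$. Together with~\eqref{eq:symcurltracepart1}, which gives $\boldsymbol n_{F,e}^{\intercal}\tr_1(\boldsymbol\tau)\boldsymbol n_{F,e}|_e=\boldsymbol n^{\intercal}\boldsymbol\tau\boldsymbol t|_e=0$, both $\rot_F\rot_F$-conforming edge traces of $\tr_1(\boldsymbol\tau)$ then vanish on $\partial F$.

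Finally, I will invoke 2D unisolvence on $F$. For the vector trace, $(\boldsymbol n\cdot\boldsymbol\tau\times\boldsymbol n)\cdot\boldsymbol n_{F,e}=\boldsymbol n^{\intercal}\boldsymbol\tau\boldsymbol t=0$ on every $e\in\mathcal E(F)$, so its normal trace on $\partial F$ vanishes; combined with the interior moments~\eqref{Hsymcurlfem3ddof7} against $\nabla_F\mathbb P_{\ell-3}(F)\oplus\boldsymbol x^{\perp}\mathbb P_{\ell-1}(F)$, a 2D $\boldsymbol H(\div_F)$-type BDM unisolvence argument forces $\boldsymbol n\cdot\boldsymbol\tau\times\boldsymbol n\equiv\boldsymbol 0$ on $F$. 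For the symmetric tensor trace $\tr_1(\boldsymbol\tau)\in\mathbb P_{\ell+1}(F;\mathbb S)$, both edge traces vanish on $\partial F$, and the interior moments~\eqref{Hsymcurlfem3ddof6} are, by the $90^\circ$ rotation exchanging $\nabla_F^2\leftrightarrow(\nabla_F^{\bot})^2$ and $\boldsymbol x^{\perp}\leftrightarrow\boldsymbol x$, exactly the d.o.f.~of the 2D divdiv BDM element, so Theorem~\ref{thm:unisovlenHdivdivfem2D} (applied after rotation) yields $\tr_1(\boldsymbol\tau)\equiv\boldsymbol 0$ on $F$.
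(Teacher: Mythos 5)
Your overall strategy matches the paper's: edge-wise vanishing of the scalar traces by counting endpoint values plus moments, Lemma~\ref{lem:symcurlkey} to convert \eqref{Hsymcurlfem3ddof3} and \eqref{Hsymcurlfem3ddof5} into the vanishing of the effective-shear trace of $\boldsymbol n\times\sym(\bs\tau\times\boldsymbol n)\times\boldsymbol n$ on $\partial F$, and then the two-dimensional unisolvence on $F$. The symmetric-tensor half of your argument is essentially the paper's proof and is sound.

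There is, however, a genuine gap in the vector-trace step. Write $\boldsymbol v:=\boldsymbol n\cdot\bs\tau\times\boldsymbol n\in\mathbb P_{\ell+1}(F;\mathbb R^2)$. You claim that the pointwise vanishing of $\boldsymbol v\cdot\boldsymbol n_{F,e}$ on $\partial F$ together with the moments \eqref{Hsymcurlfem3ddof7} against $\nabla_F\mathbb P_{\ell-3}(F)\oplus\boldsymbol x^{\perp}\mathbb P_{\ell-1}(F)$ forces $\boldsymbol v=\boldsymbol 0$ by a BDM-type argument. A count shows these data cannot suffice: the vanishing normal trace gives $3(\ell+2)$ conditions and \eqref{Hsymcurlfem3ddof7} gives $\ell^2-\ell$ more, for a total of $\ell^2+2\ell+6$, against $\dim\mathbb P_{\ell+1}(F;\mathbb R^2)=\ell^2+5\ell+6$ --- a deficit of $3\ell$. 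What is missing is control of $\div_F\boldsymbol v$ on the whole face: the usual first step of the BDM proof (test with gradients to kill the divergence) fails here because the gradient moments only reach $\mathbb P_{\ell-3}(F)$ while $\div_F\boldsymbol v\in\mathbb P_{\ell}(F)$. The paper closes this via the identity \eqref{eq:trace1}, $\div_F(\boldsymbol n\cdot\bs\tau\times\boldsymbol n)=\boldsymbol n^{\intercal}(\sym\curl\bs\tau)\boldsymbol n$: the right-hand side is a $\mathbb P_{\ell}(F)$ Lagrange-type function whose vertex values vanish by \eqref{Hsymcurlfem3ddof2}, whose edge moments against $\mathbb P_{\ell-2}(e)$ vanish by \eqref{Hsymcurlfem3ddof3}, and whose interior moments against $\mathbb P_{\ell-3}(F)$ vanish after integrating $(\div_F\boldsymbol v,q)_F$ by parts using \eqref{Hsymcurlfem3ddof4} and the first half of \eqref{Hsymcurlfem3ddof7}; hence $\div_F\boldsymbol v=0$ on $F$. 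Only then can one write $\boldsymbol v=\curl_F(b_F\phi)$ with $\phi\in\mathbb P_{\ell-1}(F)$ and use the moments against $\boldsymbol x^{\perp}\mathbb P_{\ell-1}(F)$ to conclude. You already derived all the needed ingredients (in particular the edge vanishing of $\boldsymbol n_i^{\intercal}(\sym\curl\bs\tau)\boldsymbol n_j$), but the step linking them through \eqref{eq:trace1} must be made explicit; as written, the conclusion does not follow from the degrees of freedom you invoke.
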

\begin{proof}
It follows from~\eqref{eq:trace1},~\eqref{Hsymcurlfem3ddof4} and the first part of~\eqref{Hsymcurlfem3ddof7} that
\[
(\boldsymbol  n^{\intercal}(\sym\curl\boldsymbol\tau)\boldsymbol  n, q)_F=(\div_F(\boldsymbol n\cdot \boldsymbol \tau\times\boldsymbol n), q)_F=0\quad\forall~q\in\mathbb P_{\ell-3}(F).
\]
This combined with~\eqref{Hsymcurlfem3ddof2}-\eqref{Hsymcurlfem3ddof3} yields $\boldsymbol n^{\intercal}(\sym\curl\boldsymbol\tau)\boldsymbol n|_F=\bs0$, i.e. $\div_F(\boldsymbol n\cdot \boldsymbol \tau\times\boldsymbol n|_F)=0$.
Thanks to the unisolvence of BDM element, we achieve $\boldsymbol n\cdot\boldsymbol\tau\times\boldsymbol n |_F=\boldsymbol 0$ from~\eqref{Hsymcurlfem3ddof4} and the second part of~\eqref{Hsymcurlfem3ddof7}.

Let $\boldsymbol \sigma=\Pi_F\sym(\boldsymbol\tau\times\boldsymbol n_F)\Pi_F$ for simplicity. 
Thanks to~\eqref{eq:symcurltracepart1}, we get from~\eqref{Hsymcurlfem3ddof4} that $\boldsymbol n_{F,e}^{\intercal}\bs\sigma\boldsymbol n_{F,e}=0$ on each edge $e\in\mathcal E(F)$.
By~\eqref{eq:edgedofprop1}-\eqref{eq:edgedofprop2}, it follows from~\eqref{Hsymcurlfem3ddof2}-\eqref{Hsymcurlfem3ddof3} and~\eqref{Hsymcurlfem3ddof5} that
$(\partial_{t}(\boldsymbol  t^{\intercal} \bs\sigma\boldsymbol n_{F,e}) +\boldsymbol  n_{F,e}^{\intercal}\div_F\bs\sigma)|_e=0$, which together with~\eqref{Hsymcurlfem3ddof6} and 
the unisolvence of divdiv element in two dimensions, 
i.e. Theorem~\ref{thm:unisovlenHdivdivfem2D},
implies that $\bs\sigma|_F=\bs0$.
\end{proof}

We are in the position to prove the unisolvence. 
\begin{theorem}\label{lem:unisovlenHsymcurlfem}
The degrees of freedom~\eqref{Hsymcurlfem3ddof1}-\eqref{Hsymcurlfem3ddof8} are unisolvent for $\mathbb P_{\ell+1}(K;\mathbb T)$.
\end{theorem}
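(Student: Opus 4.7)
The plan is to establish unisolvence by matching dimensions and running a short vanishing argument that leans on Lemma~\ref{lem:trace0} and the interior moment~\eqref{Hsymcurlfem3ddof8}.

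First I would verify that the total number of degrees of freedom~\eqref{Hsymcurlfem3ddof1}-\eqref{Hsymcurlfem3ddof8} equals $\dim\mathbb P_{\ell+1}(K;\mathbb T)=8\binom{\ell+4}{3}$. The vertex contributions are $4\cdot 8$ from~\eqref{Hsymcurlfem3ddof1} and $4\cdot 6$ from~\eqref{Hsymcurlfem3ddof2} (since $\sym\curl\bs\tau\in\mathbb S$); the edge contributions come from~\eqref{Hsymcurlfem3ddof3}-\eqref{Hsymcurlfem3ddof5} summed over the six edges; the face contributions come from~\eqref{Hsymcurlfem3ddof6}-\eqref{Hsymcurlfem3ddof7} summed over the four faces, and the interior term in~\eqref{Hsymcurlfem3ddof8} contributes $\dim\boldsymbol B_{\ell+1}(\sym\curl, K;\mathbb T)$, which was already computed in~\eqref{eq:basisofB}. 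The arithmetic is routine bookkeeping analogous to the count carried out in the proof of Theorem~\ref{lem:unisovlenHdivdivfem}.

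Once the dimensions match, it suffices to show that a $\bs\tau\in\mathbb P_{\ell+1}(K;\mathbb T)$ which annihilates every degree of freedom is identically zero. The vanishing of~\eqref{Hsymcurlfem3ddof1}-\eqref{Hsymcurlfem3ddof7} is precisely the hypothesis of Lemma~\ref{lem:trace0}, applied face-by-face, and yields
\[
\boldsymbol n\cdot\bs\tau\times\boldsymbol n\big|_F=\bs 0,\qquad \boldsymbol n\times\sym(\bs\tau\times\boldsymbol n)\times\boldsymbol n\big|_F=\bs 0\qquad\forall~F\in\mathcal F(K).
\]
By the very definition of the $\sym\curl$ bubble space, this places $\bs\tau\in\boldsymbol B_{\ell+1}(\sym\curl, K;\mathbb T)$. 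Since $\bs\tau$ is then an admissible test in~\eqref{Hsymcurlfem3ddof8}, choosing $\bs q=\bs\tau$ gives $(\bs\tau,\bs\tau)_K=0$, hence $\bs\tau=\bs 0$.

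The substantive content of the argument has already been absorbed into Lemma~\ref{lem:trace0}, which in turn draws on the trace identities in Lemma~\ref{lm:tracerelation}, the algebraic reduction of edge traces in Lemma~\ref{lem:symcurlkey}, and the unisolvence of the two-dimensional divdiv element (Theorem~\ref{thm:unisovlenHdivdivfem2D}). The main remaining obstacle is therefore the careful dimension bookkeeping; the concluding step that exploits the bubble moment~\eqref{Hsymcurlfem3ddof8} via the test choice $\bs q=\bs\tau$ is immediate.
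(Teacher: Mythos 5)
Your proposal is correct and follows essentially the same route as the paper: after the dimension count, the vanishing of~\eqref{Hsymcurlfem3ddof1}--\eqref{Hsymcurlfem3ddof7} is fed into Lemma~\ref{lem:trace0} to place $\bs\tau$ in $\boldsymbol B_{\ell+1}(\sym\curl, K;\mathbb T)$, and then $\boldsymbol q=\bs\tau$ in~\eqref{Hsymcurlfem3ddof8} forces $\bs\tau=\bs 0$. The only difference is that you leave the arithmetic of the count implicit, whereas the paper records the explicit total $\tfrac{4}{3}(\ell+4)(\ell+3)(\ell+2)$; your itemization of the contributions is consistent with it.
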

\begin{proof}
It is easy to see that
\begin{align*}
\#\mathcal N_{\ell+1}(K)&= 56+ 6(6\ell-2) + 4\left(2\ell(\ell+1) + \frac{1}{2}(\ell-1)(\ell-2)-4\right) \\
&\quad+ \frac{1}{3}(4\ell^3+6\ell^2-10\ell)
=\frac{4}{3}(\ell+4)(\ell+3)(\ell+2)\\
&=\dim\mathbb P_{\ell+1}(K;\mathbb T).
\end{align*}

Take any $\boldsymbol \tau\in\mathbb P_{\ell+1}(K;\mathbb T)$ and
suppose all the degrees of freedom~\eqref{Hsymcurlfem3ddof1}-\eqref{Hsymcurlfem3ddof8} vanish. Then by Lemmas~\ref{lem:trace0}, $\boldsymbol \tau\in \boldsymbol B_{\ell+1}(\sym\curl, K;\mathbb T)$. Then taking $\boldsymbol q = \bs \tau$ in~\eqref{Hsymcurlfem3ddof8}, we conclude $\boldsymbol \tau = 0$. 
\end{proof}

\subsection{Lagrange-type Degree of freedoms}
The d.o.f. $\mathcal N_{\ell +1}$ is designed to form a finite element divdiv complex. If the exactness of the sequence is not the concern, we can construct simpler degree of freedoms.  Below is the Lagrange-type $\boldsymbol H(\sym\curl)$-conforming finite elements for trace-free tensors.
Take the space of shape functions as $\mathbb P_{\ell+1}(K;\mathbb T)$.
The degrees of freedom are given by
\begin{align}
\boldsymbol \tau(\delta) & \quad\forall~\delta\in \mathcal V(K), \label{Hsymcurlfem1dof1}\\
(\boldsymbol \tau, \boldsymbol q)_e & \quad\forall~\boldsymbol q\in\mathbb P_{\ell-1}(e;\mathbb T),  e\in\mathcal E(K),\label{Hsymcurlfem1dof2}\\
(\boldsymbol n\times \sym(\boldsymbol\tau\times\boldsymbol n)\times \boldsymbol n, \boldsymbol q)_F &\quad\forall~\boldsymbol q\in\mathbb P_{\ell-2}(F;\mathbb S), F\in\mathcal F(K),\label{Hsymcurlfem1dof3}\\
(\boldsymbol n\cdot \boldsymbol\tau\times\boldsymbol n, \boldsymbol q)_F & \quad\forall~\boldsymbol q\in\mathbb P_{\ell-2}(F;\mathbb R^2),  F\in\mathcal F(K),\label{Hsymcurlfem1dof4}\\
(\boldsymbol \tau, \boldsymbol q)_K & \quad\forall~\boldsymbol q\in\boldsymbol B_{\ell+1}(\sym\curl, K;\mathbb T). \label{Hsymcurlfem1dof5} 
\end{align}
It is straightforward to verify the unisolvence of~\eqref{Hsymcurlfem1dof1}-\eqref{Hsymcurlfem1dof5}  due to the characterization of trace operators and bubble functions. 


We can also take another set of degrees of freedom
\begin{align}
\boldsymbol \tau(\delta) & \quad\forall~\delta\in \mathcal V(K), \label{Hsymcurlfem2dof1}\\
(\boldsymbol  n_i^{\intercal}\boldsymbol \tau\boldsymbol t, q)_e & \quad\forall~q\in\mathbb P_{\ell-1}(e),  e\in\mathcal E(K), i=1,2,\label{Hsymcurlfem2dof2}\\
(\boldsymbol n\times \sym(\boldsymbol\tau\times\boldsymbol n)\times \boldsymbol n, \boldsymbol q)_F &\quad\forall~\boldsymbol q\in\mathring{\mathbb P}_{\ell}(F;\mathbb S), F\in\mathcal F(K),\label{Hsymcurlfem2dof3}\\
(\boldsymbol n\cdot \boldsymbol\tau\times\boldsymbol n, \boldsymbol q)_F & \quad\forall~\boldsymbol q\in\nabla_F\mathbb P_{\ell}(F)\oplus\boldsymbol x^{\perp}\mathbb P_{\ell-1}(F),  F\in\mathcal F(K),\label{Hsymcurlfem2dof4}\\
(\boldsymbol \tau, \boldsymbol q)_K & \quad\forall~\boldsymbol q\in\boldsymbol B_{\ell+1}(\sym\curl, K;\mathbb T), \label{Hsymcurlfem2dof5} 
\end{align}
where 
$$
\mathring{\mathbb P}_{\ell}(F;\mathbb S):=\{\boldsymbol q\in\mathbb P_{\ell}(F;\mathbb S): (\boldsymbol t_1^{\intercal}\boldsymbol q\boldsymbol t_2)(\delta)=0\textrm{ for each } \delta\in\mathcal V(K)\}
$$
with $\boldsymbol t_1$ and $\boldsymbol t_2$ being the unit tangential vectors of two edges of $F$ sharing $\delta$.
The degree of freedom~\eqref{Hsymcurlfem2dof3} is motivated by the Hellan-Herrmann-Johnson mixed method for the Kirchhoff plate bending problems \cite{Hellan1967,Herrmann1967,Johnson1973}.



\section{A Finite Element Divdiv Complex in Three Dimensions}\label{sec:femdivdivcomplex}
In this section, we collect finite element spaces defined before to form a finite element div-div complex. We assume $\mathcal T_h$ is a triangulation of a topological trivial domain $\Omega$. 

\subsection{A finite element $\textrm{divdiv}$ complex}
We start from the vectorial Hermite element space in three dimensions \cite{Ciarlet1978}
\begin{align*}
\boldsymbol V_h:=\{\boldsymbol v_h\in \boldsymbol H^1(\Omega;\mathbb R^3): &\boldsymbol v_h|_K\in\mathbb P_{\ell+2}(K;\mathbb R^3)\textrm{ for each } K\in\mathcal T_h,\\
&\nabla \boldsymbol v_h(\delta) \textrm{ is single-valued at each vertex } \delta\in\mathcal V_h  
\}.
\end{align*}
The local degrees of freedom for $\boldsymbol V_h(K):=\boldsymbol V_h|_K$ are
\begin{align}
\boldsymbol v(\delta), \nabla \boldsymbol v(\delta) & \quad\forall~\delta\in \mathcal V(K), \label{Hermitfem3ddof1}\\
(\boldsymbol v, \boldsymbol q)_e & \quad\forall~\boldsymbol q\in\mathbb P_{\ell-2}(e;\mathbb R^3),  e\in\mathcal E(K), \label{Hermitfem3ddof2}\\
(\boldsymbol v, \boldsymbol q)_F & \quad\forall~\boldsymbol q\in\mathbb P_{\ell-1}(F;\mathbb R^3),  F\in\mathcal F(K),\label{Hermitfem3ddof3}\\
(\boldsymbol v, \boldsymbol q)_K & \quad\forall~\boldsymbol q\in\mathbb P_{\ell-2}(K;\mathbb R^3). \label{Hermitfem3ddof4} 
\end{align}
The unisolvence for $\bs V_h(K)$ is trivial. 
And
$$
\dim\boldsymbol V_h= 12\#\mathcal V_h+ 3(\ell-1)\#\mathcal E_h + \frac{3}{2}(\ell+1)\ell\#\mathcal F_h +\frac{1}{2}(\ell^3-\ell)\#\mathcal T_h.
$$
Let
\begin{align*}
\bs\Sigma_h^{\mathbb T}:=\{\bs\tau_h\in \boldsymbol L^2(\Omega; \mathbb T): &\bs\tau_h|_K\in\mathbb P_{\ell+1}(K;\mathbb T) \textrm{ for each } K\in\mathcal T_h, \textrm{ all the } \\
& \textrm{ degrees of freedom~\eqref{Hsymcurlfem3ddof1}-\eqref{Hsymcurlfem3ddof7} are single-valued} \},
\end{align*}
then
\begin{align*}
\dim\bs\Sigma_h^{\mathbb T}&= 14\#\mathcal V_h+ (6\ell-2)\#\mathcal E_h + \left(2\ell(\ell+1) + \frac{1}{2}(\ell-1)(\ell-2)-4\right)\#\mathcal F_h \\
&\quad+ \frac{1}{3}(4\ell^3+6\ell^2-10\ell) \#\mathcal T_h.
\end{align*}
Clearly Lemma~\ref{lem:trace0} ensures $\bs\Sigma_h^{\mathbb T}\subset \boldsymbol H(\sym\curl,\Omega;\mathbb T)$. 
Let
\begin{align*}
\bs\Sigma_h^{\mathbb S}:=\{\bs\tau_h\in \boldsymbol L^2(\Omega; \mathbb S): &\bs\tau_h|_K\in\boldsymbol \Sigma_{\ell,k}(K) \textrm{ for each } K\in\mathcal T_h, \textrm{ all the } \\
& \textrm{ degrees of freedom~\eqref{Hdivdivfem3ddof1}-\eqref{Hdivdivfem3ddof4} are single-valued} \},
\end{align*}
then
\begin{align*}
\dim\bs\Sigma_h^{\mathbb S}&= 6\#\mathcal V_h+ 3(\ell-1)\#\mathcal E_h + (\ell^2 -\ell+1)\#\mathcal F_h \\
&\quad+\left(\frac{1}{2}\ell(\ell-1)+ \frac{1}{6}(\ell-1)\ell(5\ell+14) + \frac{1}{6}(k^3-k)-4\right)\#\mathcal T_h.
\end{align*}
It follows from the proof of Lemma~\ref{lem:vanishdof} ensures $\bs\Sigma_h^{\mathbb S}\subset \boldsymbol H(\div\div,\Omega;\mathbb S)$.
Let 
$$
\mathcal Q_h :=\mathbb P_{k-2}(\mathcal T_h)=\{q_h\in L^2(\Omega): q_h|_K\in \mathbb P_{k-2}(K) \textrm{ for each } K\in\mathcal T_h\}
$$
be the discontinuous polynomial space. Obviously 
$$
\dim \mathcal Q_h = \frac{1}{6}(k^3 - k)\#\mathcal T_h.
$$

\begin{lemma}\label{lem:divdivonto}
It holds
$$
\div\div\boldsymbol\Sigma_h^{\mathbb S}=\mathcal Q_h.
$$
\end{lemma}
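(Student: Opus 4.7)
The plan is to prove the two inclusions separately. For $\div\div\boldsymbol\Sigma_h^{\mathbb S}\subseteq\mathcal Q_h$, I argue element-by-element. On each $K\in\mathcal T_h$, decompose $\bs\tau_h|_K\in\boldsymbol\Sigma_{\ell,k}(K)=\mathbb C_\ell(K;\mathbb S)\oplus\mathbb C_k^{\oplus}(K;\mathbb S)$ via Lemma~\ref{lem:symmpolyspacedirectsum}(iii). The polynomial divdiv complex~\eqref{eq:divdivcomplex3dPoly} gives $\div\div\,\mathbb C_\ell(K;\mathbb S)=0$, while Lemma~\ref{lem:symmpolyspacedirectsum}(i) gives $\div\div(\boldsymbol x\boldsymbol x^{\intercal}q)\in\mathbb P_{k-2}(K)$ for $q\in\mathbb P_{k-2}(K)$. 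Hence $\div\div\bs\tau_h|_K\in\mathbb P_{k-2}(K)$, so $\div\div\bs\tau_h\in\mathcal Q_h$.

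For the reverse inclusion, given $q_h\in\mathcal Q_h$, I plan a two-stage construction. Let $P_K:L^2(K)\to\mathbb P_1(K)$ be the $L^2$-projection, and split $q_h=q_h^{\perp}+p_h$ with $q_h^{\perp}|_K=(I-P_K)q_h|_K\in\mathbb P_{k-2}(K)\cap\mathbb P_1^{\perp}(K)$ and $p_h|_K=P_K q_h\in\mathbb P_1(K)$. Lemma~\ref{lem:divdivbubble} yields $\bs\tau_h^{b}|_K\in\mathring{\boldsymbol\Sigma}_{\ell,k}(K)$ with $\div\div\bs\tau_h^{b}|_K=q_h^{\perp}|_K$. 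All boundary d.o.f.~\eqref{Hdivdivfem3ddof1}-\eqref{Hdivdivfem3ddof4} of $\bs\tau_h^{b}|_K$ vanish by definition of the bubble space, so patching the element-wise pieces produces $\bs\tau_h^{b}\in\boldsymbol\Sigma_h^{\mathbb S}$ with $\div\div\bs\tau_h^{b}=q_h^{\perp}$.

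It then remains to build $\bs\tau_h^{g}\in\boldsymbol\Sigma_h^{\mathbb S}$ with $\div\div\bs\tau_h^{g}=p_h$. I would invoke Theorem~\ref{thm:divdivcomplex} to obtain $\bs\sigma\in\bs H(\div\div,\Omega;\mathbb S)$ with $\div\div\bs\sigma=p_h$, choosing $\bs\sigma$ smooth enough for the canonical interpolation (for instance $\bs\sigma=\phi\boldsymbol I$ with $\Delta\phi=p_h$, mollified if necessary), and set $\bs\tau_h^{g}:=I_h\bs\sigma$ via the d.o.f.~\eqref{Hdivdivfem3ddof1}-\eqref{Hdivdivfem3ddof6}. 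Applying Green's identity~\eqref{eq:greenidentitydivdiv} to $\bs\sigma-I_h\bs\sigma$ against any $p\in\mathbb P_{k-2}(K)$, the volume term vanishes by d.o.f.~\eqref{Hdivdivfem3ddof5}, while the edge- and face-trace terms vanish through d.o.f.~\eqref{Hdivdivfem3ddof1}-\eqref{Hdivdivfem3ddof4}; thus $(\div\div I_h\bs\sigma,p)_K=(p_h,p)_K$ for every $p\in\mathbb P_{k-2}(K)$, forcing $\div\div I_h\bs\sigma=p_h$. Then $\bs\tau_h:=\bs\tau_h^{b}+I_h\bs\sigma\in\boldsymbol\Sigma_h^{\mathbb S}$ satisfies $\div\div\bs\tau_h=q_h$. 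The main obstacle is verifying the boundary trace-moment preservation under the canonical interpolation, especially for the RT-type $(\ell=k-1)$, where the moment d.o.f.\ on edges and faces are of degree one below the test polynomials appearing in Green's identity. I expect to close this gap by exploiting the polynomial trace structure of Lemma~\ref{lem:boundpolyl} together with the fact that the $\mathbb C_\ell$ component of $\boldsymbol\Sigma_{\ell,k}(K)$ lies in the kernel of $\div\div$, so any mismatch in the boundary interpolation is absorbed without altering $\div\div\bs\tau_h^{g}$.
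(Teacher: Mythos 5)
Your overall strategy (interpolate a smooth right inverse of $\div\div$, then correct with bubbles via Lemma~\ref{lem:divdivbubble}) is the same as the paper's, but the way you have ordered the two steps creates a genuine gap. You split $q_h=p_h+q_h^{\perp}$ first and then need the \emph{exact} identity $\div\div I_h\bs\sigma=p_h$. Your justification of that identity requires the boundary terms in the Green's identity~\eqref{eq:greenidentitydivdiv} for $\bs\sigma-I_h\bs\sigma$ to vanish against every $p\in\mathbb P_{k-2}(K)$. For the RT-type choice $\ell=k-1$ this fails: the edge d.o.f.~\eqref{Hdivdivfem3ddof2} only control moments of $\boldsymbol n_{F,e}^{\intercal}(\bs\sigma-I_h\bs\sigma)\boldsymbol n$ against $\mathbb P_{\ell-2}(e)=\mathbb P_{k-3}(e)$ while $p|_e\in\mathbb P_{k-2}(e)$, and likewise~\eqref{Hdivdivfem3ddof3} tests $\boldsymbol n^{\intercal}(\cdot)\boldsymbol n$ only against $\mathbb P_{\ell-3}(F)=\mathbb P_{k-4}(F)$ while $\partial_n p\in\mathbb P_{k-3}(F)$. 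Since $\bs\sigma$ is merely $H^2$, its traces on edges and faces are not polynomials, so the matching vertex values~\eqref{Hdivdivfem3ddof1} do not rescue the missing moments; Lemma~\ref{lem:boundpolyl} describes the trace of the \emph{discrete} function only, and adding a $\mathbb C_\ell$ component cannot ``absorb'' the mismatch, because that mismatch shows up precisely in $(\div\div(\bs\sigma-I_h\bs\sigma),p)_K$ through the boundary pairings, not in the volume term. So as written, $\div\div I_h\bs\sigma$ equals $p_h$ only up to an elementwise error orthogonal to $\mathbb P_1(K)$, and your final sum $\bs\tau_h^{b}+I_h\bs\sigma$ does not reproduce $q_h$.

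The repair is exactly the paper's ordering: do not pre-split $q_h$. Take $\bs\tau\in\boldsymbol H^2(\Omega;\mathbb S)$ with $\div\div\bs\tau=q_h$, form $I_h\bs\tau$, and apply Green's identity only against $p\in\mathbb P_1(K)$ — this is where $\ell\geq3$ is used, since then $p|_e\in\mathbb P_{\ell-2}(e)$, $\partial_np\in\mathbb P_{\ell-3}(F)$ and $p|_F\in\mathbb P_{\ell-1}(F)$, so all boundary and volume terms are controlled by~\eqref{Hdivdivfem3ddof1}--\eqref{Hdivdivfem3ddof5}. This shows the residual $q_h-\div\div I_h\bs\tau$ lies elementwise in $\mathbb P_{k-2}(K)\cap\mathbb P_1^{\bot}(K)$, which is precisely the range $\div\div\mathring{\boldsymbol\Sigma}_{\ell,k}(K)$ of~\eqref{eq:divdivL2local0}, and a single bubble correction finishes the proof. (Your first inclusion $\div\div\boldsymbol\Sigma_h^{\mathbb S}\subseteq\mathcal Q_h$ and your use of Lemma~\ref{lem:divdivbubble} are fine; also note that mollifying $\phi$ with $\Delta\phi=p_h$ would destroy the exact identity $\div\div\bs\sigma=p_h$, whereas the cited surjectivity $\div\div\boldsymbol H^2(\Omega;\mathbb S)=L^2(\Omega)$ already gives enough smoothness for the canonical interpolation.)
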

\begin{proof}
Apparently $\div\div\boldsymbol\Sigma_h^{\mathbb S}\subseteq\mathcal Q_h$. Then we focus on $\mathcal Q_h\subseteq\div\div\boldsymbol\Sigma_h^{\mathbb S}$.

Take any $v_h\in\mathcal Q_h$.  By the fact $\div\div \boldsymbol H^2(\Omega;\mathbb S)=L^2(\Omega)$~\cite{CostabelMcIntosh2010}, 
there exists $\bs\tau\in\boldsymbol H^2(\Omega;\mathbb S)$ such that
\[
\div\div\bs\tau=v_h. 
\]
Let $I_h{\bs\tau}\in\boldsymbol\Sigma_h^{\mathbb S}$ be determined by 
$$
\bs N (I_h\bs \tau) = \bs N(\bs \tau)
$$
for all d.o.f. $\bs N$ from~\eqref{Hdivdivfem3ddof1} to~\eqref{Hdivdivfem3ddof6}.
Note that for functions in $H^2(K)$, the integrals on edge and pointwise value are well-defined.
Since $\ell\geq3$,
it follows from the Green's identity~\eqref{eq:greenidentitydivdiv} that
\[
\left(\div\div(\boldsymbol \tau - I_h{\bs\tau}), q \right)_K=0\quad\forall~q\in\mathbb P_1(K), \; K\in\mathcal T_h.
\]
Hence $(v_h-\div\div I_h \bs \tau)|_K=\div\div(\boldsymbol \tau - I_h \bs \tau)|_K\in \mathbb P_{k-2}(K)\cap \mathbb P_1^{\bot}(K)$.
Applying~\eqref{eq:divdivL2local0}, there exists $\bs\tau_b\in\boldsymbol\Sigma_h^{\mathbb S}$ such that $\bs\tau_b|_K \in \mathring{\boldsymbol \Sigma}_{\ell,k}(K)$ for each $K\in\mathcal T_h$, and
\[
v_h-\div\div I_h \bs \tau=\div\div\bs\tau_b.
\]
Therefore $v_h=\div\div(I_h \bs \tau+\bs\tau_b)$, where $I_h \bs \tau+\bs\tau_b\in\boldsymbol\Sigma_h^{\mathbb S}$, as required.
\end{proof}

\begin{theorem}
Assume $\Omega$ is a bounded and topologically trivial Lipschitz domain in $\mathbb R^3$.
The finite element divdiv complex
\begin{equation}\label{eq:divdivcomplex3dfem}
\boldsymbol{RT}\xrightarrow{\subset} \boldsymbol V_h\xrightarrow{\dev\grad}\bs\Sigma_h^{\mathbb T}\xrightarrow{\sym\curl} \bs\Sigma_h^{\mathbb S} \xrightarrow{\div{\div}} \mathcal Q_h\xrightarrow{}0
\end{equation}
is exact.
\end{theorem}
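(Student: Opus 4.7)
The strategy is to verify the complex property first and then establish exactness position by position, leveraging three ingredients: the continuous divdiv complex (Theorem~\ref{thm:divdivcomplex}), the polynomial divdiv complex~\eqref{eq:divdivcomplex3dPoly}, and the local bubble complex on each tetrahedron (Theorem~\ref{th:bubblecomplex}). For the complex property, I would first check the conformities $\dev\grad \boldsymbol V_h \subset \bs\Sigma_h^{\mathbb T}$ and $\sym\curl \bs\Sigma_h^{\mathbb T} \subset \bs\Sigma_h^{\mathbb S}$ by showing that each d.o.f.\ of the target space is single-valued when applied to the image. For $\dev\grad \bs V_h \subset \bs\Sigma_h^{\mathbb T}$, the vertex and vertex-curl values of $\dev\grad \bs v_h$ are single-valued because $\bs v_h$ is Hermite, while the face traces $\boldsymbol n\times\sym((\dev\grad\bs v_h)\times\boldsymbol n)\times\boldsymbol n$ and $\boldsymbol n\cdot(\dev\grad\bs v_h)\times\boldsymbol n$ involve only surface differentiations of $\bs v_h|_F$, which is continuous. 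For $\sym\curl \bs\Sigma_h^{\mathbb T} \subset \bs\Sigma_h^{\mathbb S}$, Lemma~\ref{lm:tracerelation} rewrites the two divdiv traces of $\sym\curl\bs\sigma$ in terms of the tangential traces of $\bs\sigma$ built into the definition of $\bs\Sigma_h^{\mathbb T}$; similarly Lemmas~\ref{lem:symcurledgeprop} and~\ref{lem:symcurlkey} account for the vertex and edge d.o.f.\ in~\eqref{Hdivdivfem3ddof1}--\eqref{Hdivdivfem3ddof2}. The vanishing of consecutive compositions is inherited from the continuous complex.

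Exactness at $\bs V_h$ follows pointwise: if $\dev\grad \bs v_h = 0$, the polynomial complex~\eqref{eq:divdivcomplex3dPoly} forces $\bs v_h|_K \in \bs{RT}$ on each $K$, and global $H^1$-continuity glues these local $\bs{RT}$ pieces into a single global $\bs{RT}$ function. Surjectivity $\div\div \bs\Sigma_h^{\mathbb S} = \mathcal Q_h$ is exactly Lemma~\ref{lem:divdivonto}. The substantive remaining steps are exactness at $\bs\Sigma_h^{\mathbb T}$ and at $\bs\Sigma_h^{\mathbb S}$, which I would prove by a ``global lift $+$ interior bubble correction'' scheme. For $\bs\tau_h \in \bs\Sigma_h^{\mathbb S} \cap \ker(\div\div)$, Theorem~\ref{thm:divdivcomplex} yields $\bs\sigma^* \in \boldsymbol H(\sym\curl,\Omega;\mathbb T)$ with $\sym\curl \bs\sigma^* = \bs\tau_h$, which may be taken piecewise smooth. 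I would define a discrete surrogate $\bs\sigma_h^{0} \in \bs\Sigma_h^{\mathbb T}$ by prescribing the boundary d.o.f.~\eqref{Hsymcurlfem3ddof1}--\eqref{Hsymcurlfem3ddof7} from $\bs\sigma^*$ so that the boundary d.o.f.~\eqref{Hdivdivfem3ddof1}--\eqref{Hdivdivfem3ddof4} of $\sym\curl \bs\sigma_h^0$ agree with those of $\bs\tau_h$. Then $\bs\tau_h - \sym\curl \bs\sigma_h^0$ restricted to each $K$ lies in $\mathring{\bs\Sigma}_{\ell,k}(K) \cap \ker(\div\div)$, and the bubble complex (Theorem~\ref{th:bubblecomplex}) supplies a $\bs\sigma_K \in \bs B_{\ell+1}(\sym\curl,K;\mathbb T)$ whose $\sym\curl$ is the discrepancy. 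Setting $\bs\sigma_h := \bs\sigma_h^{0} + \sum_K \bs\sigma_K$ (zero-extended) gives a member of $\bs\Sigma_h^{\mathbb T}$ with $\sym\curl \bs\sigma_h = \bs\tau_h$. The same template, using the first row of~\eqref{eq:divdivcomplex3dfembubble} with $\bs B_{\ell+2}(K;\mathbb R^3)$ in place of the $\sym\curl$-bubble space, proves exactness at $\bs\Sigma_h^{\mathbb T}$.

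The main obstacle is engineering the boundary surrogate $\bs\sigma_h^0$ so that $\bs\tau_h - \sym\curl \bs\sigma_h^0$ actually lies in the element-wise bubble space $\mathring{\bs\Sigma}_{\ell,k}(K)$. This requires a commuting-trace property between the d.o.f.\ of $\bs\Sigma_h^{\mathbb T}$ and those of $\bs\Sigma_h^{\mathbb S}$ under $\sym\curl$: the vertex values, normal-normal edge moments, and two face traces of $\sym\curl \bs\sigma$ must be expressible in terms of the boundary d.o.f.\ of $\bs\sigma$ chosen in Section~\ref{sec:tracefreefem}. The identities in Lemmas~\ref{lem:symcurledgeprop}, \ref{lm:tracerelation}, and~\ref{lem:symcurlkey} are precisely the algebraic content that makes this match work, and the reason d.o.f.~\eqref{Hsymcurlfem3ddof2}, \eqref{Hsymcurlfem3ddof3}, \eqref{Hsymcurlfem3ddof5} are chosen as stated. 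As a consistency check I would also verify the Euler-characteristic identity
\[
\dim \bs{RT} - \dim \bs V_h + \dim \bs\Sigma_h^{\mathbb T} - \dim \bs\Sigma_h^{\mathbb S} + \dim \mathcal Q_h = 0
\]
from the explicit dimension formulas collected in Section~\ref{sec:femdivdivcomplex}; combined with the already-established exactness at $\bs V_h$, $\mathcal Q_h$, and either $\bs\Sigma_h^{\mathbb T}$ or $\bs\Sigma_h^{\mathbb S}$, this pins down exactness at the remaining position for free.
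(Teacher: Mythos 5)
Your outline assembles the right ingredients, but the emphasis is inverted relative to what actually closes the argument. The ``global lift $+$ interior bubble correction'' scheme that you present as the main route for exactness at $\bs\Sigma_h^{\mathbb S}$ has a genuine gap: the lift $\bs\sigma^*$ supplied by Theorem~\ref{thm:divdivcomplex} is only in $\boldsymbol H^1(\Omega;\mathbb T)$ (a fortiori in $\boldsymbol H(\sym\curl,\Omega;\mathbb T)$), which is not enough regularity to evaluate the vertex values~\eqref{Hsymcurlfem3ddof1}--\eqref{Hsymcurlfem3ddof2} and edge moments~\eqref{Hsymcurlfem3ddof3}--\eqref{Hsymcurlfem3ddof5} needed to define the surrogate $\bs\sigma_h^0$; the parenthetical ``which may be taken piecewise smooth'' is precisely the unjustified step, and you yourself flag the trace-matching engineering as ``the main obstacle'' without discharging it. The paper avoids this entirely: what you relegate to a ``consistency check'' --- the alternating dimension identity, proved from the explicit dimension formulas together with Euler's formula $\#\mathcal V_h-\#\mathcal E_h+\#\mathcal F_h-\#\mathcal T_h=1$ --- is the actual proof of exactness at $\bs\Sigma_h^{\mathbb S}$, once surjectivity onto $\mathcal Q_h$ (Lemma~\ref{lem:divdivonto}) and exactness at $\boldsymbol V_h$ and $\bs\Sigma_h^{\mathbb T}$ are in hand. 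Your observation that three vanishing cohomologies plus a zero alternating sum force the fourth to vanish is correct and is exactly the load-bearing step; promote it from a check to the argument.

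For exactness at $\bs\Sigma_h^{\mathbb T}$ the situation is more forgiving than your template suggests, and the paper's route is simpler: given $\bs\tau_h\in\bs\Sigma_h^{\mathbb T}\cap\ker(\sym\curl)$, the continuous complex gives $\boldsymbol v\in\boldsymbol H^1(\Omega;\mathbb R^3)$ with $\dev\grad\boldsymbol v=\bs\tau_h$, and since $\bs\tau_h$ is piecewise polynomial the local exactness of~\eqref{eq:divdivcomplex3dPoly} forces $\boldsymbol v$ to be piecewise in $\mathbb P_{\ell+2}(K;\mathbb R^3)$; one then takes the canonical Hermite interpolant $\boldsymbol v_h$ (the d.o.f.~\eqref{Hsymcurlfem3ddof4} guarantee enough edge regularity of $\boldsymbol v$ for the vertex values to make sense) and checks $\dev\grad\boldsymbol v_h=\dev\grad\boldsymbol v$ by integration by parts --- no bubble correction via the first row of~\eqref{eq:divdivcomplex3dfembubble} is needed. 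Your treatment of the complex property, of exactness at $\boldsymbol V_h$, and of surjectivity onto $\mathcal Q_h$ matches the paper. In short: keep the dimension count as the proof at $\bs\Sigma_h^{\mathbb S}$, simplify the argument at $\bs\Sigma_h^{\mathbb T}$ to a direct interpolation of the piecewise-polynomial lift, and drop (or substantially repair) the constructive surrogate scheme.
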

\begin{proof}
For any sufficient vector function $\boldsymbol v$ and $e\in\mathcal E(K)$, we have from $\boldsymbol t=\boldsymbol n_1\times \boldsymbol n_2$ that
\begin{align*}
&\quad\boldsymbol n_{2}^{\intercal}(\curl(\dev\grad\boldsymbol v)))\boldsymbol n_1 + \partial_{t}(\boldsymbol t^{\intercal}(\dev\grad\boldsymbol v)\boldsymbol t) \\
&= -\frac{1}{3}\boldsymbol n_1\cdot\curl(\boldsymbol n_{2}\div\boldsymbol v) + \partial_{tt}(\boldsymbol v\cdot\boldsymbol t)- \frac{1}{3}\partial_{t}(\div\boldsymbol v) \\
&= \frac{1}{3}(\boldsymbol n_1\times\boldsymbol n_2)\cdot\nabla(\div\boldsymbol v)  + \partial_{tt}(\boldsymbol v\cdot\boldsymbol t)- \frac{1}{3}\partial_{t}(\div\boldsymbol v)=\partial_{tt}(\boldsymbol v\cdot\boldsymbol t).
\end{align*}
Hence by~\eqref{eq:devgradtr1}-\eqref{eq:devgradtr2} it is easy to see that $\dev\grad\boldsymbol V_h\subset\bs\Sigma_h^{\mathbb T}$.
It holds from Lemma~\ref{lem:trace0} and the degrees of freedom~\eqref{Hsymcurlfem3ddof2}-\eqref{Hsymcurlfem3ddof3} that
\begin{equation}\label{eq:symcurlinsigmaS}
\sym\curl\bs\Sigma_h^{\mathbb T}\subset\bs\Sigma_h^{\mathbb S}.
\end{equation}
Thus we get from Lemma~\ref{lem:divdivonto} that~\eqref{eq:divdivcomplex3dfem} is a complex.

We then verify the exactness.

\medskip
\noindent {\em 1. $\boldsymbol V_h \cap\ker(\dev\grad)=\bs{RT}$.} By the exactness of the complex~\eqref{eq:divdivcomplex3d}, 
\[
\bs{RT}\subseteq\boldsymbol V_h \cap\ker(\dev\grad)\subseteq\boldsymbol H^1(\Omega;\mathbb R^3) \cap\ker(\dev\grad)=\bs{RT}.
\]

\medskip
\noindent {\em 2. $\bs\Sigma_h^{\mathbb T}\cap\ker(\sym\curl)=\dev\grad\boldsymbol V_h$, i.e. if $\sym\curl\boldsymbol \tau = 0$ and $\boldsymbol\tau\in\bs\Sigma_h^{\mathbb T}$, then there exists a $\boldsymbol v_h\in\boldsymbol V_h$, s.t. $\bs\tau =\dev\grad\boldsymbol v_h$}. 

Since $\sym\curl\boldsymbol \tau = 0$, by the divdiv complex~\eqref{eq:divdivcomplex3d}, there exists $\boldsymbol v\in\boldsymbol H^1(\Omega;\mathbb R^3)$ such that $\bs\tau=\dev\grad\boldsymbol v$.
Due to~\eqref{Hsymcurlfem3ddof4}, we obtain $(\boldsymbol v\cdot\boldsymbol n_i)|_e\in H^1(e)$ for each edge $e\in\mathcal E_h$ and $i=1,2$. Hence $\boldsymbol v(\delta)$ is well-defined for each vertex $\delta\in\mathcal V_h$. 
Take $\boldsymbol v_h\in\boldsymbol V_h$ satisfying
$
N(\boldsymbol v_h)=N(\boldsymbol v)
$, where $N$ goes through all the degrees of freedom~\eqref{Hermitfem3ddof1}-\eqref{Hermitfem3ddof4}.
Then it follows from the integration by parts that
$$
\dev\grad\boldsymbol v_h=\dev\grad\boldsymbol v=\bs\tau.
$$
This indicates $\bs\Sigma_h^{\mathbb T}\cap\ker(\sym\curl)\subseteq\dev\grad\boldsymbol V_h$.

\medskip
\noindent {\em 3. $\div\div\boldsymbol\Sigma_h^{\mathbb S}=\mathcal Q_h$.} This is Lemma~\ref{lem:divdivonto}.

\medskip
\noindent {\em 4. $\bs\Sigma_h^{\mathbb S}\cap\ker(\div\div)=\sym\curl\bs\Sigma_h^{\mathbb T}$.} 

We verify the identity by dimension count. By Lemma~\ref{lem:divdivonto},
\begin{align}
\dim(\bs\Sigma_h^{\mathbb S}\cap\ker(\div\div))&=\dim\bs\Sigma_h^{\mathbb S}-\dim\mathcal Q_h \notag\\
&= 6\#\mathcal V_h+ 3(\ell-1)\#\mathcal E_h + (\ell^2 -\ell+1)\#\mathcal F_h \notag \\
&\quad+\left(\frac{1}{6}(\ell-1)\ell(5\ell+17)-4\right)\#\mathcal T_h. \label{eq:20210112}
\end{align}
As a result of step 2,
\begin{align*}
\dim\sym\curl\bs\Sigma_h^{\mathbb T}&=\dim\bs\Sigma_h^{\mathbb T} -\dim\dev\grad\boldsymbol V_h=\dim\bs\Sigma_h^{\mathbb T} -\dim\boldsymbol V_h+4 \\
&=2\#\mathcal V_h+ (3\ell+1)\#\mathcal E_h + (\ell^2-\ell-3)\#\mathcal F_h \\
&\quad+\frac{1}{6}(\ell-1)\ell(5\ell+17)\#\mathcal T_h+4.
\end{align*}
Applying the Euler's formula $\#\mathcal V_h-\#\mathcal E_h+\#\mathcal F_h-\#\mathcal T_h=1$, we get from~\eqref{eq:20210112} that $\dim\sym\curl\bs\Sigma_h^{\mathbb T}=\dim(\bs\Sigma_h^{\mathbb S}\cap\ker(\div\div))$.
Then the result follows from~\eqref{eq:symcurlinsigmaS}. 

\medskip

Therefore the finite element divdiv complex~\eqref{eq:divdivcomplex3dfem} is exact.
\end{proof}


For the completeness, we present a two dimensional finite element divdiv complex but restricted to one element. A global version of~\eqref{eq:divdivcomplexPolyvar} as well as a commutative diagram involving quasi-interpolation operators from Sobolev spaces to finite element spaces can be found in~\cite{ChenHuang2020}.  

%
%
Let $\boldsymbol V_{\ell+1}(F):=\mathbb P_{\ell+1}(F;\mathbb R^2)$ with $\ell\geq2$ be the vectorial Hermite element~\cite{BrennerScott2008,Ciarlet1978}. 

\begin{lemma}
For any triangle $F$, the polynomial complex
\begin{equation}\label{eq:divdivcomplexPolyvar}
\boldsymbol  {RT}\xrightarrow{\subset} \boldsymbol V_{\ell+1}(F)\xrightarrow{\sym\curl_F} \boldsymbol \Sigma_{\ell,k}(F) \xrightarrow{\div_F{\div_F}} \mathbb P_{k-2}(F)\xrightarrow{}0
\end{equation}
is exact.
\end{lemma}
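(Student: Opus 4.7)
The plan is to mimic the three-dimensional arguments in Lemma~\ref{lem:symmpolyspacedirectsum} and Theorem~\ref{th:bubblecomplex}, exploiting the direct sum decomposition
\[
\boldsymbol \Sigma_{\ell,k}(F) = \mathbb C_{\ell}(F;\mathbb S) \oplus \mathbb C_k^{\oplus}(F;\mathbb S)
\]
together with the two-dimensional polynomial divdiv complex~\eqref{eq:divdivcomplexPolydouble2D}. The key advantage in two dimensions is that $\boldsymbol V_{\ell+1}(F) = \mathbb P_{\ell+1}(F;\mathbb R^2)$ is the full polynomial space, so~\eqref{eq:divdivcomplexPolyvar} is a genuine polynomial complex and no finite-element trace continuity issues arise.

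First I would check that~\eqref{eq:divdivcomplexPolyvar} is a complex. The inclusion $\boldsymbol{RT} \subseteq \ker(\sym\curl_F)$ is immediate, and $\div_F\div_F\circ\sym\curl_F = 0$ follows from~\eqref{eq:divdivcomplexPolydouble2D}. Next, exactness at $\boldsymbol V_{\ell+1}(F)$ follows from the exact divdiv Hilbert complex in 2D, since
\[
\boldsymbol{RT} \subseteq \boldsymbol V_{\ell+1}(F) \cap \ker(\sym\curl_F) \subseteq \boldsymbol H^1(F;\mathbb R^2) \cap \ker(\sym\curl_F) = \boldsymbol{RT}.
\]

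For surjectivity of $\div_F\div_F$, I would prove the two-dimensional analogue of Lemma~\ref{lem:symmpolyspacedirectsum}(i)--(ii). A direct computation using $\div_F(\boldsymbol x q) = (\boldsymbol x\cdot\nabla)q + 2q$ yields
\[
\div_F\div_F(\boldsymbol x\boldsymbol x^{\intercal} q) = (\boldsymbol x\cdot\nabla + 2)(\boldsymbol x\cdot\nabla + 3)q,
\]
so by Euler's formula $\div_F\div_F(\boldsymbol x\boldsymbol x^{\intercal} q) = (k+2)(k+3) q$ for each $q\in\mathbb H_k(F)$. Splitting $\mathbb P_{k-2}(F) = \oplus_{i=0}^{k-2}\mathbb H_i(F)$ shows that $\div_F\div_F : \mathbb C_k^{\oplus}(F;\mathbb S) \to \mathbb P_{k-2}(F)$ is a bijection, hence $\div_F\div_F \, \boldsymbol \Sigma_{\ell,k}(F) = \mathbb P_{k-2}(F)$.

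Finally, for exactness at $\boldsymbol \Sigma_{\ell,k}(F)$, I would combine the decomposition with the previous bijectivity. By definition $\mathbb C_{\ell}(F;\mathbb S) = \sym\curl_F\,\mathbb P_{\ell+1}(F;\mathbb R^2) = \sym\curl_F\,\boldsymbol V_{\ell+1}(F)$, and by~\eqref{eq:divdivcomplexPolydouble2D} we have $\mathbb C_{\ell}(F;\mathbb S) \subseteq \ker(\div_F\div_F)$. Given any $\boldsymbol\tau \in \boldsymbol \Sigma_{\ell,k}(F)$ with $\div_F\div_F\boldsymbol\tau = 0$, decompose $\boldsymbol\tau = \boldsymbol\tau_1 + \boldsymbol\tau_2$ with $\boldsymbol\tau_1 \in \mathbb C_{\ell}(F;\mathbb S)$ and $\boldsymbol\tau_2 \in \mathbb C_k^{\oplus}(F;\mathbb S)$. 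Then $\div_F\div_F\boldsymbol\tau_2 = 0$, and injectivity on $\mathbb C_k^{\oplus}(F;\mathbb S)$ forces $\boldsymbol\tau_2 = 0$, so $\boldsymbol\tau \in \sym\curl_F\,\boldsymbol V_{\ell+1}(F)$. There is essentially no main obstacle here; the only step requiring care is the scalar-multiple identity for $\div_F\div_F$ on $\mathbb C_k^{\oplus}(F;\mathbb S)$, which is a routine adaptation of Lemma~\ref{lem:symmpolyspacedirectsum}(i) from 3D to 2D.
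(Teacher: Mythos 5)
Your proposal is correct. Note that the paper does not actually supply a proof of this lemma: it is stated ``for completeness'' with the verification deferred to the cited two-dimensional reference, so there is no in-paper argument to compare against line by line. Your self-contained proof follows exactly the methodology the paper uses for the analogous three-dimensional statements: the composite-is-zero check via the cited complex~\eqref{eq:divdivcomplexPolydouble2D}, the kernel identification $\boldsymbol V_{\ell+1}(F)\cap\ker(\sym\curl_F)=\boldsymbol{RT}$ by sandwiching between the polynomial and Hilbert complexes, and the two-dimensional analogue of Lemma~\ref{lem:symmpolyspacedirectsum}, where your computation $\div_F\div_F(\boldsymbol x\boldsymbol x^{\intercal}q)=(\boldsymbol x\cdot\nabla+2)(\boldsymbol x\cdot\nabla+3)q$ is the correct $2$D counterpart of~\eqref{eq:20200512} and yields the bijection $\div_F\div_F:\mathbb C_k^{\oplus}(F;\mathbb S)\to\mathbb P_{k-2}(F)$. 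The only implicit point worth making explicit is that the directness of the sum $\mathbb C_{\ell}(F;\mathbb S)\oplus\mathbb C_k^{\oplus}(F;\mathbb S)$, which you use when decomposing $\boldsymbol\tau=\boldsymbol\tau_1+\boldsymbol\tau_2$, is itself a consequence of the same injectivity of $\div_F\div_F$ on $\mathbb C_k^{\oplus}(F;\mathbb S)$; stating that sentence closes the argument completely.
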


\section*{Acknowledgement.} The authors appreciate the anonymous reviewers for valuable suggestions and careful comments, which significantly improved the readability of an early version of the paper. 
The authors also want to thank Prof. Jun Hu, Dr. Yizhou Liang in Peking University, and Dr. Rui Ma in University of Duisburg-Essen for showing us the proof of the key Lemma~\ref{lem:symcurlkey} for constructing $H(\sym\curl)$-conforming element.

\bibliographystyle{abbrv}
\bibliography{./Hdivdiv3d}
\end{document}